\newtheorem{theorem}{Theorem}[section]
\newtheorem{proposition}[theorem]{Proposition}
\newtheorem{conjecture}[theorem]{Conjecture}
\newtheorem{corollary}[theorem]{Corollary}
\newtheorem{lemma}[theorem]{Lemma}
\newtheorem{openproblem}[theorem]{Open Problem}
\theoremstyle{definition}
\newtheorem{definition}[theorem]{Definition}
\newtheorem{example}[theorem]{Example}
\newtheorem{remark}[theorem]{Remark}  
\def\oO{{\mathcal{O}}}
\def\cC{{\mathcal{C}}}
\def\fF{{\mathcal{F}}}
\def\RR{{\mathbb R}}
\def\CC{{\mathbb C}}
\def\CC{{\mathbb C}}
\def\ZZ{{\mathbb Z}}
\def\xx{{\mathbf x}}
\def\vv{{\mathbf v}}
\def\mm{{\mathbf m}}
\def\lessdot{<\hskip-6pt\cdot\hskip3pt}
\def\conv{{\operatorname{conv}}}
\def\Cat{{\operatorname{Cat}}}
\def\NN{\Delta^{N\!N}}
\def\NC{\Delta^{N\!C}}
\def\NCred{\widetilde{\Delta}^{N\!C}}
\def\WS{{\Delta}^{Sep}}
\def\WSred{\widetilde{\Delta}^{Sep}}
\def\Tab{\mathcal{T}}
\def\Tab{\mathcal{T}}
\newcommand{\defn}[1]{\emph{\color{red} #1}} 
\def\bijNN{\varphi_{N\!N}}
\def\bijNC{\varphi_{N\!C}}
\def\Tamari{\mathcal{T}}
\def\Hilb{\mathcal{H}}
\def\bend{{\mathbf{b}}}
\newcommand\figref{Figure~\ref}
\def\saveenum{\xdef\@savedenum{\the\c@enumi\relax}}
\def\resetenum{\global\c@enumi\@savedenum}
\title[Noncrossing sets and a Gra\ss mannian associahedron]{Noncrossing sets and a\\ Gra\ss mann associahedron}
\author[F.~Santos]{Francisco Santos$^*$}
\address[F.~Santos]{Departamento de Matem\'aticas, Estad\'istica y Computaci\'on Universidad de Cantabria, Santander, Spain}
\email{francisco.santos@unican.es}
\thanks{$^\star$Supported by the Spanish Ministry of Science (MICINN) through grant MTM2011-22792, and by a Humboldt Research Award of the Alexander von Humboldt Foundation}
\author[C.~Stump]{Christian Stump$^\dagger$}
\address[C.~Stump]{Institut f\"ur Mathematik, Freie Universit\"at Berlin, Germany}
\email{christian.stump@fu-berlin.de}
\thanks{$^\dagger$Supported by the German Research Foundation DFG, grant STU 563/2-1 ``Coxeter-Catalan combinatorics".}
\author[V.~Welker]{Volkmar Welker}
\address[V.~Welker]{Fachbereich Mathematik und Informatik, Philipps-Universit\"at Marburg, Germany}
\email{welker@mathematik.uni-marburg.de}
\thanks{{\bf Acknowledgement:} The authors would like to thank Christian Haase and Raman Sanyal for valuable comments and discussions.
After the first version of this paper was submitted to the arXiv we became aware of the facts that our results substantially overlap with~\cite{PPS2010} 
and that Conjecture~\ref{conj:ws-sphere} was already posed in the preliminary version~\cite{HessHirsch2011} of~\cite{HessHirsch2013}. 
We thank David Speyer and Vic Reiner, respectively, for pointing these two facts to us.
We finally remark that the current version of this paper is only preliminary.
}
\subjclass[2000]{Primary 52B20; Secondary 06A11}
\date{\today}
\keywords{Gra{\ss}mannian, crossing, nesting, order polytope}
\begin{document}

\begin{abstract}
  We study a natural generalization of the noncrossing relation between pairs of elements in $[n]$ to $k$-tuples in $[n]$ that was first considered by Petersen, Pylyavskyy, Speyer (2010). 
  We give an alternative approach to their result that the flag simplicial complex on $\binom{[n]}{k}$ induced by this relation is a regular, unimodular and flag triangulation of the order polytope of the poset given by the product $[k]\times[n-k]$ of two chains (also called Gelfand-Tsetlin polytope), and that it is the join of a simplex and a sphere (that is, it is a Gorenstein triangulation).
  We then observe that this already implies the existence of a flag simplicial polytope generalizing the dual associahedron, whose Stanley-Reisner ideal is an initial ideal of the Gra\ss mann-Pl\"ucker ideal, while previous constructions of such a polytope did not guarantee flagness nor reduced to the dual associahedron for $k=2$.
  On our way we provide general results about order polytopes and their triangulations.
  We call the simplicial complex the \emph{noncrossing complex}, and the polytope derived from it the dual \emph{Gra\ss mann associahedron}.
  We extend results of Petersen, Pylyavskyy, Speyer (2010) showing that the non-crossing complex and the Gra\ss mann associahedron naturally reflect the relations between Gra\ss mannians with different parameters, in particular the isomorphism $G_{k,n} \cong G_{n-k,n}$.
  Moreover, our approach allows us to show that the adjacency graph of the noncrossing complex admits a natural acyclic orientation that allows us to define a \emph{Gra\ss mann-Tamari order} on maximal noncrossing families.
  Finally, we look at the precise relation of the noncrossing complex and the weak separability complex of Leclerc, Zelevinsky (1998), see also Scott (2005) among others.
  We show that the weak separability complex is not only a subcomplex of the noncrossibg complex as noted by Petersen, Pylyavskyy, Speyer (2010) but actually the cyclically invariant part of it.
\end{abstract}

\maketitle

\setcounter{tocdepth}{1}

\vspace*{-15pt}
\tableofcontents

\vspace*{-15pt}

\section{Introduction and main results}

Let $[n]$ denote the (ordered) set $\{ 1,\ldots,n \}$ of the first $n$ positive integers.
Two pairs $(i<i')$ and $(j<j')$ with $i \leq j$ are said to \defn{nest} if $i<j<j'<i'$ and \defn{cross} if $i<j<i'<j'$.
In other words, they nest and cross if the two arcs nest and, respectively, cross in the following picture,

\begin{center}
  \setlength{\unitlength}{13pt}
  \begin{picture}(12,2.2)
    \put(0,0){\hbox{$1$}}
    \put(1,0){\hbox{$\cdots$}}
    \put(2.5,0){\hbox{$i$}}
    \put(3.5,0){\hbox{$<$}}
    \put(4.75,0){\hbox{$j$}}
    \put(5.75,0){\hbox{$<$}}
    \put(7,0){\hbox{$j'$}}
    \put(8,0){\hbox{$<$}}
    \put(9.25,0){\hbox{$i'$}}
    \put(10.25,0){\hbox{$\cdots$}}
    \put(11.75,0){\hbox{$n$}}
    \qbezier(2.9,0.7)(6.0,3.2)(9.1,0.7)
    \qbezier(5.1,0.7)(6.0,2.2)(6.9,0.7)
  \end{picture}
  \quad\quad
  \begin{picture}(12,2.2)
    \put(0,0){\hbox{$1$}}
    \put(1,0){\hbox{$\cdots$}}
    \put(2.5,0){\hbox{$i$}}
    \put(3.5,0){\hbox{$<$}}
    \put(4.75,0){\hbox{$j$}}
    \put(5.75,0){\hbox{$<$}}
    \put(7,0){\hbox{$i'$}}
    \put(8,0){\hbox{$<$}}
    \put(9.25,0){\hbox{$j'$}}
    \put(10.25,0){\hbox{$\cdots$}}
    \put(11.75,0){\hbox{$n$}}
    \qbezier(2.9,0.7)(4.9,3.2)(6.9,0.7)
    \qbezier(5.1,0.7)(7.1,3.2)(9.1,0.7)
  \end{picture} \ .
\end{center}

Nestings and crossings have been intensively studied and generalized in the literature, see e.g.~\cite{Ath1998,PPS2010,Pyl2009,RubeyStump}.
One important context in which they appear are two pure and flag simplicial complexes $\NN_n$ and $\NC_n$. 
Recall that a \defn{flag} simplicial complex is the complex of all vertex sets of cliques of some graph. 
$\NN_n$ is the flag complex having the arcs $1 \leq i < j \leq n$ as vertices and pairs of nonnesting arcs as edges, 
while $\NC_n$ is the flag complex with the same vertices and pairs of noncrossing arcs as edges. 

It is not hard to see that the maximal faces of $\NN_n$ are parametrized by \defn{Dyck paths} of length $2(n-2)$, while the maximal faces of $\NC_n$ are parametrized by \defn{triangulations} of a convex $n$-gon.
Thus both complexes have the same number of maximal faces, the $(n-2)$\textsuperscript{nd}
Catalan number $\frac{1}{n-1}\binom{2n-4}{n-2}$.
Moreover, it can be shown that their face vectors coincide and that both are 
balls of dimension $2n-4$. In addition, the complex $\NC_n$ is the join of an
$(n-1)$-dimensional simplex and an ubiquitous $(n-4)$-dimensional polytopal
sphere~$\NCred_n$, the (dual of the) \defn{associahedron}.

\subsection{The nonnesting complex}
The following generalization of the nonnesting complex is well known. 
Let $V_{k,n}$ denote the set of all vectors $(i_1,\ldots,i_k)$, $1 \leq i_1 < \cdots < i_k \leq n$, of length $k$ with entries in~$[n]$.

\begin{definition}
  \label{def:nonnesting}
  Two vectors $I = (i_1,\ldots,i_k)$ and $J = (j_1,\ldots,j_k)$ in $V_{k,n}$ are \defn{nonnesting} if for all indices $a<b$
  the arcs $(i_a<i_b)$ and $(j_a<j_b)$ are nonnesting.
  The \defn{(multidimensional) nonnesting complex} $\NN_{k,n}$ is  the flag simplicial complex with vertices
  $V_{k,n}$ and with edges being the nonnesting pairs of
  vertices.
\end{definition}

By definition, we have $\NN_{2,n}=\NN_n$.
Equipped with the component-wise order,~$V_{k,n}$ becomes a distributive lattice.
Moreover, $I,J\in V_{k,n}$ are nonnesting if and only if $I\ge J$ or $J\ge I$ component-wise.
That is, $\NN_{k,n}$ is the order complex of the distributive lattice~$V_{k,n}$.

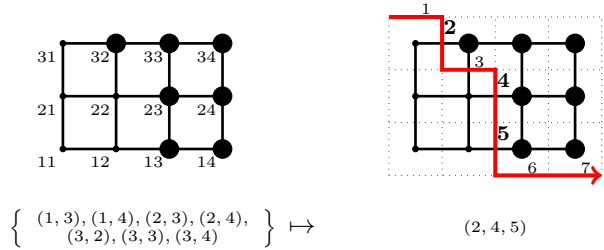
\begin{figure}[ht]
  \begin{tikzpicture}[scale=.7]
    \coordinate (11) at (0,0);
    \coordinate (12) at (1,0);
    \coordinate (13) at (2,0);
    \coordinate (14) at (3,0);
    \coordinate (21) at (0,1);
    \coordinate (22) at (1,1);
    \coordinate (23) at (2,1);
    \coordinate (24) at (3,1);
    \coordinate (31) at (0,2);
    \coordinate (32) at (1,2);
    \coordinate (33) at (2,2);
    \coordinate (34) at (3,2);

    \node at (-.3,-.25) {\tiny$11$};
    \node at (0.7,-.25) {\tiny$12$};
    \node at (1.7,-.25) {\tiny$13$};
    \node at (2.7,-.25) {\tiny$14$};

    \node at (-.3,0.75) {\tiny$21$};
    \node at (0.7,0.75) {\tiny$22$};
    \node at (1.7,0.75) {\tiny$23$};
    \node at (2.7,0.75) {\tiny$24$};

    \node at (-.3,1.75) {\tiny$31$};
    \node at (0.7,1.75) {\tiny$32$};
    \node at (1.7,1.75) {\tiny$33$};
    \node at (2.7,1.75) {\tiny$34$};

    \draw[line width=1pt,black] {(11)--(14)};
    \draw[line width=1pt,black] {(21)--(24)};
    \draw[line width=1pt,black] {(31)--(34)};

    \draw[line width=1pt,black] {(11)--(31)};
    \draw[line width=1pt,black] {(12)--(32)};
    \draw[line width=1pt,black] {(13)--(33)};
    \draw[line width=1pt,black] {(14)--(34)};

    \draw[fill=black] (11) circle (1.5pt);
    \draw[fill=black] (12) circle (1.5pt);
    \draw[fill=black] (13) circle (5pt);
    \draw[fill=black] (14) circle (5pt);
    \draw[fill=black] (21) circle (1.5pt);
    \draw[fill=black] (22) circle (1.5pt);
    \draw[fill=black] (23) circle (5pt);
    \draw[fill=black] (24) circle (5pt);
    \draw[fill=black] (31) circle (1.5pt);
    \draw[fill=black] (32) circle (5pt);
    \draw[fill=black] (33) circle (5pt);
    \draw[fill=black] (34) circle (5pt);

    \node at (1.5,-1.5) {\tiny $\left\{\begin{array}{cc} (1,3),(1,4),(2,3),(2,4), \\ (3,2),(3,3),(3,4) \end{array} \right\}$};
    \node at (4.5,-1.5) {$\mapsto$};
    \node at (1.5,-2) {};
    \end{tikzpicture}
  \qquad
  \begin{tikzpicture}[scale=.7]
    \coordinate (11) at (0,0);
    \coordinate (12) at (1,0);
    \coordinate (13) at (2,0);
    \coordinate (14) at (3,0);
    \coordinate (21) at (0,1);
    \coordinate (22) at (1,1);
    \coordinate (23) at (2,1);
    \coordinate (24) at (3,1);
    \coordinate (31) at (0,2);
    \coordinate (32) at (1,2);
    \coordinate (33) at (2,2);
    \coordinate (34) at (3,2);

    \coordinate (11dual) at (-.5,-.5);
    \coordinate (12dual) at (0.5,-.5);
    \coordinate (13dual) at (1.5,-.5);
    \coordinate (14dual) at (2.5,-.5);
    \coordinate (15dual) at (3.5,-.5);
    \coordinate (21dual) at (-.5,0.5);
    \coordinate (22dual) at (0.5,0.5);
    \coordinate (23dual) at (1.5,0.5);
    \coordinate (24dual) at (2.5,0.5);
    \coordinate (25dual) at (3.5,0.5);
    \coordinate (31dual) at (-.5,1.5);
    \coordinate (32dual) at (0.5,1.5);
    \coordinate (33dual) at (1.5,1.5);
    \coordinate (34dual) at (2.5,1.5);
    \coordinate (35dual) at (3.5,1.5);
    \coordinate (41dual) at (-.5,2.5);
    \coordinate (42dual) at (0.5,2.5);
    \coordinate (43dual) at (1.5,2.5);
    \coordinate (44dual) at (2.5,2.5);
    \coordinate (45dual) at (3.5,2.5);

    \draw[line width=1pt,black] {(11)--(14)};
    \draw[line width=1pt,black] {(21)--(24)};
    \draw[line width=1pt,black] {(31)--(34)};

    \draw[line width=1pt,black] {(11)--(31)};
    \draw[line width=1pt,black] {(12)--(32)};
    \draw[line width=1pt,black] {(13)--(33)};
    \draw[line width=1pt,black] {(14)--(34)};

    \draw[line width=0.25pt,dotted] {(11dual)--(15dual)};
    \draw[line width=0.25pt,dotted] {(21dual)--(25dual)};
    \draw[line width=0.25pt,dotted] {(31dual)--(35dual)};
    \draw[line width=0.25pt,dotted] {(41dual)--(45dual)};

    \draw[line width=0.25pt,dotted] {(11dual)--(41dual)};
    \draw[line width=0.25pt,dotted] {(12dual)--(42dual)};
    \draw[line width=0.25pt,dotted] {(13dual)--(43dual)};
    \draw[line width=0.25pt,dotted] {(14dual)--(44dual)};
    \draw[line width=0.25pt,dotted] {(15dual)--(45dual)};

    \draw[fill=black] (11) circle (1.5pt);
    \draw[fill=black] (12) circle (1.5pt);
    \draw[fill=black] (13) circle (5pt);
    \draw[fill=black] (14) circle (5pt);
    \draw[fill=black] (21) circle (1.5pt);
    \draw[fill=black] (22) circle (1.5pt);
    \draw[fill=black] (23) circle (5pt);
    \draw[fill=black] (24) circle (5pt);
    \draw[fill=black] (31) circle (1.5pt);
    \draw[fill=black] (32) circle (5pt);
    \draw[fill=black] (33) circle (5pt);
    \draw[fill=black] (34) circle (5pt);

    \draw[line width=1.5pt, red, ->] (41dual) -- (42dual) -- (32dual) --(33dual) -- (13dual) -- (15dual);

    \node at (0.2,2.65) {\tiny$1$};
    \node at (1.2,1.65) {\tiny$3$};
    \node at (2.2,-.35) {\tiny$6$};
    \node at (3.2,-.35) {\tiny$7$};

    \node at (0.65,2.3) {\footnotesize$\bf 2$};
    \node at (1.65,1.3) {\footnotesize$\bf 4$};
    \node at (1.65,0.3) {\footnotesize$\bf 5$};

    \node at (1.5,-1.5) {\tiny $(2,4,5)$};
    \node at (1.5,-2) {};
\end{tikzpicture}

\caption{
  An order filter in the poset~$P_{3,7}$, and the corresponding monotone path in the dual grid.
  Since the second, forth and fifth steps on the path are going south, the corresponding element of~$V_{3,7}$ is $(2,4,5)$.
}
\label{fig:poset}
\end{figure}

By Birkhoff's representation theorem for distributive lattices~\cite{Birkhoff}, there is a poset~$P$ such that the distributive lattice~$V_{k,n}$ is isomorphic to the lattice 
of (order) filters of~$P$.
Remember that an \defn{order filter} in~$P$ is a subset~$F \subset P$ satisfying
$
a\in F, a<_P b \Rightarrow b\in F.
$
It is easy to see that, indeed,~~$V_{k,n}$ is the lattice of filters in the product poset~$P_{k,n}$ of a $k$-chain and an $(n-k)$-chain.
A graphical way to set up this bijection between vectors in~$V_{k,n}$ and order filters in~$P_{k,n}$ is illustrated in Figure~\ref{fig:poset}.
To each filter in $P_{k,n}$ associate a monotone lattice path from $(0,0)$ to $(k,n-k)$ in a grid ``dual'' to the Hasse diagram of~$P_{k,n}$.
The path is defined by separating the elements in the filter from those not in the filter.
Such paths biject to~$V_{k,n}$ in the usual way by selecting the indices of steps in the direction of the first coordinate (the south direction in the picture).
As long as there is no ambiguity, we will thus consider elements of~$V_{k,n}$ as increasing $k$-tuples, as $k$-subsets, or as order filters in~$P_{k,n}$.

By a result of R.~Stanley~\cite[Sec, 5]{Sta1986}, $\NN_{k,n}$ is the standard triangulation of the \emph{order polytope} $\oO_{k,n} \subseteq [0,1]^{k\times(n-k)}$ of~$P_{k,n}$, where a vector~$I \in V_{k,n}$ is mapped to the characteristic vector~$\chi_I \in \mathbb{N}^{P_{k,n}}$ of the corresponding order filter.
We refer to Section~\ref{sec:orderpolytope} for basic facts about order polytopes and their triangulations.
It follows that~$\NN_{k,n}$ is a simplicial ball of dimension $k(n-k)$. 
Through this connection, its $h$-vector is linked to the Hilbert series of the coordinate ring of the Gra{\ss}mannian~$G_{k,n}$ of $k$-planes in $\CC^n$.
For details on this connection we refer to Section~\ref{sec:motivation}.

Linear extensions of $P_{k,n}$, i.e., maximal faces of $\NN_{k,n}$, are in bijection with standard tableaux of shape $k \times (n-k)$.
Here, a \defn{tableau} of shape $k \times (n-k)$ is a matrix in $\mathbb{N}^{k \times (n-k)}$ that is weakly increasing along rows from left to right and along columns from bottom to top.
Equivalently, it is a \emph{weakly} order preserving map $P_{k,n}\to \mathbb{N}$.
We denote the set of all tableaux of this shape by~$\Tab_{k,n}$.
A tableau is called \defn{standard} if it contains every integer $1$ through $k(n-k)$ exactly once.
An application of the hook length formula implies that maximal faces of $\NN_{k,n}$ are counted by the $(n-k,k)$\textsuperscript{th} \defn{multidimensional Catalan number}
\[
\Cat_{n-k,k} := \frac{0!\ 1!\ \cdots (k-1)!}{(n-1)!\ (n-2)!\ \cdots (n-k)!}\big(k(n-k)\big)! \ .
\]
These numbers were studied e.g. in~\cite{GorskaPenson,Sulanke}, see as well~\cite[Seq.~A060854]{OEIS}.
Denote the $h$-vector of $\NN_{k,n}$ by $(h_0^{(k,n)},\ldots,h_{n(n-k)}^{(k,n)})$. 
It follows from the connection of $\NN_{k,n}$ to the Hilbert series of the Gra{\ss}mannian, and it was also observed 
in~\cite{Sulanke} going back to P.~A.~MacMahon's study of plane partitions, that its entries are the \defn{multidimensional Narayana numbers}.
We refer to~\cite{Sulanke} for an explicit formula of these numbers, which can be combinatorially defined in terms of 
standard tableaux of shape $k\times (n-k)$ as follows.
Call an integer $a\in[k(n-k)-1]$ a \defn{peak} of a standard tableau $T$ if $a+1$ is placed in a lower row than~$a$. 
Then, $h^{(k,n)}_i$ equals the number of standard tableaux with exactly~$i$ peaks.
This combinatorial interpretation implies in particular that
\begin{align}
h_{i}^{(k,n)} =
  \begin{cases}
    1 &\text{ if } i = k(n-k)-n+1 \\
    0 &\text{ if } i > k(n-k)-n+1
  \end{cases} \label{eq:zeroesattheend}.
\end{align}

\begin{figure}
  \centering
  \begin{tikzpicture}[scale=1]
    \coordinate (1) at (-1,1);
    \coordinate (2) at (1,1);
    \coordinate (3) at (1,-1);
    \coordinate (4) at (-1,-1);
    \coordinate (5) at (-2,0);
    \coordinate (0) at (0,0);

    \fill[fill opacity=0.4] (1) -- (4) -- (5);
    \fill[fill opacity=0.4] (1) -- (4) -- (0);
    \fill[fill opacity=0.4] (3) -- (4) -- (0);
    \fill[fill opacity=0.4] (3) -- (2) -- (0);
    \fill[fill opacity=0.4] (1) -- (2) -- (0);

    \draw[line width=5pt,white] {(1)--(2)--(3)--(4)--(5)--(1)};
    \draw[line width=1pt,black] {(1)--(2)--(3)--(4)--(5)--(1)};
    \draw[line width=5pt,white] {(0)--(1)--(0)--(2)--(0)--(3)--(0)--(4)--(1)};
    \draw[line width=1pt,black] {(0)--(1)--(0)--(2)--(0)--(3)--(0)--(4)--(1)};

    \node[circle,inner sep=1pt,fill=white] at (1) {$14$};
    \node[circle,inner sep=1pt,fill=white] at (2) {$34$};    
    \node[circle,inner sep=1pt,fill=white] at (3) {$23$};    
    \node[circle,inner sep=1pt,fill=white] at (4) {$25$};    
    \node[circle,inner sep=1pt,fill=white] at (5) {$15$};    
    \node[circle,inner sep=1pt,fill=white] at (0) {$24$};    
  \end{tikzpicture}
  \qquad\qquad
  \begin{tikzpicture}[scale=1]
    \coordinate (1) at (-1,1);
    \coordinate (2) at (1,1);
    \coordinate (3) at (1,-1);
    \coordinate (4) at (-1,-1);
    \coordinate (5) at (-2,0);
    \coordinate (0) at (0,0);
    
    \fill[fill opacity=0.4] (1) -- (5) -- (0);
    \fill[fill opacity=0.4] (4) -- (5) -- (0);
    \fill[fill opacity=0.4] (3) -- (4) -- (0);
    \fill[fill opacity=0.4] (3) -- (2) -- (0);
    \fill[fill opacity=0.4] (1) -- (2) -- (0);

    \draw[line width=5pt,white] {(1)--(2)--(3)--(4)--(5)--(1)};
    \draw[line width=1pt,black] {(1)--(2)--(3)--(4)--(5)--(1)};
    \draw[line width=5pt,white] {(0)--(1)--(0)--(2)--(0)--(3)--(0)--(4)--(0)--(5)};
    \draw[line width=1pt,black] {(0)--(1)--(0)--(2)--(0)--(3)--(0)--(4)--(0)--(5)};

    \node[circle,inner sep=1pt,fill=white] at (1) {$14$};
    \node[circle,inner sep=1pt,fill=white] at (2) {$13$};    
    \node[circle,inner sep=1pt,fill=white] at (3) {$35$};    
    \node[circle,inner sep=1pt,fill=white] at (4) {$25$};    
    \node[circle,inner sep=1pt,fill=white] at (5) {$24$};    
    \node[circle,inner sep=3pt,fill=white] at (0) {$\bullet$};    
  \end{tikzpicture}
  \caption{Parts of the nonnesting complex~$\NN_{2,5}$ of the noncrossing complex~$\NC_{2,5}$.}
  \label{fig:52}
\end{figure}
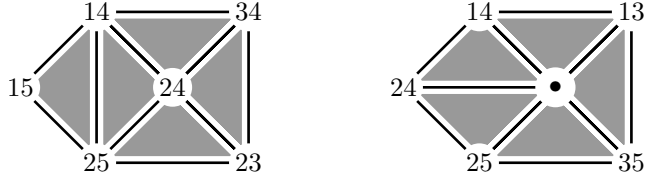

\begin{example}
  For $n=5$ and $k=2$, the vertices of the nonnesting complex $\NN_{2,5}$ are given by $V_{2,5} = \{ 12,13,14,15,23,24,25,34,35,45\}$, and the~$5$ maximal faces are given by the join of the simplex spanned by $\{12,13,35,45\}$ and the~$5$ faces
  $$\big\{ \{14,15,25\}, \{14,24,25\}, \{23,24,25\}, \{23,24,34\}, \{14,24,34\} \big\}.$$
  This subcomplex is shown in Figure~\ref{fig:52} on the left.
\end{example}

\subsection{The noncrossing complex}

The reformulation of the nonnesting complex as the standard triangulation of $\oO_{k,n}$
raises the question whether an analogous construction of a multidimensional noncrossing complex has interesting properties as well.
The main object of study in this paper is  the following slight modification of Definition~\ref{def:nonnesting}, introduced in~\cite{PPS2010}. 

\begin{definition} \label{def:noncrossing}
  Two vectors $I = (i_1,\ldots,i_k)$ and $J = (j_1,\ldots,j_k)$ in $V_{k,n}$ are \defn{noncrossing} if for all 
  indices $a<b$ with $i_\ell = j_\ell$ for $a < \ell < b$, the arcs $(i_a<i_b)$ and $(j_a<j_b)$ do not cross.
  The \defn{(multidimensional) noncrossing complex} $\NC_{k,n}$ is  the flag simplicial complex with vertices 
  $V_{k,n}$ and with edges being the noncrossing pairs of vertices.
\end{definition}

\begin{remark}
The definition in~\cite{PPS2010} allows for the vectors $I$ and $J$ to have different lengths, and restricts to our definition in the case of equal lengths.
We discuss this further in Section~\ref{sec:PPS}.
\end{remark}

\begin{example}
  For $n=5$ and $k=2$, the vertices of the noncrossing complex $\NC_{2,5}$ are again given by $V_{2,5} = \{ 12,13,14,15,23,24,25,34,35,45\}$, and the~$5$ maximal faces are given by the join of the simplex spanned by $\{12,23,34,45,15\}$ and the~$5$ faces
  $$\big\{ \{14,24\}, \{24,25\}, \{13,25\}, \{13,35\}, \{14,35\} \big\}.$$
  The noncrossing complex is shown in Figure~\ref{fig:52} on the right, where the circle indicates the simplex spanned by $\{12,23,34,45,15\}$.
\end{example}

\begin{remark}
  The reader may wonder why in the noncrossing world one requires the noncrossing property only for \emph{some} pairs of coordinates 
  $a<b$, while in the nonnesting world the nonnesting property is required for \emph{all} pairs.
  One answer is that the direct noncrossing analogue of Definition~\ref{def:nonnesting} does not even yield a pure complex.
  But another answer is that it would not make a difference in Definition~\ref{def:nonnesting} to require the condition 
  only for pairs with $i_\ell = j_\ell$ for $a < \ell < b$. All other pairs would automatically be nonnesting, thanks to the 
  following transitivity of nonnestingness: 
  let $a<b<c$ and suppose that the arcs $(i_a<i_b)$ and $(j_a<j_b)$ are nonnesting, and the arcs $(i_b<i_c)$ and $(j_b<j_c)$ are nonnesting as well. 
  Then the arcs $(i_a<i_c)$ and $(j_a<j_c)$ are also nonnesting.
\end{remark}

The main properties of $\NC_{k,n}$  are summarized in the following statement.

\begin{theorem}
  \label{th:main}
  The noncrossing complex $\NC_{k,n}$ is a flag, regular, unimodular and Gorenstein triangulation of the order polytope $\oO_{k,n}$.
  In particular, $\NC_{k,n}$ and $\NN_{k,n}$ have the same $f$- and $h$-vectors.
\end{theorem}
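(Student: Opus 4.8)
The plan is to show that $\NC_{k,n}$ is a triangulation of $\oO_{k,n}$ by exhibiting it as a regular triangulation induced by an explicit weight function, then read off the remaining properties. First I would set up, as in Stanley's treatment of order polytopes, the fact that any triangulation of $\oO_{k,n}$ refining the standard (canonical) subdivision is determined by, on each cell, a choice of linear order extending the coordinate-wise partial order on $V_{k,n}$; equivalently, a simplicial complex on $V_{k,n}$ is a unimodular triangulation of $\oO_{k,n}$ precisely when its maximal faces are maximal chains in some flag structure covering all of $\oO_{k,n}$ with the right volume. The key combinatorial input is that two vertices of a common face of $\NC_{k,n}$ must be comparable in a suitable sense: I would prove that a noncrossing pair $I,J\in V_{k,n}$ is always \emph{comparable} after passing to the ``dual path'' picture of Figure~\ref{fig:poset}, i.e. that the noncrossing condition forces the two monotone lattice paths not to cross transversally, hence one weakly dominates the other over each maximal antichain where they could interact. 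This is the content of the definition's restriction to pairs $a<b$ with $i_\ell=j_\ell$ for $a<\ell<b$: it exactly captures the ``local'' crossings of the two paths.

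Next I would produce the weight function making $\NC_{k,n}$ regular. The natural candidate is a function $w\colon V_{k,n}\to\RR$ of the form $w(I)=\sum_{\ell} f(\ell, i_\ell)$ for a strictly convex $f$, or more precisely a ``staircase-convex'' weight that is minimized on the noncrossing flips; one checks that the lower envelope of the lifted points $\{(\chi_I, w(I)) : I\in V_{k,n}\}$ projects to exactly the cells whose vertex sets are noncrossing families. Concretely, for each cell one verifies a local condition at every interior wall: the four vectors involved in a flip between a noncrossing and a nesting configuration satisfy the correct inequality for $w$. Because $\oO_{k,n}$ is an order polytope, every triangulation using only the lattice points $\chi_I$ is automatically unimodular (all full-dimensional simplices have normalized volume $1$), so once we know the noncrossing maximal faces are the cells of \emph{a} triangulation, unimodularity is free, and flagness holds by construction since $\NC_{k,n}$ is defined as a flag complex. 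The equality of $f$- and $h$-vectors with $\NN_{k,n}$ then follows because any two unimodular triangulations of the same polytope have the same $f$-vector (they triangulate the same volume and, more strongly here, the same canonical subdivision), and $\NN_{k,n}$ is Stanley's standard triangulation.

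For the Gorenstein claim I would argue that $\NC_{k,n}$ is the join of a simplex with a sphere. The simplex part consists of the ``noncrossing'' vectors that are comparable (noncrossing) to \emph{every} element of $V_{k,n}$ — these are precisely the vectors lying on a boundary facet of $\oO_{k,n}$ common to all maximal cells, generalizing the $n$-cycle $\{12,23,\dots,n-1\,n,1n\}$ for $k=2$. After coning off this simplex one is left with a triangulation of a lower-dimensional ball whose boundary complex one must identify; Gorenstein-ness of $\oO_{k,n}$ (which holds because $P_{k,n}$ is a graded poset, so the order polytope has a unimodular Gorenstein structure) combined with the general fact that a unimodular Gorenstein triangulation of a Gorenstein polytope is a join of a simplex and a sphere yields the claim.

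The main obstacle I expect is the second step: verifying that the noncrossing families are exactly the cells of a genuine triangulation, i.e. that they tile $\oO_{k,n}$ without gaps or overlaps. Purity of $\NC_{k,n}$ and the count of maximal faces (the multidimensional Catalan number) are suggestive but not sufficient; one really needs either (a) an explicit flip-connectedness argument showing every maximal noncrossing family can be transformed into a standard tableau / nonnesting one through flips that correspond to bistellar moves across walls of the canonical subdivision, or (b) a direct verification that the candidate weight function $w$ is generic and that its induced regular subdivision has exactly the noncrossing cells. Approach (b) is cleaner if a clean convex $w$ can be found; the risk is that the most natural separable weights $w(I)=\sum_\ell f(\ell,i_\ell)$ do \emph{not} single out the noncrossing complex, in which case one must design $w$ more cleverly (possibly depending on consecutive pairs $(i_\ell, i_{\ell+1})$), and proving genericity and the cell description becomes the technical heart of the argument.
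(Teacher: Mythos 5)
The central step of the theorem --- that the maximal noncrossing families actually tile $\oO_{k,n}$ without gaps or overlaps --- is not proved in your proposal: you correctly flag it as the technical heart, but both routes you sketch are left unexecuted, and the auxiliary claims you do assert are false. Noncrossing vectors need not be comparable in the lattice-path picture: $(1,4)$ and $(2,3)$ nest, hence are noncrossing, yet neither path weakly dominates the other; path comparability is exactly the \emph{nonnesting} condition, so your first-paragraph reduction conflates $\NC_{k,n}$ with $\NN_{k,n}$ (if noncrossing implied comparability, $\NC_{k,n}$ would be a subcomplex of, hence equal to, $\NN_{k,n}$). Likewise it is not true that every triangulation of an order polytope on its vertex set is automatically unimodular: the $3$-cube is the order polytope of a $3$-element antichain and has a vertex triangulation containing a simplex of normalized volume $2$, so ``unimodularity is free'' is a gap. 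The paper's actual key idea is absent from your proposal: Theorem~\ref{thm:uniquemultiset} shows, via an explicit revlex insertion algorithm, that every tableau --- i.e.\ every lattice point of the cone over $\oO^*_{k,n}$ --- has a \emph{unique} representation as a nonnegative integer combination of characteristic vectors $\chi_I$ with noncrossing support. This one statement simultaneously gives that $\NC_{k,n}$ covers $\oO_{k,n}$ without overlaps and that every maximal simplex is unimodular, with no flip-connectedness argument and no genericity analysis of a weight function.

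On the remaining points: your instinct that separable weights $w(I)=\sum_\ell f(\ell,i_\ell)$ will not single out $\NC_{k,n}$ is well founded --- the paper uses the non-separable quadratic weight $w(I)=\sum_{a<b}\alpha_{b-a}\,i_a i_b$ with $1\gg\alpha_1\gg\alpha_2\gg\cdots>0$ --- but the regularity verification there rests on Lemma~\ref{lemma:regular-easy}, which reduces the check to edges (pairs with $\chi_I+\chi_J=\chi_X+\chi_Y$) precisely because the triangulation is already known to be flag and unimodular; without that prior knowledge your ``verify a local condition at every interior wall'' has no rigorous content. Finally, your Gorenstein argument is circular as stated: ``a unimodular Gorenstein triangulation of a Gorenstein polytope is a join of a simplex and a sphere'' is the definition of a Gorenstein triangulation, not a tool for establishing it. What is needed, and what the paper does, is Proposition~\ref{prop:intro}(v): the $n$ cyclic rotations of $(1,\ldots,k)$ cross no vertex, so $\NC_{k,n}$ is the join of an $(n-1)$-simplex with $\NCred_{k,n}$; the latter is a ball or a sphere because it is a link in a triangulation of a polytope, and it is a sphere because the top $h$-entry $h^{(k,n)}_{k(n-k)-n+1}=1$ is nonzero (Corollary~\ref{cor:reducedsphere}). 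Your description of the cone vertices as ``vectors lying on a boundary facet common to all maximal cells'' neither identifies them nor proves the join structure.
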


This statement, of which we give an independent proof, is already contained in~\cite{PPS2010} in the following way. There, the order polytope  $\oO_{k,n}$ appears as the Gelfand-Tsetlin polytope of a particular shape (a rectangle). Theorem 8.1 from~\cite{PPS2010} says that $\NC_{k,n}$ is a regular triangulation of it, and Theorem 8.7 that it is Gorenstein. Unimodularity is mentioned in the proof of Corollary 8.2.

The claim that $\NC_{k,n}$ is ``in some respects nicer'' than $\NN_{k,n}$ is justified by the word ``Gorenstein'' in the statement, which fails for $\NN_{k,n}$. Recall that a \emph{Gorenstein triangulation} of a polytope is one that decomposes as the join of a simplex and a sphere (see Section \ref{sec:orderpolytope} for details on such triangulations). This property is related to the last of the following list
of purely combinatorial properties of $\NC_{k,n}$, which 
generalize to higher~$k$ well known properties of the dual associahedron.

\begin{proposition} \label{prop:intro}
     The complex~$\NC_{k,n}$ has the following properties.
    \begin{enumerate}[(i)] 

        \item The map $a \mapsto n+1-a$ induces an automorphism on $\NC_{k,n}$. \label{eq:reflectionsymmetry}

        \item The map $I \mapsto [n] \setminus I$ induces an isomorphism $\NC_{k,n}\ \tilde\longrightarrow\ \NC_{n-k,n}$.  
          \label{eq:complementarity}

        \item $I,J \in V_{k,n}$ are noncrossing if and only if they are noncrossing when restricting to the symmetric difference 
              $I \triangle J = ( I \cup J ) \setminus ( I \cap J)$. \label{eq:symmetricdifference}

        \item For $b \in [n]$ The restriction of $\NC_{k,n}$ to vertices with $b \in I$ yields $\NC_{k-1,n-1}$. 
              The restriction of $\NC_{k,n}$ to vertices with $b \not\in I$  yields $\NC_{k,n-1}$.
              \label{eq:restriction}

        \item The $n$ vertices in $V_{k,n}$ obtained by cyclic rotations of the vertex $(1,2,\ldots,k) \in V_{k,n}$ do not 
              cross any other vertex in $V_{k,n}$ and
              hence are contained in every maximal face of $\NN_{k,n}$.  
              \label{eq:join}
    \end{enumerate}
\end{proposition}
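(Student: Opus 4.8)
The plan is to verify each of the five items directly from Definition~\ref{def:noncrossing}, exploiting the fact that the crossing condition on a pair of $k$-tuples only involves the arcs $(i_a<i_b)$, $(j_a<j_b)$ with $i_\ell=j_\ell$ strictly between $a$ and $b$. Since $\NC_{k,n}$ is a flag complex, in each case it suffices to check what happens on pairs of vertices (edges), and the statements about faces follow because a flag complex is determined by its $1$-skeleton.

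For~\eqref{eq:reflectionsymmetry}, I would observe that the map $a\mapsto n+1-a$ sends an increasing $k$-tuple $I=(i_1<\cdots<i_k)$ to the increasing $k$-tuple $I^*=(n+1-i_k<\cdots<n+1-i_1)$, i.e.\ it reverses the order of entries and the ground set simultaneously. The key computation is that a pair of arcs $(i_a<i_b)$, $(j_a<j_b)$ with a common ``inner profile'' maps to the pair $(n+1-i_b<n+1-i_a)$, $(n+1-j_b<n+1-j_a)$, and the condition ``$i_\ell=j_\ell$ for $a<\ell<b$'' becomes the analogous condition for the complementary index positions; crossing is preserved because reflecting a crossing picture left-to-right yields a crossing picture. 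For~\eqref{eq:complementarity}, the map $I\mapsto [n]\setminus I$ is a bijection $V_{k,n}\to V_{n-k,n}$; here the natural bridge is the lattice-path / order-filter description from Figure~\ref{fig:poset}: complementation corresponds to transposing the $k\times(n-k)$ grid (swapping the two chains in $P_{k,n}$), and one checks that two monotone paths crossing in one grid correspond to two paths crossing in the transposed grid. Alternatively, and perhaps more cleanly, I would combine~\eqref{eq:complementarity} with~\eqref{eq:symmetricdifference}: reducing to the symmetric difference makes the statement symmetric in ``inside $I$'' versus ``outside $I$''. So I would prove~\eqref{eq:symmetricdifference} first.

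For~\eqref{eq:symmetricdifference}, suppose $a<b$ witness a crossing of $I,J$, so $i_\ell=j_\ell$ for $a<\ell<b$ and, say, $i_a=j_a<i_b<j_b$ (if $i_a<j_a$ the positions $a$, $b$ can be adjusted; one should phrase the crossing condition carefully so that the two ``outer'' endpoints of the arcs are the relevant data). The entries $i_{a+1}=j_{a+1},\dots,i_{b-1}=j_{b-1}$ lie in $I\cap J$, hence are deleted when passing to $I\triangle J$; the four entries $i_a=j_a$, $i_b$, $j_b$ survive (they lie in $I\triangle J$ or are endpoints one must track), and after relabeling positions in $I\cap(I\triangle J)$ and $J\cap(I\triangle J)$ the same four values again witness a crossing, with now an empty or shorter inner profile. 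Conversely a crossing in the restriction obviously lifts to a crossing in $I,J$. The main point is bookkeeping: deleting common entries in a block $a<\ell<b$ shifts indices but preserves the ``equal inner profile'' hypothesis, because deleted positions were equal on both sides. I expect this to be the most delicate item to write cleanly, and it is the natural main obstacle, since the definition of crossing is asymmetric in the indices and one has to be careful that the witnessing pair $(a,b)$ can always be chosen minimal so that the inner block consists entirely of common entries.

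For~\eqref{eq:restriction}, fix $b\in[n]$ and restrict to vertices $I$ with $b\in I$. Deleting $b$ from each such $I$ and relabeling $[n]\setminus\{b\}$ order-isomorphically with $[n-1]$ gives a bijection onto $V_{k-1,n-1}$; since $b$ is a common entry of any two such vertices, the crossing condition is unaffected by removing it (it can only sit in the common inner profile or outside the relevant arcs), so this bijection is an isomorphism of flag complexes $\NC_{k,n}|_{b\in I}\cong\NC_{k-1,n-1}$, and symmetrically for $b\notin I$ (here $b$ never appears, so restriction just forgets a never-used letter) giving $\NC_{k,n-1}$. Finally for~\eqref{eq:join}, let $Z_c=(c,c+1,\dots,c+k-1)$ read cyclically, for $c\in[n]$. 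I would check that $Z_c$ is noncrossing with every $J\in V_{k,n}$: the entries of $Z_c$ form a cyclic interval, so for any positions $a<b$ with the inner profile of $Z_c$ and $J$ agreeing, the arc $(i_a<i_b)$ of $Z_c$ has consecutive-or-wrap endpoints forcing $i_b=i_a+(b-a)$, while $j_b\ge j_a+(b-a)$ with $j_a=i_a$; a short case analysis shows the two arcs cannot cross. Hence each $Z_c$ is joined to the whole complex, so lies in every facet; since all facets of $\NC_{k,n}$ have the same size as facets of $\NN_{k,n}$ by Theorem~\ref{th:main}, and $\NN_{k,n}$ is the order complex of $V_{k,n}$ in which each $Z_c$ (being comparable to everything, as a bottom-like or top-like filter along one chain) lies in every maximal chain, the count is consistent and these $n$ vertices span the simplex factor. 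I would double-check that the $Z_c$ are exactly the cyclic rotations of $(1,2,\dots,k)$ and that there are $n$ distinct ones, completing the proof.
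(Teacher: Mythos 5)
The central gap is in item (iii), which you make the keystone of your plan (you prove it directly and then propose to derive (ii) from it), whereas the paper goes the other way around: it proves (ii) first, from the observation that a crossing between $I,J$ induces a crossing between $[n]\setminus I$ and $[n]\setminus J$, and then obtains (iii) by the two easy reductions ``replace $[n]$ by $I\cup J$'' (harmless, since noncrossingness only depends on the relative order of the entries) and ``remove $I\cap J$'', the latter being exactly complementation inside $I\cup J$ applied via (ii); item (iv) is then a formal consequence of (iii). Your direct bookkeeping argument for (iii), as sketched, does not work. It is not true that the four entries witnessing a crossing of $I,J$ survive in $I\triangle J$ and witness a crossing there: for $I=(1,3,4)$ and $J=(2,4,6)$ the crossing is witnessed at positions $a=1,b=2$ by the arcs $(1<3)$ and $(2<4)$, but the endpoint $4$ lies in $I\cap J$ and is deleted; the restriction $\{(1,3),(2,6)\}$ does cross, but via a different quadruple. (One can check that the two \emph{inner} endpoints $j_a$ and $i_b$ of a crossing $i_a<j_a<i_b<j_b$ always lie in $I\triangle J$, but the outer ones need not, so any repair must re-choose endpoints.) Note also that the configuration you write down, $i_a=j_a<i_b<j_b$, is not a crossing -- crossing requires the strict interleaving $i_a<j_a<i_b<j_b$ -- and the converse direction (``a crossing in the restriction obviously lifts'') is not obvious either, because deleting common entries shifts positions and the hypothesis $i_\ell=j_\ell$ for $a<\ell<b$ is about positions, not values. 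So the delicate point you yourself identify as the main obstacle is precisely what is left unestablished.

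The remaining items are fine in outline: (i) is immediate; your transposed-grid route to (ii) is a legitimate alternative to the paper's complementation observation (though it still needs the check that crossings correspond under transposition), and once either (ii) or (iii) is available the other, together with (iv), follows as you and the paper indicate. For (v), your argument is essentially the paper's (``the entries of a cyclic interval are as close together as possible''), but the formula $i_b=i_a+(b-a)$ only holds when the segment of $Z_c$ between positions $a$ and $b$ does not wrap around; the wrap case needs its own short argument, e.g.\ using that $i_a=a$ for positions in the low block and $i_b=n-(k-b)$ for positions in the high block, which forces $j_a\geq i_a$ and $j_b\leq i_b$ and rules out both crossing configurations. The simplest overall fix is to adopt the paper's order of deduction: prove the complementation statement (ii) directly, then deduce (iii) and (iv) from it.
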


Parts~\eqref{eq:complementarity}  and~\eqref{eq:join} are mentioned in~\cite[Remark 2.7]{PPS2010} and~\cite[Lemma 8.6]{PPS2010}, where the $n$  vertices in~\eqref{eq:join} are called \emph{solid elements}.

\begin{proof}
  Property~\eqref{eq:reflectionsymmetry} is clear from the definition.
  Property~\eqref{eq:complementarity} can be derived from the observation that a crossing between two vertices $I,J \in V_{k,n}$ induces a 
  crossing between $[n]\setminus I$ and $[n]\setminus J$ in $V_{n-k,n}$. Applying this argument twice, we obtain that~$I$ and~$J$ are noncrossing if and 
  only if $[n] \setminus I$ and $[n]\setminus J$ are noncrossing.
  To obtain Property~\eqref{eq:symmetricdifference}, observe that it is clear from the definition that one can always restrict the attention to the 
  situation where the set $[n]$ is replaced by $I \cup J$.
  It then follows with Property~\eqref{eq:complementarity} that one can as well remove $I \cap J$.
  Property~\eqref{eq:restriction} is a consequence of Property~\eqref{eq:symmetricdifference}.
  For Property~\eqref{eq:join},
  let $J = (j_1,\ldots,j_k)$ and let $I = (i_1,\ldots,i_k) = (c,\ldots,c+k)$ for some $1 \leq c \leq n$, with elements considered 
  modulo $n$.
  Since the entries in~$I$ are as `close together' as possible, it is not possible to have 
  two crossing arcs $(i_a < i_b)$ and $(j_a < j_b)$ such that $i_\ell = j_\ell$ for $a < \ell < b$.
\end{proof}

Observe that Properties \eqref{eq:complementarity} and~\eqref{eq:restriction} are natural when considering the relation between $\oO_{k,n}$ and the Gra{\ss}mannian 
(see Section \ref{sec:motivation}) as they reflect the isomorphism $G_{k,n}\cong G_{n-k,n}$ and the embeddings $G_{k-1,n-1}\hookrightarrow G_{k,n}$ and $G_{k,n-1}\hookrightarrow G_{k,n}$.

Properties~\eqref{eq:reflectionsymmetry},~\eqref{eq:complementarity},~\eqref{eq:symmetricdifference}, and~\eqref{eq:restriction} also hold for the 
nonnesting complex $\NN_{k,n}$, but Property~\eqref{eq:join} fails.
This property implies
that $\NC_{k,n}$ is the join of an $(n-1)$-dimensional simplex and a complex~$\NCred_{k,n}$ of dimension $k(n-k)-n$ which has the same $h$-vector as $\NC_{k,n}$.
Note that, for $k=2$,~$\NCred_{k,n}$ reduces to the (dual) associahedron~$\NCred_n$.
Thus the following corollary (which implies that $\NCred_{k,n}$ is a Gorenstein triangulation of $\oO_{k,n}$)
together with the discussion in Section \ref{sec:motivation} justifies that we call
the dual complex of~$\NCred_{k,n}$ the \defn{Gra{\ss}mann associahedron}.

\begin{corollary} \label{cor:reducedsphere}
  $\NCred_{k,n}$ is a flag polytopal sphere of dimension $k(n-k)-n$.
  Moreover, Properties~\eqref{eq:reflectionsymmetry},~\eqref{eq:complementarity}, and~\eqref{eq:restriction} in 
  Proposition~\ref{prop:intro} also hold for $\NCred_{k,n}$.
\end{corollary}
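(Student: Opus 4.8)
The plan is to read the corollary off from Theorem~\ref{th:main} together with Property~\eqref{eq:join} in Proposition~\ref{prop:intro}, using the structure theory of Gorenstein and regular triangulations recalled in Section~\ref{sec:orderpolytope}. By Property~\eqref{eq:join}, the $n$ solid vertices (the cyclic rotations of $(1,\dots,k)$) lie in every facet of $\NC_{k,n}$; since $\NC_{k,n}$ is flag by Theorem~\ref{th:main}, they span a face, an $(n-1)$-simplex $\Delta_{\mathrm{solid}}$, and $\NC_{k,n}=\Delta_{\mathrm{solid}}*\NCred_{k,n}$ with $\NCred_{k,n}$ the full subcomplex induced on the non-solid vertices; equivalently $\NCred_{k,n}=\operatorname{lk}_{\NC_{k,n}}(\Delta_{\mathrm{solid}})$. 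Flagness of $\NCred_{k,n}$ is then immediate, a full subcomplex of a flag complex being flag.

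For the sphere assertion I would first pin down that $\Delta_{\mathrm{solid}}$ is \emph{exactly} the cone-point simplex of the Gorenstein decomposition of $\NC_{k,n}$. By Theorem~\ref{th:main}, $\NC_{k,n}=\Sigma*S$ with $\Sigma$ a simplex and $S$ a sphere, $\Sigma$ being spanned by the vertices contained in every facet; by Property~\eqref{eq:join}, $\Delta_{\mathrm{solid}}\subseteq\Sigma$, so $\NCred_{k,n}$ equals the join of $S$ with the residual simplex on $V(\Sigma)\setminus V(\Delta_{\mathrm{solid}})$. If that residual simplex were non-empty, $\NCred_{k,n}$ would be a cone, and then the top entry of its $h$-vector --- which, $\NCred_{k,n}$ having the same $h$-vector as $\NC_{k,n}$ and dimension $k(n-k)-n$, equals $h^{(k,n)}_{k(n-k)-n+1}=1$ by~\eqref{eq:zeroesattheend} --- would vanish, a contradiction, since the top $h$-entry of a cone is $0$. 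Hence $\Sigma=\Delta_{\mathrm{solid}}$ and $\NCred_{k,n}=S$ is a sphere of dimension $k(n-k)-n$. Polytopality then comes from regularity: $\NC_{k,n}$ is a regular triangulation (Theorem~\ref{th:main}), links of faces of regular triangulations are regular, so $S=\operatorname{lk}_{\NC_{k,n}}(\Delta_{\mathrm{solid}})$ is a regular --- hence polytopal --- sphere (Section~\ref{sec:orderpolytope}).

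For the inherited properties, the plan is to check compatibility of the maps of Proposition~\ref{prop:intro} with the decomposition $\NC_{k,n}=\Delta_{\mathrm{solid}}*\NCred_{k,n}$. Properties~\eqref{eq:reflectionsymmetry} and~\eqref{eq:complementarity} are immediate: $a\mapsto n+1-a$ sends a cyclic interval of $[n]$ to a cyclic interval, and $I\mapsto[n]\setminus I$ sends a cyclic $k$-interval to the complementary cyclic $(n-k)$-interval; thus both carry solid vertices to solid vertices and hence restrict to an automorphism of $\NCred_{k,n}$, respectively an isomorphism $\NCred_{k,n}\xrightarrow{\ \sim\ }\NCred_{n-k,n}$.

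Property~\eqref{eq:restriction} is the delicate point and the one I expect to be the main obstacle. The restriction maps of Proposition~\ref{prop:intro}\eqref{eq:restriction} do \emph{not} preserve solidity --- deleting an element from a non-cyclically-consecutive $k$-set can yield a cyclically consecutive one --- so the restriction of $\NCred_{k,n}$ to $\{b\in I\}$ (resp.\ $\{b\notin I\}$) is not literally $\NCred_{k-1,n-1}$ (resp.\ $\NCred_{k,n-1}$). The plan is to determine the precise image of $\Delta_{\mathrm{solid}}$ under these maps and to show that, after coning off the simplex on the ``newly solid'' vertices, the restriction is isomorphic to $\NCred_{k-1,n-1}$ (resp.\ $\NCred_{k,n-1}$) --- equivalently, that the reduced (sphere) factor of the restricted complex is the lower-parameter $\NCred$. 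This bookkeeping, together with fixing the exact form of the statement it is meant to establish, is the crux of the proof.
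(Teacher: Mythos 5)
Your join decomposition $\NC_{k,n}=\Delta_{\mathrm{solid}}*\NCred_{k,n}$ with $\NCred_{k,n}=\operatorname{lk}_{\NC_{k,n}}(\Delta_{\mathrm{solid}})$, the flagness and polytopality arguments, and the treatment of properties~\eqref{eq:reflectionsymmetry} and~\eqref{eq:complementarity} coincide with the paper's proof. Where you diverge is the sphere step: you invoke the Gorenstein clause of Theorem~\ref{th:main} (a decomposition $\Sigma*S$) and then use $h^{(k,n)}_{k(n-k)-n+1}=1$ to force $\Sigma=\Delta_{\mathrm{solid}}$. Inside this paper that clause has no proof independent of the corollary: the join-of-simplex-and-sphere statement is precisely what this corollary together with Proposition~\ref{prop:intro}\eqref{eq:join} is meant to deliver, so your route is circular unless you import Theorem~8.7 of~\cite{PPS2010}. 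The paper instead observes that $\NCred_{k,n}$, being the link of an $(n-1)$-simplex in a triangulation of the $k(n-k)$-dimensional polytope $\oO_{k,n}$, is a ball or a sphere of dimension $k(n-k)-n$, and then uses the same fact $h^{(k,n)}_{k(n-k)-n+1}=1$ to exclude the ball; your $h$-vector argument is exactly this one, just aimed at the wrong target. (Your identification of $\Sigma$ with the vertices lying in every facet also silently uses that a sphere is not a cone.)

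The genuine gap is property~\eqref{eq:restriction}, which you diagnose correctly but then leave unproven. You are right that the naive reading fails: for $k=2$, $n=5$, $b=1$ the restriction of the $5$-cycle $\NCred_{2,5}$ to vertices containing $1$ is the edge $\{13,14\}$, not $\NCred_{1,4}$, and the paper's one-line justification (that the three operations preserve the vertex set of~\eqref{eq:join}) is only literally accurate for~\eqref{eq:reflectionsymmetry} and~\eqref{eq:complementarity}. But the bookkeeping you defer as ``the crux'' is two lines, not an obstacle: by Proposition~\ref{prop:intro}\eqref{eq:restriction} the restriction of $\NC_{k,n}$ to $\{I: b\in I\}$ is $\NC_{k-1,n-1}=\Delta'_{\mathrm{solid}}*\NCred_{k-1,n-1}$; an induced subcomplex of a join is the join of the induced subcomplexes; and the solid vertices of $V_{k,n}$ containing $b$ map to solid vertices of $V_{k-1,n-1}$. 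Hence restricting further to the non-solid vertices of $V_{k,n}$ yields exactly the join of $\NCred_{k-1,n-1}$ with the simplex on the ``newly solid'' vertices, which is the sense in which the property holds for $\NCred_{k,n}$ (and likewise for the case $b\notin I$ with $\NCred_{k,n-1}$). Carrying this out, rather than flagging it, is all that is missing.
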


Observe that although~\cite{PPS2010} show that $\NCred_{k,n}$ is a sphere (Lemma 8.6 and Theorem 8.7), polytopality of this sphere is a special case of  their Conjecture 8.10. The following arguments, applied to their Theorems 8.1 and 8.7 instead of our Theorem~\ref{th:main} and Proposition~\ref{prop:intro}\eqref{eq:join}, prove that conjecture in full generality.

\begin{proof}
  $\NCred_{k,n}$ is clearly a sphere or a ball of dimension $k(n-k)-n$, since it is the link of an $(n-1)$-simplex in the 
  triangulation $\NC_{k,n}$ of the $k(n-k)$-dimensional polytope $\oO_{k,n}$. 
  Since $h^{k,n}_{k(n-k)-n+1}=1$ it must be a sphere.
  Polytopality follows from regularity of $\NC_{k,n}$ as a triangulation of $\oO_{k,n}$ and flagness is preserved under 
  taking links. The three operations in the proposition are also preserved 
  since they leave the set of vertices described in~\eqref{eq:join} invariant.
\end{proof}

In particular, the Gra{\ss}mann associahedron can be realized as a simple polytope of dimension $k(n-k)-n+1 = (k-1)(n-k-1)$.

\begin{remark}
\label{rem:cyclicsymmetry}
  Proposition~\ref{prop:intro}\eqref{eq:reflectionsymmetry} says that $\NC_{k,n}$ possesses
  the reflection symmetry present in the associahedron $\NCred_{n}$. 
  Of course, another symmetry of $\NCred_{n}$ comes from the cyclic rotation $i \mapsto i+1$ 
  (considered as remainders $1, \ldots, n$ modulo $n$). That symmetry \emph{does not} carry over to $\NC_{k,n}$ for $k\ge 3$.
  In fact, such a cyclic symmetry \emph{cannot} carry over to the general situation since no flag complex 
  on the set of vertices $V_{3,6}$
  that has the $h$-vector of $\NN_{3,6}$ can be invariant under cyclic rotation.
  To see this, observe that such a complex would have~$155$ edges and~$35$ nonedges. In particular, there should be in 
  $\binom{V_{3,6}}{2}$ at least two rotational orbits of size \emph{not} a multiple of three 
  (one orbit of edges and one orbit of nonedges).
  But an orbit whose size is not divisible by three must have all its elements fixed by the order three rotation $i\mapsto i+2$, and the only 
  element of $\binom{V_{3,6}}{2}$ fixed by this rotation turns out to be
  $\{135, 246\}$.
\end{remark}

\subsection{Motivation: the Hilbert series of the Pl\"ucker embedding}
\label{sec:motivation}

Besides its well behaved combinatorial properties, our main motivation for studying the noncrossing complex comes from the 
connection between the order polytope $\oO_{k,n}$, initial ideals of the ideal of Pl\"ucker relations, and Hilbert series of Gra\ss mannians.
We refer to~\cite{Sturmfels1996},~\cite{GonciuleaLakshmibai} and~\cite{Hibi} for more details of this connection.

Let $G_{k,n}$ denote the \defn{Gra\ss mannian} of $k$-dimensional linear subspaces in $\CC^{n}$, and let~$L_{k,n}$ be the defining ideal of $G_{k,n}$  in its Pl\"ucker embedding.
Thus,~$L_{k,n}$ is the homogeneous ideal in the polynomial ring
$$T_{k,n} = \CC[x_{i_1,\ldots, i_{k}}~:~1 \leq i_1 < \cdots < i_{k} \leq n]$$
generated by the Pl\"ucker relations.
It follows from work of B.~Sturmfels~\cite{Sturmfels1996} that the Stanley-Reisner ideals of all regular unimodular triangulations of $\oO_{k,n}$ are squarefree initial ideals of $L_{k,n}$.
Indeed, let~$M_{k,n}$ be the ideal in the polynomial ring with variables $\{x_F: F$ filter of $P_{k,n}\}$ generated by the binomials $x_{E}x_{F} - x_{E \cap F}x_{E \cup F}$ for all choices of order ideals~$E$ and~$F$ in~$P_{k,n}$.
The ideal~$M_{k,n}$ is known as the Hibi ideal of the poset~$P_{k,n}$, or the Ehrhart ideal of the polytope $\oO_{k,n}$.
In~\cite[Prop. 11.10, Cor. 8.9]{Sturmfels1996} it is shown that~$M_{k,n}$ appears as an initial ideal of~$L_{k,n}$.
In turn, it follows from~\cite[Ch. 8]{Sturmfels1996} that there is a one to one correspondence between regular unimodular triangulations of $\oO_{k,n}$ and squarefree monomial initial ideals of~$M_{k,n}$.
This correspondence sends a particular regular unimodular triangulation to its Stanley-Reisner ideal.

\begin{itemize}
\item The regular, unimodular, flag triangulation $\NN_{k,n}$ of $\oO_{k,n}$ leads to a squarefree monomial initial ideal of $M_{k,n}$ 
  studied by T.~Hibi~\cite{Hibi}.
\item The regular, unimodular, flag triangulation $\NC_{k,n}$ provides a new initial ideal with particularly nice properties and leads to 
  new insight in the Hilbert series of the coordinate ring $A_{k,n} = T_{k,n} \big/ L_{k,n}$.
\end{itemize}

From the relation between initial ideals and unimodular triangulations stated above it follows that this Hilbert series is given by 
$$
  \Hilb_{A_{k,n}}(t) = H(t) \big/(1-t)^{k(n-k)+1},
$$
where 
$
  H(t) = h_{0}^{(k,n)} + h_{1}^{(k,n)} t + \cdots + h_{k(n-k)}^{(k,n)} t^{k(n-k)}
$
is the $h$-polynomial of any regular unimodular triangulation corresponding of $\oO_{k,n}$. In particular, its coefficients are the multidimensional Narayana numbers.

\medskip

In the following, let~$\Delta$ be a simplicial complex whose Stanley-Reisner ideal $I_\Delta$ appears as an initial ideal of~$L_{k,n}$.
Then the following properties are desirable for~$\Delta$:
\begin{itemize}

  \item It follows from~\eqref{eq:zeroesattheend} that there are at most~$n$ variables that do not appear in the set of generators of~$I_\Delta$. 
      Equivalently, if~$\Delta$ decomposes into $\Delta = 2^V * \Delta'$ where $2^V$ is the full simplex spanned by~$V$, then $\# V \leq n$.
      Thus the `most factorizable' complex~$\Delta$ should be a join over a simplex spanned by~$n$ vertices.

  \item The fact that $A_{k,n}$ is Gorenstein should be reflected in $\Delta$. Thus we desire that $\Delta$ is the join of a simplex with a 
      triangulation of a (homology) sphere of the appropriate dimension or, even better, 
      the boundary complex of a simplicial polytope, which would then deserve the name \emph{(dual) Gra\ss mann associahedron}.

  \item Since $A_{k,n}$ has a quadratic Gr\"obner basis, it is Koszul. Hence, one could hope that $I_{\Delta}$ is generated by quadratic monomials, 
      or, equivalently, that~$\Delta$ is flag.

  \item One could hope that~$\Delta$ reflects the duality between $G_{k,n}$ and $G_{n-k,n}$,
      as well as the embeddings $G_{k-1,n-1}\hookrightarrow G_{k,n}$ and $G_{k,n-1}\hookrightarrow G_{k,n}$.

\end{itemize}

Theorem~\ref{th:main}, Proposition~\ref{prop:intro}, and Corollary~\ref{cor:reducedsphere} say that the noncrossing complex $\Delta~=~\NC_{k,n}$ fulfills all these properties. 

\medskip

Note that in this algebraic framework the result of Theorem~\ref{thm:uniquemultiset} translates into a statement about standard monomials 
in $T_{k,n}/M_{k,n}$ (the Hibi ring of~$P_{k,n}$, or the Ehrhart ring of~$\oO_{k,n}$).
Let $\preceq$ be a term order for $T_{k,n}$ and suppose the corresponding initial ideal of $M_{k,n}$ is squarefree (and monomial). Equivalently, by Sturmfels' results, 
the initial ideal comes from a unimodular triangulation of~$\oO_{k,n}$.
Tableaux of shape $k \times (n-k)$ are nothing but the integer points in dilations of $\oO_{k,n}$ (see Lemma~\ref{lemma:tableaux-as-lattice-points}) and hence they index standard monomials with respect to $\preceq$.
More precisely, tableaux in the $r$\textsuperscript{th} dilation of~$\oO_{k,n}$ correspond to standard monomials of degree $r$.%
\footnote{Observe that there is a certain ambiguity here. Since~$\oO_{k,n}$ contains the origin, a point in its $r$\textsuperscript{th} dilation lies also in the $s$\textsuperscript{th} dilation, 
for any $s\ge r$. This reflects the fact that multiplying by the generator corresponding to the vertex $(n-k+1,\ldots, n)$ of $\oO_{k,n}$ has no effect in the tableaux and is the reason 
why $V_{k,n}^*= V_{k,n}\setminus\{(n-k+1,\ldots, n)\}$ appears in Theorem~\ref{thm:uniquemultiset} instead of $V_{k,n}$.}
By assumption the initial ideal of $M_{k,n}$ with respect to $\preceq$ is squarefree. Hence there is a simplicial complex $\Delta$ such that the initial ideal of $M_{k,n}$ is the Stanley-Reisner ideal of $\Delta$ 
and consequently standard monomials for $\preceq$ are the monomials whose support is a face in $\Delta$. 
Since each standard monomial is in a unique way a product of the variables, which are in bijection to the vertices of~$\oO_{k,n}$ or to $V_{k,n}$, 
standard monomials of degree $r$ are identified with multisets of $r$ elements from $V_{k,n}$ whose support lies $\Delta$. Thus combinatorially we get an identification of tableaux and multisets. 
In this perspective Theorem~\ref{thm:uniquemultiset} provides this identification for $\Delta = \NN_{k,n}$ and $\Delta = \NC_{k,n}$ and the corresponding term orders.

Since $M_{k,n}$ is an initial ideal of $L_{k,n}$, standard monomials for $\preceq$ are also
standard monomials of a Gr\"obner basis of $L_{k,n}$, which links our results to standard monomial theory (see~\cite{LakshmibaiRaghavan}) for Schubert 
varieties. Among other aspects, this theory deals with straightening rules for products of standard monomials in the coordinate rings. 
For $\NN_{k,n}$ we are in the classical standard monomial theory of the Gra\ss mann variety. 
It would be interesting to develop straightening laws for our new set of standard monomials corresponding to $\NC_{k,n}$.

\subsection{Relation to previous work}

\subsubsection{Petersen-Pylyavskyy-Speyer's noncrossing complex} 
\label{sec:PPS}
  Some of the main results from this paper were previously proved by Petersen, Pylyavskyy and Speyer in~\cite{PPS2010} in a more general context. Let $V_n$ denote the set of all subsets of $[n]$, which can be thought of as the disjoint union of $V_{k,n}$ for all $k\in [0,n]$. Petersen et al.~then define a noncrossing relation among elements of $V_n$ and consider, for each subset $L\subset [n]$, the flag complex 
  $\mm^{(nc)}_L$
  of non-crossing vectors whose length belongs to $L$. In particular, $\mm^{(nc)}_{\{k\}}$ is exactly equal to the noncrossing complex $\NCred_{k,n}$ considered in this paper.
  
  The main result of~\cite{PPS2010}, as was already mentioned, is the generalization of Theorem~\ref{th:main} to arbitrary $L$, by changing the order polytope $\oO_{k,n}$ to the more general Gelfand-Tsetlin polytopes of shape $L$. 
  
  Our methods of proof, however, are different, and, as we think, of independent interest. 
  Our main innovation is the explicit relation of facets of the non-crossing complex and tableaux that we set up in Section~\ref{sec:combinatorics}.
  This has several algorithmic applications, analogous to the \emph{driving rules} in~\cite{PPS2010}:
  \begin{enumerate}
  \item Theorem~\ref{thm:uniquemultiset} (or, rather, its proof) contains a fast algorithm for \emph{point location} in $\NC_{k,n}$: 
  given a point $T$ in the order polytope $\oO_{k,n}$, the algorithm outputs the minimal face of $\NC_{k,n}$ containing $T$ (the \emph{carrier} of $T$).
  \item The ``pushing of bars'' procedure described in ?? gives an efficient algorithm to construct the non-crossing complex $\NC_{k,n}$ or the star of any individual face in it. Efficient here means ``polynomial in the output size''.
  \end{enumerate}
   As a by-product of the second item above, we have a natural way to give directions to the  edges in the dual graph of the non-crossing complex. In Section~\ref{sec:GrassmannTamari} we show that these directions make the graph acyclic which, in particular, allows us to define a poset structure on the facets of $\NC_{k,n}$. We call this the Gra\ss{}mann Tamari poset since it generalizes the classical Tamari poset, and conjecture it to be a lattice.

  \subsubsection{Pylyavskyy's noncrossing tableaux} 
  In~\cite{Pyl2009}, P.~Pylyavskyy introduces and studies what he calls \emph{noncrossing tableaux}, showing that they are equinumerous with standard tableaux, hence with facets of $\NC_{k,n}$.
  The construction therein does not seem to be directly linked to the multidimensional noncrossing complex, as already noted in~\cite{PPS2010}.
  For example, Pylyavskyy's noncrossing tableau are not in general monotone along columns, while the tableaux that we biject to maximal faces of $\NC_{k,n}$ in 
  Section~\ref{sec:decomposition} are strictly monotone along rows and columns.

\subsubsection{Weakly separable sets} 
  Closely related to our complex is the notion of \emph{weakly separable} subsets of~$[n]$, introduced by B.~Leclerc and A.~Zelevinsky in~\cite{LZ1998} in the context of quasi-commuting families of quantum {P}l\"ucker coordinates.
  Restricted to subsets of the same size~$k$, which is the case of interest to us, the definition is that two $k$-subsets~$X,Y\subset [n]$ are \defn{weakly separable} if, when considered as subsets of vertices in an $n$-gon,  the convex hulls of $X\setminus Y$ and $Y\setminus X$ are disjoint. 
  The flag complex~$\WS_{k,n}$ of weakly separable $k$-subsets of~$[n]$ was studied by J.~S.~Scott in~\cite{Sco2005,Sco2006},
  who conjectured that~$\WS_{k,n}$ is pure of dimension $k(n-k)$, and that it is strongly connected (that is, its dual graph is connected).
  Both conjectures were shown to hold by S.~Oh, A.~Postnikov and D.~Speyer~\cite{OPS2011}, for the first see also V.~I.~Danilov, A.~V.~Karzanov, and G.~A.~Koshevoy~\cite[Prop.~5.9]{DKK2010}.

  It is not hard to see that~$\WS_{k,n}$ is a subcomplex of $\NC_{k,n}$ and it is trivial to observe that $\WS_{k,n}$ is invariant under cyclic (or, more strongly, dihedral) symmetry.
  As we will see in  Section~\ref{sec:separated}, it turns out that the weak separation graph is the intersection of all cyclic shifts of the noncrossingness graph.
  Since flagness is preserved by intersection, the same happens for the complexes.
  We expect our approach to the noncrossing complex~$\NC_{k,n}$ to also shed further light on the weak separability complex~$\WS_{k,n}$.
  In particular, we hope to better understand the intriguing conjecture about the topology of $\WS_{k,n}$ and its generalizations that can be found a 
  preliminary version~\cite{HessHirsch2011} of~\cite{HessHirsch2013}.

\subsubsection{Triangulations of order polytopes}
  \label{sec:orderpolytope}
  
  Essential for most of our main conclusions is the fact that $\NN_{k,n}$ and $\NC_{k,n}$ are triangulations of an order polytope. 
  We recall some basic facts about order polytopes and then give relations to known results about triangulations of order
  polytopes or more general integer polytopes.
  Let~$P$ be a finite poset. The \defn{order polytope} of $P$, introduced by R.~Stanley~\cite{Sta1986}, is given by
  \begin{align*}
    \oO(P) &:= \big\{(x_a)_{a\in P} \in [0,1]^P : x_a\le x_b\ \mbox{~for all~} a<_{P} b \big\}.\\[3pt]
           &\phantom{:}= \conv\big\{ \chi_F ~:~ F \text{ order filter of } P \big\},
  \end{align*}
  where~$\chi_F \in \mathbb{N}^{P}$ is the characteristic vector of the order filter~$F$ of~$P$.
  The order polytope is a $0/1$-polytope of dimension~$|P|$.
  It has a somehow canonical triangulation $\Delta(P)$, see again~\cite[Sec. 5]{Sta1986}, that we call the 
  \defn{standard triangulation}. It is also sometimes called the \defn{staircase triangulation} of $\oO(P)$. It can be described in the 
  following equivalent ways.
  \begin{itemize}
    \item Each of the $|P|!$ monotone paths from $(0,\ldots,0)$ to $(1,\ldots,1)$ in the unit cube $[0,1]^P$ defines a full-dimensional 
     simplex. These simplices triangulate the cube, and the subset of them whose vertices lie in $\oO(P)$ triangulate $\oO(P)$.

   \item Each such monotone path is the Hasse diagram of a \defn{linear extension} of $P$. Thus, $\Delta(P)$ is the subdivision of $\oO(P)$ 
     into the order polytopes of the linear extensions of $P$.

   \item Under the correspondence between vertices of $\oO(P)$ and filters of $P$, linear extensions correspond to maximal containment chains of filters. 
     That is, $\Delta(P)$ is the \emph{order complex} of the lattice of filters of $P$, where the order complex 
     of a poset is the flag simplicial complex obtained from the comparability graph of $P$.

   \item Last but not least, the complex $\Delta(P)$ can be realized as a the partition of $\oO(P)$ obtained by slicing it by all the 
     hyperplanes of the form $\{x_a=x_b\}$, $a,b\in P$. Of course, these hyperplanes only slice $\oO(P)$ if $a $ and $b$ were incomparable, 
     in which case the two sides of the hyperplane correspond to the two possible relative orders of $a$ and $b$ in a linear extension of $P$.
  \end{itemize}

  The third (and also the fourth) description of $\Delta(P)$ shows that it is a flag complex. Any of the first three shows that it is 
  unimodular (all simplices have euclidean volume $1/|P|)$, the minimal possible volume of a full-dimensional lattice simplex in $\RR^P$). 
  Finally, the last description implies it to be regular. 

  In ~\cite{RW2005} V.~Reiner and V.~Welker construct, for every graded poset~$P$ of rank~$n$, a regular unimodular triangulation $\Gamma(P)$ of $\oO(P)$
  that decomposes as $2^{W} * \widetilde{\Gamma}(P)$ for a simplex $2^{W}$ with $n$ vertices and a polytopal sphere $\widetilde{\Gamma}(P)$. Since this is
  a Gorenstein simplicial complex we call it a Gorenstein triangulation.
  The existence of Gorenstein triangulations was later verified by C.~A.~Athanasiadis~\cite{Ath2005} for a larger geometrically
  defined class of polytopes and then by W.~Bruns and T.~R\"omer~\cite{BrunsRomer} for the even larger class of all 
  \emph{Gorenstein polytopes} admitting a regular unimodular triangulation. A Gorenstein polytope, here, is one whose unimodular triangulations have a symmetric $h$-vector, and it was first shown in~\cite{Hibi}
  that an order polytope $\oO(P)$ is Gorenstein if and only if $P$ is graded. The survey article by~\cite{ConcaHostenThomas} puts the
  existence of Gorenstein triangulations in an algebraic perspective. 

  In particular, any of~\cite{RW2005},~\cite{Ath2005},~\cite{BrunsRomer} shows the existence of a regular, unimodular, Gorenstein triangulation of $\oO(P_{k,n})$. 
  This implies that the multidimensional Narayana numbers are the face numbers of a 
  simplicial polytope, and thus satisfy all conditions of the $g$-theorem.
  It can be checked that the triangulation of~\cite{RW2005} is not flag for $P_{k,n}$ and neither the results from~\cite{Ath2005} nor from~\cite{BrunsRomer} can
  guarantee flagness of the triangulation. 
  The construction in~\cite{PPS2010} and the present paper does. In particular, the 
  multidimensional Narayana numbers satisfy all inequalities valid for $h$-vectors of flag simplicial polytopes. This includes the positivity of 
  the $\gamma$-vector and as a special case the Charney-Davis inequalities. Note, that the latter implication are know to hold by~\cite{Branden2004}, where 
  they are shown to hold for all triangulations of order polytopes of graded posets.
  Also, it was pointed out by C.~A.~Athanasiadis to the authors of~\cite{RW2005} that the Gorenstein triangulation of~$\oO_{2,n}$ obtained from their construction is 
  not isomorphic to a dual associahedron
  To our best knowledge, neither the construction from~\cite{Ath2005} nor from~\cite{BrunsRomer} can be used to obtain such a triangulation.
  Thus, the $\NC_{k,n}$ from~\cite{PPS2010} studies in this paper appears to be more suited for a combinatorial analysis, and more closely related to Gra\ss mannians, 
  than these previous constructions.

  \section{Combinatorics of the noncrossing complex}
\label{sec:combinatorics}

This section is devoted to the combinatorics of the noncrossing complex and its close relationship with the combinatorics of the nonnesting complex.
We study nonnesting and noncrossing decompositions of tableaux, which will later be the main tool in Section~\ref{sec:geometry} to understand the geometry of these complexes.
We then deduce several further combinatorial properties of these complexes directly from the tableau decompositions.
In the final part of this section, we define and study the Gra{\ss}mann-Tamari order on maximal faces of the noncrossing complex.

\subsection{The nonnesting and noncrossing decompositions of a tableau}
\label{sec:decomposition}

As defined in the introduction, a \emph{tableau} of shape $k \times (n-k)$ is a matrix $T\in \mathbb{N}^{k\times(n-k)}$
that is weakly increasing along rows from left to right and along columns from bottom to top.
Recall also that we denote the set of all tableaux of shape $k \times (n-k)$ by~$\Tab_{k,n}$.
We still consider rows as labeled from top to bottom (i.e., the top row is the first row).
This unusual choice makes tableaux of zeros and ones correspond to vectors in $V_{k,n}$.
For each weakly increasing vector $(b_1,\ldots,b_k)\in [0,n-k]^k$, the tableau having as its $a$\textsuperscript{th} row $b_a$ zeroes followed by $n-k-b_a$ ones corresponds to the increasing vector $I = (b_1+1,\ldots,b_k+k)\in V_{k,n}$ via the bijection sending $I \in V_{k,n}$ to its characteristic vector~$\chi_I \in \mathbb{N}^{P_{k,n}}$.

\medskip

We now show how to go from a \emph{multiset} of vectors in $V_{k,n}$ to a tableau, and vice versa.
The geometric interpretation of tableaux as integer points in the cone spanned by the order polytope $\oO_{k,n}$ as discussed in Section~\ref{sec:geometry} (see in particular Lemma~\ref{lemma:tableaux-as-lattice-points}) will then lead to a proof that the nonnesting and the noncrossing complexes triangulate~$\oO_{k,n}$.

\medskip

Let~$L$ be a multiset of~$\ell$ vectors $(i_{1j},\ldots,i_{kj}) \in V_{k,n}$ ($1 \leq j \leq \ell$).
The \defn{summing tableau}~$T = (t_{ab})$ of the multiset~$L$ is the $k \times (n-k)$-matrix
\[
  t_{ab} = \# \big\{ j \in [\ell]\ :\ i_{aj} \leq b+a-1 \big\}.
\]

Note that if $L=\{I\}$ is a single vector, then the summing tableau has only zeroes and ones and coincides
with $\chi_I$, the characteristic vector of a filter in $P_{k,n}$.
The following lemma can be seen as a motivation for the definition of the summing tableau, and is a direct consequence thereof.

\begin{lemma}
\label{lem:summingtableau}
  The summing tableau~$T$ of a multiset~$L$ of vectors in $V_{k,n}$ equals
  $$
  T = \sum_{I \in L} \chi_I \quad\in \mathbb{N}^{P_{k,n}}.
  $$
  In particular,~$T$ is a weakly order preserving map from $P_{k,n}$ to the nonnegative integers and thus a tableau in~$\Tab_{k,n}$.
\end{lemma}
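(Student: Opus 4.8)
The plan is to verify the stated equality $T = \sum_{I\in L}\chi_I$ entrywise by unwinding the definition of the characteristic vector $\chi_I$ under the identification, set up in the introduction, between $V_{k,n}$, order filters of $P_{k,n}$, and monotone lattice paths in the dual grid. Recall that $P_{k,n}$ is the product of a $k$-chain and an $(n-k)$-chain, so its elements can be indexed by pairs $(a,b)$ with $1\le a\le k$ and $1\le b\le n-k$, and an element of $V_{k,n}$, written as an increasing tuple $I=(i_1,\ldots,i_k)$, corresponds via the path bijection to the filter $F_I=\{(a,b): i_a > b+a-1\}$ (equivalently, the $a$\textsuperscript{th} row of $\chi_I$ has $i_a - a$ zeroes followed by ones, matching the description of $0/1$-tableaux at the start of Section~\ref{sec:decomposition}). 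Thus the $(a,b)$ entry of $\chi_I$ is $1$ if $i_a \le b+a-1$ and $0$ otherwise, i.e.\ it equals $\#\{j : i_a \le b+a-1\}$ when $L=\{I\}$ is a singleton.

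First I would record precisely this description of $\chi_I$, citing the bijection between $V_{k,n}$ and filters in $P_{k,n}$ recalled around Figure~\ref{fig:poset} and the $0/1$-tableau normalization at the beginning of Section~\ref{sec:decomposition}. Then, summing over the multiset $L=\{(i_{1j},\ldots,i_{kj}) : 1\le j\le \ell\}$, the $(a,b)$ entry of $\sum_{I\in L}\chi_I$ is $\sum_{j=1}^\ell [\,i_{aj}\le b+a-1\,] = \#\{j\in[\ell] : i_{aj}\le b+a-1\}$, which is exactly the defining formula for $t_{ab}$. This establishes $T=\sum_{I\in L}\chi_I$.

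For the final assertion, each $\chi_I$ is the characteristic vector of an order filter of $P_{k,n}$, hence a (weakly) order-preserving map $P_{k,n}\to\{0,1\}\subseteq\mathbb{N}$ — indeed, if $a<_{P_{k,n}} c$ and $a\in F_I$ then $c\in F_I$ since $F_I$ is a filter, so $\chi_I(a)\le\chi_I(c)$. A nonnegative integer combination (here, a sum) of order-preserving maps $P_{k,n}\to\mathbb{N}$ is again order-preserving, so $T$ is a weakly order-preserving map $P_{k,n}\to\mathbb{N}$; translating back through the matrix indexing, this says precisely that $T$ is weakly increasing along rows from left to right and along columns from bottom to top, i.e.\ $T\in\Tab_{k,n}$.

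There is essentially no obstacle here: the only point requiring care is bookkeeping the index shift $b\mapsto b+a-1$ so that the zeroes/ones pattern of $\chi_I$ in row $a$ lines up with the tuple entry $i_a$, and making sure the "rows labeled top-to-bottom" convention of Section~\ref{sec:decomposition} is consistent with the Hasse-diagram picture of $P_{k,n}$. Once the singleton case $T=\chi_I$ is matched, the multiset case is immediate by linearity of the sum, and the order-preserving conclusion follows because filters are upward closed.
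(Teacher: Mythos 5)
Your proof is correct and is essentially the paper's own argument: the paper dismisses the lemma as a direct consequence of the definition of the summing tableau, and your entrywise check that $(\chi_I)_{ab}=1$ exactly when $i_a\le b+a-1$, followed by summing over $L$ and noting that a sum of characteristic vectors of filters is weakly order preserving, is precisely that verification written out. One small slip: your parenthetical description of the filter should read $F_I=\{(a,b): i_a\le b+a-1\}$ (the inequality is reversed as written), which is what your subsequent computation actually uses.
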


It follows directly from this description of the summing tableau that the two maps described in Proposition~\ref{prop:intro}\eqref{eq:reflectionsymmetry} and~\eqref{eq:complementarity} translate to natural actions on summing tableaux, see also~Figure~\ref{fig:poset}.
\begin{corollary}
  \label{cor:mapsontableaux}
  The action on $\NC_{k,n}$ induced by $a \mapsto n+1-a$ corresponds to a $180^\circ$ rotation of the summing tableau.
  The map from $\NC_{k,n}$ to $\NC_{n-k,n}$ induced by $I \mapsto [n] \setminus I$ corresponds to transposing the 
  summing tableau along the north-west-to-south-east diagonal.
\end{corollary}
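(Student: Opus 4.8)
The plan is to reduce both statements, by linearity, to the case of a single vector, and then verify them by a direct substitution into the explicit formula for $\chi_I$. Both maps in question --- the reflection $a\mapsto n+1-a$ of Proposition~\ref{prop:intro}\eqref{eq:reflectionsymmetry} and the complementation $I\mapsto[n]\setminus I$ of Proposition~\ref{prop:intro}\eqref{eq:complementarity} --- act on $V_{k,n}$ vertex by vertex, hence they induce the corresponding coordinatewise maps on multisets of vectors. Since by Lemma~\ref{lem:summingtableau} the summing tableau of a multiset $L$ is $\sum_{I\in L}\chi_I$, and since $180^\circ$ rotation of an array and transposition along a diagonal are linear operations, it suffices to prove each statement for $L=\{I\}$ a single vector, where the summing tableau is exactly $\chi_I$; the general statement then follows by summing over $I\in L$ (with the caveat, discussed below, that ``rotation/transposition of a summing tableau'' is to be read as also replacing each entry $t$ by $\ell-t$, $\ell=|L|$).

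For a single $I=(i_1,\dots,i_k)$ I would use the description recalled in Section~\ref{sec:decomposition}: $\chi_I$ is the $k\times(n-k)$ array whose $a$-th row (counted from the top) consists of $i_a-a$ zeroes followed by ones, so that its $(a,b)$-entry is $1$ precisely when $i_a\le a+b-1$. For the reflection, $a\mapsto n+1-a$ sends $I$ to $I^{\mathrm{op}}=(n+1-i_k,\dots,n+1-i_1)$, i.e.\ $(I^{\mathrm{op}})_a=n+1-i_{k+1-a}$; substituting, the $(a,b)$-entry of $\chi_{I^{\mathrm{op}}}$ is $1$ iff $i_{k+1-a}\ge n+2-a-b$, which is the negation of the inequality $i_{k+1-a}\le n+1-a-b$ defining the $(k{+}1{-}a,\,n{-}k{+}1{-}b)$-entry of $\chi_I$. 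Thus $\chi_{I^{\mathrm{op}}}$ arises from $\chi_I$ by reversing the order of both the rows and the columns --- a $180^\circ$ rotation of the array --- followed by the exchange $0\leftrightarrow1$ of entries. For the complementation, write $[n]\setminus I=(j_1,\dots,j_{n-k})$; the transpose of $\chi_I$ along the north-west-to-south-east diagonal has $(b,a)$-entry equal to the $(a,b)$-entry of $\chi_I$, which is $1$ iff $|I\cap[a+b-1]|\ge a$, while the $(b,a)$-entry of $\chi_{[n]\setminus I}$ is $1$ iff $|([n]\setminus I)\cap[a+b-1]|\ge b$. Since $|I\cap[m]|+|([n]\setminus I)\cap[m]|=m$ with $m=a+b-1$, these two conditions are again complementary, so the transpose of $\chi_I$ equals $\chi_{[n]\setminus I}$ up to the same exchange $0\leftrightarrow1$.

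I do not expect any genuine mathematical obstacle here; the work is entirely in pinning down conventions --- the shift built into the identification of $k$-subsets with $0/1$-tableaux (``$i_a\le a+b-1$'' rather than ``$\le b$''), the upside-down labelling of rows, and the complementation of entries that accompanies both the rotation and the transposition. That complementation (for a general multiset, $t\mapsto\ell-t$) is forced: rotating or transposing the separating staircase of $\chi_I$ interchanges its two sides, and the complementation is precisely the operation that carries the rotated, resp.\ transposed, array back into an honest monotone tableau. Geometrically, in the picture of Figure~\ref{fig:poset} this complementation is invisible --- one is simply rotating, respectively reflecting, the lattice path --- which is why it is natural to state the result as a plain $180^\circ$ rotation and a plain transposition of the summing tableau.
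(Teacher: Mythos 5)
Your proof is correct and is essentially the paper's argument made explicit: the paper disposes of this corollary in one sentence, appealing to Lemma~\ref{lem:summingtableau} (the summing tableau is $\sum_{I\in L}\chi_I$) and the lattice-path picture of Figure~\ref{fig:poset}, which is precisely your reduction by linearity to a single characteristic vector followed by the inequality check $i_a\le a+b-1$. Your caveat is also accurate: entrywise the two operations are $180^\circ$ rotation, respectively transposition, composed with the complementation $t\mapsto \ell-t$ with $\ell=|L|$ (not $\max(T)$, which differs when $(n-k+1,\ldots,n)\in L$); the paper leaves this implicit because it reads ``rotation'' and ``transposition'' on the superposed monotone paths of Figure~\ref{fig:poset}, where the complementation of values is automatic, exactly as you observe at the end.
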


It will be convenient in the following to represent an $\ell$-multiset $L$ of vectors in~$V_{k,n}$ as the $(k \times \ell)$-table 
containing the vectors in~$L$ as columns, in lexicographic order.
For example, let $n=7$ and $k=3$, and consider the multiset given by the $(3 \times 9)$-table

\smallskip
\begin{center}
\begin{tikzpicture}[node distance=0 cm,outer sep = 0pt]
  \tikzstyle{bsq}=[minimum width=.6cm, minimum height=.6cm]
  \tikzstyle{bsq2}=[rectangle,draw,opacity=.25,fill opacity=1]
  \tikzstyle{cor}=[anchor=north west,inner sep=1pt]

  \node[bsq] (r1) at (-.3, .3)    {\scriptsize 3};
  \node[bsq] (r2) [above = of r1] {\scriptsize 2};
  \node[bsq] (r3) [above = of r2] {\scriptsize 1};

  \node[bsq] (c1) at (.3, -.3)    {\scriptsize 1};
  \node[bsq] (c2) [right = of c1] {\scriptsize 2};
  \node[bsq] (c3) [right = of c2] {\scriptsize 3};
  \node[bsq] (c4) [right = of c3] {\scriptsize 4};
  \node[bsq] (c5) [right = of c4] {\scriptsize 5};
  \node[bsq] (c6) [right = of c5] {\scriptsize 6};
  \node[bsq] (c7) [right = of c6] {\scriptsize 7};
  \node[bsq] (c8) [right = of c7] {\scriptsize 8};
  \node[bsq] (c9) [right = of c8] {\scriptsize 9};

  \node[bsq,bsq2] (11) at (.3, 1.5)     {1};
  \node[bsq,bsq2] (12) [right = of 11]  {1};
  \node[bsq,bsq2] (13) [right = of 12]  {1};
  \node[bsq,bsq2] (14) [right = of 13]  {2};
  \node[bsq,bsq2] (15) [right = of 14]  {2};
  \node[bsq,bsq2] (16) [right = of 15]  {2};
  \node[bsq,bsq2] (17) [right = of 16]  {2};
  \node[bsq,bsq2] (18) [right = of 17]  {3};
  \node[bsq,bsq2] (19) [right = of 18]  {5};

  \node[bsq,bsq2] (21) at (.3, .9)      {2};
  \node[bsq,bsq2] (22) [right = of 21]  {2};
  \node[bsq,bsq2] (23) [right = of 22]  {3};
  \node[bsq,bsq2] (24) [right = of 23]  {3};
  \node[bsq,bsq2] (25) [right = of 24]  {4};
  \node[bsq,bsq2] (26) [right = of 25]  {4};
  \node[bsq,bsq2] (27) [right = of 26]  {5};
  \node[bsq,bsq2] (28) [right = of 27]  {5};
  \node[bsq,bsq2] (29) [right = of 28]  {6};

  \node[bsq,bsq2] (31) at (.3,.3)       {3};
  \node[bsq,bsq2] (32) [right = of 31]  {4};
  \node[bsq,bsq2] (33) [right = of 32]  {5};
  \node[bsq,bsq2] (34) [right = of 33]  {5};
  \node[bsq,bsq2] (35) [right = of 34]  {5};
  \node[bsq,bsq2] (36) [right = of 35]  {5};
  \node[bsq,bsq2] (37) [right = of 36]  {7};
  \node[bsq,bsq2] (38) [right = of 37]  {7};
  \node[bsq,bsq2] (39) [right = of 38]  {7};

  \node      (00) [left = of r2]  {$L = $};
\end{tikzpicture}
\end{center}
\smallskip
Its summing tableau is 

\smallskip
\begin{center}
\begin{tikzpicture}[node distance=0 cm,outer sep = 0pt]
  \tikzstyle{bsq}=[rectangle, draw,opacity=1,fill opacity=1, minimum width=.6cm, minimum height=.6cm]
  \node[bsq] (11) at (1, 1)       {3};
  \node[bsq] (12) [right = of 11] {7};
  \node[bsq] (13) [right = of 12] {8};
  \node[bsq] (14) [right = of 13] {8};
  \node[bsq] (21) [below = of 11] {2};
  \node[bsq] (22) [right = of 21] {4};
  \node[bsq] (23) [right = of 22] {6};
  \node[bsq] (24) [right = of 23] {8};
  \node[bsq] (31) [below = of 21] {1};
  \node[bsq] (32) [right = of 31] {2};
  \node[bsq] (33) [right = of 32] {6};
  \node[bsq] (34) [right = of 33] {6};
  \node      (00) [left = of 21]  {$T =\ $};
\end{tikzpicture}
\end{center}
\smallskip

For example the first row of~$T$ says that the first row of~$L$ contains three~$1$'s, four~$2$'s, and one~$3$, while the last vector $(5,6,7)$ does not contribute to~$T$ as $\chi_{(5,6,7)} = 0 \in \mathbb{N}^{P_{3,7}}$.
As in this example, if~$L$ contains the vector $I_{\hat 1}:=(n-k+1,\ldots,n) \in V_{k,n}$, that vector does not contribute to the summing tableau since $\chi_{I_{\hat 1}} = 0 \in \mathbb{N}^{P_{k,n}}$.
We thus set $V_{k,n}^* = V_{k,n} \setminus \big\{\ (n-k+1,\ldots,n)\ \big\}$ for later convenience.

\medskip

The following statement is at the basis of our results about the two simplicial complexes $\NN_{k,n}$ and $\NC_{k,n}$.

\begin{theorem}
\label{thm:uniquemultiset}
  Let~$T \in \Tab_{k,n}$.
  Then there is a unique multiset $\bijNN(T)$ and a unique multiset $\bijNC(T)$ of vectors in $V^*_{k,n}$ whose 
  summing tableaux are~$T$, and such that
  \begin{itemize}
    \item the vectors in $\bijNN(T)$ are mutually nonnesting, and
    \item the vectors in $\bijNC(T)$ are mutually noncrossing.
  \end{itemize}
\end{theorem}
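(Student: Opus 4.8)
The statement bundles a nonnesting claim and a noncrossing claim; the first is the classical staircase (order‑filter) decomposition, while the second is the substantial part.

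\textbf{The nonnesting decomposition.} Two vectors are nonnesting exactly when they are comparable in the distributive lattice $V_{k,n}$, so a mutually nonnesting multiset is the same thing as a multichain $F_1\supseteq F_2\supseteq\cdots\supseteq F_\ell$ of order filters of $P_{k,n}$. Reading $T$ as a weakly order‑preserving map $P_{k,n}\to\mathbb N$, I would take the \emph{level filters} $F_j:=\{p\in P_{k,n}:T(p)\ge j\}$; each is a filter because $T$ is order‑preserving, and $T=\sum_{j\ge 1}\chi_{F_j}$ is immediate from the meaning of ``$T(p)\ge j$''. The number of nonzero summands is $\ell=T(\hat m)$, where $\hat m$ is the unique element of $P_{k,n}$ lying in every nonempty filter (namely its maximum); hence each $F_j$, $1\le j\le\ell$, is nonempty and corresponds to a vector in $V^*_{k,n}$. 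This produces $\bijNN(T)$. Conversely, any multichain $G_1\supseteq\cdots\supseteq G_\ell$ of nonempty filters with summing tableau $T$ has $T(p)=\#\{j:p\in G_j\}$, and nestedness of the $G_j$ forces $G_j=F_j$; so $\bijNN(T)$ is unique. Note also that, by the same computation, $\ell=T(\hat m)$ for \emph{any} multiset in $V^*_{k,n}$ with summing tableau $T$, so the number of vectors is forced in both cases.

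\textbf{The noncrossing decomposition, existence.} Here I would produce $\bijNC(T)$ either by a direct recursive ``pushing of bars'' construction on the $k\times\ell$ table of vectors, or by \emph{straightening} $\bijNN(T)$: whenever the current multiset contains a crossing pair $I,J$ (witnessed at rows $a<b$, so $i_a<j_a$, $i_b<j_b$, and $i_\ell=j_\ell$ for $a<\ell<b$) replace $I,J$ by the two vectors obtained by exchanging their tails starting at position $b$, i.e.\ $\hat I=(i_1,\dots,i_{b-1},j_b,\dots,j_k)$ and $\hat J=(j_1,\dots,j_{b-1},i_b,\dots,i_k)$. Two facts are routine: (i) $\hat I,\hat J$ are again strictly increasing, because $\max(i_{b-1},j_{b-1})<\min(i_b,j_b)$ under the crossing hypothesis, and each still lies in $V^*_{k,n}$ — if $\hat I$ were the excluded vector $(n-k+1,\dots,n)$ then $\chi_{\hat J}=\chi_I+\chi_J$, impossible since $F_I\cap F_J\ni\hat m$ makes this have an entry $2$; (ii) the move leaves the multiset of $a$‑th coordinates unchanged in every row, hence preserves the summing tableau. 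What requires care here is \emph{termination}: one must order the uncrossing moves (or design the direct construction) and exhibit a monovariant that strictly decreases, e.g.\ a convexity‑type potential $\sum_{I\in L}\big(\sum_a w_a i_a\big)^2$ for a suitably chosen weight vector $w$, applied with a disciplined choice of which crossing to resolve. The terminal multiset is then a mutually noncrossing multiset in $V^*_{k,n}$ with summing tableau $T$.

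\textbf{The noncrossing decomposition, uniqueness.} This is the heart of the theorem and the step I expect to be the main obstacle. I see two routes. \emph{(a) Confluence.} The multisets in $V^*_{k,n}$ with summing tableau $T$ form a single fiber, connected by the meet--join moves $\{I,J\}\rightsquigarrow\{I\wedge J,\,I\vee J\}$ (exactly the binomials generating the Hibi ideal $M_{k,n}$ of $P_{k,n}$). If one shows the oriented uncrossing rewriting system above is \emph{locally confluent} — a finite Diamond‑Lemma case analysis according to whether two applicable moves share $0$, $1$, or $2$ vectors — then, being terminating, it is confluent, hence has a unique normal form, which is the unique crossing‑free multiset in the fiber. \emph{(b) A forced vector and induction on $\ell=T(\hat m)$.} Extract from $T$ a distinguished vector $I^*(T)\in V^*_{k,n}$ — the lexicographically smallest vector, or the one read off the greedy construction — and prove (i) that $I^*(T)$ lies in \emph{every} mutually noncrossing multiset with summing tableau $T$, and (ii) that $T-\chi_{I^*(T)}\in\Tab_{k,n}$; then delete $I^*(T)$ and induct. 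In either route the real content is showing the local choices are forced, and this is precisely where the noncrossing condition — crucially its flag, purely pairwise nature — is genuinely used, in contrast with the easy nonnesting half.
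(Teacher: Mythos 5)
Your nonnesting half is correct and complete: decomposing $T$ into its level filters $F_j=\{p:T(p)\ge j\}$ and observing that a mutually nonnesting multiset is exactly a multichain of filters, forced by the multiplicity function, is equivalent to (and a clean repackaging of) the paper's left-to-right filling procedure for $\bijNN(T)$.

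The noncrossing half, however, is not a proof but a program, and the gaps you flag yourself are real. For existence, the tail-swap straightening preserves the summing tableau and stays in $V^*_{k,n}$ (your checks (i) and (ii) are fine), but termination is only gestured at: you do not exhibit a weight vector $w$ and an order of moves for which your potential strictly decreases, and this is not automatic — a tail swap can create new crossings, and the sign analysis of the change in a quadratic potential is exactly the delicate point (compare the paper's Lemma~\ref{lemma:noncrossing-regular}, where the sign argument only works against a \emph{noncrossing} pair and needs the hierarchy $\alpha_{i+1}\ll\alpha_i$ plus the ``minimal distance inconsistent pair'' analysis). For uniqueness, route (a) needs, besides termination, a local-confluence case analysis that is not carried out, and it has an additional unaddressed issue: Newman/Church–Rosser gives equality of normal forms only for multisets connected under your \emph{tail-swap} moves, whereas the connectivity you invoke is for the Hibi meet–join exchanges $\{I,J\}\rightsquigarrow\{I\wedge J, I\vee J\}$, which are different moves (e.g.\ for $I=(1,3)$, $J=(2,4)$ the tail swap gives $\{(1,4),(2,3)\}$ while $I\wedge J,I\vee J$ are $I,J$ themselves). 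Route (b) leaves precisely the forced-vector claims (i) and (ii) unproven, and these are the substance of the theorem. By contrast, the paper avoids all of this with one explicit greedy construction: fill the $k\times\max(T)$ table row by row, always placing the next entry of row $a$ in the revlex-\emph{largest} admissible column; a two-line argument shows the resulting columns are pairwise noncrossing, and uniqueness follows at once because any deviation from the revlex-largest choice at any step creates a crossing pair. If you want to salvage your outline, the most economical fix is to replace routes (a)/(b) by this greedy insertion (or to prove your forced-vector claim by showing that the column the greedy algorithm builds first must occur in every noncrossing multiset with summing tableau $T$).
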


In order to prove this, we provide two (almost identical) procedures to construct $\bijNN(T)$ and $\bijNC(T)$.

\medskip

Let~$T=(t_{ab}) \in \Tab_{k,n}$ be a tableau, and let $\ell = \max(T)=t_{1,n-k}$ be its maximal entry.
We are going to fill a $(k\times \ell)$-table whose columns give~$\bijNN(T)$ and~$\bijNN(T)$, respectively.
Since we want each column to be in $V_{k,n}^*$, we have to fill the $a$\textsuperscript{th} row ($a \in \{1,\ldots,k\}$) 
with numbers in $\{a,\ldots,a+n-k\}$.
Moreover, in order to have $T$ as the summing tableau of the multiset of columns, the number $a+b$ must appear in 
the $a$\textsuperscript{th} row exactly
$
  t_{a,b+1}- t_{a,b}
$
times, where we use the convention $t_{a,0}=0$ and $t_{a,n-k+1}=\max(T)$.
That is, we do not have a choice of \emph{which} entries to use in each row, but only on \emph{where} to put them.
Our procedure is to fill the table row by row from top to bottom, inserting the entries $a+1,\ldots,a+n-k$ in increasing order (each of them the prescribed number of times) placing them one after the other into the \lq\lq next\rq\rq\ column in the~$a$\textsuperscript{th} row of the table.
The only difference between $\bijNN$ and $\bijNC$ is how the term \lq\lq next\rq\rq\ is defined.

\smallskip
\begin{itemize}
\item To obtain~$\bijNN$, \lq\lq next\rq\rq\ is simply the next free box from left to right.
In the above example, the table gets filled as follows.
\smallskip
\begin{center}
\begin{tikzpicture}[node distance=0 cm,outer sep = 0pt]
  \tikzstyle{bsq}=[minimum width=.6cm, minimum height=.6cm]
  \tikzstyle{bsq2}=[rectangle,draw,opacity=.25,fill opacity=1]
  \tikzstyle{cor}=[anchor=north west,inner sep=1pt]

  \draw[ thin] (1.5, 1.5) circle(0.20);
  \draw[ thin] (3.9, 1.5) circle(0.20);
  \draw[ thin] (4.5, 1.5) circle(0.20); 
  \draw[ thin] (0.9, 0.9) circle(0.20); 
  \draw[ thin] (2.1, 0.9) circle(0.20);
  \draw[ thin] (3.3, 0.9) circle(0.20);
  \draw[ thin] (4.5, 0.9) circle(0.20);
  \draw[ thin] (0.3, 0.3) circle(0.20);
  \draw[ thin] (0.9, 0.3) circle(0.20);
  \draw[ thin] (3.3, 0.3) circle(0.20); 

  \node[bsq] (r1) at (-.3, .3)    {\scriptsize 3};
  \node[bsq] (r2) [above = of r1] {\scriptsize 2};
  \node[bsq] (r3) [above = of r2] {\scriptsize 1};

  \node[bsq] (c1) at (.3, -.3)    {\scriptsize 1};
  \node[bsq] (c2) [right = of c1] {\scriptsize 2};
  \node[bsq] (c3) [right = of c2] {\scriptsize 3};
  \node[bsq] (c4) [right = of c3] {\scriptsize 4};
  \node[bsq] (c5) [right = of c4] {\scriptsize 5};
  \node[bsq] (c6) [right = of c5] {\scriptsize 6};
  \node[bsq] (c7) [right = of c6] {\scriptsize 7};
  \node[bsq] (c8) [right = of c7] {\scriptsize 8};

  \node[bsq,bsq2] (11) at (.3, 1.5)     {1};
  \node[bsq,bsq2] (12) [right = of 11]  {1};
  \node[bsq,bsq2] (13) [right = of 12]  {1};
  \node[bsq,bsq2] (14) [right = of 13]  {2};
  \node[bsq,bsq2] (15) [right = of 14]  {2};
  \node[bsq,bsq2] (16) [right = of 15]  {2};
  \node[bsq,bsq2] (17) [right = of 16]  {2};
  \node[bsq,bsq2] (18) [right = of 17]  {3};

  \node[bsq,bsq2] (21) at (.3, .9)      {2};
  \node[bsq,bsq2] (22) [right = of 21]  {2};
  \node[bsq,bsq2] (23) [right = of 22]  {3};
  \node[bsq,bsq2] (24) [right = of 23]  {3};
  \node[bsq,bsq2] (25) [right = of 24]  {4};
  \node[bsq,bsq2] (26) [right = of 25]  {4};
  \node[bsq,bsq2] (27) [right = of 26]  {5};
  \node[bsq,bsq2] (28) [right = of 27]  {5};

  \node[bsq,bsq2] (31) at (.3,.3)       {3};
  \node[bsq,bsq2] (32) [right = of 31]  {4};
  \node[bsq,bsq2] (33) [right = of 32]  {5};
  \node[bsq,bsq2] (34) [right = of 33]  {5};
  \node[bsq,bsq2] (35) [right = of 34]  {5};
  \node[bsq,bsq2] (36) [right = of 35]  {5};
  \node[bsq,bsq2] (37) [right = of 36]  {7};
  \node[bsq,bsq2] (38) [right = of 37]  {7};

  \node      (00) [left = of r2]  {$\bijNN(T) = $};

  \node[cor] at (11.north west) {\tiny 1};
  \node[cor] at (12.north west) {\tiny 2};
  \node[cor] at (13.north west) {\tiny 3};
  \node[cor] at (14.north west) {\tiny 4};
  \node[cor] at (15.north west) {\tiny 5};
  \node[cor] at (16.north west) {\tiny 6};
  \node[cor] at (17.north west) {\tiny 7};
  \node[cor] at (18.north west) {\tiny 8};

  \node[cor] at (21.north west) {\tiny 1};
  \node[cor] at (22.north west) {\tiny 2};
  \node[cor] at (23.north west) {\tiny 3};
  \node[cor] at (24.north west) {\tiny 4};
  \node[cor] at (25.north west) {\tiny 5};
  \node[cor] at (26.north west) {\tiny 6};
  \node[cor] at (27.north west) {\tiny 7};
  \node[cor] at (28.north west) {\tiny 8};

  \node[cor] at (31.north west) {\tiny 1};
  \node[cor] at (32.north west) {\tiny 2};
  \node[cor] at (33.north west) {\tiny 3};
  \node[cor] at (34.north west) {\tiny 4};
  \node[cor] at (35.north west) {\tiny 5};
  \node[cor] at (36.north west) {\tiny 6};
  \node[cor] at (37.north west) {\tiny 7};
  \node[cor] at (38.north west) {\tiny 8};

\end{tikzpicture}
\end{center}
To help the reader, in the top-left corner of each box we indicate the order in which a given entry is inserted into its row of the table.
Also, we have marked with a circle the last occurrence of each entry in each row.
These circle-marks are not needed for the proof of Theorem~\ref{thm:uniquemultiset} but will become important later.

\smallskip

\item To obtain $\bijNC$, \lq\lq next\rq\rq\ is slightly more complicated.
For two vectors $v,w \in \mathbb{N}^k$ we say that $v$ precedes $w$ in {\bf revlex order} if
the rightmost entry of $w-v$ different from $0$ is positive.
We chose the {\bf revlex-largest} vector whose $a$\textsuperscript{th} entry has not yet been 
inserted and for which the property of strictly increasing entries in a column is preserved.
In other words, inserting an integer $i$ into row~$a$ is done by looking at the first $a-1$ entries 
$v = (v_1,\ldots,v_{a-1})$ of all vectors that have not been assigned an~$a$\textsuperscript{th} entry yet and such that $v_{a-1} < i$. Among those, we assign~$i$ to the revlex-largest free box, i.e., to that~$v$ for which $v_{a-1}$ is maximal, then $v_{a-2}$ is maximal, and so on.
If this revlex-largest vector is not unique, we fill the box of the left-most of the choices, in order to maintain the table columns in lexicographic order.
In the above example, the table now gets filled as follows.

\smallskip
\begin{center}
\begin{tikzpicture}[node distance=0 cm,outer sep = 0pt]
  \tikzstyle{bsq}=[minimum width=.6cm, minimum height=.6cm]
  \tikzstyle{bsq2}=[rectangle,draw,opacity=.25,fill opacity=1]
  \tikzstyle{cor}=[anchor=north west,inner sep=1pt]

  \node[bsq] (r1) at (-.3, .3)    {\scriptsize 3};
  \node[bsq] (r2) [above = of r1] {\scriptsize 2};
  \node[bsq] (r3) [above = of r2] {\scriptsize 1};

  \node[bsq] (c1) at (.3, -.3)    {\scriptsize 1};
  \node[bsq] (c2) [right = of c1] {\scriptsize 2};
  \node[bsq] (c3) [right = of c2] {\scriptsize 3};
  \node[bsq] (c4) [right = of c3] {\scriptsize 4};
  \node[bsq] (c5) [right = of c4] {\scriptsize 5};
  \node[bsq] (c6) [right = of c5] {\scriptsize 6};
  \node[bsq] (c7) [right = of c6] {\scriptsize 7};
  \node[bsq] (c8) [right = of c7] {\scriptsize 8};

  \node[bsq,bsq2] (11) at (.3, 1.5)     {1};
  \node[bsq,bsq2] (12) [right = of 11]  {1};
  \node[bsq,bsq2] (13) [right = of 12]  {1};
  \node[bsq,bsq2] (14) [right = of 13]  {2};
  \node[bsq,bsq2] (15) [right = of 14]  {2};
  \node[bsq,bsq2] (16) [right = of 15]  {2};
  \node[bsq,bsq2] (17) [right = of 16]  {2};
  \node[bsq,bsq2] (18) [right = of 17]  {3};

  \node[bsq,bsq2] (21) at (.3, .9)      {2};
  \node[bsq,bsq2] (22) [right = of 21]  {2};
  \node[bsq,bsq2] (23) [right = of 22]  {5};
  \node[bsq,bsq2] (24) [right = of 23]  {3};
  \node[bsq,bsq2] (25) [right = of 24]  {3};
  \node[bsq,bsq2] (26) [right = of 25]  {4};
  \node[bsq,bsq2] (27) [right = of 26]  {5};
  \node[bsq,bsq2] (28) [right = of 27]  {4};

  \node[bsq,bsq2] (31) at (.3,.3)       {3};
  \node[bsq,bsq2] (32) [right = of 31]  {5};
  \node[bsq,bsq2] (33) [right = of 32]  {7};
  \node[bsq,bsq2] (34) [right = of 33]  {4};
  \node[bsq,bsq2] (35) [right = of 34]  {5};
  \node[bsq,bsq2] (36) [right = of 35]  {5};
  \node[bsq,bsq2] (37) [right = of 36]  {7};
  \node[bsq,bsq2] (38) [right = of 37]  {5};

  \node      (00) [left = of r2]  {$\bijNC(T) = $};

  \node[cor] at (11.north west) {\tiny 1};
  \node[cor] at (12.north west) {\tiny 2};
  \node[cor] at (13.north west) {\tiny 3};
  \node[cor] at (14.north west) {\tiny 4};
  \node[cor] at (15.north west) {\tiny 5};
  \node[cor] at (16.north west) {\tiny 6};
  \node[cor] at (17.north west) {\tiny 7};
  \node[cor] at (18.north west) {\tiny 8};

  \node[cor] at (21.north west) {\tiny 1};
  \node[cor] at (22.north west) {\tiny 2};
  \node[cor] at (23.north west) {\tiny 8};
  \node[cor] at (24.north west) {\tiny 3};
  \node[cor] at (25.north west) {\tiny 4};
  \node[cor] at (26.north west) {\tiny 6};
  \node[cor] at (27.north west) {\tiny 7};
  \node[cor] at (28.north west) {\tiny 5};

  \node[cor] at (31.north west) {\tiny 1};
  \node[cor] at (32.north west) {\tiny 6};
  \node[cor] at (33.north west) {\tiny 8};
  \node[cor] at (34.north west) {\tiny 2};
  \node[cor] at (35.north west) {\tiny 5};
  \node[cor] at (36.north west) {\tiny 4};
  \node[cor] at (37.north west) {\tiny 7};
  \node[cor] at (38.north west) {\tiny 3};

  \draw[ thin] (1.5, 1.5) circle(0.20);
  \draw[ thin] (3.9, 1.5) circle(0.20);
  \draw[ thin] (4.5, 1.5) circle(0.20); 
  \draw[ thin] (0.9, 0.9) circle(0.20); 
  \draw[ thin] (1.5, 0.9) circle(0.20);
  \draw[ thin] (2.7, 0.9) circle(0.20);
  \draw[ thin] (3.3, 0.9) circle(0.20);
  \draw[ thin] (0.3, 0.3) circle(0.20);
  \draw[ thin] (0.9, 0.3) circle(0.20); 
  \draw[ thin] (2.1, 0.3) circle(0.20);

\end{tikzpicture}
\end{center}
\smallskip

\end{itemize}

Observe that we could have described the choice of \lq\lq next\rq\rq\ position in the procedure~$\bijNN$ by saying that it means the {\bf revlex-smallest} vector (in the same sense as above for $\bijNC$) for which the property of strictly increasing entries in a column is preserved.
This makes  both procedures  almost identical, only interchanging revlex-smallest and revlex-largest in the choice of the box to insert the next integer.

\begin{remark}
  \label{rem:topbottom}
  Observe that these procedures could, by Proposition~\ref{prop:intro}\eqref{eq:reflectionsymmetry}, also be applied \lq\lq from bottom to top\rq\rq\ by first inserting the last row, and then filling the table row by row by the analogous lex- and revlex-insertions.
\end{remark}

\begin{definition}
\label{def:decomposition}
The multisets $\bijNN$ and $\bijNC$ obtained from a tableau $T$ by the above procedures are called the \emph{nonnesting decomposition} and the \emph{noncrossing decomposition} of $T$.
\end{definition}

\begin{proof}[Proof of Theorem~\ref{thm:uniquemultiset}]
  We start with proving that the procedures give what they are supposed to:
  every two columns of $\bijNN(T)$ are nonnesting, and every two columns of $\bijNC(T)$ are noncrossing.
  To this end, let $I = (i_1,\ldots,i_k)$ and $J = (j_1,\ldots,j_k)$ be two such columns, and let $a,b$ be two indices such that $i_\ell = j_\ell$ for all $\ell$ such that $a < \ell < b$.
  To show that if $I$ and $J$ in $\bijNN(T)$ (resp. in $\bijNC(T)$) are nonnesting (resp. noncrossing), we have to show that $i_a < j_a$ implies  $i_b \leq j_b$ (resp. $i_b \geq j_b$).
  When assigning row~$b$ in the table, we see $i_a,i_{a+1},\ldots,i_{b-1}$ in the column containing~$I$, and similarly $j_a,j_{a+1},\ldots,j_{b-1}$ in the column containing~$J$.
  In this situation, the column containing~$I$ is filled before the column containing~$J$ for $\bijNN$ and after the column~$J$ for $\bijNC$.
  Thus, $i_b \leq j_b$ for $\bijNN$ and $i_b \geq j_b$ for $\bijNC$.

  \smallskip

  To show uniqueness, suppose that we would have not chosen the revlex-smallest (resp. revlex-largest) column at some point in the procedure.
  The same argument as before then implies that we then would have created two nesting (resp. crossing) columns.
\end{proof}

\subsection{Further properties of the complexes $\NN_{k,n}$ and $\NC_{k,n}$}
\label{sec:furthercombi}

We emphasize that Theorem~\ref{thm:uniquemultiset} alone, suitably interpreted, implies that the complexes $\NN_{k,n}$ and $\NC_{k,n}$ are unimodular triangulations of the order polytope $\oO_{k,n}$. This interpretation is carried out in Section~\ref{sec:geometry}, after some preliminaries on triangulations and order polytopes that we briefly survey in Section~\ref{sec:background}. 
Before getting there, we prove in the remainder of this section further combinatorial properties of the two complexes $\NN_{k,n}$ and $\NC_{k,n}$.
Some of these properties (e.g. the pureness of the complexes in Corollary~\ref{coro:pure}) follow also from the geometric results in the subsequent sections, but we think that it is interesting to have independent combinatorial proofs.

\medskip

To further understand the combinatorics of $\NN_{k,n}$ and $\NC_{k,n}$, we mark (indicated by a circle in the figures) the 
last occurrence of every \emph{nonmaximal} integer placed in each row of $\bijNN(T)$ and of $\bijNC(T)$ in the above 
construction procedure. We call them the \defn{marked positions}.
In symbols, for each $a \in [k]$ and $b \in [n-k]$ we mark the last occurrence of~$a+b-1$ that is placed in row~$a$.
Here, \lq\lq last occurrence\rq\rq\ is meant in the order the integer is inserted into the given row.
For two examples, see the above instances of~$\bijNN(T)$ and of~$\bijNC(T)$.

\begin{remark}
\label{rem:bars}
  One can ask in which way the marked positions differ if we fill the table from bottom to top according to Remark~\ref{rem:topbottom}.
  If the position of the last occurrence of the maximal integer $a+n-k$ in row~$a$ is marked, it turns out that both procedures provide 
  the same marked positions. In other words, the marked positions do not depend on the procedure, but can be described purely in terms 
  of the table~$L$, except that the rule is different when describing the marked positions of a noncrossing table or a nonnesting table.
  In both cases, we assume that the table ~$L$ has no repeated columns (if it has, only one copy carries marks).
  There is going to be one mark for each row $a\in[k]$ and each $b\in[n-k+1]$.
  The mark will be in one of the vectors in
  \[
  L_{a,b}:=\big\{ (i_1,\ldots,i_k)\in L: i_a=a+b-1 \big\}.
  \]
  The rule to decide which vector carries the mark is:
  \begin{itemize}

    \item For the marks in the nonnesting table, the mark lies in the vector $I=(i_1,\ldots,i_k) \in L_{a,b}$ for which $(i_1,\ldots,i_a)$ is smallest and 
      $(i_{a+1},\ldots,i_{k})$ is smallest. Observe that there is no inconsistency on what of the two rules we look at first, since nonnesting 
      vectors are component-wise comparable. For the same reason, ``smallest'' means both lex-smallest and revlex-smallest.

    \item For the marks in the noncrossing table, the mark lies in the vector $I=(i_1,\ldots,i_k) \in L_{a,b}$ for which 
      $(i_1,\ldots,i_a)$ is revlex-smallest and $(i_{a+1},\ldots,i_{k})$ is lex-largest.

  \end{itemize}
\end{remark}

\smallskip

In the following three lemmas, we collect further properties of the tables $\bijNN(T)$ and $\bijNC(T)$, some of which can be detected using the information 
where the last occurrences of the entries are placed.

\begin{lemma}
\label{le:furtherproperties1}
  Let~$T \in \Tab_{k,n}$, and let~$L$ be either $\bijNN(T)$ or $\bijNC(T)$.
  We then have the following properties for~$L$.
  \begin{enumerate}[(i)]

    \item The columns in~$L$ are ordered lexicographically. \label{eq:lexorder}

    \item $T_{k,1} > 0$ if and only if the vector $(1,\ldots,k)$ is a column of~$L$. \label{eq:notzero}

    \item Let $L_i$ and $L_{i+1}$ be two two consecutive columns of~$L$. Then the first row in which $L_i$ and $L_{i+1}$ differ is equal to the first row in which $L_i$ has a marked position. \label{eq:firstdiff}

    \item Two consecutive columns $L_i$ and $L_{i+1}$ coincide if and only if $L_i$ has no marked position. \label{eq:repetition}
  \end{enumerate}
\end{lemma}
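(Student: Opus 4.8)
\subsection*{Proof proposal}

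The plan is to prove (i) first, since it is used in (ii), (iii) and (iv), and to reduce everything to one structural fact about the filling procedures, which I will call the \emph{block lemma}. Fix a row $a$ and suppose rows $1,\dots,a-1$ are already filled. Group the columns into maximal runs of consecutive columns with identical entries in rows $1,\dots,a-1$; call these the \emph{blocks} of level $a$. All columns of one block share their first $a-1$ entries, hence share the same revlex rank and the same eligibility threshold $i_{a-1}<w$ when $w$ is being inserted into row $a$. The block lemma asserts that, in both $\bijNN$ and $\bijNC$, each block of level $a$ has its row-$a$ boxes filled in increasing index order, and the sequence of values it receives is weakly increasing. For $\bijNN$ this is immediate, since row $a$ is filled left to right. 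For $\bijNC$ one argues that, once a block has been chosen to receive the value currently being processed, it stays revlex-maximal among the still-eligible columns for as long as that value is not exhausted: while the current value is fixed the eligible set only shrinks, and at the step the block was first chosen no eligible column had strictly larger revlex rank. I would also record at the outset that the $\bijNC$ procedure never gets stuck and produces columns in $V^*_{k,n}$: the tableau inequalities $t_{a,b}\le t_{a-1,b}$ and $t_{a,b}\le t_{a,b+1}$ force that, when $a+c$ must be inserted $t_{a,c+1}-t_{a,c}$ times, at least that many columns with $i_{a-1}<a+c$ still have row $a$ empty (eligibility then gives $i_{a-1}<i_a$, so columns stay strictly increasing), and since row $1$ can never attain its maximal value $n-k+1$ (prescribed multiplicity $t_{1,n-k+1}-t_{1,n-k}=0$), no column equals $(n-k+1,\dots,n)$.

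For part (i) in $\bijNN$: the $a$-th entry of column $j$ equals $a-1+\min\{b\in\{1,\dots,n-k+1\}:t_{a,b}\ge j\}$, which is weakly increasing in $j$ coordinatewise, hence lexicographically, and strictly increasing down each column because $t_{a,b}\le t_{a-1,b}$. For $\bijNC$ I would induct on the number of rows already filled: if the columns are lexicographically sorted with respect to rows $1,\dots,a-1$, then two columns already differing there stay sorted, and two columns agreeing there lie in a common block of level $a$, so the block lemma gives the smaller-indexed one a smaller-or-equal value in row $a$; eligibility gives strict increase within columns. For part (ii): $T_{k,1}=t_{k,1}>0$ iff $t_{a,1}>0$ for all $a$ (since $T$ is weakly increasing up columns). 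Assuming this, an induction on $a$ shows column $1$ receives value $a$ in row $a$: value $a$ is inserted before all larger values; column $1$ has first $a-1$ entries $(1,\dots,a-1)$, so $i_{a-1}=a-1<a$ makes it eligible, it is revlex-maximal among eligible columns (a column tied with it in the last coordinate must, by strict increase, tie in all earlier ones) and leftmost among ties, while in $\bijNN$ it is the leftmost empty box. Hence column $1$ equals $(1,\dots,k)$. Conversely $(1,\dots,k)$ is the lexicographically smallest vector in $V_{k,n}$, so by (i) it can only occur as column $1$; if it does, row $k$ contains the value $k$, forcing $t_{k,1}>0$.

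For parts (iii) and (iv) the decisive step is to identify the rows in which a column $L_c$ (with $c<\ell$) carries a mark: I claim $L_c$ is marked in row $a'$ exactly when $a'$ is the first row in which $L_c$ and $L_{c+1}$ differ. If $(L_c)_{a'}=(L_{c+1})_{a'}$ — which covers every row below the first disagreement, and every row at all when $L_c=L_{c+1}$ — then $L_c$ and $L_{c+1}$ agree on all earlier rows, share a block of level $a'$, and both eventually receive the value $(L_c)_{a'}$ in row $a'$; the block lemma places $c$ before $c+1$, so $c$ is not the last column to receive that value, hence is unmarked in row $a'$. If instead $a'$ is the first row of disagreement, then by (i) $v:=(L_c)_{a'}<(L_{c+1})_{a'}\le a'+n-k$, so $v$ is non-maximal; $c$ and $c+1$ still share a block $B'$ of level $a'$, and once $c$ has received $v$ the leftmost still-empty column of $B'$ is $c+1$, which however ends up with the strictly larger value $(L_{c+1})_{a'}$ — so the supply of $v$'s must have run out exactly when $c$ was filled, i.e.\ $c$ is the last column to receive $v$ in row $a'$ and carries that mark. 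This gives (iv) (a column is marked iff it differs from its right neighbour) and (iii) (its first marked row is the first row where it differs from its right neighbour). The main obstacle is exactly the block lemma for $\bijNC$: one must reason about the \emph{insertion order} of columns inside a row rather than their index order, since — as the worked example shows — two columns with the same value in a given row need not be inserted in index order, so the naive guess ``the marked column is the one of largest index among those sharing its value'' is false. Establishing that a revlex-tied block is consumed in increasing index order, and cannot be overtaken by a higher-revlex column while the current value is unexhausted, is the one place requiring genuine care; everything else is a short formal consequence of the block lemma and part (i).
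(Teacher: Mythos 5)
The paper itself disposes of this lemma in one line (the properties ``can be directly read off the insertion procedures''), and your block lemma is a correct way of making that reading precise: all columns sharing a prefix in rows $1,\dots,a-1$ are revlex-tied, the tie is broken by the leftmost free box, so each block is consumed left to right while the values inserted into row $a$ increase weakly with time. This yields (i); the eligibility argument (only prefix $(1,\dots,a-1)$ can satisfy $v_{a-1}<a$) yields (ii); and for (iii)/(iv) you correctly establish the three facts actually needed: a column carries no mark in any row strictly above its first disagreement with its right neighbour, it does carry a mark at that first disagreement row (since the block stays revlex-maximal while the current value is unexhausted, a further copy of that value would have to land in column $c+1$, contradicting $(L_{c+1})_{a_0}>v$), and two equal consecutive columns carry no mark at all.

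However, the claim you call decisive --- ``$L_c$ is marked in row $a'$ exactly when $a'$ is the first row in which $L_c$ and $L_{c+1}$ differ'' --- is false as a biconditional: a column can carry several marks. In the paper's own running example for $\bijNC(T)$, the column $(1,2,5)$ is marked in rows $2$ and $3$, while its first disagreement with its neighbour $(1,5,7)$ is in row $2$; indeed the later arguments about codimension-one faces (Proposition~\ref{prop:furtherproperties}, Corollary~\ref{cor:pseudomanifold}) depend on columns having two marked positions. Relatedly, the parenthetical ``which covers every row below the first disagreement'' is not right: for a row $a'$ after the first disagreement, equality of the entries at $a'$ does not imply equality in all earlier rows, so $L_c$ and $L_{c+1}$ need not share a block of level $a'$ and your case-(a) argument does not apply there. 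Fortunately neither the full biconditional nor that extra case is used anywhere: (iii) and (iv) follow from exactly the three correctly proven facts listed above, so your proof stands once the overclaim is weakened to what you actually establish.
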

\begin{proof}
  These properties can be directly read off the insertion procedures for~$\bijNN(T)$ and for~$\bijNC(T)$ and the definition of the marked positions.
\end{proof}

\begin{lemma}
\label{le:furtherproperties2}
  Let~$T \in \Tab_{k,n}$.
  We have the following property for $\bijNN(T)$ which does not hold for $\bijNC(T)$.
  \begin{enumerate}[(i)]
    \item two consecutive columns of $\bijNN(T)$ differ in exactly those positions in which the left of the two has marked positions. \label{eq:alldiffNN}
  \end{enumerate}
\end{lemma}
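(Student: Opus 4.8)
The plan is to extract the statement directly from the construction of $\bijNN(T)$, using only the description of the insertion procedure and the definition of marked positions. First I would fix two consecutive columns $L_i = (i_1,\ldots,i_k)$ and $L_{i+1}=(i'_1,\ldots,i'_k)$ of $\bijNN(T)$ and set $a$ to be the first row in which they differ; by Lemma~\ref{le:furtherproperties1}\eqref{eq:firstdiff} this $a$ is exactly the first row in which $L_i$ carries a marked position, and by Lemma~\ref{le:furtherproperties1}\eqref{eq:lexorder} together with the fact that the two columns are adjacent in lex order we have $i_a < i'_a$ while $i_\ell = i'_\ell$ for $\ell < a$. I would then argue that the mark in row $a$ on $L_i$ signals that $i_a$ was the last occurrence of the value $i_a$ placed in row $a$, hence $i'_a > i_a$ is forced to be a strictly larger value in that row; this recovers that $L_i$ is marked in row $a$ and differs there, which is the ``easy half'' of the claim.

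The substance of the lemma is the converse in each row below $a$: I want to show that for every $\ell > a$, the columns differ in position $\ell$ \emph{if and only if} $L_i$ has a marked position in row $\ell$. For the forward direction, suppose $i_\ell \ne i'_\ell$; since $\bijNN$ inserts entries left to right and the column containing $L_i$ is filled before the one containing $L_{i+1}$ (this is precisely the nonnesting mechanism used in the proof of Theorem~\ref{thm:uniquemultiset}), in row $\ell$ the value $i_\ell$ was placed in the box of $L_i$ and then a \emph{different} (necessarily larger, by nonnestingness and the revlex-smallest rule) value landed in the box of $L_{i+1}$; because $L_{i+1}$ is the immediate right neighbour, this means $i_\ell$ was the last box in row $\ell$ to receive that value, i.e.\ $L_i$ is marked in row $\ell$. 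Conversely, if $L_i$ is marked in row $\ell$, then $i_\ell$ was the last occurrence of its value in that row, so the neighbouring box in $L_{i+1}$ received a strictly larger value, forcing $i'_\ell \ne i_\ell$. I would phrase both directions symmetrically by tracking, in the filling of row $\ell$, which column of the table is ``current'' when the last copy of a given value is inserted.

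The main obstacle I anticipate is bookkeeping around the interaction of the rows: in row $\ell>a$ the columns already agree in positions $<a$ but may have differed in intermediate rows between $a$ and $\ell$, so I must be careful that ``$L_i$ is filled immediately before $L_{i+1}$ in row $\ell$'' still holds even though the partial vectors $(i_1,\ldots,i_{\ell-1})$ and $(i'_1,\ldots,i'_{\ell-1})$ are no longer equal. The key point to nail down is that in the left-to-right ($\bijNN$) procedure the \emph{physical left-to-right order of columns in the table is preserved from row to row}, so ``adjacent columns'' means adjacent boxes in every row; combined with Lemma~\ref{le:furtherproperties1}\eqref{eq:lexorder} and \eqref{eq:repetition}, this makes the marked-position criterion local to each row and independent of what happened above. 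I would also include a one-line remark, via the explicit $\NC_{2,5}$-type example already in the excerpt (the $\bijNC(T)$ table displayed above), showing that for $\bijNC$ two consecutive columns can agree in a row where the left column is marked, so the ``exactly'' fails there and the restriction to $\bijNN$ is essential.
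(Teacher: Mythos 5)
Your proof of the statement for $\bijNN(T)$ is correct and is essentially the paper's own argument made explicit (the paper disposes of it in one line as ``a direct consequence of the procedure and the definition of the marked positions''): in the nonnesting procedure each row is filled strictly left to right with its entries in weakly increasing order, so consecutive columns occupy adjacent boxes in every row, and a box differs from its right neighbour precisely when it holds the rightmost occurrence of its value, i.e.\ precisely when it is marked; the maximal value $a+n-k$ in row~$a$ never differs from its right neighbour, consistent with its carrying no mark, which is the one small case your write-up leaves implicit.

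One factual slip in your closing remark: the displayed table $\bijNC(T)$ does \emph{not} contain two consecutive columns that agree in a row where the left one is marked; there the failure is in the opposite direction, e.g.\ the third and fourth columns $(1,5,7)$ and $(2,3,4)$ differ in row~$3$ although the third column carries no mark in that row. The failure mode you describe does occur for $\bijNC$, but you need a different example: for the noncrossing pair $(1,3),(2,3)\in V_{2,5}$ the revlex-max insertion places the last~$3$ of row~$2$ in the left column, which is therefore marked there even though both columns have a~$3$ in row~$2$. Either phenomenon shows that the ``exactly'' fails for $\bijNC$, so the lemma itself is unaffected, but the illustration should be corrected.
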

\begin{proof}
  This is also a direct consequence of the procedure and the definition of the marked positions.
\end{proof}

\begin{lemma}
\label{le:furtherproperties3}
  Let~$T \in \Tab_{k,n}$.
  We have the following properties for $\bijNC(T)$ that do not hold for $\bijNN(T)$.
  \begin{enumerate}[(i)]
    \item $T$ is strictly increasing along rows if and only if all vectors of the form $(b+1,\ldots,b+k)$ for $b \in [n-k-1]$ appear as columns in $\bijNC(T)$. \label{eq:alongrows}

    \item $T$ is strictly increasing along columns if and only if all vectors of the form $(1,\ldots,a,n-k+a+1,\ldots,n)$ for $a \in [k-1]$ appear as columns in $\bijNC(T)$. \label{eq:alongcols}
  \end{enumerate}
\end{lemma}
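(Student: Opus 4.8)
The plan is to prove the two equivalences by establishing~\eqref{eq:alongrows} directly and then deducing~\eqref{eq:alongcols} from it by complementation. In~\eqref{eq:alongrows}, one implication is formal (it holds for any decomposition of~$T$ into vectors of~$V^*_{k,n}$), and the converse is the place where the revlex-largest rule defining~$\bijNC$ is used.

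For the formal implication of~\eqref{eq:alongrows}: in the table of any multiset~$L$ with summing tableau~$T$, the value~$a+b$ fills exactly~$t_{a,b+1}-t_{a,b}$ boxes of row~$a$ (with the conventions $t_{a,0}=0$ and $t_{a,n-k+1}=\max(T)$), so~$T$ is strictly increasing along row~$a$ if and only if each of $a+1,\dots,a+n-k-1$ occurs in row~$a$. As the vector $(b+1,\dots,b+k)$ has $a$-th entry $a+b$, the presence of all of them for $b\in[n-k-1]$ among the columns of~$\bijNC(T)$ forces every row of~$T$ to be strictly increasing; this is the easy direction (and it would hold verbatim for $\bijNN$ too).

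For the converse of~\eqref{eq:alongrows}: assume~$T$ is strictly increasing along rows, fix $b\in[n-k-1]$, and run the noncrossing insertion row by row. I would show by induction on $a\in\{1,\dots,k\}$ that after row~$a$ has been filled, some column of the table has prefix $(b+1,b+2,\dots,b+a)$. For $a=1$ this holds because strictness of row~$1$ forces the value~$b+1$ to be inserted at least once. For the inductive step, let~$C$ be a column that has prefix $(b+1,\dots,b+a-1)$ after row $a-1$ was filled. Its $(a-1)$-st entry is $b+a-1$, which is not smaller than any of the values $a,\dots,b+a-1$ inserted into row~$a$ before~$b+a$; hence~$C$ is not eligible while those are placed and still lacks an $a$-th entry when~$b+a$ is inserted (and by strictness of row~$a$, $b+a$ is inserted at least once). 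Among the columns eligible at that moment---those with $(a-1)$-st entry at most $b+a-1$ and no $a$-th entry---the prefix $(b+1,\dots,b+a-1)$ is revlex-maximal: any other eligible prefix $(v_1,\dots,v_{a-1})$ has $v_{a-1}\le b+a-1$, and if equality holds then strict increase along a column gives $v_{a-2}\le b+a-2$, and one descends the prefix the same way. So the first box filled with~$b+a$ in row~$a$ belongs to a column with prefix $(b+1,\dots,b+a-1)$, which thereby acquires prefix $(b+1,\dots,b+a)$; the case $a=k$ yields $(b+1,\dots,b+k)\in\bijNC(T)$. I expect this inductive step---confirming both that~$C$ stays free until~$b+a$ arrives and that its prefix is revlex-maximal among the eligible ones, each of which rests on columns being strictly increasing---to be the only genuine obstacle.

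Finally, I would obtain~\eqref{eq:alongcols} from~\eqref{eq:alongrows}. By Proposition~\ref{prop:intro}\eqref{eq:complementarity} and the uniqueness in Theorem~\ref{thm:uniquemultiset}, the map $I\mapsto[n]\setminus I$ sends~$\bijNC(T)$, after discarding the copies of the trivial vector $(k+1,\dots,n)$, to~$\bijNC(T^\circ)$, where $T^\circ\in\Tab_{n-k,n}$ is obtained from~$T$ by the reflection of Corollary~\ref{cor:mapsontableaux} (transposing along the north-west--south-east diagonal), which interchanges strict monotonicity along rows with strict monotonicity along columns. Moreover $[n]\setminus(\beta+1,\dots,\beta+(n-k))=(1,\dots,\beta,\beta+n-k+1,\dots,n)$ is precisely the vector $(1,\dots,a,n-k+a+1,\dots,n)$ of~\eqref{eq:alongcols} with $a=\beta$, and~$\beta$ ranges over $[n-(n-k)-1]=[k-1]$. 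Hence~\eqref{eq:alongrows}, applied to~$T^\circ$ with parameters $(n-k,n)$, gives~\eqref{eq:alongcols}. For the parenthetical remark that these properties fail for~$\bijNN$, I would give a small example such as $k=2$, $n=5$, $T=\left(\begin{smallmatrix}1&2&3\\0&1&3\end{smallmatrix}\right)$: here~$T$ is strictly increasing along rows while $\bijNN(T)=\{(1,3),(2,4),(3,4)\}$ does not contain~$(2,3)$.
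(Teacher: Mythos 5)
Your proof is correct. For part \eqref{eq:alongrows} it is essentially the paper's own argument: the paper's one-line observation that the first insertion of a given value into row~$a$ lands on the column whose prefix is the consecutive interval ending at that value is exactly the claim you establish by induction, and your verification that this column is still free when the value arrives and that its prefix is revlex-maximal among the eligible ones (both resting on strict increase within columns) is precisely the content the paper leaves implicit; the easy direction is the same counting of occurrences of $a+b$ in row~$a$. For part \eqref{eq:alongcols} you take a genuinely different route: the paper just repeats the row observation mutatis mutandis for columns, whereas you reduce \eqref{eq:alongcols} to \eqref{eq:alongrows} via complementation, Proposition~\ref{prop:intro}\eqref{eq:complementarity}, Corollary~\ref{cor:mapsontableaux} and the uniqueness in Theorem~\ref{thm:uniquemultiset}; this makes the $k\leftrightarrow n-k$ duality do the work and shortens the second half. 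One caveat: taken literally, the summing tableau of the complemented multiset is not the transpose of~$T$ but the transpose with each entry $t$ replaced by $\max(T)-t$ (the plain transpose of a tableau is not even a tableau in the paper's conventions); Corollary~\ref{cor:mapsontableaux} is itself stated loosely on this point and you inherit that looseness. Since this entrywise complementation reverses the inequalities in exactly the right way, so that strict increase along rows of~$T^\circ$ is equivalent to strict increase along columns of~$T$, and since your identification of the complements of the interval vectors with the hook vectors $(1,\ldots,a,n-k+a+1,\ldots,n)$ is unaffected, the reduction stands; you should just spell out the correct operation. Your explicit $\bijNN$ counterexample (which checks out: $\bijNN(T)=\{(1,3),(2,4),(3,4)\}$ misses $(2,3)$) is a welcome addition, since the paper asserts the failure for $\bijNN$ without exhibiting one.
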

\begin{proof}
  Observe that the revlex-max insertion ensures that the first insertion of a given integer~$a+b-1$ into row~$a$ yields a partial vector (of length~$a$) of the form~$(b,\ldots,a+b-1)$.
  This implies that for fixed $b \in [n-k-1]$, we have that~$T_{a,b} < T_{a,b+1}$ for all $a \in [k]$ if and only if $(b+1,\ldots,b+k)$ is a column of $\bijNC(T)$, thus implying~\eqref{eq:alongrows}.
  A similar observation holds as well for the columns of~$T$.
  For fixed $a \in [k-1]$, we have that~$T_{a,b} < T_{a+1,b}$ for all $b \in [n-k]$ if and only if $(1,\ldots,a,n-k+a+1,\ldots,n)$ is a column of $\bijNC(T)$, thus implying~\eqref{eq:alongcols}.
\end{proof}

We are now ready to prove the following proposition from which we will then derive the pureness and the dimension of the nonnesting and the noncrossing complexes.

\begin{proposition}
\label{prop:furtherproperties}
  Let~$T \in \Tab_{k,n}$, and let~$L$ be either $\bijNN(T)$ or $\bijNC(T)$.
  If a column of~$L$ contains more than one marked position, then there exists a vector in $V^*_{k,n}$ that is not contained in~$L$ and which does not nest or cross, respectively,  with any vector in~$L$, depending on $L$ being $\bijNN(T)$ or $\bijNC(T)$.
\end{proposition}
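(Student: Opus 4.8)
The plan is to exhibit the missing vector explicitly by a local modification of the table~$L$ at a column carrying two marks. Suppose a column $I=(i_1,\ldots,i_k)$ of~$L$ has marked positions in two rows $a<a'$ (we may take $a'$ to be the next marked row after~$a$, so the marks in rows $a+1,\ldots,a'-1$ of~$I$ are absent). By the description of marks in Remark~\ref{rem:bars} together with Lemma~\ref{le:furtherproperties1}\eqref{eq:firstdiff}–\eqref{eq:repetition}, the mark in row~$a$ of~$I$ signals that the column immediately to the right of~$I$ in~$L$ first differs from~$I$ in row~$a$; write the relevant neighbour as $I'$ (for the nonnesting case) or handle the two cases in parallel. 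The candidate vector to produce will be the ``hybrid'' $J$ obtained from~$I$ by replacing the entries in rows $1,\ldots,a$ by the corresponding entries of a suitable neighbouring column, while keeping rows $a+1,\ldots,k$ equal to those of~$I$; concretely, in the nonnesting table one decreases the top part at row~$a$ to the next admissible value, and in the noncrossing table one moves it in the revlex direction dictated by the insertion rule. One first checks $J\in V^*_{k,n}$: strict increase within the column is what the insertion procedure guarantees at every marked position, and $J\ne(n-k+1,\ldots,n)$ because $J$ agrees with the in-range column~$I$ below row~$a$.

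Next I would verify $J\notin L$. This is where the \emph{second} mark, in row~$a'$, is used: if $J$ were a column of~$L$, then since $J$ agrees with~$I$ on rows $a+1,\ldots,k$ and in particular on rows $a+1,\ldots,a'$, the lexicographic ordering of columns (Lemma~\ref{le:furtherproperties1}\eqref{eq:lexorder}) would force $J$ and $I$ to be consecutive, and then Lemma~\ref{le:furtherproperties2}\eqref{eq:alldiffNN} (resp.\ the analogous statement read off the $\bijNC$ procedure) would say they differ exactly in the marked rows of the left one — contradicting that they agree on row~$a'$, which is marked in~$I$. So $J$ is genuinely new.

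The heart of the argument — and the step I expect to be the main obstacle — is showing that $J$ neither nests nor crosses (as appropriate) with any column $K=(k_1,\ldots,k_m)$ of~$L$. Here one must compare $J$ with $K$ in every pair of coordinates $(c,d)$ with $J_\ell=K_\ell$ for $c<\ell<d$, and split into the cases $d\le a$, $c\ge a$, and $c<a<d$. For $c\ge a$ the comparison is identical to comparing~$I$ with~$K$, which is fine since $I,K\in L$. For $d\le a$ one compares the top part of~$J$, which by construction is the top part of an actual column of~$L$, again with~$K$. The delicate case is $c<a\le d$ straddling the modified block: here one uses that $J$ was built using exactly the ``next'' rule of the insertion procedure (the revlex-smallest resp.\ revlex-largest choice), so any nesting/crossing with~$K$ would have forced the procedure to have placed $a+b-1$ differently in row~$a$ of some column — precisely the uniqueness argument in the proof of Theorem~\ref{thm:uniquemultiset}, now applied to the enlarged table $L\cup\{J\}$. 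In other words, the proposition reduces to: $L\cup\{J\}$ is again a valid nonnesting (resp.\ noncrossing) family, which one checks by the same transitivity and insertion-order bookkeeping already developed. The bookkeeping of which marks survive under the modification, and making the case $c<a\le d$ fully rigorous, is the part that will require care.
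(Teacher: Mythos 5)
There is a genuine gap, and it lies exactly in the construction of the candidate vector. You push the mark in the \emph{upper} row $a$ but keep rows $a+1,\ldots,k$ of $I$ and replace rows $1,\ldots,a$ by those of a neighbouring column; this is the wrong side of the pushed mark. Concretely, for $k=3$, $n=10$ take $T$ to be the summing tableau of $\{(1,2,4),(1,5,6)\}$, so that $\bijNN(T)$ is this two-column table; the column $I=(1,2,4)$ carries marks in rows $2$ and $3$, its right neighbour is $I'=(1,5,6)$, and your hybrid is $(1,5,4)$, which is not even an element of $V_{k,n}$. In the noncrossing case the idea of a \emph{neighbouring} donor column is also wrong: in the paper's running example the doubly marked column is the last column $(4,5,6)$ (marks in rows $1$ and $3$), so there is no right neighbour, and using the left neighbour $(3,4,5)$ produces $(3,5,6)$, which crosses the column $(2,4,5)$ of $L$. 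What the paper does instead is push the \emph{last} mark, say in row $a$ with value $x$: the new column keeps rows $a,\ldots,k$ of $I$ (including the marked entry $x$) and imports rows $1,\ldots,a-1$ from the column $b'$ into which one additional copy of $x$ would be inserted by the revlex-max rule --- a column that is in general far from $I$ and is found by rerunning the insertion, not by looking at neighbours. (Symmetrically, by Remark~\ref{rem:pushing} one may push the first mark, but then one must keep rows $1,\ldots,a$ of $I$ and modify the rows \emph{below}; see Example~\ref{ex:bendingvector}, where pushing the marks of $(2,4,6)$ yields $(2,4,7)$ and $(1,4,6)$.)

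A second missing ingredient is the preliminary reduction in the noncrossing case: before pushing, the paper adjoins to $L$ all cyclic-interval columns (using Lemma~\ref{le:furtherproperties1}\eqref{eq:notzero}, Lemma~\ref{le:furtherproperties3} and Proposition~\ref{prop:intro}\eqref{eq:join}), so that $T$ becomes strictly increasing along rows and columns; this is what guarantees that the extra copy of $x$ \emph{can} be inserted, i.e.\ that the donor column $b'$ exists. Your verification mechanism --- showing that $L\cup\{J\}$ is again the insertion table of its own summing tableau and invoking the argument of Theorem~\ref{thm:uniquemultiset} --- is indeed the right one and is exactly how the paper concludes, and it also disposes of the ``straddling'' case you flag as the main obstacle; but it only works once $J$ is built by the insertion rule itself (top part from $b'$, bottom part from $I$ at the last mark), not from a lexicographic neighbour, so as written the proof does not go through.
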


\begin{proof}
  In the case of $L = \bijNN(T)$, this is a direct consequence of Lemma~\ref{le:furtherproperties2}:
  Given a column with multiple marked positions, we can always insert a new column to the right of this column where only the last marked position is changed.
  This vector is nonnesting with all other vectors by construction.
  E.g., the second column in the above example for $\bijNN(T)$ has the second and the third position marked. We can thus insert a new column between the second and the third where only the last position is changed, thus being the vector $(1,2,5)$.

  The case of $L = \bijNC(T)$ is a little more delicate.
  First, we can assume that~$T_{k,1} > 0$, and that~$T$ is strictly increasing along rows and columns.
  Otherwise, we can, according to Lemma~\ref{le:furtherproperties1}\eqref{eq:notzero} and Lemma~\ref{le:furtherproperties3}, together with Proposition~\ref{prop:intro}\eqref{eq:join}, always insert the missing cyclic intervals as columns into $\bijNC(T)$ and modify~$T$ accordingly.

  Given this situation and a column containing more than one marked position, one can \lq\lq push\rq\rq\ marked position to obtain a new vector in~$V^*_{k,n}$ that is not yet contained in~$L$, and which is noncrossing with every column of~$L$.
  Since it is enough for our purposes here, we describe the procedure of pushing the last marked position, similarly to the situation for $\bijNN(T)$.
  To this end, let~$b$ be the column containing more than a single marked positions, and let the last marked position be in row~$a$.
  Moreover, let the value of this last marked position be~$x$.
  Now, pretend that we had one more~$x$ to be inserted into row~$a$ in this table.
  The condition that~$T$ is strictly increasing along rows and columns implies that it would indeed be possible to insert another~$x$ into row~$a$.
  Let~$b'$ be the column in which this next~$x$ would be inserted.
  Then, it would be possible to add a new column between columns $b'-1$ and~$b'$ containing the vector given by the first $a-1$ entries of the previous column~$b'$ together with the remaining entries of column~$b$.
  Call the resulting table~$L'$.
  Observe that by construction,~$L'$ equals the revlex-max insertion table of its summing tableau.
  This implies that all columns in~$L'$ are noncrossing.
  Moreover, the marked positions of~$L'$ are exactly those of~$L$, except that the last marked position in the previous column~$b$ has moved to the new column~$b'$.
  As an example, we consider the following table (which is the previously considered table with all extra vectors inserted as described earlier in this proof).

  \smallskip
  \begin{center}
  \begin{tikzpicture}[node distance=0 cm,outer sep = 0pt]
    \tikzstyle{bsq}=[minimum width=.6cm, minimum height=.6cm]
    \tikzstyle{bsq2}=[rectangle,draw=black,opacity=.25,fill opacity=1]
    \tikzstyle{cor}=[anchor=north west,inner sep=1pt]

    \node[bsq] (r1) at (-.3, .3)    {\scriptsize 3};
    \node[bsq] (r2) [above = of r1] {\scriptsize 2};
    \node[bsq] (r3) [above = of r2] {\scriptsize 1};

    \node[bsq] (c1) at (.3, -.3)    {\scriptsize 1};
    \node[bsq] (c2) [right = of c1] {\scriptsize 2};
    \node[bsq] (c3) [right = of c2] {\scriptsize 3};
    \node[bsq] (c4) [right = of c3] {\scriptsize 4};
    \node[bsq] (c5) [right = of c4] {\scriptsize 5};
    \node[bsq] (c6) [right = of c5] {\scriptsize 6};
    \node[bsq] (c7) [right = of c6] {\scriptsize 7};
    \node[bsq] (c8) [right = of c7] {\scriptsize 8};
    \node[bsq] (c9) [right = of c8] {\scriptsize 9};
    \node[bsq] (c10) [right = of c9] {\scriptsize 10};
    \node[bsq] (c11) [right = of c10] {\scriptsize 11};

    \node[bsq,bsq2] (11) at (.3, 1.5)     {1};
    \node[bsq,bsq2] (12) [right = of 11]  {1};
    \node[bsq,bsq2] (13) [right = of 12]  {1};
    \node[bsq,bsq2] (14) [right = of 13]  {1};
    \node[bsq,bsq2] (15) [right = of 14]  {1};
    \node[bsq,bsq2] (16) [right = of 15]  {2};
    \node[bsq,bsq2] (17) [right = of 16]  {2};
    \node[bsq,bsq2] (18) [right = of 17]  {2};
    \node[bsq,bsq2] (19) [right = of 18]  {2};
    \node[bsq,bsq2] (110) [right = of 19]  {3};
    \node[bsq,bsq2] (111) [right = of 110]  {4};

    \node[bsq,bsq2] (21) at (.3, .9)      {2};
    \node[bsq,bsq2] (22) [right = of 21]  {2};
    \node[bsq,bsq2] (23) [right = of 22]  {2};
    \node[bsq,bsq2] (24) [right = of 23]  {5};
    \node[bsq,bsq2] (25) [right = of 24]  {6};
    \node[bsq,bsq2] (26) [right = of 25]  {3};
    \node[bsq,bsq2] (27) [right = of 26]  {3};
    \node[bsq,bsq2] (28) [right = of 27]  {4};
    \node[bsq,bsq2] (29) [right = of 28]  {5};
    \node[bsq,bsq2] (210) [right = of 29]  {4};
    \node[bsq,bsq2] (211) [right = of 210]  {5};

    \node[bsq,bsq2] (31) at (.3,.3)       {3};
    \node[bsq,bsq2] (32) [right = of 31]  {5};
    \node[bsq,bsq2] (33) [right = of 32]  {7};
    \node[bsq,bsq2] (34) [right = of 33]  {7};
    \node[bsq,bsq2] (35) [right = of 34]  {7};
    \node[bsq,bsq2] (36) [right = of 35]  {4};
    \node[bsq,bsq2] (37) [right = of 36]  {5};
    \node[bsq,bsq2] (38) [right = of 37]  {5};
    \node[bsq,bsq2] (39) [right = of 38]  {7};
    \node[bsq,bsq2] (310) [right = of 39]  {5};
    \node[bsq,bsq2] (311) [right = of 310]  {6};

    \node[cor] at (11.north west) {\tiny 1};
    \node[cor] at (12.north west) {\tiny 2};
    \node[cor] at (13.north west) {\tiny 3};
    \node[cor] at (14.north west) {\tiny 4};
    \node[cor] at (15.north west) {\tiny 5};
    \node[cor] at (16.north west) {\tiny 6};
    \node[cor] at (17.north west) {\tiny 7};
    \node[cor] at (18.north west) {\tiny 8};
    \node[cor] at (19.north west) {\tiny 9};
    \node[cor] at (110.north west) {\tiny 1\!0};
    \node[cor] at (111.north west) {\tiny 1\!1};

    \node[cor] at (21.north west) {\tiny 1};
    \node[cor] at (22.north west) {\tiny 2};
    \node[cor] at (23.north west) {\tiny 3};
    \node[cor] at (24.north west) {\tiny 1\!0};
    \node[cor] at (25.north west) {\tiny 1\!1};
    \node[cor] at (26.north west) {\tiny 4};
    \node[cor] at (27.north west) {\tiny 5};
    \node[cor] at (28.north west) {\tiny 7};
    \node[cor] at (29.north west) {\tiny 9};
    \node[cor] at (210.north west) {\tiny 6};
    \node[cor] at (211.north west) {\tiny 8};

    \node[cor] at (31.north west) {\tiny 1};
    \node[cor] at (32.north west) {\tiny 6};
    \node[cor] at (33.north west) {\tiny 1\!1};
    \node[cor] at (34.north west) {\tiny 1\!0};
    \node[cor] at (35.north west) {\tiny 8};
    \node[cor] at (36.north west) {\tiny 2};
    \node[cor] at (37.north west) {\tiny 5};
    \node[cor] at (38.north west) {\tiny 4};
    \node[cor] at (39.north west) {\tiny 9};
    \node[cor] at (310.north west) {\tiny 3};
    \node[cor] at (311.north west) {\tiny 7};

    \draw[ thin] (2.7, 1.5) circle(0.20);
    \draw[ thin] (5.1, 1.5) circle(0.20);
    \draw[ thin] (5.7, 1.5) circle(0.20); 
    \draw[ thin] (6.3, 1.5) circle(0.20); 

    \draw[ thin] (1.5, 0.9) circle(0.20); 
    \draw[ thin] (2.1, 0.9) circle(0.20);
    \draw[ thin] (3.9, 0.9) circle(0.20);
    \draw[ thin] (4.5, 0.9) circle(0.20);
    
    \draw[ thin] (0.3, 0.3) circle(0.20);
    \draw[ thin] (0.9, 0.3) circle(0.20); 
    \draw[ thin] (3.3, 0.3) circle(0.20);
    \draw[ thin] (6.3, 0.3) circle(0.20);

    \node  (00) [left = of r2]  {$L = $};

  \end{tikzpicture}
  \end{center}
  \smallskip

  Now, consider the last column $b=11$, having marked positions in rows~$1$ and~$3$, with values~$4$ and~$6$, respectively.
  So, a new last~$6$ would be inserted into column~$b'=9$.
  The resulting table~$L'$ has a new column between columns~$8$ and~$9$ consisting of the first~$2$ entries of the previous column~$9$ and the last entry of the previous column~$11$, thus being the vector $(2,5,6)$.
  We therefore get the following table, extending~$L$ by one column.

  \smallskip
  \begin{center}
  \begin{tikzpicture}[node distance=0 cm,outer sep = 0pt]
    \tikzstyle{bsq}=[minimum width=.6cm, minimum height=.6cm]
    \tikzstyle{bsq2}=[rectangle,draw=black,opacity=.25,fill opacity=1]
    \tikzstyle{cor}=[anchor=north west,inner sep=1pt]

    \node[bsq] (r1) at (-.3, .3)    {\scriptsize 3};
    \node[bsq] (r2) [above = of r1] {\scriptsize 2};
    \node[bsq] (r3) [above = of r2] {\scriptsize 1};

    \node[bsq] (c1) at (.3, -.3)    {\scriptsize 1};
    \node[bsq] (c2) [right = of c1] {\scriptsize 2};
    \node[bsq] (c3) [right = of c2] {\scriptsize 3};
    \node[bsq] (c4) [right = of c3] {\scriptsize 4};
    \node[bsq] (c5) [right = of c4] {\scriptsize 5};
    \node[bsq] (c6) [right = of c5] {\scriptsize 6};
    \node[bsq] (c7) [right = of c6] {\scriptsize 7};
    \node[bsq] (c8) [right = of c7] {\scriptsize 8};
    \node[bsq] (c9) [right = of c8] {\scriptsize 9};
    \node[bsq] (c10) [right = of c9] {\scriptsize 10};
    \node[bsq] (c11) [right = of c10] {\scriptsize 11};
    \node[bsq] (c11) [right = of c11] {\scriptsize 12};

    \node[bsq,bsq2] (11) at (.3, 1.5)     {1};
    \node[bsq,bsq2] (12) [right = of 11]  {1};
    \node[bsq,bsq2] (13) [right = of 12]  {1};
    \node[bsq,bsq2] (14) [right = of 13]  {1};
    \node[bsq,bsq2] (15) [right = of 14]  {1};
    \node[bsq,bsq2] (16) [right = of 15]  {2};
    \node[bsq,bsq2] (17) [right = of 16]  {2};
    \node[bsq,bsq2] (18) [right = of 17]  {2};
    \node[bsq,bsq2] (19) [right = of 18]  {2};
    \node[bsq,bsq2] (110) [right = of 19]  {2};
    \node[bsq,bsq2] (111) [right = of 110]  {3};
    \node[bsq,bsq2] (112) [right = of 111]  {4};

    \node[bsq,bsq2] (21) at (.3, .9)      {2};
    \node[bsq,bsq2] (22) [right = of 21]  {2};
    \node[bsq,bsq2] (23) [right = of 22]  {2};
    \node[bsq,bsq2] (24) [right = of 23]  {5};
    \node[bsq,bsq2] (25) [right = of 24]  {6};
    \node[bsq,bsq2] (26) [right = of 25]  {3};
    \node[bsq,bsq2] (27) [right = of 26]  {3};
    \node[bsq,bsq2] (28) [right = of 27]  {4};
    \node[bsq,bsq2] (29) [right = of 28]  {5};
    \node[bsq,bsq2] (210) [right = of 29]  {5};
    \node[bsq,bsq2] (211) [right = of 210]  {4};
    \node[bsq,bsq2] (212) [right = of 211]  {5};

    \node[bsq,bsq2] (31) at (.3,.3)       {3};
    \node[bsq,bsq2] (32) [right = of 31]  {5};
    \node[bsq,bsq2] (33) [right = of 32]  {7};
    \node[bsq,bsq2] (34) [right = of 33]  {7};
    \node[bsq,bsq2] (35) [right = of 34]  {7};
    \node[bsq,bsq2] (36) [right = of 35]  {4};
    \node[bsq,bsq2] (37) [right = of 36]  {5};
    \node[bsq,bsq2] (38) [right = of 37]  {5};
    \node[bsq,bsq2] (39) [right = of 38]  {6};
    \node[bsq,bsq2] (310) [right = of 39]  {7};
    \node[bsq,bsq2] (311) [right = of 310]  {5};
    \node[bsq,bsq2] (312) [right = of 311]  {6};

    \node[cor] at (11.north west) {\tiny 1};
    \node[cor] at (12.north west) {\tiny 2};
    \node[cor] at (13.north west) {\tiny 3};
    \node[cor] at (14.north west) {\tiny 4};
    \node[cor] at (15.north west) {\tiny 5};
    \node[cor] at (16.north west) {\tiny 6};
    \node[cor] at (17.north west) {\tiny 7};
    \node[cor] at (18.north west) {\tiny 8};
    \node[cor] at (19.north west) {\tiny 9};
    \node[cor] at (110.north west) {\tiny 1\!0};
    \node[cor] at (111.north west) {\tiny 1\!1};
    \node[cor] at (112.north west) {\tiny 1\!2};

    \node[cor] at (21.north west) {\tiny 1};
    \node[cor] at (22.north west) {\tiny 2};
    \node[cor] at (23.north west) {\tiny 3};
    \node[cor] at (24.north west) {\tiny 1\!1};
    \node[cor] at (25.north west) {\tiny 1\!2};
    \node[cor] at (26.north west) {\tiny 4};
    \node[cor] at (27.north west) {\tiny 5};
    \node[cor] at (28.north west) {\tiny 7};
    \node[cor] at (29.north west) {\tiny 9};
    \node[cor] at (210.north west) {\tiny 10};
    \node[cor] at (211.north west) {\tiny 6};
    \node[cor] at (212.north west) {\tiny 8};

    \node[cor] at (31.north west) {\tiny 1};
    \node[cor] at (32.north west) {\tiny 6};
    \node[cor] at (33.north west) {\tiny 1\!2};
    \node[cor] at (34.north west) {\tiny 1\!1};
    \node[cor] at (35.north west) {\tiny 9};
    \node[cor] at (36.north west) {\tiny 2};
    \node[cor] at (37.north west) {\tiny 5};
    \node[cor] at (38.north west) {\tiny 4};
    \node[cor] at (39.north west) {\tiny 8};
    \node[cor] at (310.north west) {\tiny 10};
    \node[cor] at (311.north west) {\tiny 3};
    \node[cor] at (312.north west) {\tiny 7};

    \draw[ thin] (2.7, 1.5) circle(0.20);
    \draw[ thin] (5.7, 1.5) circle(0.20);
    \draw[ thin] (6.3, 1.5) circle(0.20); 
    \draw[ thin] (6.9, 1.5) circle(0.20); 

    \draw[ thin] (1.5, 0.9) circle(0.20); 
    \draw[ thin] (2.1, 0.9) circle(0.20);
    \draw[ thin] (3.9, 0.9) circle(0.20);
    \draw[ thin] (4.5, 0.9) circle(0.20);
    
    \draw[ thin] (0.3, 0.3) circle(0.20);
    \draw[ thin] (0.9, 0.3) circle(0.20); 
    \draw[ thin] (3.3, 0.3) circle(0.20);
    \draw[ thin] (5.1, 0.3) circle(0.20);

    \node  (00) [left = of r2]  {$L' = $};

  \end{tikzpicture}
  \end{center}
\end{proof}

\begin{remark}
  \label{rem:pushing}
  Observe that a procedure similar to the pushing procedure of the last marked position can be used to push the first marked position.
  More concretely, we have seen in Remark~\ref{rem:bars} that filling the table top to bottom or bottom to top produces the same marked positions.
  The first marked position in a column of the top to bottom procedure can thus be seen as the last marked position of the same column of the bottom to top procedure.
  Thus, it can be pushed in the analogous way as the last bar is pushed.
\end{remark}

The following corollary is well known for $\NN_{k,n}$. For $\NC_{k,n}$ it follows from~\cite{PPS2010}.

\begin{corollary}
\label{coro:pure}
  The simplicial complexes $\NN_{k,n}$ and $\NC_{k,n}$ are pure of dimension $k(n-k)$.
\end{corollary}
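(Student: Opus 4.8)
The plan is to show that every facet (inclusion-maximal face) of $\NC_{k,n}$, and likewise of $\NN_{k,n}$, has exactly $k(n-k)+1$ vertices; purity and the claimed dimension then follow immediately. For $\NN_{k,n}$ this is essentially automatic: $\NN_{k,n}$ is the order complex of the distributive lattice $V_{k,n}$ of order filters of $P_{k,n}$, and this lattice is graded of rank $|P_{k,n}|=k(n-k)$ (a maximal chain of filters is exactly a linear extension of $P_{k,n}$), so all of its maximal chains have $k(n-k)+1$ elements. Alternatively one can run the argument below for $\NC_{k,n}$ almost verbatim with $\bijNN$ in place of $\bijNC$, using Lemma~\ref{le:furtherproperties2}\eqref{eq:alldiffNN} and Proposition~\ref{prop:furtherproperties}.

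For $\NC_{k,n}$, let $F$ be a facet. By Proposition~\ref{prop:intro}\eqref{eq:join} the $n$ cyclic intervals lie in every facet; in particular $I_{\hat 1}=(n-k+1,\dots,n)\in F$. Set $F^{*}:=F\setminus\{I_{\hat 1}\}\subseteq V_{k,n}^{*}$, so that $|F|=|F^{*}|+1$. The vectors in $F^{*}$ are pairwise noncrossing and have summing tableau $T:=\sum_{I\in F^{*}}\chi_{I}$, so the uniqueness part of Theorem~\ref{thm:uniquemultiset} gives $F^{*}=\bijNC(T)$; since $F^{*}$ has no repeated vector, the table $\bijNC(T)$ has pairwise distinct columns and $|F^{*}|=\max(T)$, its number of columns. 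As $F^{*}$ contains all cyclic intervals except $I_{\hat 1}$, Lemma~\ref{le:furtherproperties1}\eqref{eq:notzero} and Lemma~\ref{le:furtherproperties3}\eqref{eq:alongrows},\eqref{eq:alongcols} force $T_{k,1}>0$ and $T$ to be strictly increasing along rows and along columns; hence each of the $k(n-k)$ values $a+b-1$ with $a\in[k]$ and $b\in[n-k]$ occurs in row $a$ of $\bijNC(T)$, and $\bijNC(T)$ has exactly $k(n-k)$ marked positions.

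The heart of the argument is then a double count. Because $F$ is a facet, Proposition~\ref{prop:furtherproperties} forbids any column of $\bijNC(T)$ from carrying two or more marked positions: such a column would yield a vector of $V_{k,n}^{*}\setminus F^{*}$ noncrossing with every vector of $F^{*}$, hence, since $I_{\hat 1}$ crosses nothing, a face strictly containing $F$. On the other hand the columns of $\bijNC(T)$ are distinct, so by Lemma~\ref{le:furtherproperties1}\eqref{eq:repetition} every column other than the last carries at least one marked position. Therefore every column other than the last carries exactly one marked position and the last carries at most one, so counting marks gives $k(n-k)\le|F^{*}|=\max(T)\le k(n-k)+1$.

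The step I expect to be the real obstacle is to exclude the upper value, i.e.\ to prove that when $T$ is strictly increasing along rows and columns the last (lexicographically largest) column of $\bijNC(T)$ carries a marked position too; this is what pins $|F^{*}|$ down to $k(n-k)$, hence $|F|$ to $k(n-k)+1$, for \emph{every} facet, which is exactly what purity requires. To settle it I would analyse the last column directly --- it has first entry $n-k$, and Lemma~\ref{le:furtherproperties3}\eqref{eq:alongrows} controls which values of the first rows it reuses --- or run the insertion procedure ``from bottom to top'' as in Remark~\ref{rem:topbottom}, so that the final column is produced at a stage one can control, using the table-intrinsic description of marked positions in Remark~\ref{rem:bars}. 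As a safety net, the same conclusion follows a posteriori from Section~\ref{sec:geometry}, where $\NC_{k,n}$ and $\NN_{k,n}$ are shown to triangulate the $k(n-k)$-dimensional order polytope $\oO_{k,n}$ and hence to be pure of that dimension; but the point here is to have a self-contained combinatorial proof.
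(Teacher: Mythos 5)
Your overall route is the paper's: for $\NC_{k,n}$ you pass to the summing tableau, identify the facet with $\bijNC(T)$ via Theorem~\ref{thm:uniquemultiset}, count marked positions, use Lemma~\ref{le:furtherproperties1}\eqref{eq:repetition} for the upper bound and Proposition~\ref{prop:furtherproperties} for the lower bound (the paper runs exactly this argument, stated for $\NN_{k,n}$ and declared ``word by word the same'' for $\NC_{k,n}$; your graded-lattice argument for $\NN_{k,n}$ is a clean, correct shortcut for that half, and your justification of the strict-increase hypotheses via Proposition~\ref{prop:intro}\eqref{eq:join} and Lemma~\ref{le:furtherproperties3} is more explicit than the paper's ``without loss of generality''). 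The genuine gap is the one you flag yourself: you only get $k(n-k)\le |F^*|\le k(n-k)+1$, and you never rule out the upper value, i.e.\ you never show that the lexicographically last column of $\bijNC(T)$ also carries a mark. Without that, a facet could a priori have $k(n-k)+2$ vertices and purity does not follow; the appeal to Section~\ref{sec:geometry} is a legitimate, non-circular fallback (Corollary~\ref{coro:maximal face-triang} uses only Theorem~\ref{thm:uniquemultiset}), but it is not the combinatorial proof you set out to give, so as a self-contained argument the proposal is incomplete at precisely its decisive step.

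The missing step is, however, short, and your suggested line of attack works. Under your hypotheses ($t_{k,1}>0$ and $T$ strictly increasing along rows), the maximal value $1+(n-k)$ never occurs in row~$1$, because its multiplicity there is $\max(T)-t_{1,n-k}=0$; so the values occurring in row~$1$ are exactly $1,\dots,n-k$, each a nonmaximal value. Row~$1$ is filled strictly left to right by both insertion procedures (for $\bijNC$ all empty columns have equal, empty prefixes and ties go to the leftmost box), so the row-$1$ entries weakly increase along the columns and the last column receives the final insertion into row~$1$, namely the last occurrence of the value $n-k$. That position is marked, so \emph{every} column carries a mark, whence $|F^*|\le k(n-k)$ and $|F|=k(n-k)+1$ as required. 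It is worth noting that the paper's own proof is just as terse at this exact point (it passes from ``$k(n-k)$ marks'' to ``$|F|\le k(n-k)$, as otherwise we would have repeated columns'', which also implicitly needs the last column to be marked), so your proposal reproduces the paper's argument including its one elided observation -- but since you leave that observation unproved rather than supplying it, the write-up as it stands does not yet establish the corollary combinatorially.
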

\begin{proof}
  Consider a face~$F$ of $\NN_{k,n}$ not containing the vector~$(n-k+1,\ldots,n)$. This is,~$F$ is a set of mutually nonnesting elements in $V^*_{k,n}$.
  As we have seen in Theorem~\ref{thm:uniquemultiset},~$F$ can be recovered from its summing tableau~$T$, i.e., $F = \bijNN(T)$.
  Thus, we can recover the marked positions in the table as described before.
  Alternatively, one can as well obtain the marked positions using the procedure described in Remark~\ref{rem:bars}.
  Since the number of inserted marked positions equals $k(n-k)$ (assuming without loss of generality that~$T$ is strictly increasing along rows), we have~$|F| \leq k(n-k)$, as otherwise, we would have repeated columns in~$F$ by Lemma~\ref{le:furtherproperties1}\eqref{eq:repetition}.
  Moreover, if $|F| < k(n-k)$ then there is a column containing more than one marked position.
  Thus, Proposition~\ref{prop:furtherproperties} implies that there is a vector $I \in V^*_{k,n}$ that is not contained in~$F$ such that $F \cup \{ I \}$ is again mutually nonnesting and thus a face of $\NN_{k,n}$.
  This implies the corollary for $\NN_{k,n}$.
  The argument for $\NC_{k,n}$ is word by word the same.
\end{proof}

\begin{remark}
  The proof of Corollary~\ref{coro:pure} contains an implicit characterization of the tableaux that correspond to maximal faces of the nonnesting (respectively, noncrossing) complex: 
  they are those for which the nonnesting (respectively, noncrossing) decompositions result in exactly one marked position in each column of the table.
  For the nonnesting complex, in which all rows of the table are filled from left to right, this condition is clearly equivalent to saying that $T$ contains each entry in 
  $[k(n-k)]$ exactly once. That is, $\bijNN$ gives a bijection between Standard Young Tableaux of shape $k\times(n-k)$ with $\max(T)=k(n-k)$ and maximal faces of $\NN_{k,n}$.

  For $\NC_{k,n}$ we do not have a simple combinatorial characterization of the tableaux that arise.
  According to Lemma~\ref{le:furtherproperties1}\eqref{eq:notzero} and Lemma~\ref{le:furtherproperties3}, it is however easy to see that they must be strictly increasing along rows and columns.
\end{remark}

In the approach taken in~\cite{PPS2010} the next two corollaries follow from their Theorem 8.7.

\begin{corollary}
\label{cor:pseudomanifold}
  Every face of the reduced noncrossing complex $\NCred_{k,n}$ of co\-dimen\-sion-$1$ is contained in exactly two maximal faces.
\end{corollary}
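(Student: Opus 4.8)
The plan is to reduce the statement to one about $\NC_{k,n}$ and then argue entirely with the tableau encoding of Section~\ref{sec:decomposition}. By Proposition~\ref{prop:intro}\eqref{eq:join} one has $\NC_{k,n}=2^{S}*\NCred_{k,n}$, where $S$ is the set of the $n$ solid vertices and $2^S$ the full simplex on them. Hence, for a codimension-$1$ face $F$ of $\NCred_{k,n}$, the set $G:=S\cup F$ is a codimension-$1$ face of $\NC_{k,n}$, and the maximal faces of $\NCred_{k,n}$ containing $F$ are in bijection (by adjoining, resp.\ deleting, $S$) with the maximal faces of $\NC_{k,n}$ containing $G$. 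So it suffices to show that such a $G$ lies in exactly two maximal faces of $\NC_{k,n}$. Note that $G$ contains $I_{\hat 1}=(n-k+1,\ldots,n)\in S$, so that $\hat G:=G\setminus\{I_{\hat 1}\}\subseteq V^*_{k,n}$ still contains all of the other $n-1$ solid vertices.

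Next I would read off the marked-position bookkeeping. Let $T$ be the summing tableau of $\hat G$, so $\hat G=\bijNC(T)$ by Theorem~\ref{thm:uniquemultiset}. Since $\hat G$ contains all solid vertices except $I_{\hat 1}$, Lemma~\ref{le:furtherproperties1}\eqref{eq:notzero} and Lemma~\ref{le:furtherproperties3} show that $T$ is strictly increasing along rows and along columns with $T_{k,1}>0$; hence $\bijNC(T)$ carries exactly $k(n-k)$ marked positions. Moreover, in a revlex-max insertion table with strictly increasing rows the top row is filled from left to right, so its rightmost column always carries a mark in row $1$; together with Lemma~\ref{le:furtherproperties1}\eqref{eq:repetition} this forces \emph{every} column of $\hat G$ to carry at least one mark. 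As $\hat G$ has only $k(n-k)-1$ columns, exactly one column $b_0$ carries exactly two marks — say in rows $a_1<a_2$ — and each remaining column carries exactly one.

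Then I would identify the maximal faces through $G$ with the one-column extensions of $\hat G$. A maximal face of $\NC_{k,n}$ through $G$ is $G\cup\{I\}$ with $I\in V^*_{k,n}\setminus\hat G$ noncrossing with all of $\hat G$; conversely every such $I$ produces a maximal face, because $\hat G\cup\{I\}=\bijNC(T+\chi_I)$ and the summing tableau $T+\chi_I$ again contains all solids, hence is strictly increasing along rows and columns with positive first column, hence carries exactly $k(n-k)$ marks, one of them in the rightmost column; since $\hat G\cup\{I\}$ then has exactly $k(n-k)$ columns, each column carries exactly one mark, which by the characterization in the proof of Corollary~\ref{coro:pure} means the face is maximal. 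So the maximal faces through $G$ correspond bijectively to the vectors $I\in V^*_{k,n}\setminus\hat G$ that are noncrossing with $\hat G$, and the task is to show there are exactly two.

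Two such vectors come from the pushing procedure: Proposition~\ref{prop:furtherproperties} pushes the mark of $b_0$ in row $a_2$ to a new column $I_1$, and Remark~\ref{rem:pushing} pushes the mark of $b_0$ in row $a_1$ to a new column $I_2$; both $\hat G\cup\{I_1\}$ and $\hat G\cup\{I_2\}$ are noncrossing, and $I_1\neq I_2$ since they alter $b_0$ ``from the bottom'' and ``from the top''. For the reverse inclusion one takes an arbitrary valid $I$ and compares the marked positions of $\bijNC(T+\chi_I)$ with those of $\bijNC(T)$ via the combinatorial description of marks in Remark~\ref{rem:bars}: passing from $T$ to $T+\chi_I$ changes the relevant occurrence counts only at the pairs $(a,\,i_a-a+1)$, $a\in[k]$, so $I$ can grab at most one mark; being one column of a maximal face it grabs exactly one, at some $(a^*,\,i_{a^*}-a^*+1)$; and the column that loses this mark cannot have carried only one mark (it would otherwise carry none in the maximal face $\hat G\cup\{I\}$), so it must be $b_0$. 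Hence $a^*\in\{a_1,a_2\}$, $i_{a^*}=(b_0)_{a^*}$, and one checks that being noncrossing with $\hat G$ — in particular with $b_0$, with which $I$ shares its $a^*$-th coordinate — forces $I$ to coincide with $I_1$ (if $a^*=a_2$) or with $I_2$ (if $a^*=a_1$). The crux of the argument is exactly this last step: showing that the two pushing moves are the \emph{only} one-column extensions, which needs a careful local analysis of how the revlex-max insertion table and its marks react to incrementing the summing tableau by a single characteristic vector. (As a sanity check, for $k=2$ this specializes to the two diagonals of the unique quadrilateral left by removing a diagonal from a triangulation of the $n$-gon; and of course the statement also follows at once from Corollary~\ref{cor:reducedsphere}, since polytopal spheres are pseudomanifolds — the point of the above is to remain within the combinatorial framework of this section.)
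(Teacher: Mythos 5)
Your proposal follows essentially the same route as the paper's own proof: encode the codimension-one face as a table over $V^*_{k,n}$, use the mark count to see that exactly one column carries two marked positions, obtain the two maximal faces by pushing each of these marks (Proposition~\ref{prop:furtherproperties} and Remark~\ref{rem:pushing}), and argue that any maximal face over the given face must correspond to one of these two marks because the marked positions are intrinsic to the table. The one step you leave as ``one checks'' --- identifying an arbitrary one-column extension with the corresponding push --- is asserted at exactly the same level of detail as the paper's own claim that pushing the inherited marked position yields back the table~$L$, so there is no essential gap or divergence relative to the paper's argument.
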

\begin{proof}
  Let~$L$ be the table of a maximal face of~$\NC_{k,n}$, let~$b$ be the index of one column of~$L$, and let~$L'$ be the table of the codimension one 
  face of~$\NC_{k,n}$ obtained from~$L$ by deleting a column~$b$ that does not contain a vector that is a cyclic rotation of the vector $(1,\ldots,k)$.
  Observe that it follows from Corollary~\ref{coro:pure} that every pair $L' \subset L$ of a codimension one face contained in a maximal face is obtained this way.
  Now, every marked position in a column of~$L$ different from column~$b$ is as well a marked position in~$L'$.
  Moreover, there is a unique column of~$L'$ that contains a unique second marked position.
  Following the pushing procedure described above, we obtain that pushing this marked position again yields the table~$L$.
  Since the marked positions in~$L'$ are independent of~$L$, and since we can push exactly those two marked positions in the unique column containing them, we conclude that~$L'$ is contained in exactly two maximal faces.
\end{proof}

Corollaries~\ref{coro:pure} and~\ref{cor:pseudomanifold} together can be rephrased as follows.

\begin{corollary}
  \label{cor:pseudomanifold2}
  $\NCred_{k,n}$ is a pseudo-manifold without boundary.
  $\NC_{k,n}$ is a pseudo-manifold with boundary, its boundary consisting of the codimension one faces that do not use all the cyclic intervals.
\end{corollary}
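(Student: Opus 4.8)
The plan is to read Corollary~\ref{cor:pseudomanifold2} off Corollaries~\ref{coro:pure} and~\ref{cor:pseudomanifold} together with Proposition~\ref{prop:intro}\eqref{eq:join}. For us a \emph{pseudo-manifold with boundary} is a pure simplicial complex in which every codimension-one face lies in at most two facets, its \emph{boundary} being the subcomplex generated by the codimension-one faces lying in exactly one facet; it is \emph{without boundary} when every codimension-one face lies in exactly two facets. (If one insists on also requiring strong connectedness, it holds as well: for $\NCred_{k,n}$ this is part of Corollary~\ref{cor:reducedsphere}, and it transfers to $\NC_{k,n}$, which is a join of $\NCred_{k,n}$ with a simplex.) Throughout write $\sigma$ for the simplex on the $n$ cyclic intervals $(c,\dots,c+k-1)$, $c\in[n]$ taken mod $n$; by Proposition~\ref{prop:intro}\eqref{eq:join} these vertices cross nothing, so $\sigma$ is a face of $\NC_{k,n}$ contained in every facet, and $\NCred_{k,n}$ is by definition the link of $\sigma$.

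First I would handle $\NCred_{k,n}$. Since every facet of $\NC_{k,n}$ contains $\sigma$, it is of the form $\sigma * F$ with $F$ a facet of $\NCred_{k,n}$; by Corollary~\ref{coro:pure} all facets of $\NC_{k,n}$ have $k(n-k)+1$ vertices, so $\NCred_{k,n}$ is pure of dimension $k(n-k)-n$. Corollary~\ref{cor:pseudomanifold} says each of its codimension-one faces lies in exactly two facets, so $\NCred_{k,n}$ is a pseudo-manifold without boundary.

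Next I would treat $\NC_{k,n}$. By Corollary~\ref{coro:pure} it is pure of dimension $k(n-k)$, so a codimension-one face $G$ has $k(n-k)$ vertices and lies in some facet $F=G\cup\{v\}$; as $F$ contains all $n$ vertices of $\sigma$, the face $G$ omits at most one of them. If $G$ omits none, then $G=\sigma * G'$ with $G'$ a codimension-one face of $\NCred_{k,n}$, which by Corollary~\ref{cor:pseudomanifold} lies in exactly two facets $F'_1,F'_2$ of $\NCred_{k,n}$; since any facet of $\NC_{k,n}$ through $G$ contains $\sigma$ and deleting $\sigma$ from it yields a facet of $\NCred_{k,n}$ containing $G'$, the only facets through $G$ are $\sigma * F'_1$ and $\sigma * F'_2$. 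If $G$ omits exactly one cyclic interval $c$, write $G=\sigma' * F'$ with $\sigma'=\sigma\setminus\{c\}$; a cardinality count gives $|F'|=k(n-k)-n+1$, and since $F'$ is disjoint from $\sigma$ and $c$ crosses nothing, $\sigma * F'$ is a facet of $\NC_{k,n}$ containing $G$. Any facet $F\supseteq G$ equals $G\cup\{v\}$ and must contain $c\in\sigma$; as $c\notin\sigma'$ and $c\notin F'$ (the link of $\sigma$ uses none of the vertices of $\sigma$), necessarily $v=c$, so $\sigma * F'$ is the unique facet through $G$. Hence every codimension-one face of $\NC_{k,n}$ lies in one or two facets, $\NC_{k,n}$ is a pseudo-manifold with boundary, and its boundary is generated precisely by the codimension-one faces omitting a cyclic interval, that is, by those that do not use all the cyclic intervals.

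The only point demanding a little care is the uniqueness claim in the second case — that a codimension-one face missing a cyclic interval has a unique completion to a facet — and it dissolves as soon as one invokes Proposition~\ref{prop:intro}\eqref{eq:join} (every facet contains $\sigma$) together with the triviality that the link of a simplex contains none of that simplex's vertices. So I do not expect any real obstacle; the statement is essentially a repackaging of Corollaries~\ref{coro:pure} and~\ref{cor:pseudomanifold}.
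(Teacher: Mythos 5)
Your proof is correct and follows essentially the paper's route: the paper treats this corollary as a direct rephrasing of Corollaries~\ref{coro:pure} and~\ref{cor:pseudomanifold} together with Proposition~\ref{prop:intro}\eqref{eq:join}, which is exactly what you spell out (including the easy uniqueness argument for codimension-one faces missing one cyclic interval, which the paper leaves implicit).
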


\subsection{The Gra{\ss}mann-Tamari order}
\label{sec:GrassmannTamari}

Based on our approach to the combinatorics and on the geometry (developed in the next section)  of the noncrossing complex~$\NC_{k,n}$, we consider a natural generalization 
of the \defn{Tamari order} on triangulations of a convex polygon. To this end let the \defn{dual graph}~$G(\Delta)$ of a pure simplicial complex~$\Delta$ be the graph 
whose vertices are the maximal faces of~$\Delta$, and where two maximal faces~$F_1$ and~$F_2$ share an edge if they intersect in a face of codimension one.

\medskip

We start with recalling the definition of the Tamari order.
For further background and many more detailed see e.g.~\cite{Rea2012} and the Tamari Festschrift containing that article.
Fix~$n>2$.
The elements of the Tamari poset $\Tamari_{n}$ are triangulations of a convex $n$-gon and the Hasse diagram of~$\Tamari_{n}$ coincides, as a graph, 
with the dual graph of $\NC_{n} = \NC_{2,n}$. To specify~$\Tamari_{n}$ we thus need to orient its dual graph.
Let $T$ and $T'$ be two triangulations which differ in a single diagonal.
We then have that $T \triangle T' = \{ [i_1,i_2], [j_1,j_2] \}$, and observe that $(i_1,i_2)$ and $(j_1,j_2)$ cross, so we can assume without loss of generality that 
$i_1 < j_1 < i_2 < j_2$. We say that~$T \prec_{\Tamari_n} T'$ form a cover relation in $\Tamari_n$ if $[i_1,i_2] \in T$ and $[j_1,j_2] \in T'$.

\medskip

We next extend this ordering to the dual graph of~$\NC_{k,n}$ for general~$k$.
Let~$F$ be a face of~$\NC_{k,n}$ of codimension one that uses all the cyclic intervals. Equivalently, by 
Corollary~\ref{cor:pseudomanifold2}, let~$F$ be a codimension one face that is not in the boundary of~$\NC_{k,n}$.
Given the procedure we used in the proof of Proposition~\ref{prop:furtherproperties} we have that the table of~$F$ 
contains exactly one column with more than a single marked position.
This column contains exactly two marked positions.
By Corollary~\ref{cor:pseudomanifold},~$F$ is contained in exactly two maximal faces, obtained by \lq\lq pushing\rq\rq\ one of those two marked positions.

\begin{definition}
  \label{def:TamariGraph}
  The \defn{Gra{\ss}mann-Tamari digraph} $\vec G(\NC_{k,n})$ is the orientation on the dual graph~$G(\NC_{k,n})$ given by the following rule.
  Let ~$F_1$ and~$F_2$ be two maximal faces sharing a codimension one face~$F$. We orient the edge~$F_1 - F_2$ from~$F_1$ to~$F_2$ if~$F_1$ is obtained by pushing the lower of the two marked positions in the column of~$F$ that has two marks.
  The \defn{Gra{\ss}mann-Tamari order}~$\Tamari_{k,n}$ is the partial order on the maximal faces of $\NC_{k,n}$ obtained as the transitive closure of $\vec G(\NC_{k,n})$.
  That is, we have $F_1 <_{\Tamari_{k,n}} F_2$ for two maximal faces if there is a directed path from~$F_1$ to~$F_2$ in $\vec G(\NC_{k,n})$.
\end{definition} 

Of course, the Gra{\ss}mann-Tamari order will only be well-defined if the Gra{\ss}mann-Tamari digraph is acyclic (meaning that it does not contain directed cycles).

\begin{theorem}
\label{thm:welldefinedTamari}
  The Gra{\ss}mann-Tamari digraph is acyclic, hence the Gra{\ss}\-mann-Tamari order $\Tamari_{k,n}$ is well-defined.
  Moreover, $\Tamari_{k,n}$ has a linear extension that is a shelling order of~$\NC_{k,n}$.
\end{theorem}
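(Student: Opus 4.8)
The plan is to establish acyclicity by exhibiting a strictly monotone potential on the set of maximal faces, and then to promote a linear extension that is compatible with this potential to a shelling, using the regularity of $\NC_{k,n}$.

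For the potential, I would work through the encoding of a maximal face $F$ by its summing tableau $T(F)=\sum_{I\in F}\chi_I\in\mathbb{N}^{P_{k,n}}$ (Theorem~\ref{thm:uniquemultiset}, Lemma~\ref{lem:summingtableau}). Fix a weight vector $\mathbf{w}\in\mathbb{R}_{>0}^{P_{k,n}}$ and set $\Phi(F):=\langle\mathbf{w},T(F)\rangle=\sum_{I\in F}\langle\mathbf{w},\chi_I\rangle$. Writing $\langle\mathbf{w},\chi_I\rangle=\sum_{a=1}^{k}g_a(i_a)$ with $g_a(x)=\sum_{b\ge x-a+1}\mathbf{w}_{a,b}$ strictly decreasing, we see that $\Phi$ is a weighted version of $\sum_{I\in F}\sum_{a}i_a$ (up to an additive constant when $\mathbf{w}\equiv 1$). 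Now let $F$ be a codimension-one face in the interior of $\NC_{k,n}$ (Corollary~\ref{cor:pseudomanifold2}); by Proposition~\ref{prop:furtherproperties} and Corollaries~\ref{coro:pure} and~\ref{cor:pseudomanifold}, the table of $F$ has a unique column $C$ with two marks, say in rows $p<q$, and the two maximal faces above $F$ are $F\cup\{I'\}$ and $F\cup\{I''\}$, where $I'$ is obtained by pushing the lower (row-$q$) mark and $I''$ by pushing the upper (row-$p$) mark (using Remark~\ref{rem:pushing} for the latter), with $I'$ agreeing with $C$ on rows $\ge q$ and $I''$ agreeing with $C$ on rows $\le p$. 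Hence $\Phi(F\cup\{I''\})-\Phi(F\cup\{I'\})=\langle\mathbf{w},\chi_{I''}-\chi_{I'}\rangle$, and the plan is to analyze $\chi_{I''}-\chi_{I'}$ row by row from the explicit description of the two pushes, and to choose $\mathbf{w}$ so that this quantity is strictly positive for every interior ridge. Then $\Phi$ strictly increases along every arc of $\vec G(\NC_{k,n})$ (oriented, per Definition~\ref{def:TamariGraph}, from ``push lower mark'' to ``push upper mark''), so the digraph is acyclic and $\Tamari_{k,n}$ is well defined. I expect this weight choice to be the main obstacle: for $k=2$ the bare statistic $\sum_{I\in F}\sum_a i_a$ already works (a Tamari cover replaces a diagonal $[i_1,i_2]$ by $[j_1,j_2]$ with $i_1<j_1<i_2<j_2$, so $\sum_{[a,b]}(a+b)$ strictly increases), but for $k\ge 3$ the prefixes imported from the columns ``where the next occurrence would go'' force one to weight the rows, and one must verify that a single $\mathbf{w}$ orients all flips coherently.

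For the shelling, choose a total order $\prec$ on the maximal faces of $\NC_{k,n}$ refining both $\Tamari_{k,n}$ and the (generic, hence injective) values of $\Phi$; this is a linear extension of $\Tamari_{k,n}$. Since $\NC_{k,n}=2^{V}\ast\NCred_{k,n}$ by Proposition~\ref{prop:intro}\eqref{eq:join}, and a shelling of the factor $\NCred_{k,n}$ induces one of the join, it suffices to realize $\prec$ as a shelling of the polytopal sphere $\NCred_{k,n}$. By Theorem~\ref{th:main}, $\NC_{k,n}$ is a regular triangulation of $\oO_{k,n}$; lift $\oO_{k,n}$ by a convex function $\omega$ inducing $\NC_{k,n}$, perturb it to $\omega+\varepsilon\Phi$ for small $\varepsilon>0$ (which still induces $\NC_{k,n}$), and run a Bruggesser--Mani line shelling of the lifted polytope in the direction dual to $\mathbf{w}$. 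This visits the maximal simplices in the order $\prec$, and for each face $F$ the facets across which the flip lowers $\Phi$ are exactly the facets along which $F$ meets the $\prec$-earlier faces; by the pushing description this intersection is pure of codimension one and nonempty unless $F$ is the $\Phi$-minimal face, which is the shelling condition. The delicate points here are reconciling the abstract line-shelling order with the combinatorial order $\prec$ and the bookkeeping at the $\Phi$-extreme faces; granting these, a linear extension of the Grassmann--Tamari order shells $\NC_{k,n}$.
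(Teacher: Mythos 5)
Your plan has the right general shape (a monotonicity certificate for acyclicity, then regularity plus a line shelling), which is indeed the spirit of the paper's argument, but both halves leave precisely the hard steps unproved. For acyclicity, everything rests on the existence of one weight vector $\mathbf{w}$ making $\Phi(F)=\langle\mathbf{w},T(F)\rangle$ strictly monotone along all arcs, and you explicitly defer this (``the main obstacle''); that existence is the entire content of the statement. Moreover, as set up the sign is wrong: since $\langle(1,\ldots,1),\chi_I\rangle$ is a constant minus $\sum_a i_a$, your $k=2$ sanity check with $\sum_{[a,b]}(a+b)$ corresponds to a \emph{negative} weighting of the summing tableau. Concretely, in Example~\ref{ex:bendingvector} the Tamari arc runs from the facet obtained by pushing the lower mark (new vertex $J_2=(1,4,6)$) to the one obtained by pushing the upper mark (new vertex $J_1=(2,4,7)$), and $\chi_{J_1}-\chi_{J_2}$ is entrywise $\le 0$, so for every strictly positive $\mathbf{w}$ your $\Phi$ strictly \emph{decreases} along this arc. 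Even after flipping the sign, you would still have to prove that a single functional orients every interior ridge coherently for all $k$; the paper does the corresponding work by computing the ridge geometry exactly: the hyperplane spanned by an interior codimension-one face has normal the bending vector $\bend_{(I,X)}$ of its doubly marked column (Lemma~\ref{lemma:ridge_hyperplane}), and one fixed interior point $o$ with $o_{a,b}=ab$ satisfies $\langle\bend_{(I,X)},o\rangle>0$ for \emph{all} bending vectors (Lemma~\ref{lemma:positive}). Without a proof of your inequality, or a substitute of this kind, acyclicity is not established.

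The shelling half has a further gap: you assert that a Bruggesser--Mani line shelling of the lifted polytope ``in the direction dual to $\mathbf{w}$'' visits the facets in your chosen order $\prec$. A line shelling through a generic interior point orders the facets by the values at that point of the local linear interpolants of the lift; this is not a linear function of the summing tableaux and there is no reason it should refine a prescribed total order $\prec$ (also, adding $\varepsilon\Phi$, a linear function of the vertex coordinates, to the lift changes nothing about the regular triangulation). What can be proved, and what the paper proves, is weaker but sufficient: the Gra{\ss}mann--Tamari orientation coincides with the central orientation $\vec G(\NC_{k,n},o)$ from the fixed point $o$ above, and Lemma~\ref{lemma:central-shelling} (regularity of $\NC_{k,n}$ plus the vertical line shelling through $o$) shows that this orientation is acyclic and is induced by \emph{some} shelling order, hence some linear extension of $\Tamari_{k,n}$ shells $\NC_{k,n}$. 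So the two missing ingredients in your write-up --- a verified universal inequality over all interior ridges, and a verified identification of the Tamari orientation with an orientation induced by a shelling --- are exactly the bending-vector lemmas and the central-orientation lemma that carry the paper's proof.
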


Our proof of this theorem relies on the geometry of the noncrossing complex.
It is thus postponed to Section~\ref{sec:shelling}.

\begin{example}
  \label{ex:tamari}
  Figure~\ref{fig:tamari} shows the Gra{\ss}mann-Tamari posets for~$n=5, k\in\{2,3\}$.
\end{example}

\begin{figure}
  \centering
  \begin{tikzpicture}[node distance=0pt,outer sep = 0pt,inner sep=0pt,scale=.9]
    \tikzstyle{bsq}=[rectangle,draw=black,opacity=.25,fill opacity=1,minimum width=.4cm, minimum height=.4cm]

    \node[bsq] (11) at (0,0)         {1};
    \node[bsq] (12) [right = of 11]  {\bf 1};
    \node[bsq] (13) [right = of 12]  {\bf 1};
    \node[bsq] (14) [right = of 13]  {1};
    \node[bsq] (15) [right = of 14]  {2};
    \node[bsq] (16) [right = of 15]  {3};
    \node[bsq] (17) [right = of 16]  {4};

    \node[bsq] (21) [below = of 11]  {2};
    \node[bsq] (22) [right = of 21]  {\bf 3};
    \node[bsq] (23) [right = of 22]  {\bf 4};
    \node[bsq] (24) [right = of 23]  {5};
    \node[bsq] (25) [right = of 24]  {3};
    \node[bsq] (26) [right = of 25]  {4};
    \node[bsq] (27) [right = of 26]  {5};

    \draw[ thin] (14) circle(0.20);
    \draw[ thin] (15) circle(0.20);
    \draw[ thin] (16) circle(0.20);

    \draw[ thin] (21) circle(0.20); 
    \draw[ thin] (22) circle(0.20);
    \draw[ thin] (23) circle(0.20);

    \node[bsq] (11) at (1.5,3)       {1};
    \node[bsq] (12) [right = of 11]  {\bf 1};
    \node[bsq] (13) [right = of 12]  {1};
    \node[bsq] (14) [right = of 13]  {2};
    \node[bsq] (15) [right = of 14]  {3};
    \node[bsq] (16) [right = of 15]  {\bf 3};
    \node[bsq] (17) [right = of 16]  {4};

    \node[bsq] (21) [below = of 11]  {2};
    \node[bsq] (22) [right = of 21]  {\bf 3};
    \node[bsq] (23) [right = of 22]  {5};
    \node[bsq] (24) [right = of 23]  {3};
    \node[bsq] (25) [right = of 24]  {4};
    \node[bsq] (26) [right = of 25]  {\bf 5};
    \node[bsq] (27) [right = of 26]  {5};

    \draw[ thin] (13) circle(0.20);
    \draw[ thin] (14) circle(0.20);
    \draw[ thin] (16) circle(0.20);

    \draw[ thin] (21) circle(0.20); 
    \draw[ thin] (22) circle(0.20);
    \draw[ thin] (25) circle(0.20);

    \node[bsq] (11) at (-2,2)        {1};
    \node[bsq] (12) [right = of 11]  {\bf 1};
    \node[bsq] (13) [right = of 12]  {1};
    \node[bsq] (14) [right = of 13]  {2};
    \node[bsq] (15) [right = of 14]  {\bf 2};
    \node[bsq] (16) [right = of 15]  {3};
    \node[bsq] (17) [right = of 16]  {4};

    \node[bsq] (21) [below = of 11]  {2};
    \node[bsq] (22) [right = of 21]  {\bf 4};
    \node[bsq] (23) [right = of 22]  {5};
    \node[bsq] (24) [right = of 23]  {3};
    \node[bsq] (25) [right = of 24]  {\bf 4};
    \node[bsq] (26) [right = of 25]  {4};
    \node[bsq] (27) [right = of 26]  {5};

    \draw[ thin] (13) circle(0.20);
    \draw[ thin] (15) circle(0.20);
    \draw[ thin] (16) circle(0.20);

    \draw[ thin] (21) circle(0.20); 
    \draw[ thin] (22) circle(0.20);
    \draw[ thin] (24) circle(0.20);

    \node[bsq] (11) at (-2,4)        {1};
    \node[bsq] (12) [right = of 11]  {1};
    \node[bsq] (13) [right = of 12]  {2};
    \node[bsq] (14) [right = of 13]  {\bf 2};
    \node[bsq] (15) [right = of 14]  {\bf 2};
    \node[bsq] (16) [right = of 15]  {3};
    \node[bsq] (17) [right = of 16]  {4};

    \node[bsq] (21) [below = of 11]  {2};
    \node[bsq] (22) [right = of 21]  {5};
    \node[bsq] (23) [right = of 22]  {3};
    \node[bsq] (24) [right = of 23]  {\bf 4};
    \node[bsq] (25) [right = of 24]  {\bf 5};
    \node[bsq] (26) [right = of 25]  {4};
    \node[bsq] (27) [right = of 26]  {5};

    \draw[ thin] (12) circle(0.20);
    \draw[ thin] (15) circle(0.20);
    \draw[ thin] (16) circle(0.20);

    \draw[ thin] (21) circle(0.20); 
    \draw[ thin] (23) circle(0.20);
    \draw[ thin] (24) circle(0.20);

    \node[bsq] (11) at (0,6)         {1};
    \node[bsq] (12) [right = of 11]  {1};
    \node[bsq] (13) [right = of 12]  {2};
    \node[bsq] (14) [right = of 13]  {\bf 2};
    \node[bsq] (15) [right = of 14]  {3};
    \node[bsq] (16) [right = of 15]  {\bf 3};
    \node[bsq] (17) [right = of 16]  {4};

    \node[bsq] (21) [below = of 11]  {2};
    \node[bsq] (22) [right = of 21]  {5};
    \node[bsq] (23) [right = of 22]  {3};
    \node[bsq] (24) [right = of 23]  {\bf 5};
    \node[bsq] (25) [right = of 24]  {4};
    \node[bsq] (26) [right = of 25]  {\bf 5};
    \node[bsq] (27) [right = of 26]  {5};

    \draw[ thin] (12) circle(0.20);
    \draw[ thin] (14) circle(0.20);
    \draw[ thin] (16) circle(0.20);

    \draw[ thin] (21) circle(0.20); 
    \draw[ thin] (23) circle(0.20);
    \draw[ thin] (25) circle(0.20);

    \draw[line width=1pt,black,->] {(0.5,0.25)--(-1,1.25)};
    \draw[line width=1pt,black,->] {(1.5,0.25)--(3,2.25)};
    \draw[line width=1pt,black,->] {(-1.25,2.35)--(-1.25,3.35)};
    \draw[line width=1pt,black,->] {(-1,4.25)--(0.5,5.25)};
    \draw[line width=1pt,black,->] {(3,3.35)--(1.5,5.25)};
    
  \end{tikzpicture}
  \quad
  \begin{tikzpicture}[node distance=0pt,outer sep = 0pt,inner sep=0pt,scale=.9]
      \tikzstyle{bsq}=[rectangle,draw=black,opacity=.25,fill opacity=1,minimum width=.4cm, minimum height=.4cm]

    \node[bsq] (11) at (0,0)         {1};
    \node[bsq] (12) [right = of 11]  {\bf 1};
    \node[bsq] (13) [right = of 12]  {1};
    \node[bsq] (14) [right = of 13]  {\bf 1};
    \node[bsq] (15) [right = of 14]  {1};
    \node[bsq] (16) [right = of 15]  {2};
    \node[bsq] (17) [right = of 16]  {3};

    \node[bsq] (21) [below = of 11]  {2};
    \node[bsq] (22) [right = of 21]  {\bf 2};
    \node[bsq] (23) [right = of 22]  {2};
    \node[bsq] (24) [right = of 23]  {\bf 3};
    \node[bsq] (25) [right = of 24]  {4};
    \node[bsq] (26) [right = of 25]  {3};
    \node[bsq] (27) [right = of 26]  {4};

    \node[bsq] (31) [below = of 21]  {3};
    \node[bsq] (32) [right = of 31]  {\bf 4};
    \node[bsq] (33) [right = of 32]  {5};
    \node[bsq] (34) [right = of 33]  {\bf 4};
    \node[bsq] (35) [right = of 34]  {5};
    \node[bsq] (36) [right = of 35]  {4};
    \node[bsq] (37) [right = of 36]  {5};

    \draw[ thin] (15) circle(0.20);
    \draw[ thin] (16) circle(0.20);

    \draw[ thin] (23) circle(0.20); 
    \draw[ thin] (24) circle(0.20);

    \draw[ thin] (31) circle(0.20); 
    \draw[ thin] (32) circle(0.20);

    \node[bsq] (11) at (-2,2)        {1};
    \node[bsq] (12) [right = of 11]  {1};
    \node[bsq] (13) [right = of 12]  {\bf 1};
    \node[bsq] (14) [right = of 13]  {\bf 1};
    \node[bsq] (15) [right = of 14]  {1};
    \node[bsq] (16) [right = of 15]  {2};
    \node[bsq] (17) [right = of 16]  {3};

    \node[bsq] (21) [below = of 11]  {2};
    \node[bsq] (22) [right = of 21]  {2};
    \node[bsq] (23) [right = of 22]  {\bf 3};
    \node[bsq] (24) [right = of 23]  {\bf 3};
    \node[bsq] (25) [right = of 24]  {4};
    \node[bsq] (26) [right = of 25]  {3};
    \node[bsq] (27) [right = of 26]  {4};

    \node[bsq] (31) [below = of 21]  {3};
    \node[bsq] (32) [right = of 31]  {5};
    \node[bsq] (33) [right = of 32]  {\bf 4};
    \node[bsq] (34) [right = of 33]  {\bf 5};
    \node[bsq] (35) [right = of 34]  {5};
    \node[bsq] (36) [right = of 35]  {4};
    \node[bsq] (37) [right = of 36]  {5};

    \draw[ thin] (15) circle(0.20);
    \draw[ thin] (16) circle(0.20);

    \draw[ thin] (22) circle(0.20); 
    \draw[ thin] (24) circle(0.20);

    \draw[ thin] (31) circle(0.20); 
    \draw[ thin] (33) circle(0.20);

    \node[bsq] (11) at (-2,4)        {1};
    \node[bsq] (12) [right = of 11]  {1};
    \node[bsq] (13) [right = of 12]  {\bf 1};
    \node[bsq] (14) [right = of 13]  {1};
    \node[bsq] (15) [right = of 14]  {2};
    \node[bsq] (16) [right = of 15]  {\bf 2};
    \node[bsq] (17) [right = of 16]  {3};

    \node[bsq] (21) [below = of 11]  {2};
    \node[bsq] (22) [right = of 21]  {2};
    \node[bsq] (23) [right = of 22]  {\bf 3};
    \node[bsq] (24) [right = of 23]  {4};
    \node[bsq] (25) [right = of 24]  {3};
    \node[bsq] (26) [right = of 25]  {\bf 3};
    \node[bsq] (27) [right = of 26]  {4};

    \node[bsq] (31) [below = of 21]  {3};
    \node[bsq] (32) [right = of 31]  {5};
    \node[bsq] (33) [right = of 32]  {\bf 5};
    \node[bsq] (34) [right = of 33]  {5};
    \node[bsq] (35) [right = of 34]  {4};
    \node[bsq] (36) [right = of 35]  {\bf 5};
    \node[bsq] (37) [right = of 36]  {5};

    \draw[ thin] (14) circle(0.20);
    \draw[ thin] (16) circle(0.20);

    \draw[ thin] (22) circle(0.20); 
    \draw[ thin] (23) circle(0.20);

    \draw[ thin] (31) circle(0.20); 
    \draw[ thin] (35) circle(0.20);

    \node[bsq] (11) at (0,6)         {1};
    \node[bsq] (12) [right = of 11]  {1};
    \node[bsq] (13) [right = of 12]  {1};
    \node[bsq] (14) [right = of 13]  {2};
    \node[bsq] (15) [right = of 14]  {\bf 2};
    \node[bsq] (16) [right = of 15]  {\bf 2};
    \node[bsq] (17) [right = of 16]  {3};

    \node[bsq] (21) [below = of 11]  {2};
    \node[bsq] (22) [right = of 21]  {2};
    \node[bsq] (23) [right = of 22]  {4};
    \node[bsq] (24) [right = of 23]  {3};
    \node[bsq] (25) [right = of 24]  {\bf 3};
    \node[bsq] (26) [right = of 25]  {\bf 4};
    \node[bsq] (27) [right = of 26]  {4};

    \node[bsq] (31) [below = of 21]  {3};
    \node[bsq] (32) [right = of 31]  {5};
    \node[bsq] (33) [right = of 32]  {5};
    \node[bsq] (34) [right = of 33]  {4};
    \node[bsq] (35) [right = of 34]  {\bf 5};
    \node[bsq] (36) [right = of 35]  {\bf 5};
    \node[bsq] (37) [right = of 36]  {5};

    \draw[ thin] (13) circle(0.20);
    \draw[ thin] (16) circle(0.20);

    \draw[ thin] (22) circle(0.20); 
    \draw[ thin] (25) circle(0.20);

    \draw[ thin] (31) circle(0.20); 
    \draw[ thin] (34) circle(0.20);

    \node[bsq] (11) at (1.5,3)       {1};
    \node[bsq] (12) [right = of 11]  {\bf 1};
    \node[bsq] (13) [right = of 12]  {1};
    \node[bsq] (14) [right = of 13]  {1};
    \node[bsq] (15) [right = of 14]  {2};
    \node[bsq] (16) [right = of 15]  {\bf 2};
    \node[bsq] (17) [right = of 16]  {3};

    \node[bsq] (21) [below = of 11]  {2};
    \node[bsq] (22) [right = of 21]  {\bf 2};
    \node[bsq] (23) [right = of 22]  {2};
    \node[bsq] (24) [right = of 23]  {4};
    \node[bsq] (25) [right = of 24]  {3};
    \node[bsq] (26) [right = of 25]  {\bf 4};
    \node[bsq] (27) [right = of 26]  {4};

    \node[bsq] (31) [below = of 21]  {3};
    \node[bsq] (32) [right = of 31]  {\bf 4};
    \node[bsq] (33) [right = of 32]  {5};
    \node[bsq] (34) [right = of 33]  {5};
    \node[bsq] (35) [right = of 34]  {4};
    \node[bsq] (36) [right = of 35]  {\bf 5};
    \node[bsq] (37) [right = of 36]  {5};

    \draw[ thin] (14) circle(0.20);
    \draw[ thin] (16) circle(0.20);

    \draw[ thin] (23) circle(0.20); 
    \draw[ thin] (25) circle(0.20);

    \draw[ thin] (31) circle(0.20); 
    \draw[ thin] (32) circle(0.20);

    \draw[line width=1pt,black,->] {(0.5,0.25)--(-1,0.85)};
    \draw[line width=1pt,black,->] {(1.5,0.25)--(3,1.85)};
    \draw[line width=1pt,black,->] {(-1.25,2.35)--(-1.25,2.95)};
    \draw[line width=1pt,black,->] {(-1,4.25)--(0.5,4.85)};
    \draw[line width=1pt,black,->] {(3,3.35)--(1.5,4.85)};
    
  \end{tikzpicture}

  \caption{The Gra{\ss}mann-Tamari posets for $\NC_{5,2}$ and for $\NC_{5,3}$.}
  \label{fig:tamari}
\end{figure}
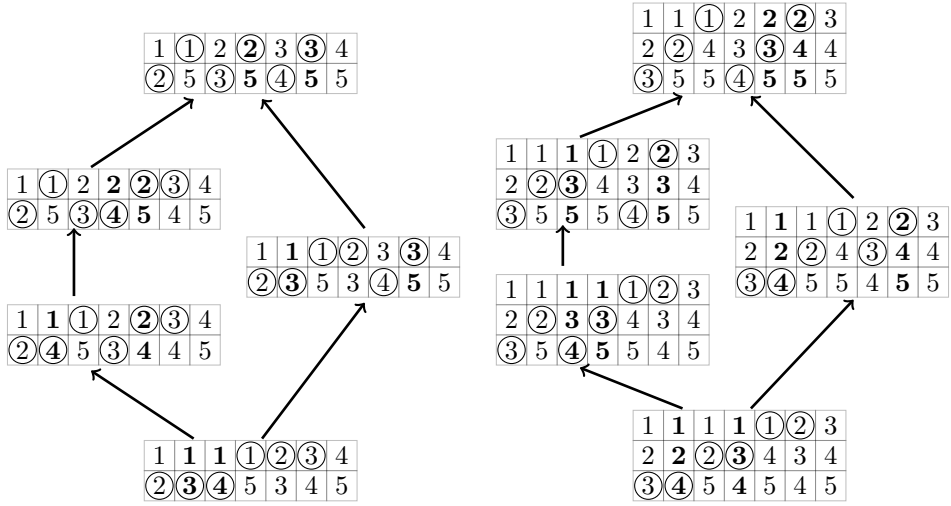

\begin{proposition}
  \label{prop:tamariproperties}
  The Gra{\ss}mann-Tamari order has the following properties:
  \begin{enumerate}
    \item The map induced on $\Tamari_{k,n}$ by $a \mapsto n+1-a$ is order reversing .
    In particular, $\Tamari_{k,n}$ is self-dual. \label{eq:tamari1}
    \item  The map from $\Tamari_{k,n}$ to $\Tamari_{n-k,n}$ induced by $I \mapsto [n] \setminus I$ is order reversing.
    In particular, $\Tamari_{k,n} \cong \Tamari_{n-k,n}$. \label{eq:tamari2}
  \end{enumerate}
\end{proposition}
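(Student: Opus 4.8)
We will deduce both statements from a single claim: \emph{each of the two maps reverses every arc of the Gra\ss{}mann--Tamari digraph} $\vec G(\NC_{k,n})$. Passing to the transitive closure then shows that $a\mapsto n+1-a$ induces an order-reversing bijection of $\Tamari_{k,n}$, i.e.\ an isomorphism $\Tamari_{k,n}\cong\Tamari_{k,n}^{\mathrm{op}}$, which is~\eqref{eq:tamari1}; and likewise that $I\mapsto[n]\setminus I$ induces an isomorphism $\Tamari_{k,n}\cong\Tamari_{n-k,n}^{\mathrm{op}}$, which combined with~\eqref{eq:tamari1} applied to $\Tamari_{n-k,n}$ gives $\Tamari_{k,n}\cong\Tamari_{n-k,n}$, proving~\eqref{eq:tamari2}. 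By Proposition~\ref{prop:intro}\eqref{eq:reflectionsymmetry} and~\eqref{eq:complementarity} both maps are isomorphisms of the relevant noncrossing complexes, hence induce isomorphisms of the dual graphs, so only the \emph{orientations} have to be compared. By Definition~\ref{def:TamariGraph} (together with Corollary~\ref{cor:pseudomanifold2}) the orientation of the arc across a codimension-one, non-boundary face $F$ is determined by two data attached to the table of $F$: its unique column carrying two marked positions, and which of these two marks is the \emph{lower} one (the one in the row of larger index). So it suffices to track, for each map, its action on tables, on marked positions, and on the ``pushing'' operation from the proof of Proposition~\ref{prop:furtherproperties}.

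Consider first the reflection $\sigma\colon a\mapsto n+1-a$. By Corollary~\ref{cor:mapsontableaux} it acts on summing tableaux by a $180^{\circ}$ rotation; equivalently, on the table of a face it simultaneously reverses the order of the rows, $a\leftrightarrow k+1-a$, and the order of the columns, and complements the entries of a row. By Remark~\ref{rem:topbottom} the noncrossing decomposition can be computed ``from the bottom up'', and applying $\sigma$ is exactly what turns the top-to-bottom procedure for $F$ into the bottom-to-top procedure for $\sigma(F)$; since by Remark~\ref{rem:bars} the marked positions do not depend on the direction of filling, $\sigma$ carries the marks of $F$ bijectively onto the marks of $\sigma(F)$, a mark in row $a$ going to a mark in row $k+1-a$. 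In particular $\sigma$ takes the special two-marked column of $F$ to that of $\sigma(F)$ but exchanges its lower and upper mark. Finally, by Remark~\ref{rem:pushing}, pushing the lower (last) marked position and pushing the upper (first) marked position are interchanged by this top-to-bottom symmetry; hence $\sigma$ sends the maximal face $F_1$ obtained by pushing the lower mark of $F$ to the face obtained by pushing the upper mark of $\sigma(F)$, which is $\sigma(F_2)$, and conversely, so the arc $F_1\to F_2$ is carried to $\sigma(F_2)\to\sigma(F_1)$. Thus $\sigma$ reverses $\vec G(\NC_{k,n})$.

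For the complementation $\tau\colon I\mapsto[n]\setminus I$ the argument has the same shape, with the transposition of summing tableaux along the north-west-to-south-east diagonal (Corollary~\ref{cor:mapsontableaux}) in place of the $180^{\circ}$ rotation; write $T^{t}$ for this transpose. One first checks that $\tau$ is equivariant for the noncrossing decomposition, i.e.\ $\bijNC(T^{t})=\tau\bigl(\bijNC(T)\bigr)$ for every tableau $T$: by Corollary~\ref{cor:mapsontableaux} the multiset $\tau(\bijNC(T))$ has summing tableau $T^{t}$ and is mutually noncrossing in $V^{*}_{n-k,n}$, so the claim follows from the uniqueness in Theorem~\ref{thm:uniquemultiset}. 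Next one must verify that the noncrossing marking rule of Remark~\ref{rem:bars} is carried by $\tau$ to the noncrossing marking rule for $\NC_{n-k,n}$, so that $\tau$ identifies the marks of $F$ with the marks of $\tau(F)$ and, in particular, the special two-marked column of $F$ with that of $\tau(F)$; and, crucially, that under this identification the \emph{lower} of the two marks of $F$ becomes the \emph{upper} of the two marks of $\tau(F)$ --- equivalently, that the ``pushing'' move of Proposition~\ref{prop:furtherproperties} intertwines with $\tau$ in the same orientation-reversing way it does with $\sigma$. Granting this, the same computation as for $\sigma$ shows that $\tau$ carries $\vec G(\NC_{k,n})$ onto $\vec G(\NC_{n-k,n})^{\mathrm{op}}$.

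The routine parts are that $\sigma$ and $\tau$ are simplicial isomorphisms (Proposition~\ref{prop:intro}), that the transitive closure of a reversed digraph is the opposite poset, and the self-duality bootstrap needed to pass from $\Tamari_{k,n}\cong\Tamari_{n-k,n}^{\mathrm{op}}$ to $\Tamari_{k,n}\cong\Tamari_{n-k,n}$. I expect the main obstacle to be the verification, in the $\tau$-case, that transposition reverses the top-to-bottom order of the two marks in the distinguished column and intertwines the pushing procedure accordingly. This is the transposed analogue of the top-to-bottom symmetry recorded in Remarks~\ref{rem:topbottom},~\ref{rem:bars} and~\ref{rem:pushing}, but --- unlike the reflection, which merely permutes rows --- transposition genuinely exchanges the row index of a mark with the index of the value it records, and it also shifts which cyclic-interval vertex plays the role of the distinguished vertex $(n-k+1,\dots,n)$. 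One therefore has to re-run the bookkeeping for the insertion procedure of Section~\ref{sec:decomposition} with transposition in place of reflection, using the explicit revlex-smallest-prefix / lex-largest-suffix description of the noncrossing marks, rather than merely quoting those remarks.
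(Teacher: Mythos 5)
Your overall strategy (show that each map reverses every arc of the Gra\ss{}mann--Tamari digraph and then pass to transitive closures) is exactly the paper's, and your treatment of part~\eqref{eq:tamari1} matches the paper's proof, which simply invokes Remarks~\ref{rem:topbottom},~\ref{rem:bars} and~\ref{rem:pushing}. The problem is part~\eqref{eq:tamari2}: the one step that carries all the content --- that complementation sends ``push the lower mark'' to ``push the higher mark'' --- is precisely the step you do not prove. You write ``Granting this\dots'' and then, in your closing paragraph, concede that one ``has to re-run the bookkeeping'' for the insertion procedure under transposition. That bookkeeping \emph{is} the proof of~\eqref{eq:tamari2}; leaving it as a flagged obstacle means the proposal has a genuine gap rather than a complete argument.

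Moreover, the specific claim you propose to verify is doubtful as stated. You want $\tau\colon I\mapsto[n]\setminus I$ to identify the marks of $F$ with the marks of $\tau(F)$, ``in particular the special two-marked column of $F$ with that of $\tau(F)$.'' If this identification is meant to be columnwise (the doubly-marked column of $\tau(F)$ being the complement of the doubly-marked column of $F$), it already fails in the paper's own worked example for $(k,n)=(2,5)$: there the doubly-marked column of $F$ is $(2,4)$, whose complement is $(1,3,5)$, while the doubly-marked column of $G=\tau(F)$ is $(2,3,4)$. The paper's proof avoids any such mark-by-mark correspondence. Instead it argues on \emph{values}: using Lemma~\ref{le:furtherproperties1}\eqref{eq:firstdiff} (each column of a maximal face, except the last, has a unique mark, and agrees with the column to its right above that mark) it shows that the two marked values in the doubly-marked column of $F$ coincide, as a pair, with the two marked values in the doubly-marked column of $G$; since one of these values is the marked value of the new column of $F_1$ and $G_1=[n]\setminus F_1$, the \emph{other} value must be the marked value of the new column of $G_1$. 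Because within a column the lower mark carries the larger value, this yields ``lower in $F$ $\leftrightarrow$ higher in $G$'' and hence arc reversal. To repair your proposal you would need either to reproduce this value-based argument or to carry out, in full, the transposed insertion bookkeeping you postponed --- and in doing so you should abandon the expectation that $\tau$ matches doubly-marked columns to doubly-marked columns.
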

\begin{proof}
  Both parts of~\eqref{eq:tamari1} follow from the discussion in Remarks~\ref{rem:topbottom},~\ref{rem:bars}, and~\ref{rem:pushing}.

  For~\eqref{eq:tamari2}, let~$F_1$ and~$F_2$ be two maximal faces of~$\NC_{k,n}$ sharing a face~$F=F_1 \cap F_2$ of codimension one, and let 
  $G_i = [n] \setminus F_i$, $i=1,2$, and $G = G_1 \cap G_2$ be the two complementary maximal faces in ~$\NC_{n-k,n}$ and their intersection.
  Proposition~\ref{prop:intro}\eqref{eq:complementarity} implies that~$G$ is a face of codimension one as well.
  It is thus left to show that that if~$F_1$ is obtained from~$F$ by pushing the lower of the two marked 
  positions in the appropriate column of the table for~$F$, then~$G_1$ is  obtained from~$G$ by pushing the higher of the two marked positions again in the appropriate column of~$G$.
  To prove this, recall first that any column in any maximal face of $\NC_{k,n}$ and of $\NC_{n-k,n}$ (except the last) contains 
  a unique marked position, and this column and the column to its right coincide above this marked position by Lemma~\ref{le:furtherproperties1}\eqref{eq:firstdiff}.
  This implies that the values of the two marked positions in the table for~$F$ and in the table for~$G$ coincide.
  Since one of these two values is also the value of the marked position in~$F_1$, the other value must be the value of the marked position in~$G_1$ (since $G_1 = [n] \setminus F_1$).
  This finally yields that if pushing the lower entry in~$F$ yields~$F_1$, then pushing the higher entry in~$G$ yields~$G_1$, as desired.
  As an example, consider the cover relation on the left in the two Gra{\ss}mann-Tamari posets in Figure~\ref{fig:tamari}, and call the maximal 
  faces~$F_1 \prec_{\Tamari_{2,5}} F_2$ in the left poset, and~$G_2  \prec_{\Tamari_{3,5}} G_1$ in the right poset.
  Then~$F = F_1 \cap F_2$ and of $G = G_1 \cap G_2$ are given by

  \begin{figure}[h]
    \centering
    \begin{tikzpicture}[node distance=0pt,outer sep = 0pt,inner sep=0pt,scale=.9]
      \tikzstyle{bsq}=[rectangle,draw=black,opacity=.25,fill opacity=1,minimum width=.4cm, minimum height=.4cm]
  
    \node[bsq] (12) at (0,0)        {1};
    \node[bsq] (13) [right = of 12]  {1};
    \node[bsq] (14) [right = of 13]  {2};
    \node[bsq] (15) [right = of 14]  {\bf 2};
    \node[bsq] (16) [right = of 15]  {3};
    \node[bsq] (17) [right = of 16]  {4};

    \node[bsq] (22) [below = of 12]  {2};
    \node[bsq] (23) [right = of 22]  {5};
    \node[bsq] (24) [right = of 23]  {3};
    \node[bsq] (25) [right = of 24]  {\bf 4};
    \node[bsq] (26) [right = of 25]  {4};
    \node[bsq] (27) [right = of 26]  {5};

    \draw[ thin] (13) circle(0.20);
    \draw[ thin] (15) circle(0.20);
    \draw[ thin] (16) circle(0.20);

    \draw[ thin] (25) circle(0.20); 
    \draw[ thin] (22) circle(0.20);
    \draw[ thin] (24) circle(0.20);

    \node  (00) at (-.8,-.25)  {$F = $};
    \node (asdf) at (0,-.9) {};
  \end{tikzpicture}
  \qquad
    \begin{tikzpicture}[node distance=0pt,outer sep = 0pt,inner sep=0pt,scale=.9]
      \tikzstyle{bsq}=[rectangle,draw=black,opacity=.25,fill opacity=1,minimum width=.4cm, minimum height=.4cm]
  
    \node[bsq] (11) at (0,0)        {1};
    \node[bsq] (12) [right = of 11]  {1};
    \node[bsq] (13) [right = of 12]  {\bf 1};
    \node[bsq] (14) [right = of 13]  {1};
    \node[bsq] (15) [right = of 14]  {2};
    \node[bsq] (17) [right = of 15]  {3};

    \node[bsq] (21) [below = of 11]  {2};
    \node[bsq] (22) [right = of 21]  {2};
    \node[bsq] (23) [right = of 22]  {\bf 3};
    \node[bsq] (24) [right = of 23]  {4};
    \node[bsq] (25) [right = of 24]  {3};
    \node[bsq] (27) [right = of 25]  {4};

    \node[bsq] (31) [below = of 21]  {3};
    \node[bsq] (32) [right = of 31]  {5};
    \node[bsq] (33) [right = of 32]  {\bf 5};
    \node[bsq] (34) [right = of 33]  {5};
    \node[bsq] (35) [right = of 34]  {4};
    \node[bsq] (37) [right = of 35]  {5};

    \draw[ thin] (14) circle(0.20);
    \draw[ thin] (15) circle(0.20);

    \draw[ thin] (22) circle(0.20); 
    \draw[ thin] (23) circle(0.20);

    \draw[ thin] (31) circle(0.20); 
    \draw[ thin] (35) circle(0.20);
    \node  (00) at (-.8,-.45)  {$G = $};

  \end{tikzpicture}
  \end{figure}
  \noindent and the values of the two marked positions in unique columns of~$F$ and~$G$ containing two marked positions are~$2$ and~$4$.
  Pushing the~$4$ in~$F$ yields~$F_1$, and pushing the~$2$ in~$G$ yields~$G_1$, as desired.
\end{proof}

It is straightforward to see that the Gra{\ss}mann-Tamari order restricts to the usual Tamari order for $k=2$.
The latter is well known to be a selfdual lattice.
We conjecture this as well for the Gra{\ss}mann-Tamari order for general~$k$, tested for~$n \in \{ 6,7,8\}$ and~$k=3$.

\begin{conjecture}
  The Gra{\ss}mann-Tamari poset~$\Tamari_{k,n}$ is a lattice.
\end{conjecture}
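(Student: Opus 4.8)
We sketch a possible approach. The idea is to realize $\Tamari_{k,n}$ as the increasing-flip order of a non-kissing complex on a grid and to prove the lattice property through a forcing closure operator on arcs, in the spirit of the classical bracketing proof for $k=2$ and of the lattice proofs for related oriented flip graphs such as McConville's grid-Tamari orders. First note that, by Definition~\ref{def:noncrossing}, two elements $I,J\in V_{k,n}$ cross precisely when the monotone lattice paths from $(0,0)$ to $(k,n-k)$ representing them (Figure~\ref{fig:poset}) \emph{kiss}: they agree along a common sub-walk and then diverge with one weakly above the other. Hence $\NC_{k,n}$ is the non-kissing complex of monotone lattice paths in the $k\times(n-k)$ grid, the solid elements of Proposition~\ref{prop:intro}\eqref{eq:join} being exactly the $n$ paths that kiss no other path, and $\NCred_{k,n}$ is the corresponding reduced complex. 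Under this dictionary the marked positions of Section~\ref{sec:combinatorics} and the bar-pushing procedure from the proofs of Proposition~\ref{prop:furtherproperties} and Corollary~\ref{cor:pseudomanifold} are the internal arcs of a facet and the flips between facets, and the cover rule of Definition~\ref{def:TamariGraph}---resolving a codimension-one face by pushing the lower versus the higher of its two marks---is precisely the decreasing versus increasing flip for the natural orientation of the grid. So $\Tamari_{k,n}$ is the transitive closure of the oriented flip graph, the \emph{grid-Tamari order}, which for $k=2$ reduces to the familiar fact that increasing flips of polygon triangulations yield the Tamari lattice.

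Two routine reductions follow. By Theorem~\ref{thm:welldefinedTamari} the digraph $\vec G(\NC_{k,n})$ is acyclic and shellable; its unique source and sink are the two extreme maximal non-kissing collections---the facets whose resolutions at every codimension-one face always take, respectively, the lower or the higher mark---so $\Tamari_{k,n}$ has a bottom $\hat 0$ and a top $\hat 1$. A finite bounded poset is a lattice as soon as every pair of elements has a join, the meet of $x$ and $y$ being then the join of all their common lower bounds; and by the complementation isomorphism $\Tamari_{k,n}\cong\Tamari_{n-k,n}$ of Proposition~\ref{prop:tamariproperties}\eqref{eq:tamari2} it is equivalent to prove that every pair has a meet. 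It therefore suffices to construct the join $F\vee G$ of two facets.

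The core of the argument is to attach to each facet $F$ and each internal arc $\beta\in F$ a \emph{forcing set} $\mathrm{Force}_F(\beta)\subseteq V^*_{k,n}$ of vectors that are obliged to occur in every facet reachable from $F$ in $\vec G(\NC_{k,n})$ that still contains $\beta$. These local relations generate a closure operator $X\mapsto\overline{X}$ on sets of vectors. One then proves that a facet $F$ is recovered from $D(F):=\overline{\bigcup_{\beta\in F}\mathrm{Force}_F(\beta)}$, that $F\le_{\Tamari_{k,n}}G$ if and only if $D(F)\subseteq D(G)$, and that the sets arising as $D(F)$, ordered by inclusion, form a lattice isomorphic to $\Tamari_{k,n}$; the join operation of that lattice then supplies $F\vee G$, namely the facet whose $D$-set is the least $D(H)$ containing $D(F)\cup D(G)$. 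As a bonus this realizes $\Tamari_{k,n}$ as a congruence-uniform, hence semidistributive, lattice.

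The step I expect to be the main obstacle is the \emph{confluence} of the forcing closure: that performing forced increasing flips, starting from any facet above a given pair $F,G$, always terminates at one and the same minimal common upper bound, regardless of the order of the flips. For $k=2$ this is exactly Tamari's bracketing argument, where the forced rotations never compete; for general $k$ the combinatorics of kissing paths in a grid replaces bracketings, and one must show that the marks propagate coherently under simultaneous pushes. The tools for this should be the intrinsic description of the first row in which consecutive columns of a facet differ (Lemma~\ref{le:furtherproperties1}\eqref{eq:firstdiff}), the fact that a single flip alters exactly one arc (Corollaries~\ref{coro:pure} and~\ref{cor:pseudomanifold}), and the symmetry between top-to-bottom and bottom-to-top filling---equivalently, between pushing the first and the last mark---(Remarks~\ref{rem:bars} and~\ref{rem:pushing}). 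The remaining ingredients---that the order induced by the closure agrees with the transitive closure of $\vec G(\NC_{k,n})$, and the exchange property that two facets with the same $D(\cdot)$ coincide---are then bookkeeping in the tableau model of Section~\ref{sec:combinatorics}.
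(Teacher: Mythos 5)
The statement you are addressing is not proved in the paper at all: the authors state it as a conjecture, supported only by computations for $k=3$, $n\in\{6,7,8\}$, so there is no argument of theirs to compare yours with. What you have written is a research plan rather than a proof, and the plan leaves exactly the hard part open. The central objects of your argument, the forcing sets $\mathrm{Force}_F(\beta)$ and the closure operator $X\mapsto\overline{X}$, are never defined; the three assertions that a facet $F$ is recoverable from $D(F)$, that $F\le_{\Tamari_{k,n}}G$ if and only if $D(F)\subseteq D(G)$, and that the image of $D$ is closed under a join operation are together equivalent to the conjecture itself, and you state them without proof. You then explicitly identify the confluence of forced flips as ``the main obstacle'' and offer no argument for it; for $k=2$ you appeal to the classical bracketing proof, but nothing in your text shows how the marked positions of Section~\ref{sec:combinatorics} propagate coherently under simultaneous pushes for general $k$, which is precisely where a proof would have to do work. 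Even the opening translation --- that two vectors of $V_{k,n}$ cross in the sense of Definition~\ref{def:noncrossing} exactly when their lattice paths ``kiss'' --- is asserted rather than verified; the definition involves the condition $i_\ell=j_\ell$ for $a<\ell<b$, and matching this with a path-kissing condition, and then matching the cover rule of Definition~\ref{def:TamariGraph} with an orientation of flips of such a complex, are checks you would need to carry out before any machinery for non-kissing complexes could be invoked.

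None of this means the strategy is wrong: an approach of this kind (encoding facets by closed sets of combinatorial data and proving a lattice, indeed congruence-uniform, structure on the oriented flip graph) is a reasonable way to attack the conjecture, and the ingredients you cite from the paper (Lemma~\ref{le:furtherproperties1}\eqref{eq:firstdiff}, Corollaries~\ref{coro:pure} and~\ref{cor:pseudomanifold}, Remarks~\ref{rem:bars} and~\ref{rem:pushing}, Theorem~\ref{thm:welldefinedTamari} and Proposition~\ref{prop:tamariproperties}) are the right raw material. But as submitted the proposal proves nothing beyond what the paper already establishes (acyclicity, boundedness via shellability, self-duality and the $k\leftrightarrow n-k$ symmetry); the lattice property itself remains untouched.
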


\begin{remark}
  It would be interesting to extend other properties of the (dual) associahedron or the Tamari lattice to $\NC_{k,n}$.
  An example of such a property concerns the diameter. The diameter of the dual associahedron $\NC_{2,n}$ is known to be 
  bounded by $2n-10$ for every $n$ (D.~D.~Sleator, R.~E.¸Tarjan, and W.~P.~Thurston,~\cite{STT1988}) and equal to 
  $2n-10$ for $n>12$ (L.~Pournin,~\cite{Pou2012}). 

  Although the proof of the $2n-10$ upper bound needs the use of the cyclic symmetry of $\NC_{2,n}$ (and, as we have seen 
  in Remark~\ref{rem:cyclicsymmetry}, this symmetry does not carry over to higher $k$), an almost tight bound of $2n-6$ can be 
  derived from the very simple fact that every maximal face (triangulation of the $n$-gon) is at distance at most 
  $n-3 = (k-1)(n-k-1)$ from the minimal element in $\Tamari_n$.
  Unfortunately, the similar statement does not hold for $\Tamari_{k,n}$:
  For $k=n-k=3$, the above formula would predict that every maximal face can be flipped to the unique minimal element in $\Tamari_{k,n}$ in~$4$ 
  steps, but there are elements that need~$5$ such flips.
  Moreover, there is no maximal face in $\NC_{3,6}$ that is connected to every other maximal face by~$4$ or less flips.
\end{remark}

\medskip

Observe that Theorem~\ref{thm:welldefinedTamari} implies that the $h$-vector of the noncrossing complex~$\NC_{k,n}$ is the 
generating function of out-degrees in the Gra{\ss}mann-Tamari digraph.
In particular, 
\[h_i^{(k,n)} = \big| \{ T \in \NC_{k,n} \ :\ T \text{ has } i \text{ upper covers }\}\big|,\]
are the multidimensional Narayana numbers, and also equal to the number of standard Young tableaux with exactly~$i$ peaks;
see Equation~\eqref{eq:zeroesattheend} in the Introduction and the preceding discussion.
This raises the following problem.

\begin{openproblem}
  Is there an operation on peaks of standard Young tableaux (or, equivalently, on valleys of multidimensional Dyck paths) that
  \begin{itemize}
    \item describes the Gra{\ss}mann-Tamari order directly on standard Young tableaux (or on multidimensional Dyck paths), 
       thus also providing a bijection between maximal faces of $\NN_{k,n}$ and of $\NC_{k,n}$, and
    \item which generalizes the Tamari order as defined on ordinary Dyck paths?
  \end{itemize}
\end{openproblem}

\section{Order polytopes and their triangulations}
\label{sec:background}

\subsection{Cubical faces in order polytopes}
\label{sub:cubicalfaces}

  Let~$E$ and~$F$ be two order filters in a poset~$P$, and let~$\fF(E,F)$ denote the minimal face of the order polytope~$\oO(P)$ containing the corresponding vertices~$\chi_E$ and~$\chi_F$. 
  It turns out that~$\fF(E,F)$ is always (affinely equivalent to) a cube. 
  Although this is not difficult to prove, it was new to us and is useful in some parts of this paper.

  \medskip

  To prove it, we start with the case when~$E$ and~$F$ are comparable filters. 
  In the next statement we use the notation $\vec X$ for the segment going from the origin to $X\in \RR^{P}$.
  \begin{lemma}
    \label{lem:cubes}
    Let  $E\subset F$ be two comparable filters in a finite poset~$P$ and let $P'_1,\ldots,P'_d$ be the connected components of $P|_{F\setminus E}$.
    Then the minimal face~$\fF(E,F)$ of the order polytope~$\oO(P)$ containing the vertices~$\chi_E$ and~$\chi_F$ is the Minkowski sum
    $$
      \chi_E + \vec{\chi}_{P'_1}+ \cdots+ \vec\chi_{P'_d}.
    $$
    In particular, combinatorially $\fF(E,F)$ is a cube of dimension~$d$.
  \end{lemma}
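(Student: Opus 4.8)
The plan is to combine Stanley's description of the facets of an order polytope \cite{Sta1986} with the elementary fact that the minimal face of a polytope through a given set of vertices is the intersection of the facets containing \emph{all} of them. Write $\hat P = P \cup \{\hat 0, \hat 1\}$ for $P$ with a new global minimum and maximum, and for a point $x \in \oO(P)$ adopt the conventions $x_{\hat 0} = 0$, $x_{\hat 1} = 1$ (equivalently, regard $\hat 1$ as lying in every filter and $\hat 0$ in none). Recall that the facets of $\oO(P)$ are exactly the sets $\{x \in \oO(P) : x_a = x_b\}$, one for each covering relation $a \lessdot b$ of $\hat P$; and that the vertex $\chi_G$ of a filter $G$ lies on such a facet if and only if $(\chi_G)_a = (\chi_G)_b$, i.e.\ if and only if it is \emph{not} the case that $b \in G$ while $a \notin G$.

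First I would stratify $\hat P$ according to $E$ and $F$. Since $E \subseteq F$ are filters, set $B := \hat P \setminus F$ (a down-set containing $\hat 0$), $M := F \setminus E$, and $T := E$ (an up-set containing $\hat 1$), so that $\hat P = B \sqcup M \sqcup T$. Because $F$ and $E$ are filters, no element of a lower block lies above an element of a strictly higher block; hence every covering relation $a \lessdot b$ of $\hat P$ either stays within one block or jumps to a strictly higher one (types $B\!-\!M$, $B\!-\!T$, $M\!-\!T$). A quick check of the possibilities shows that the facet of $a \lessdot b$ passes through \emph{both} $\chi_E$ and $\chi_F$ exactly when $a,b$ lie in a common block: for the three mixed types one has $x_a = 0 < 1 = x_b$ at $\chi_F$ (type $B\!-\!M$) or at $\chi_E$ (types $B\!-\!T$, $M\!-\!T$), while for covers within $B$, within $M$, or within $T$ the two relevant coordinates of $\chi_E$ (and of $\chi_F$) agree. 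By the opening remark this gives
\[
  \fF(E,F) \;=\; \bigl\{\, x \in \oO(P) \ :\ x_a = x_b \text{ whenever } a \lessdot b \text{ is a cover of } \hat P \text{ inside a single block} \,\bigr\}.
\]

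Next I would read off the equivalence classes of coordinates that are forced to be equal. Here one uses that $B$ is a down-set, $T$ an up-set, and $M = F \setminus E$ a ``slice'' between two filters, to see that a covering relation of the induced subposet on $B$ (resp.\ $M$, resp.\ $T$) is already a covering relation of $\hat P$; since moreover $B$ has a minimum and $T$ has a maximum, the within-block covers connect all of $B$ into one class and all of $T$ into one class, whereas $M$ splits into its connected components $P'_1, \dots, P'_d$. Thus a point of $\fF(E,F)$ is a vector that is constantly $0$ on $B$ (forced by $x_{\hat 0}=0$), constantly $1$ on $T$ (forced by $x_{\hat 1}=1$), and constantly equal to some $t_j$ on each $P'_j$. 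The remaining defining inequalities of $\oO(P)$ impose only $0 \le t_j \le 1$: any comparability $a <_P b$ with $a \in P'_i$ and $b \in P'_j$, $i \ne j$, would be witnessed by a saturated chain lying entirely in $M$, forcing $P'_i = P'_j$; and comparabilities between a $P'_j$ and $B$ or $T$ merely restate $0 \le t_j$, resp.\ $t_j \le 1$. Hence
\[
  \fF(E,F) \;=\; \Bigl\{\, \chi_E + \textstyle\sum_{j=1}^{d} t_j\, \chi_{P'_j} \ :\ (t_1,\dots,t_d) \in [0,1]^d \,\Bigr\} \;=\; \chi_E + \vec\chi_{P'_1} + \cdots + \vec\chi_{P'_d},
\]
and as the $\chi_{P'_j}$ have pairwise disjoint supports this Minkowski sum of $d$ segments is combinatorially a $d$-dimensional cube. (When $E = F$ one has $d = 0$ and $\fF(E,F) = \{\chi_E\}$.)

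I expect the only delicate point to be the bookkeeping in the stratification: checking that covering relations of $\hat P$ do not split a block or create tight facets beyond those listed, and — the crux — that distinct connected components of $F \setminus E$ are genuinely incomparable in $P$. This last fact, which is exactly where the hypothesis that \emph{both} $E$ and $F$ are filters enters, is what makes the parameters $t_j$ independent and hence $\fF(E,F)$ a cube rather than some lower-dimensional section of one. Everything else is a routine dictionary between facets of $\oO(P)$, equivalence classes of coordinates, and Minkowski sums of segments.
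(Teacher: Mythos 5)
Your argument is correct, and it takes a genuinely different route from the paper's. You compute the carrier face directly: using Stanley's facet description of $\oO(P)$ (one facet $\{x_a=x_b\}$ per cover $a\lessdot b$ of $\hat P=P\cup\{\hat 0,\hat 1\}$) together with the standard fact that the minimal face through a set of vertices is the intersection of the facets containing all of them, you identify the tight facets as exactly the within-block covers of the stratification $\hat P=B\sqcup M\sqcup T$, and then solve the resulting equality system, checking that covers of the induced posets are covers of $\hat P$ and that distinct components of $M=F\setminus E$ are incomparable in $P$ (the crux, correctly isolated). This yields in one pass an explicit parametrization of the \emph{entire} face as $\chi_E+\sum_j t_j\chi_{P'_j}$, $t\in[0,1]^d$. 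The paper proceeds differently: it first reduces to the case $E=\emptyset$, $F=P$ by restricting to the sub-face of $\oO(P)$ that is a translated copy of $\oO(P|_{F\setminus E})$, and then determines only the \emph{vertex set} of $\fF(\emptyset,P)$. The easy inclusion is as in your argument (equalities $x_a=x_b$ within a component cut out faces through both $\chi_\emptyset$ and $\chi_P$), but the converse is handled not by facet bookkeeping but by a midpoint trick: for $x=\sum_{b\in S}\chi_{P'_b}$ and the complementary vertex $y=\sum_{b\notin S}\chi_{P'_b}$ one has $x+y=\chi_\emptyset+\chi_P$, so the minimal face through $\chi_\emptyset$ and $\chi_P$ contains $x$ and $y$. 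Your approach buys a self-contained, fully explicit description of the face without the reduction step (at the cost of the $\hat P$ cover bookkeeping); the paper's approach is shorter and its midpoint identity is precisely what feeds the subsequent Corollary~\ref{coro:equivalent_diagonals}, where $\chi_E+\chi_F=\chi_{E'}+\chi_{F'}$ is shown to force $\fF(E,F)=\fF(E',F')$ -- a statement you would still need to extract separately from your parametrization.
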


  \begin{proof}
    Observe that $\fF(E,F)$ is contained in the face of $\oO(P)$ obtained by setting the coordinates of elements in $E\cap F=E$ to be $1$ 
    and those of elements not in $E\cup F=F$ to be $0$. That face is just the order polytope of $P|_{F\setminus E}$ (translated by the vector $\chi_E$).
    For the rest of the proof there is thus no loss of generality in assuming that $E=\emptyset$ and $F=P$.
    We claim that the set of vertices of $\fF(\emptyset,P)$ is
    \[
      \{x = (x_a)_{a \in P} \in \{0,1\}^P : x_a=x_b \text{ if $a$ and $b$ are in the same component of $P$}\}.
    \]
    To see that every vertex of $\fF(\emptyset,P)$ must have this form, observe that the equality $x_a=x_b$ for two comparable elements defines a face of the order polytope and it is satisfied both by~$\chi_E$ and~$\chi_F$, so it is satisfied in all of~$\fF(\emptyset,P)$. 
    Moreover, since connected components are the transitive closure of covering relations, the equality $x_a=x_b$ for two  elements of the same connected component is also satisfied in~$\fF(\emptyset,P)$.
    For the converse, let $x\in \{0,1\}^P$ be such that $x_a=x_b$ when $a$ and $b$ are in the same component of~$P$. 
    Put differently,~$x$ is the sum of the characteristic vectors of some subset $S$ of the components,
    \[
      x=\sum_{b\in S} \chi_{P'_b}, \text{for some $S\subset[d]$}.
    \]
    Clearly,~$x$ is a vertex of~$\oO(P)$, since a union of connected components is a filter.
    Consider the complementary vertex
    \[
      y=\sum_{b\not\in S} \chi_{P'_b}.
    \]
    We have $x+y =\chi_\emptyset+\chi_P$, which implies that the minimal face containing~$\chi_\emptyset$ and~$\chi_P$ contains also~$x$ and~$y$ (and vice versa).

    The description of the vertices of $\fF(\emptyset,P)$ automatically implies the Minkowski sum expression for $\fF(\emptyset,P)$. 
    Moreover, since the different segments $\vec\chi_{P'_1},\ldots,\vec\chi_{P'_d}$ have disjoint supports, their Minkowski sum is a Cartesian product.
  \end{proof}

  The last part of the proof has the following implications.


%
  \begin{corollary}
    \label{coro:equivalent_diagonals}
    Let $E$, $F$, $E'$ and $F'$ be four filters. Then the following properties are equivalent:
    \begin{enumerate}
      \item $\fF(E,F)=\fF(E',F')$.
      \item $(\chi_{E'},\chi_{F'})$ is a pair of opposite vertices of the cube $\fF(E,F)$.
      \item $\chi_E+\chi_F=\chi_{E'}+\chi_{F'}$.
      \item $E\cap F=E'\cap F'$ and $E\cup F=E'\cup F'$.
    \end{enumerate}
    In particular, we have that $\fF(E,F)=\fF(E\cap F, E\cup F)$ which, by the previous lemma, is combinatorially a cube.
  \end{corollary}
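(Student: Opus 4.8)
The plan is to derive all four equivalences, together with the final ``in particular'', directly from Lemma~\ref{lem:cubes} and the explicit description of the cube $\fF(G,H)$ (for comparable filters $G\subseteq H$) obtained in its proof. The one elementary ingredient used throughout is the pointwise identity $\chi_E+\chi_F=\chi_{E\cap F}+\chi_{E\cup F}$ in $\mathbb{N}^P$, which holds because for each $a\in P$ the number of the two sets $E,F$ that contain $a$ equals the number of the two sets $E\cap F,E\cup F$ that contain $a$.

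First I would prove that $\fF(E,F)=\fF(E\cap F,E\cup F)$ and that this is a cube. Set $G=E\cap F$ and $H=E\cup F$, so $G\subseteq E,F\subseteq H$. The key observation is that $E\setminus F$ and $F\setminus E$ are ``separated'' inside $P|_{H\setminus G}=P|_{E\triangle F}$: there is no relation $a<_P b$ with $a\in E\setminus F$ and $b\in F\setminus E$ (it would force $b\in E$, as $E$ is a filter), and none with $b\in E\setminus F$ and $a\in F\setminus E$ (it would force $a\in F$). Hence each connected component of $P|_{E\triangle F}$ lies entirely in $E\setminus F$ or entirely in $F\setminus E$. Writing $P'_1,\dots,P'_d$ for these components and $S\subseteq[d]$ for the indices of those contained in $E\setminus F$, we get $\chi_E=\chi_G+\sum_{b\in S}\chi_{P'_b}$ and $\chi_F=\chi_G+\sum_{b\notin S}\chi_{P'_b}$. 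By the description of the vertex set of $\fF(G,H)$ in the proof of Lemma~\ref{lem:cubes}, both $\chi_E$ and $\chi_F$ are vertices of the cube $\fF(G,H)$, and they are opposite vertices, since their midpoint $\tfrac12(\chi_E+\chi_F)=\chi_G+\tfrac12\sum_b\chi_{P'_b}$ is the barycenter of $\fF(G,H)$ and thus lies in its relative interior. Consequently the minimal face of $\oO(P)$ containing $\chi_E$ and $\chi_F$ is all of $\fF(G,H)$; that is, $\fF(E,F)=\fF(E\cap F,E\cup F)$, a cube.

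With this in hand I would close the cycle $(1)\Rightarrow(2)\Rightarrow(3)\Rightarrow(4)\Rightarrow(1)$. Implication $(1)\Rightarrow(2)$ is immediate: applying the previous paragraph to the pair $E',F'$ shows that $\chi_{E'},\chi_{F'}$ are opposite vertices of $\fF(E',F')=\fF(E,F)$. For $(2)\Rightarrow(3)$, a cube has a unique barycenter, equal to the midpoint of every pair of its opposite vertices; applying this to the two opposite pairs $(\chi_E,\chi_F)$ and $(\chi_{E'},\chi_{F'})$ of the cube $\fF(E,F)$ gives $\chi_E+\chi_F=\chi_{E'}+\chi_{F'}$. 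For $(3)\Rightarrow(4)$, the pointwise identity rewrites $(3)$ as $\chi_{E\cap F}+\chi_{E\cup F}=\chi_{E'\cap F'}+\chi_{E'\cup F'}$; for comparable filters $G\subseteq H$ the set of coordinates where $\chi_G+\chi_H$ equals $2$ is exactly $G$ and the set where it is nonzero is exactly $H$, so reading off these two sets on both sides yields $E\cap F=E'\cap F'$ and $E\cup F=E'\cup F'$. Finally $(4)\Rightarrow(1)$ follows from the first paragraph applied twice: $\fF(E,F)=\fF(E\cap F,E\cup F)=\fF(E'\cap F',E'\cup F')=\fF(E',F')$.

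The step I expect to require the most care is the first paragraph — specifically, checking that $\chi_E$ and $\chi_F$ are genuine \emph{vertices} of the cube $\fF(E\cap F,E\cup F)$ (not merely points of $\oO(P)$ satisfying the right equalities) and that the minimal face of $\oO(P)$ containing two opposite vertices of a cubical face is the whole cube. Both points are settled by the separation observation for filters and by the fact that the midpoint of a pair of opposite cube vertices is the barycenter; once these are in place, the rest of the cycle is routine.
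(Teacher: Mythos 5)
Your proof is correct and follows essentially the route the paper intends: the corollary is derived directly from Lemma~\ref{lem:cubes} and the explicit vertex description of the cube obtained at the end of its proof, which is exactly what you do. The only difference is that you spell out details the paper leaves implicit---the observation that each connected component of $P|_{E\triangle F}$ lies entirely in $E\setminus F$ or in $F\setminus E$ (so that $\chi_E,\chi_F$ are genuine opposite vertices of $\fF(E\cap F,E\cup F)$), and the barycenter/relative-interior argument pinning down the minimal face---and these are filled in correctly.
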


\subsection{Triangulations. Unimodularity, regularity, and flagness}
\label{sub:triangulations}

Let $Q$ be a polytope with vertex set $V$.
A \defn{triangulation} of $Q$ is a simplicial complex $\Delta$ geometrically realized on $V$ (by which we mean that $V$ is the set of vertices of $\Delta$, and that the vertices of every face of $\Delta$ are affinely independent in $Q$) that covers $Q$ without overlaps.
A triangulation $T$ of a polytope $Q$ is called \defn{regular} if there is a weight vector $w:V \to \RR$ such that $T$ coincides with the lower envelope of the lifted point configuration
$$\big\{\big(v,w(v)\big): v \in V \big\}\subset \RR^{|Q|+1}.$$
See~\cite{DeloeraRambauSantos} for a recent monograph on these concepts.
Another way to express the notions of triangulations and regularity, more suited to our context, is as follows.
\begin{itemize}
  \item An abstract simplicial complex $\Delta$ with its vertices identified with those of $Q$ is a triangulation of $Q$ if and only if for every $x\in \conv(Q)$ there is a unique convex combination of vertices of some face $\sigma$ of $\Delta$ that produces~$x$. That is, there is a unique $\sigma\in \Delta$ (not necessarily full-dimensional) such that
  \[
  x=\sum_{v\in \sigma} \alpha_v v
  \]
  for strictly positive $\alpha_v$ with $\sum_v \alpha_v=1$.

  \item $\Delta$ is the regular triangulation of $Q$ for a weight vector $w:V \to \RR$ if, for every $x\in Q$, the expression of the previous statement is also the unique one that minimizes the weighted sum $\sum_{v\in V} \alpha_v w(v)$, among all the convex combinations giving~$x$ in terms of the vertices of~$Q$.
\end{itemize}

Observe that for some choices of $w$ (most dramatically, when $w$ is constant) the weighted sum of coefficients may not always have a unique minimum. In this case $w$ does not define a regular triangulation of $Q$ but rather a \defn{regular subdivision}. It is still true that the support of every minimizing convex combination is \emph{contained} in some cell of this regular subdivision, but it may perhaps not \emph{equal} the set of vertices of that cell.

If the vertices~$V$ of the polytope $Q$ are contained in $\ZZ^d$ (or, more generally, in a point lattice) we call a full-dimensional simplex \defn{unimodular} when it is an affine lattice basis and we call a triangulation \defn{unimodular} when all its full-dimensional faces are. 
All unimodular triangulations of a lattice polytope have the same $f$-vector and, hence, the same $h$-vector. See, for example,~\cite[Sect. 9.3.3]{DeloeraRambauSantos}. This $h$-vector can be easily computed from the Ehrhart polynomial of $Q$ and is usually called the Ehrhart $h^*$-vector of $Q$. 

If we know a triangulation of a lattice polytope~$Q$ to be unimodular and flag, then checking its regularity is easier than in the general case.
The minimality of weighted sum of coefficients in convex combinations of points $x\in Q$ needs to be checked only for very specific choices of~$x$ and very specific combinations.

\begin{lemma}
\label{lemma:regular-easy}
Let~$Q$ be a lattice polytope with vertices~$V$, let~$\Delta$ be a flag unimodular triangulation of~$Q$, and let $w:V \to \RR$ be a weight vector.
Then the following two statements are equivalent:
\begin{enumerate}[(i)]
  \item The complex~$\Delta$ is the regular triangulation corresponding to~$w$,
  \item For every edge $v_1v_2$ of the complex~$\Delta$ and for every pair of vertices $\{v'_1, v'_2\}\ne \{v_1,v_2\}$ with $v_1+v_2=v'_1+v'_2$, we have $w(v_1)+w(v_2) < w(v'_1)+w(v'_2)$.
\end{enumerate}
\end{lemma}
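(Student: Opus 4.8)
The plan is to prove the two implications separately; (i)$\Rightarrow$(ii) is a short unravelling of the definition, and (ii)$\Rightarrow$(i) is where the work lies. For (i)$\Rightarrow$(ii): if $\Delta$ is the regular triangulation for $w$ and $v_1v_2$ is an edge of $\Delta$, then the midpoint $x=\tfrac12 v_1+\tfrac12 v_2$ lies on that edge, so by the definition of a regular triangulation the combination $x=\tfrac12 v_1+\tfrac12 v_2$ is the \emph{unique} convex combination of vertices of $Q$ representing $x$ that minimizes $\sum_v\alpha_v w(v)$. Any pair $\{v'_1,v'_2\}\neq\{v_1,v_2\}$ with $v'_1+v'_2=v_1+v_2$ produces a \emph{different} convex combination $x=\tfrac12 v'_1+\tfrac12 v'_2$, which is therefore not minimizing and, by uniqueness, cannot even tie; hence $w(v_1)+w(v_2)<w(v'_1)+w(v'_2)$.

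For (ii)$\Rightarrow$(i) I would work with the piecewise-linear ``candidate lift'' $g\colon Q\to\RR$ that is affine on each maximal simplex of $\Delta$ and equals $w$ on $V$. This is well defined and continuous because two adjacent maximal simplices meet in a ridge, and the $d:=\dim Q$ vertices of a ridge already determine an affine function on its affine hull. I would then invoke the standard facts (see \cite{DeloeraRambauSantos}) that $\Delta$ is the regular triangulation of $Q$ induced by $w$ if and only if $g$ is convex and is affine on no union of two adjacent maximal simplices, and that convexity of a piecewise-linear function on a triangulation of a convex body can be verified one interior ridge at a time. Thus everything reduces to showing, for each interior ridge $\rho$ with its two maximal simplices $\sigma_1=\rho\cup\{a\}$ and $\sigma_2=\rho\cup\{b\}$ on either side, that $g$ has a \emph{strict} upward bend across $\rho$ — and to do that I first need to extract the exact inequality this demands.

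This is the step I expect to be the hard part. Writing $u_1,\dots,u_d$ for the vertices of $\rho$, the $d+2$ points $\{a,b,u_1,\dots,u_d\}$ have a unique affine dependence (a circuit), since the $d+1$ points of $\sigma_1$ are affinely independent; I claim it has the form $a+b=u+u'$ for two distinct vertices $u,u'$ of $\rho$. First, unimodularity of $\sigma_1$ and $\sigma_2$ makes both $\{u_\ell-a\}_\ell$ and $\{u_\ell-b\}_\ell$ lattice bases; the change-of-basis matrix between them is $I-\mathbf 1 c^{\mathsf T}$, where $b-a=\sum_\ell c_\ell(u_\ell-a)$ with $c_\ell\in\ZZ$, so its determinant $1-\sum_\ell c_\ell$ is $\pm1$ and $\sum_\ell c_\ell\in\{0,2\}$. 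Because $a$ and $b$ lie strictly on opposite sides of $\operatorname{aff}(\rho)$ in a triangulation, testing this dependence against an affine functional vanishing on $\operatorname{aff}(\rho)$ shows that $a$ and $b$ occur in the circuit with coefficients of the same sign, which rules out $\sum_\ell c_\ell=0$; hence $\sum_\ell c_\ell=2$ and the circuit reads $a+b=\sum_\ell c_\ell u_\ell$. To pin $c$ down to a $0/1$ vector I would combine: flagness, which forces $\{a,b\}\notin\Delta$ (otherwise every pair among $\{a,b\}\cup\operatorname{vert}(\rho)$ would be an edge, since all the others lie in $\sigma_1$ or $\sigma_2$, and the $(d+1)$-simplex $\{a,b\}\cup\operatorname{vert}(\rho)$ would be a face of the $d$-dimensional complex $\Delta$); the fact that a triangulation cannot contain simplices from both of the two triangulations of a circuit, applied to the two faces $\{a\}\cup(\operatorname{vert}(\rho)\cap\operatorname{supp}c)\subseteq\sigma_1$ and $\{b\}\cup(\operatorname{vert}(\rho)\cap\operatorname{supp}c)\subseteq\sigma_2$ of $\Delta$; and the fact that every vertex of $Q$ is an extreme point, hence not a convex combination of other vertices. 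The last ingredient already disposes of a coefficient $c_\ell=2$ (which would put $u_\ell=\tfrac12(a+b)$), and together the three ingredients are what should rule out the remaining non-binary possibilities (negative entries of $c$): concretely, such a configuration would force $\Delta$, on the relevant circuit, to use a local triangulation that fails to be simultaneously flag and unimodular. I would emphasize that non-binary ridge circuits \emph{do} occur for an isolated pair of adjacent unimodular simplices, so the hypothesis that $\Delta$ uses precisely the vertices of $Q$ is genuinely used here.

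Granting the structural claim, the remainder is routine. The affine function agreeing with $w$ on $\sigma_2$ takes at $a=u+u'-b$ the value $w(u)+w(u')-w(b)$, so ``$g$ bends strictly upward across $\rho$'' is precisely the inequality $w(a)+w(b)>w(u)+w(u')$; since $\{u,u'\}\subseteq\sigma_1$ is an edge of $\Delta$, with $u+u'=a+b$ and $\{u,u'\}\neq\{a,b\}$, this is exactly statement (ii) for the edge $\{u,u'\}$ and the pair $\{a,b\}$. Running this over all interior ridges shows $g$ is convex and has a genuine bend across each of them, whence $\Delta$ coincides with the regular subdivision of $Q$ induced by $w$, i.e.\ (i) holds. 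The single obstacle in this plan is the binary-circuit claim in the third paragraph; the reductions surrounding it (existence of $g$, locality of convexity, translation of the bending condition) are standard.
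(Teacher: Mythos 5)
Your direction (i)$\Rightarrow$(ii) is fine. The problem is the structural claim on which all of (ii)$\Rightarrow$(i) rests: that for two adjacent cells $\sigma_1=\rho\cup\{a\}$, $\sigma_2=\rho\cup\{b\}$ the circuit is always of the form $a+b=u+u'$ with $u,u'\in\rho$ (equivalently, that the segment $ab$ meets the common ridge $\rho$). This claim is false, and your sketched justification for it does not work: in the dependence $a+b=\sum_\ell c_\ell u_\ell$ the vertices $a$ and $b$ carry coefficients of the same sign, so the two faces $Z\setminus\{b\}\subseteq\sigma_1$ and $Z\setminus\{a\}\subseteq\sigma_2$ lie in the \emph{same} one of the two triangulations of the circuit $Z$, and the ``cannot use both triangulations of a circuit'' principle yields no contradiction; flagness only tells you $\{a,b\}\notin\Delta$, and extremality only kills the case $c=2e_\ell$. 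A concrete counterexample: let $A=(0,0,0)$, $B=(1,0,0)$, $C=(0,1,0)$, $D=(0,0,1)$, $E=(2,1,-1)$, $F=(1,1,-1)$, and $Q=\conv\{A,\dots,F\}$ (all six points are vertices of $Q$). The four tetrahedra $ABCD$, $BCDE$, $ABCF$, $BCEF$ are unimodular, pairwise meet in common faces, and cover $Q$ (both the union and $Q$ have normalized volume $4$), so they form a unimodular triangulation on the vertex set of $Q$; its only non-edges are $AE$ and $DF$, whose maximal cliques are exactly the four cells, so the triangulation is flag. Yet at the interior ridge $BCD$ the circuit is $A+E=2B+C-D$, not a sum of two ridge vertices. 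Consequently the strict-bend inequality you need there, $w(A)+w(E)>2w(B)+w(C)-w(D)$, is not an instance of condition (ii), and your ridge-by-ridge reduction of local convexity to (ii) breaks down. (It is not that the lemma fails here: in this example the needed inequality follows by \emph{adding} the two instances $w(B)+w(F)<w(A)+w(E)$ and $w(B)+w(C)<w(D)+w(F)$ of (ii), but your proof supplies no mechanism for producing such combinations in general.)

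For comparison, the paper's proof of (ii)$\Rightarrow$(i) avoids local convexity at ridges entirely. It takes the regular subdivision $\Delta_w$, and for any two vertices $v_1',v_2'$ of a common cell of $\Delta_w$ considers the height-two lattice point $v_1'+v_2'$ in the cone over the unimodular cell of $\Delta$ containing the midpoint; unimodularity writes it as $v_1+v_2$ for an edge $v_1v_2$ of $\Delta$, and (ii) forces $\{v_1',v_2'\}=\{v_1,v_2\}$ (otherwise the cheaper pair contradicts minimality on the cell of $\Delta_w$). Flagness of $\Delta$ then promotes ``every edge of $\Delta_w$ is an edge of $\Delta$'' to $\Delta_w\subseteq\Delta$, and since $\Delta_w$ covers $Q$, equality follows. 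If you want to salvage your approach you would have to prove that at every interior ridge the bend inequality is a nonnegative combination of the inequalities in (ii), which the example above shows is genuinely more delicate than the binary-circuit picture; switching to the paper's global argument is the simpler fix.
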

\begin{proof}
Let $\Delta_w$ be the regular triangulation (or subdivision) produced by the weight vector $w$. Consider $\Delta_w$ as a simplicial complex, even if it turns out not to be a triangulation, taking as maximal faces the vertex sets of the full-dimensional cells (simplices or not) of $\Delta_w$. We are going to show that this simplicial complex is contained in $\Delta$. The containment cannot be strict because $\Delta_w$ covers $Q$, so this will imply $\Delta=\Delta_w$.

Since $\Delta$ is flag, to show that $\Delta_w \subset \Delta$ it suffices to show that every edge of $\Delta_w$ is an edge in $\Delta$. So, let $v'_1v'_2$ be an edge of $\Delta_w$.
For the rest of the proof, we consider our polytope $Q$ embedded in $\RR^d\times \{1\}\subset \RR^{d+1}$. We  regard the vertices $\{v_1,\ldots,v_\ell\}$ of each unimodular simplex in $\Delta$ as vectors spanning a cone $C$, where spanning means not only linearly but also integrally (because of unimodularity): every integral point in $C$ is a nonnegative integral combination of the $v_i$'s.

Consider then the point $v'_1+v'_2$. It lies at height two in one of those cones, because $v'_1+v'_2\in \RR^d\times \{2\}$. Thus, $v'_1+v'_2=v_1+v_2$ for some edge $v_1v_2$ of $\Delta$. The hypothesis in the statement is then that either
$\{v'_1, v'_2\}= \{v_1,v_2\}$ (as we claim) or
$w(v_1)+w(v_2) < w(v'_1)+w(v'_2)$. The latter is impossible because then, letting $x=(v_1+v_2)/2 = (v'_1+v'_2)/2$ be the midpoint of the edge $v'_1v'_2$ of $\Delta_w$, the inequality contradicts the fact that $\Delta_w$ is the regular subdivision for $w$.
\end{proof}

\subsection{Central orientations of the dual graph and line shellings}
\label{sec:central_orientations}

Every regular triangulation of a polytope is shellable.
We sketch here a proof, adapted from~\cite[Section~9.5]{DeloeraRambauSantos}.
The ideas in it will  be used in Section~\ref{sec:shelling} for the proof of Theorem~\ref{thm:welldefinedTamari}.

Let~$\Delta$ be a triangulation (regular or not) of a polytope~$Q$, and let~$o$ be a point in the interior of~$Q$.
We moreover assume~$o \in Q$ to be sufficiently generic so that no hyperplane spanned by a codimension one face of~$\Delta$ contains~$o$.
We can then orient the dual graph~$G(\Delta)$ ``away from $o$'', in the following well-defined sense.
Let~$\sigma_1$ and~$\sigma_2$ be two adjacent maximal faces in~$\Delta$ and let~$H$ be the hyperplane containing their common codimension one face.
We orient the edge~$\sigma_1\sigma_2$ of~$G(\Delta)$ from~$\sigma_1$ to~$\sigma_2$  if~$o$ lies on the same side of~$H$ as~$\sigma_1$. 
We call the digraph obtained this way the \defn{central orientation from~$o$} of~$G(\Delta)$ and denote it $\vec G (\Delta,o)$.

\begin{lemma}
\label{lemma:central-shelling}
  If $\Delta$ is regular then $\vec G (\Delta,o)$ is acyclic for every (generic)~$o$. 
  Moreover, the directions in $\vec G (\Delta,o)$ are induced by a shelling order in the maximal faces of~$\Delta$.
\end{lemma}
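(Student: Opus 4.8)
**Proof plan for Lemma~\ref{lemma:central-shelling} (central orientations of the dual graph of a regular triangulation are acyclic and shelling-induced).**

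The plan is to realize the central orientation $\vec G(\Delta,o)$ as the orientation coming from a \emph{line shelling} of $\Delta$, in the spirit of the classical Bruggesser–Mani argument for boundary complexes of polytopes, but applied to a regular triangulation. First I would use regularity: lift the vertices $V$ of $Q$ to heights $w(v)$ in $\RR^{|Q|+1}$, so that $\Delta$ is the projection of the lower envelope $E$ of $\mathrm{conv}\{(v,w(v)):v\in V\}$. The maximal faces of $\Delta$ correspond bijectively to the lower facets of this lifted polytope (or unbounded polyhedron), and two maximal faces of $\Delta$ share a codimension one face exactly when the corresponding lower facets share a ridge. Thus shelling $\Delta$ is the same as shelling the lower part of the boundary of the lifted object, which is a polytopal complex to which the line/point shelling technology applies.

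Next I would connect the geometric point $o\in Q$ to the shelling. Pick the vertical line $\ell$ through $(o,t)$ for $t\to -\infty$ — more precisely, the ray going straight down from a point above $o$. A Bruggesser–Mani style shelling of the lower envelope is obtained by ordering the lower facets according to when a point travelling down this vertical line (and coming back from $+\infty$, but we only need the lower facets) ``sees'' them. Concretely, facet $\sigma_1$ precedes an adjacent facet $\sigma_2$ across the ridge lying in hyperplane $H$ precisely when the vertical line through $o$ crosses (the affine hull of) $\sigma_1$ before it crosses that of $\sigma_2$, and an elementary computation with the heights shows this happens exactly when $o$ and $\sigma_1$ lie on the same side of the projection $H$ of that ridge-hyperplane in $\RR^{|Q|}$. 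This is exactly the rule defining $\vec G(\Delta,o)$, so the edge orientations of $\vec G(\Delta,o)$ agree with the order relations of this line shelling. Genericity of $o$ guarantees that no two facets are crossed simultaneously, so the order is well defined, and standard line-shelling arguments (see~\cite[Section~9.5]{DeloeraRambauSantos}) show it is a shelling order; any shelling order gives an acyclic orientation of the dual graph (the reverse order would be a shelling of the ``other side'', and a directed cycle would contradict the linear order underlying the shelling). Hence $\vec G(\Delta,o)$ is acyclic and induced by a shelling.

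The main obstacle I expect is not conceptual but book-keeping: making the correspondence between ``the vertical down-line through $o$ enters the region below facet $\sigma_1$ before that below $\sigma_2$'' and ``$o$ is on the $\sigma_1$-side of $H$'' completely precise, including the degenerate/unbounded directions of the lower envelope (the lifted object is a polyhedron, not a polytope, because $Q$ has full-dimensional faces extending downward only), and handling the genericity hypothesis so that the resulting order is total on maximal faces and refines the partial order read off the ridges. A second minor point to check is that the lower envelope really is shellable by this line and that we only ever need the lower facets, so that the tail of the Bruggesser–Mani sweep (coming back from $+\infty$ along $\ell$) can be ignored; this is where I would lean on the cited adaptation in~\cite[Section~9.5]{DeloeraRambauSantos} rather than reproving it. With these technical points handled, acyclicity and the shelling statement both follow at once from the identification of $\vec G(\Delta,o)$ with a line-shelling order.
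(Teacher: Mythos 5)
Your proposal is correct and follows essentially the same route as the paper: lift the triangulation via the regularity weights, take the vertical line through the generic point~$o$, and identify the central orientation with the order in which this line shelling (Bruggesser--Mani sweep) meets the lower facets of the lifted polytope, the lower facets forming an initial segment of the full line shelling and hence a shelling of~$\Delta$. The technical worries you flag (unbounded directions, the tail of the sweep) are handled in the paper exactly as you suggest, by working with the convex hull of the lifted vertices and using that the lower facets are an initial segment of its line shelling.
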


\begin{proof}
  We are going to prove directly that there is a shelling order in~$\Delta$ that induces the orientations $\vec G (\Delta,o)$. 
  This implies $\vec G (\Delta,o)$ to be acyclic.

  The idea is the concept of a \emph{line shelling}.
  One way to shell the boundary complex of a simplicial polytope~$Q$ is to consider a (generic) line~$\ell$ going through the interior of~$Q$ and taking the maximal faces of~$Q$ in the order that the facet defining hyperplanes intersect~$\ell$. 
  The line~$\ell$ is considered to be closed (its two ends at infinity are glued together) and the maximal faces are numbered starting and ending with the two maximal faces intersecting~$\ell$.
  Put differently, we can think of the process as moving a point $p=p(t)$, $t\in [0,1]$ along the line~$\ell$, starting in the interior of~$Q$, going through infinity, then back to the interior of~$Q$, and recording the facets of~$Q$ in the order their facet-defining hyperplanes are crossed by $p(t)$.
  See Figure~\ref{fig:shelling} (left) for an illustration and~\cite[Ch.~8]{Zie1994} for more details.

  \begin{figure}
    \input{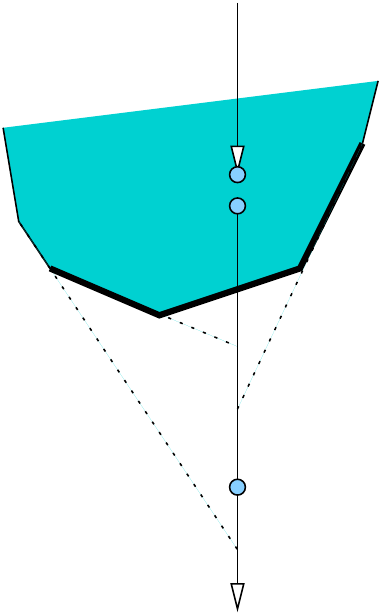_t}
    \hspace*{50pt}
    \input{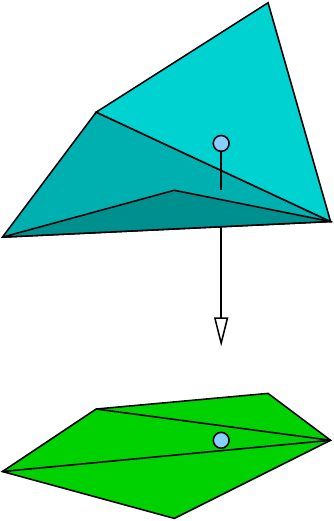_t}
    \caption{The idea behind a line shelling, and the line shelling associated to a central orientation from a generic interior point~$o$.}
    \label{fig:shelling}
  \end{figure}

  For a regular triangulation~$\Delta$, let~$\tilde \Delta$ be the convex hypersurface that projects to~$\Delta$, and let~$\ell$ be the vertical line through~$o$.
  If we order the facets of~$\tilde \Delta$ (hence, the maximal faces of~$\Delta$) in their line shelling order with respect to~$\ell$, this ordering is clearly inducing the central orientation $\vec G (\Delta,o)$ of $G(\Delta)$.
  Moreover, it is a shelling order of the boundary of~$\tilde \Delta$ (and hence of~$\Delta$) because it is an initial segment in the line shelling order of $\conv(\tilde\Delta)$ with respect to the line~$\ell$.
  See Figure~\ref{fig:shelling} (right) for an illustration.
\end{proof}

\begin{remark}
  If the triangulation~$\Delta$ is not regular, then the central orientation $\vec G (\Delta,o)$ may contain cycles.
\end{remark}
  
\section{Geometry of the nonnesting and noncrossing complexes}
\label{sec:geometry}

The goal of this section is to show that the noncrossing complex $\NC_{k,n}$ is a regular, unimodular, 
flag triangulation of the order polytope of the product of two chains.
The method presented also yields the same result for the nonnesting complex $\NN_{k,n}$, which is well known.

\subsection{The order polytope of the product of two chains}

Recall from Section \ref{sec:orderpolytope} that the vertices of~$\oO_{k,n}$ (characteristic vectors of filters in~$P_{k,n}$) are in bijection with the vertices of~$\NN_{k,n}$ (elements of~$V_{k,n}$).
As before, we use the same symbol (typically~$I$ or~$J$) to denote an element of $V_{k,n}$ and its associated filter. 

To show that $\NC_{k,n}$ is a triangulation of $\oO_{k,n}$ let us understand a bit more the combinatorics 
of the maximal faces of $\oO_{k,n}$. These are of the following three types:
\begin{enumerate}
  \item There are two maximal faces corresponding to the unique minimal vector $I_{\hat0}:=(1,\ldots,k)$ and the unique 
    maximal vector $I_{\hat1}:=(n+1-k,\ldots,n)$.
    Each of these maximal faces contains all but one vertex, namely either $\chi_{I_{\hat0}} = (1,\ldots,1)$ or 
    $\chi_{I_{\hat1}} =(0,\ldots,0)$.
    In particular, $\oO_{k,n}$ is an iterated pyramid over these two vertices.

  \item Each of the $k(n-k-1)$ covering relations $(a,b) \lessdot (a,b+1)$ in $P_{k,n}$ produces a maximal face containing 
    all $\chi_{(i_1,\ldots,i_k)}$ with $i_{k+1-a}$ not equal to $k+1-a+b$.

  \item Each of the $(k-1)(n-k)$ covering relations $(a,b) \lessdot (a+1,b)$ produces a maximal face containing 
   all $\chi_{(i_1,\ldots,i_k)}$ with $i_{k+1-a}<k+1-a+b < i_{k+2-a}$. 
   (That is, vectors in $V_{k,n}$ not containing the entry $k+1-a+b$ and having exactly $a-1$ elements greater than $k+1-a+b$).
\end{enumerate}


\subsection{Tableaux as lattice points in the cone of $\oO_{k,n}$}

Denote by $\oO^*_{k,n}$ the maximal face containing all vertices of $\oO_{k,n}$ except the 
origin $\chi_{I_{\hat1}} =(0,\ldots,0)$. Its vertex set is $V^*_{k,n} = V_{k,n}\setminus \{(n-k+1\cdots n)\}$, 
considered in Section~\ref{sec:combinatorics}. Since $\oO_{k,n}$ is a 
pyramid over $\oO^*_{k,n}$ with apex at the origin, it is natural to study the cone 
$\cC_{k,n}$ over $\oO^*_{k,n}$.
That is,
\[
  \cC_{k,n} :=\RR_{\ge 0} \oO_{k,n} = \{\lambda \vv\in \RR^{P_{k,n}}:
                                   \lambda \in[0,\infty), \vv \in \oO_{k,n}\}.
\]
Equivalently, $\cC_{k,n}$ is the polyhedron obtained from the inequality description of~$\oO_{k,n}$ by removing the inequality $x_{k,n-k}\le 1$.

Let us now look at the set  $\Tab_{k,n}$ of all tableaux of shape $k\times (n-k)$.
It is clear that the inequalities describing weak increase are the same as those defining the maximal faces of the cone $\cC_{k,n}$.
We hence have the following lemma.
\begin{lemma}
  \label{lemma:tableaux-as-lattice-points}
  $\Tab_{k,n}$ is the set of integer points in $\cC_{k,n}$.
\end{lemma}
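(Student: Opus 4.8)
The statement to prove is Lemma~\ref{lemma:tableaux-as-lattice-points}: $\Tab_{k,n}$ is the set of integer points in $\cC_{k,n}$.

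\medskip

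The plan is to show a double inclusion between the set $\Tab_{k,n}$ of all nonnegative integer $k\times(n-k)$ matrices that are weakly increasing along rows (left to right) and along columns (bottom to top), and the set $\cC_{k,n}\cap\ZZ^{P_{k,n}}$ of integer points of the cone $\cC_{k,n}=\RR_{\ge 0}\oO_{k,n}$. The key observation, already hinted at in the paragraph preceding the lemma, is that the facet description of $\cC_{k,n}$ is obtained from that of $\oO_{k,n}$ by deleting the single inequality $x_{k,n-k}\le 1$. So first I would write down the inequality description of $\oO(P_{k,n})$ explicitly from the definition in Section~\ref{sec:orderpolytope}: a point $(x_a)_{a\in P_{k,n}}$ lies in $\oO(P_{k,n})$ iff $0\le x_a$ for all minimal $a$, $x_a\le 1$ for all maximal $a$, and $x_a\le x_b$ whenever $a\lessdot_{P_{k,n}} b$ is a covering relation (covering relations suffice by transitivity). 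Removing $x_{k,n-k}\le 1$ — the only inequality of the form $x_a\le 1$, since $(k,n-k)$ is the unique maximal element of $P_{k,n}$ — yields that $\cC_{k,n}$ is cut out by $x_a\ge 0$ for the unique minimal element together with the covering inequalities $x_a\le x_b$. I should double-check that removing that one inequality genuinely produces the cone $\RR_{\ge0}\oO_{k,n}$ rather than something larger: this holds because $\oO(P_{k,n})$ is contained in the unit cube, so its only facet not passing through the origin is $x_{k,n-k}\le 1$ (the origin $\chi_{I_{\hat1}}$ satisfies all the homogeneous inequalities with slack only where required), and deleting precisely the non-homogeneous facet of a polytope containing the origin gives the cone over it.

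\medskip

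Given this, the proof becomes a translation between the two combinatorial encodings. Recall from Section~\ref{sec:decomposition} that we label rows of a tableau from top to bottom while the poset $P_{k,n}=[k]\times[n-k]$ is drawn with the $a$-coordinate increasing downward, so that a tableau $T=(t_{ab})$ is, by definition, exactly a weakly order-preserving map $P_{k,n}\to\mathbb N$, i.e.\ $t_{ab}\le t_{a'b'}$ whenever $(a,b)\le_{P_{k,n}}(a',b')$. The covering relations of $P_{k,n}$ are $(a,b)\lessdot(a,b+1)$ and $(a,b)\lessdot(a+1,b)$, which translate to ``weakly increasing along rows'' and ``weakly increasing along columns bottom to top'' — precisely the defining conditions of $\Tab_{k,n}$. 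Combined with $t_{ab}\ge 0$ (automatic for entries in $\mathbb N$, and exactly the one surviving nonnegativity inequality at the minimal element), this shows $T\in\Tab_{k,n}$ iff $T$, viewed as a vector in $\ZZ^{P_{k,n}}$, satisfies all the defining inequalities of $\cC_{k,n}$, i.e.\ $T\in\cC_{k,n}\cap\ZZ^{P_{k,n}}$.

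\medskip

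I do not expect any serious obstacle here; the lemma is essentially unwinding definitions once one has pinned down the facet descriptions. The one point requiring a little care is the claim that $\cC_{k,n}$ is cut out by just the homogeneous inequalities (nonnegativity at the minimum plus covering inequalities) and not some additional ones — equivalently, that every nonnegative integer point satisfying the monotonicity relations actually lies in $\RR_{\ge0}\oO_{k,n}$ and not merely in the cone defined by those inequalities. This follows because that inequality-defined cone is pointed with apex at the origin and its intersection with the hyperplane $\{x_{k,n-k}=1\}$ is bounded and equals $\oO_{k,n}\cap\{x_{k,n-k}=1\}$, whose vertices together with the origin are exactly the vertices of $\oO_{k,n}$; hence the cone is $\RR_{\ge0}\oO_{k,n}=\cC_{k,n}$. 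Alternatively, and perhaps more cleanly, one can cite that $\oO(P)$ always has this inequality description (Stanley~\cite{Sta1986}) and that deleting the unique facet through no vertex other than accounting for the pyramid apex produces the recession-free cone over the polytope. Either way the remaining verification is purely formal, so I would keep it brief.
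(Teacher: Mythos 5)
Your proof is correct and follows essentially the same route as the paper, which simply observes that the weak-increase inequalities defining tableaux coincide with the homogeneous inequalities cutting out $\cC_{k,n}$ (the inequality description of $\oO_{k,n}$ minus $x_{k,n-k}\le 1$). Your extra verification that deleting this single non-homogeneous facet really yields $\RR_{\ge 0}\oO_{k,n}$ — using that the origin is a vertex and $(k,n-k)$ is the unique maximal element of $P_{k,n}$ — is a detail the paper asserts without proof, and it is handled correctly.
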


Moreover, the summing tableau associated to a list of vectors $I_1,\,\dots, I_\ell\in V^*_{k,n}$ is nothing but the sum 
of the characteristic vectors $\chi_{I_1},\ldots, \chi_{I_\ell}$ of the corresponding vertices of $\oO_{k,n}$, see Lemma~\ref{lem:summingtableau}.
With this in mind, Theorem~\ref{thm:uniquemultiset} can be rewritten as follows.

\begin{proposition}
  \label{thmniquemultiset-lattice}
  For every integer point $T\in \cC_{k,n}$ there is a unique nonnegative integer combination of characteristic vectors 
  $\chi_I$ for $I \in V^*_{k,n}$ 
  with noncrossing support that gives~$T$, and another unique combination with nonnesting support that gives~$T$.
\end{proposition}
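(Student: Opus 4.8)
The plan is to deduce this directly from Theorem~\ref{thm:uniquemultiset} by translating through the dictionary provided by Lemmas~\ref{lemma:tableaux-as-lattice-points} and~\ref{lem:summingtableau}; essentially no new work is needed, only a careful rephrasing.

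First I would record the correspondence between nonnegative integer combinations and multisets: a combination $\sum_{I\in V^*_{k,n}} c_I\,\chi_I$ with $c_I\in\mathbb{N}$ is the same datum as the multiset $L$ on $V^*_{k,n}$ in which $I$ occurs $c_I$ times, and its \emph{support}, the set $\{I:c_I>0\}$, carries a face of $\NC_{k,n}$, respectively of $\NN_{k,n}$, exactly when the vectors appearing in $L$ are pairwise noncrossing, respectively pairwise nonnesting. Here one uses that these complexes are flag (so a set is a face iff all its pairs are edges) and that any vector of $V_{k,n}$ is trivially noncrossing and nonnesting with itself, so repeated entries of $L$ are harmless. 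Hence ``combination with noncrossing (resp.\ nonnesting) support summing to $T$'' means precisely ``mutually noncrossing (resp.\ nonnesting) multiset on $V^*_{k,n}$ whose summing tableau is $T$'', by Lemma~\ref{lem:summingtableau}.

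Next, by Lemma~\ref{lemma:tableaux-as-lattice-points} the integer points of $\cC_{k,n}$ are exactly the tableaux in $\Tab_{k,n}$, so ranging $T$ over integer points of the cone is the same as ranging $T$ over $\Tab_{k,n}$. Putting the two paragraphs together, the asserted existence and uniqueness of a noncrossing (resp.\ nonnesting) combination summing to $T$ is word for word the existence and uniqueness of the multisets $\bijNC(T)$ (resp.\ $\bijNN(T)$) of Theorem~\ref{thm:uniquemultiset}, which finishes the proof.

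This presents no real obstacle; the only point requiring a little care is the bookkeeping around $V^*_{k,n}$ versus $V_{k,n}$. The excluded vector $I_{\hat 1}=(n-k+1,\dots,n)$ is the unique element of $V_{k,n}$ with $\chi_{I_{\hat 1}}=0$ (it is the vertex of $\oO_{k,n}$ at the cone apex, removed when passing to $\oO^*_{k,n}$), so allowing it would destroy uniqueness of the combination without changing the summing tableau. Restricting to $V^*_{k,n}$ on both sides is exactly what makes the statement about combinations, and not merely about tableaux, correct.
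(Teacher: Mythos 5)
Your proposal is correct and matches the paper's treatment: the paper presents this proposition as a direct rewriting of Theorem~\ref{thm:uniquemultiset} via Lemma~\ref{lem:summingtableau} and Lemma~\ref{lemma:tableaux-as-lattice-points}, which is exactly the translation you carry out. Your extra remark on excluding $I_{\hat 1}$ (whose characteristic vector is zero) is the same bookkeeping the paper handles by working with $V^*_{k,n}$.
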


When translated into geometric terms and identifying faces of $\NN_{k,n}$ and $\NC_{k,n}$ with the convex hulls 
of the corresponding characteristic vectors, this proposition has the following consequence.

\begin{corollary}
  \label{coro:maximal face-triang}
  The restrictions of $\NN_{k,n}$ and $\NC_{k,n}$ to $V^*_{k,n}$ are flag unimodular triangulations of $\oO^*_{k,n}$.
\end{corollary}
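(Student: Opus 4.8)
The plan is to deduce Corollary~\ref{coro:maximal face-triang} directly from Proposition~\ref{thmniquemultiset-lattice} together with the general facts about triangulations of lattice polytopes recorded in Section~\ref{sub:triangulations}. First I would recall that $\oO^*_{k,n}$ is a lattice polytope whose vertices are exactly $\{\chi_I : I\in V^*_{k,n}\}$, and that the cone $\cC_{k,n}=\RR_{\ge0}\oO_{k,n}$ is generated integrally by these same characteristic vectors together with the observation that $\chi_{I_{\hat1}}=0$, so that integer points of $\cC_{k,n}$ are precisely nonnegative integer combinations of the $\chi_I$, $I\in V^*_{k,n}$. Here the key point is that every integer point of $\cC_{k,n}$ lies at a well-defined ``height'' given by evaluating the linear functional that is constant $=1$ on $\oO^*_{k,n}$ (this functional exists because $\oO^*_{k,n}$ is a facet of $\oO_{k,n}$ not through the origin), and an integer point of height $r$ corresponds to a multiset of $r$ vectors from $V^*_{k,n}$.

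Next I would invoke the characterization of triangulations in the bulleted list of Section~\ref{sub:triangulations}: an abstract simplicial complex $\Delta$ on the vertex set of a polytope $Q$ is a triangulation of $Q$ exactly when every point of $Q$ has a unique expression as a strictly positive convex combination of the vertices of a single face of $\Delta$. I would apply this with $Q=\oO^*_{k,n}$ and $\Delta$ equal to the restriction of $\NN_{k,n}$ (resp. $\NC_{k,n}$) to $V^*_{k,n}$, i.e. the flag complex on noncrossing (resp. nonnesting) subsets of $V^*_{k,n}$. The dictionary between convex combinations in $\oO^*_{k,n}$ and nonnegative integer combinations in $\cC_{k,n}$ is the standard cone-over-facet correspondence: scaling a rational point $x\in\oO^*_{k,n}$ to clear denominators produces an integer point $T\in\cC_{k,n}$, and a convex combination $x=\sum \alpha_v v$ with $\sum\alpha_v=1$ becomes, after scaling, an integer combination $T=\sum c_v v$; conversely any integer combination $T=\sum c_v\chi_I$ of height $r$ rescales to a convex combination of $x=T/r$. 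Under this dictionary, uniqueness of the strictly-positive convex combination supported on a face of $\Delta$ is equivalent to uniqueness of the nonnegative integer combination with noncrossing (resp. nonnesting) support, which is exactly the content of Proposition~\ref{thmniquemultiset-lattice} — applied not just to $T$ but to all its integer multiples, which it covers since the statement is for \emph{every} integer point of $\cC_{k,n}$. It then only remains to check that the support of such a combination really is a face of $\Delta$, which holds by flagness of $\Delta$: pairwise noncrossing (resp. nonnesting) of the vectors in the support is precisely the condition for them to span a face.

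Having established that $\NN_{k,n}|_{V^*_{k,n}}$ and $\NC_{k,n}|_{V^*_{k,n}}$ are triangulations of $\oO^*_{k,n}$, flagness is immediate from the definition of these complexes as flag complexes (Definitions~\ref{def:nonnesting} and~\ref{def:noncrossing}), restriction to a subset of vertices preserving flagness. For unimodularity I would argue that since both complexes triangulate the lattice polytope $\oO^*_{k,n}$, and since $\oO^*_{k,n}$ (being a facet of the order polytope $\oO_{k,n}$, itself a $0/1$-polytope of normalized volume equal to the number of linear extensions of $P_{k,n}$) has normalized volume equal to the number of maximal faces of the standard triangulation $\NN_{k,n}$, it suffices to note that every maximal face is a simplex of the minimal possible volume; more cleanly, unimodularity of $\NN_{k,n}$ follows because it is (the restriction of) the standard triangulation $\Delta(P_{k,n})$, whose unimodularity is recorded in Section~\ref{sec:orderpolytope}, and then unimodularity of $\NC_{k,n}$ follows because any two triangulations of the same lattice polytope into the same number of full-dimensional simplices are simultaneously unimodular — indeed, $\NC_{k,n}$ and $\NN_{k,n}$ have the same number of maximal faces (both equal to $\Cat_{n-k,k}$) by Theorem~\ref{thm:uniquemultiset}, since maximal faces correspond to standard tableaux in both cases. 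The main obstacle, and the place requiring the most care, is the bookkeeping in the cone-to-polytope dictionary — in particular making sure that the ``support is a face'' claim and the uniqueness claim transfer correctly between the integer (multiset) picture of Proposition~\ref{thmniquemultiset-lattice} and the convex-combination picture of the triangulation criterion, including the subtlety that a point of $\oO^*_{k,n}$ on a lower-dimensional face must be written using only the vertices of that face, which is exactly why we need the ``strictly positive coefficients, unique face'' formulation rather than a naive full-dimensional one.
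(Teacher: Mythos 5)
The triangulation part of your plan is essentially the paper's own route: the paper also deduces the statement from Proposition~\ref{thmniquemultiset-lattice} via the unique-convex-combination criterion of Section~\ref{sub:triangulations}, transferring between integer points of $\cC_{k,n}$ and points of $\oO^*_{k,n}$. The bookkeeping you defer is, however, exactly where the paper does its one piece of real work: for an arbitrary (possibly irrational) point with two representations supported on faces $S_1,S_2$, it chooses them with $|S_1|+|S_2|$ minimal, so that $S_1$ and $S_2$ are disjoint and $\conv\{\chi_I:I\in S_1\}\cap\conv\{\chi_I:I\in S_2\}$ is a single point; that point and the coefficients are then rational and can be scaled to an integer tableau with two noncrossing (resp.\ nonnesting) decompositions, contradicting the Proposition. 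Your dictionary as stated only covers rational points with rational coefficients, and note that even for a rational point the coefficients need not a priori be rational, since affine independence of the faces is part of what is being proved; some device like the paper's minimality argument (or a density-plus-closedness argument for coverage) is needed.

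The genuine gap is in your unimodularity argument for $\NC_{k,n}$. You claim that $\NC_{k,n}$ and $\NN_{k,n}$ have the same number of maximal faces ``by Theorem~\ref{thm:uniquemultiset}, since maximal faces correspond to standard tableaux in both cases.'' That justification is false for $\NC_{k,n}$: under $\bijNC$ its maximal faces correspond to tableaux that are strictly increasing along rows and columns but are \emph{not} standard (see the $\NC_{2,5}$ example, where entries repeat), and the paper explicitly states that it has no simple characterization of which tableaux occur. The equality of facet numbers (indeed of $f$- and $h$-vectors) is obtained in the paper only as a consequence of Theorem~\ref{th:main}, i.e.\ of both complexes being unimodular triangulations of the same polytope -- which is the very statement you are proving, so your argument is circular. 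The counting detour is also unnecessary: unimodularity follows from the same Proposition~\ref{thmniquemultiset-lattice} you already invoke. Every lattice point of $\cC_{k,n}$ is a nonnegative \emph{integer} combination of the $\chi_I$ supported on a face of $\NC_{k,n}$, and once the no-overlap property is established this face must lie in whichever maximal simplicial cone contains the point; hence each maximal cone, spanned by the $k(n-k)$ vertices of a maximal face (Corollary~\ref{coro:pure}), contains no lattice points beyond the nonnegative integer combinations of its generators. This is precisely unimodularity of these cones, hence of the simplices of the induced triangulation of $\oO^*_{k,n}$, and, after coning with the origin, of $\NC_{k,n}$ as a triangulation of $\oO_{k,n}$.
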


\begin{proof}
  We provide the proof for $\NC_{k,n}$. The claim for $\NN_{k,n}$ follows in the same way.
  We use the characterization of triangulations via uniqueness of the convex combination of each $x\in \oO^*_{k,n}$ 
  as a convex combination of vertices of a face in $\NC_{k,n}$ (see Section~\ref{sub:triangulations}).
  Assume, to seek a contradiction, that there is an $x\in \oO^*_{k,n}$ that admits two different combinations whose 
  support is a face in $\NC_{k,n}$. That is, there are faces $S_1$ and $S_2$ in $\NC_{k,n}$ and positive real 
  vectors $\lambda\in \RR^{S_1}$, $\mu\in \RR^{S_2}$, such that
  \[
    \sum_{I\in S_1} \lambda_I \chi_I
        =
    \sum_{I\in S_2} \mu_I \chi_I.
  \]
  Assume further that $S_1$ and $S_2$ are chosen minimizing $|S_1|+|S_2|$ among the faces in $\NC_{k,n}$ with this 
  property. This implies that $S_1$ and $S_2$ are disjoint since common vertices can be eliminated from one side of the 
  equality, and that $\conv\{\chi_I:I\in S_1\}$ and $\conv\{\chi_I:I\in S_2\}$ intersect in a single point. This, in turn, 
  implies that this point, and the vectors $\lambda$ and $\mu$, are rational. Multiplying them by suitable constants 
  we consider them integral. But then the tableau
  \[
    T:=\sum_{I\in S_1} \lambda_I \chi_I = \sum_{I\in S_2} \mu_I \chi_I
  \]
  turns out to have two different noncrossing decompositions, contradicting Proposition~\ref{thmniquemultiset-lattice}.
\end{proof}

Since $\oO_{k,n}$ is a pyramid over $\chi_{(n-k+1,\ldots , n)} = (0,\ldots,0)$ and $(n-k+1,\ldots , n)$ is
noncrossing and nonnesting with every element of $V_{k,n}$, Corollary~\ref{coro:maximal face-triang} implies that 
both are unimodular triangulations of $\oO_{k,n}$.

\begin{theorem}
\label{thm:triangulation}
  $\NN_{k,n}$ and $\NC_{k,n}$ are flag unimodular triangulations of $\oO_{k,n}$.
\end{theorem}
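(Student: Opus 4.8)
The plan is to derive this immediately from Corollary~\ref{coro:maximal face-triang}, by bootstrapping from the facet $\oO^*_{k,n}$ to the whole order polytope through the pyramid structure. First I would recall that $\oO_{k,n}$ is a pyramid whose base is the facet $\oO^*_{k,n} = \oO_{k,n}\cap\{x_{\hat 1_{P_{k,n}}}=1\}$, with vertex set exactly $V^*_{k,n}$, and whose apex is the origin $\chi_{I_{\hat 1}} = (0,\ldots,0)$, where $I_{\hat 1} = (n-k+1,\ldots,n)$. The key observation is that $I_{\hat 1}$ is nonnesting and noncrossing with every element of $V_{k,n}$: for nonnesting this is immediate since $\chi_{I_{\hat 1}} = 0$ is componentwise below every other characteristic vector, and for noncrossing it is the case $c = n-k+1$ of Proposition~\ref{prop:intro}\eqref{eq:join}. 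Hence $\NN_{k,n}$ (resp.\ $\NC_{k,n}$) is the join of the single vertex $I_{\hat 1}$ with its restriction $\NN_{k,n}|_{V^*_{k,n}}$ (resp.\ $\NC_{k,n}|_{V^*_{k,n}}$) to $V^*_{k,n}$, since every clique in the nonnesting (resp.\ noncrossing) graph either avoids $I_{\hat 1}$ or is obtained from such a clique by adding $I_{\hat 1}$, which is adjacent to everything.

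Next I would verify the three asserted properties. For \emph{triangulation}: Corollary~\ref{coro:maximal face-triang} says the restriction to $V^*_{k,n}$ triangulates $\oO^*_{k,n}$, and the join of a vertex lying off the affine span of a facet with a triangulation of that facet triangulates the pyramid. Concretely, via the uniqueness-of-convex-combination criterion recalled in Section~\ref{sub:triangulations}: every $x\in\oO_{k,n}\setminus\{0\}$ factors uniquely as $x = ty$ with $t = x_{\hat 1_{P_{k,n}}}\in(0,1]$ and $y\in\oO^*_{k,n}$, so one runs the unique representation of $y$ through the triangulation of $\oO^*_{k,n}$, while $x=0$ is the apex vertex. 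For \emph{flagness}: this is free, because $\NN_{k,n}$ and $\NC_{k,n}$ are defined as flag complexes of graphs (equivalently, the join of a point with a flag complex is flag). For \emph{unimodularity}: a maximal simplex of $\NC_{k,n}$ (resp.\ $\NN_{k,n}$) is the cone $\{0\}\cup\{\chi_I : I\in\sigma\}$ over a maximal simplex $\sigma$ of the triangulation of $\oO^*_{k,n}$; since all $\chi_I$ with $I\in V^*_{k,n}$ lie in $\{x_{\hat 1_{P_{k,n}}}=1\}$ while the apex lies in $\{x_{\hat 1_{P_{k,n}}}=0\}$, subtracting one column from the others and expanding along the $\hat 1_{P_{k,n}}$-row shows the determinant of the edge vectors of the cone equals, up to sign, the determinant of the edge vectors of $\sigma$ computed in the induced lattice $\ZZ^{P_{k,n}\setminus\{\hat 1_{P_{k,n}}\}}$ on $\operatorname{aff}(\oO^*_{k,n})$, which is $\pm 1$ since $\sigma$ is unimodular there.

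I do not expect any genuine obstacle: all the mathematical content sits in Corollary~\ref{coro:maximal face-triang} (hence ultimately in Theorem~\ref{thm:uniquemultiset}), and the only point needing a line of care is that coning over a lattice apex at lattice distance $1$ from the base preserves unimodularity, which is exactly the determinant expansion sketched above. A terser phrasing is possible: since $\oO_{k,n}$ is an iterated pyramid over the two vertices $\chi_{I_{\hat 0}}$ and $\chi_{I_{\hat 1}}$, both free in every maximal face of both complexes, one can simply invoke the facet statement.
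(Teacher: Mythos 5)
Your proposal is correct and follows essentially the same route as the paper: the paper also deduces the theorem from Corollary~\ref{coro:maximal face-triang} by noting that $\oO_{k,n}$ is a pyramid with apex $\chi_{(n-k+1,\ldots,n)}=(0,\ldots,0)$ and that $(n-k+1,\ldots,n)$ is nonnesting and noncrossing with every element of $V_{k,n}$, so both complexes are joins of that vertex with their restrictions to $V^*_{k,n}$. Your extra details on the convex-combination factorization through the pyramid and the determinant argument for unimodularity are fine elaborations of what the paper leaves implicit.
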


The theorem follows 
from Stanley's work in~\cite{Sta1986} for $\NN_{k,n}$ and from~\cite[Corollary~7.2]{PPS2010} together with a mention of unimodularity 
in the proof of~\cite[Corollary~8.2]{PPS2010} for $\NC_{k,n}$.  

\subsection{$\NC_{k,n}$ as a regular triangulation of $\oO_{k,n}$}

For real parameters $\alpha_1 , \ldots, \alpha_{k-1}$ consider the following weight function on the set of vertices 
$V_{k,n}$ of $\oO_{k,n}$. For each $I=(i_1,\ldots,i_k)\in V_{k,n}$ let
\[
  w(I)=w(i_1,\ldots,i_k):= \sum_{1\le a < b \le k} \alpha_{b-a}i_a i_b.
\]
We assume that the values $\alpha_{1},\ldots,\alpha_{k-1}$ are positive 
and we require that $\alpha_{i+1}\ll\alpha_i$.

\begin{lemma}
\label{lemma:noncrossing-regular}
  Let $I,J$ and $X,Y$ be two different pairs of elements of $V_{k,n}$. If $I$ and $J$ are noncrossing and
  $\chi_I+\chi_J=\chi_X+\chi_Y$
  then $X$ and $Y$ are crossing and
  \[
    w(I)+w(J) < w(X) + w(Y).
  \]
\end{lemma}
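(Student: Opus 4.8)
The statement has two parts that should be proved together: (a) if $I,J$ are noncrossing and $\chi_I+\chi_J=\chi_X+\chi_Y$ with $\{X,Y\}\ne\{I,J\}$, then $X,Y$ cross; and (b) the weight $w$ strictly prefers the noncrossing pair. For (a) I would argue by contraposition on what happens coordinatewise. Recall from Corollary~\ref{coro:equivalent_diagonals} that $\chi_I+\chi_J=\chi_X+\chi_Y$ is equivalent to $I\cap J=X\cap Y$ and $I\cup J=X\cup Y$; equivalently, viewing these vectors as lattice paths/tableaux, $\{i_a,j_a\}$ and $\{x_a,y_a\}$ coincide as multisets for every index $a$ (this is just the summing-tableau description, Lemma~\ref{lem:summingtableau}). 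So for each $a$ there is a binary choice: either $(x_a,y_a)=(i_a,j_a)$ or $(x_a,y_a)=(j_a,i_a)$. Since $\{X,Y\}\ne\{I,J\}$, the ``swap'' happens at some but not all coordinates. Pick indices $a<b$ that are consecutive among the coordinates where $I$ and $J$ differ and where exactly one of them lies in a swapped block and the other does not — concretely, choose $a<b$ with $i_a\ne j_a$, $i_b\ne j_b$, $i_\ell=j_\ell$ for $a<\ell<b$, and such that the swap status changes between $a$ and $b$. Then on the pair $(a,b)$ one of $\{I,J\}$, $\{X,Y\}$ is nonnesting-looking and the other is crossing-looking on those two coordinates; since $I,J$ are assumed noncrossing (hence on this pair the arcs $(i_a<i_b)$, $(j_a<j_b)$ do not cross), the pair $X,Y$ must cross on $(a,b)$, and by Definition~\ref{def:noncrossing} (with $x_\ell=y_\ell$ for $a<\ell<b$, inherited from $i_\ell=j_\ell$) this means $X,Y$ cross. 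The bookkeeping that a swap must ``start'' or ``stop'' at a place where the entries genuinely differ, producing exactly a crossing rather than a nesting, is the combinatorial heart of part (a).

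**The weight inequality.** For (b), fix the pair $a<b$ found above where $I,J$ and $X,Y$ disagree on the unordered-to-ordered assignment, with $i_\ell=j_\ell$ for $a<\ell<b$. Write $b-a=d$. The key point is that $w$ is built so that the ``revlex''/lexicographic bias matches noncrossingness, and the hierarchy $\alpha_{d+1}\ll\alpha_d$ makes the leading term decisive. I would isolate the difference $\bigl(w(I)+w(J)\bigr)-\bigl(w(X)+w(Y)\bigr)$ and show that, after using $i_\ell=j_\ell$ for $a<\ell<b$ and the multiset equalities $\{i_c,j_c\}=\{x_c,y_c\}$ for all $c$, this difference reduces to a sum of terms $\alpha_{c-a}$ (and $\alpha_{b-c}$) times expressions of the form $(i_c-j_c)(i_b-j_b)$ or $(i_c-j_c)(i_a-j_a)$, and that the dominant term — the one with the smallest index gap, namely $\alpha_{b-a}(i_a-j_a)(i_b-j_b)$ coming from the pair $(a,b)$ itself — has a sign forced by the noncrossing condition: noncrossing on $(a,b)$ with $i_a<j_a$ forces $i_b\ge j_b$, hence $(i_a-j_a)(i_b-j_b)\le 0$, and in fact $<0$ because $i_b\ne j_b$; while for $X,Y$ crossing the analogous product is $>0$. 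Since every other contributing term involves a strictly larger index difference $>b-a$ and hence a coefficient $\alpha_{>d}$ that is infinitesimally small compared to $\alpha_d$, the sign of the whole difference is governed by the leading term, giving $w(I)+w(J)<w(X)+w(Y)$.

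**The main obstacle.** The subtle point is not the asymptotics in the $\alpha_i$ — that is routine once the expansion is organized by index gap — but rather controlling \emph{which} pairs $(a,b)$ contribute and verifying that the \emph{unique} minimal-gap contribution indeed comes from a pair where the noncrossing hypothesis pins down the sign. One has to rule out cancellation among several minimal-gap terms and confirm that at least one such term is nonzero; this is where the precise structure of ``consecutive disagreement indices with a swap boundary'' from part (a) is reused. Concretely, I would argue that among all pairs $(a,b)$ with $i_\ell=j_\ell$ strictly between them and with $(x_a,y_a),(x_b,y_b)$ assigned in ``opposite order'' relative to $(i,j)$, taking one of minimal gap $b-a$, the corresponding term $\alpha_{b-a}(i_a-j_a)(i_b-j_b)$ appears with the same (nonzero, noncrossing-forced) sign in $w(I)+w(J)-w(X)-w(Y)$ and is not cancelled by any equal-gap term, because any other gap-$(b-a)$ pair either agrees in assignment on both ends (contributing $0$) or would itself be such a boundary, and the parity/structure of swap-blocks prevents opposite signs. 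Once that lemma is in hand, pushing $\alpha_{i+1}/\alpha_i\to 0$ finishes the proof. I expect formalizing this ``minimal-gap dominates and does not cancel'' step to be the one requiring genuine care; everything else is direct manipulation of the definitions of $w$, of noncrossing/crossing, and of the summing tableau.
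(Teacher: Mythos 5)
Your proposal is correct and follows essentially the same route as the paper: expand $w(X)+w(Y)-w(I)-w(J)$ over the coordinate pairs where the pairing is swapped (``inconsistent'' pairs), let $\alpha_{i+1}\ll\alpha_i$ make the minimal-gap swapped pairs dominant, observe that such pairs have equal intermediate entries, and use noncrossingness of $I,J$ (together with monotonicity of $X$ and $Y$) both to force the sign of the dominant terms and to exhibit the crossing of $X$ and $Y$. The cancellation worry you single out is resolved just as you suspect: any minimal-gap inconsistent pair automatically satisfies $i_\ell=j_\ell$ strictly between its endpoints (otherwise a shorter inconsistent pair would exist), so every dominant term $-\alpha_{b-a}(i_a-j_a)(i_b-j_b)$ is strictly positive and no parity argument is needed.
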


\begin{proof}
  We set $I=(i_1,\cdots, i_k)$, $J=(j_1,\cdots, j_k)$, $X=(x_1,\cdots ,x_k)$ and  $Y=(y_1,\cdots ,y_k)$.
  Observe that $\chi_I+\chi_J=\chi_X+\chi_Y$
  implies $\{i_a,j_a\}=\{x_a,y_a\}$ for every $a\in [k]$. Since the pairs are different it follows that $X$ and $Y$ cross.
  For each pair $a,b\in [k]$ we then have two possibilities:
  \begin{itemize}
    \item $\big\{ \{x_a,x_b\}, \{y_a,y_b\}\big\}= \big\{ \{i_a,i_b\}, \{j_a,j_b\}\big\}$. We then say that $(X,Y)$ is consistent 
      with $(I,J)$ on the coordinates $a$ and $b$.
    \item $\big\{ \{x_a,x_b\}, \{y_a,y_b\}\big\} =\big\{ \{i_a,j_b\}, \{j_a,i_b\}\big\}$. We then say that $(X,Y)$ is inconsistent 
      with $(I,J)$ on the coordinates $a$ and $b$.
  \end{itemize}
  Observe that the difference $w(X)+w(Y) - w(I)-w(J)$ equals
  \[
    \sum_{a,b} \alpha_{b-a}(i_aj_b +j_ai_b -i_ai_b -j_aj_b) = - \sum_{a,b} \alpha_{b-a}(i_a-j_a)(i_b-j_b),
  \]
  where the sum runs over all inconsistent pairs of coordinates with $1\le a<b\le k$. Observe also that, by the choice of parameters 
  $\alpha_{b-a}$ the sign of this expression depends only on the inconsistent pairs that minimize $b-a$.
  We claim that all such ``minimal distance inconsistent pairs'' have the property that $i_\ell = j_\ell$ for all $a < \ell < b$. 
  This follows from the fact that if $a<c<b$ and $i_c\ne j_c$ then $(a,b)$ being inconsistent implies that one of $(a,b)$ and $(b,c)$ is also inconsistent.

  Then, the fact that~$I$ and~$J$ are not crossing implies that, for all such $a$ and $b$, we have that
  \[
    i_a<j_a<j_b<i_b
    \quad\text{or}\quad
    j_a<i_a<i_b<j_b.
  \]
  In any case, $(i_a-j_a)(i_b-j_b)<0$, so that $w(X)+w(Y) > w(I)+w(J)$.
\end{proof}

The regularity assertion of the following corollary was proved in~\cite[Theorem~8.1]{PPS2010}.

\begin{corollary}
  \label{coro:noncrossing-regular}
  $\NC_{k,n}$ is the regular triangulation of $\oO_{k,n}$ induced by the weight vector $w$.
\end{corollary}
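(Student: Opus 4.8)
The plan is to deduce this directly from Lemma~\ref{lemma:regular-easy} combined with Lemma~\ref{lemma:noncrossing-regular}. We already know from Theorem~\ref{thm:triangulation} that $\NC_{k,n}$ is a flag unimodular triangulation of the lattice polytope $\oO_{k,n}$, so the hypotheses of Lemma~\ref{lemma:regular-easy} are met; it remains only to verify condition~(ii) of that lemma for the weight vector $w$. Concretely, I must check that for every edge $v_1v_2$ of $\NC_{k,n}$ and every other pair $\{v_1',v_2'\}\ne\{v_1,v_2\}$ of vertices of $\oO_{k,n}$ with $v_1+v_2=v_1'+v_2'$, one has $w(v_1)+w(v_2)<w(v_1')+w(v_2')$.

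First I would translate the set-up into the language of $V_{k,n}$: an edge $v_1v_2$ of $\NC_{k,n}$ is a pair of noncrossing vectors $I,J\in V_{k,n}$ (via $v_i=\chi_{I},\chi_J$), and the equation $v_1+v_2=v_1'+v_2'$ becomes $\chi_I+\chi_J=\chi_X+\chi_Y$ for the pair $X,Y\in V_{k,n}$ corresponding to $v_1',v_2'$ — here one uses that the only lattice points of $\oO_{k,n}$ are the vertices $\chi_F$, so $v_1',v_2'$ are themselves of this form. At this point Lemma~\ref{lemma:noncrossing-regular} applies verbatim: it gives exactly $w(I)+w(J)<w(X)+w(Y)$, which is condition~(ii). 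Invoking Lemma~\ref{lemma:regular-easy}(i)$\Leftrightarrow$(ii) then finishes the proof.

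There is essentially no obstacle here beyond bookkeeping, since the two quoted lemmas are designed precisely to be combined in this way; the one point worth a sentence in the write-up is why the pair $v_1',v_2'$ may be assumed to be a pair of \emph{vertices} of $\oO_{k,n}$ rather than arbitrary points — this is because Lemma~\ref{lemma:regular-easy}(ii) already quantifies only over pairs of vertices, so no reduction is even needed. Thus the proof is a two-line citation:

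\begin{proof}
  By Theorem~\ref{thm:triangulation}, $\NC_{k,n}$ is a flag unimodular triangulation of the lattice polytope $\oO_{k,n}$, so Lemma~\ref{lemma:regular-easy} applies. Its condition~(ii) asks that for every edge of $\NC_{k,n}$, given by a noncrossing pair $I,J\in V_{k,n}$, and every other pair $X,Y$ of vertices of $\oO_{k,n}$ with $\chi_I+\chi_J=\chi_X+\chi_Y$, one has $w(I)+w(J)<w(X)+w(Y)$. This is exactly the conclusion of Lemma~\ref{lemma:noncrossing-regular}. Hence $\NC_{k,n}$ is the regular triangulation of $\oO_{k,n}$ induced by $w$.
\end{proof}
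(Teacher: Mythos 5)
Your proposal is correct and is essentially identical to the paper's own argument, which likewise deduces the corollary by combining Lemma~\ref{lemma:regular-easy} (applicable since $\NC_{k,n}$ is a flag unimodular triangulation of $\oO_{k,n}$) with the weight inequality of Lemma~\ref{lemma:noncrossing-regular}. No gaps; the bookkeeping you describe is exactly what the paper leaves implicit.
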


\begin{proof}
  This follows from Lemmas~\ref{lemma:regular-easy} and~\ref{lemma:noncrossing-regular}.
\end{proof}

\begin{remark}
  The same ideas show that the nonnesting complex is the regular triangulation of $\oO_{k,n}$ produced by the opposite weight vector $-w$. 
  The only difference in the proof is that at the end, since $(I,J)$ is now the nonnesting pair, we have that
  \[
    i_a<j_a<i_b<j_b
    \quad\text{or}\quad
    j_a<i_a<j_b<i_b,
  \]
  so that $(i_a-j_a)(i_b-j_b)>0$ and $w(X)+w(Y) < w(I)+w(J)$, as needed.

  This means that $\NN_{k,n}$ and $\NC_{k,n}$ are in a sense ``opposite'' regular triangulations, although this should not be taken too literally. 
  What we claim for this particular~$w$ and its opposite $-w$ may not be true for other weight vectors $w$ producing the triangulation $\NC_{k,n}$. 
  Anyway, since $\NN_{k,n}$ is the \defn{pulling triangulation} of $\oO_{k,n}$ with respect to any of a family of orderings of the vertices 
  (any ordering compatible with comparability of filters), this raises the question whether $\NC_{k,n}$ is the \defn{pushing triangulation} 
  for the same orderings. See~\cite{DeloeraRambauSantos} for more on pushing and pulling triangulations.
\end{remark}


\subsection{Codimension one faces of $\NC_{k,n}$, and the Gra{\ss}mann-Tamari order}
\label{sec:shelling}

Here we prove Theorem~\ref{thm:welldefinedTamari}; that is, that the Gra{\ss}mann-Tamari order is well-defined, 
and that any of its linear extensions is a shelling order for $\NC_{k,n}$.

The key idea is the use a central orientation of the dual graph, as introduced in 
Section~\ref{sec:central_orientations}, by explicitly describing the hyperplane 
containing interior codimension one faces in the triangulation of $\oO_{k,n}$ by $\NC_{k,n}$.

\medskip

First we define the bending vector $\bend_{(I,X)} \in \RR^{P_{k,n}}$ of a segment 
$X=(i_{a_1},\dots,i_{a_2})$ (where $1\le a_1<a_2\le k$) of an element $I=(i_1,\dots,i_k) \in V_{k,n}$. 
We assume $i_{a_2} < a_2+(n-k)$.
That is,~$X$ does not meet the east boundary of the grid.
This technical condition is related to the convention that, when marking the last occurrence of each inter in 
a row of a noncrossing table, we omit the mark for the maximal integer $a+n-k$ in row~$a$; compare Remark~\ref{rem:bars}.

The definition heavily relies on looking at~$I$ as a monotone path in the dual grid of $P_{k,n}$ as explained in Figure~\ref{fig:poset}, 
and $X$ as a connected subpath in it, starting and ending with a vertical step. See Figure~\ref{fig:bending_functional} where~$X$ is represented 
as a thick subpath in a longer monotone path. Along~$X$ we have marked certain parts with a $\ominus$-dot or a $\oplus$-dot. Namely when
traversing the path from top to bottom: 

\begin{itemize}
  \item the first and the last step in~$X$ (both vertical) are marked with a~$\ominus$-dot and with a $\oplus$-dot, respectively, and 
  \item the corners ~$X$ turns left or right are also marked with a~$\oplus$-dot and with a $\ominus$-dot, respectively.
\end{itemize}

Observe that the dots alternate between~$\oplus$ and~$\ominus$, and that there is the same number of both along~$X$.

\begin{figure}
  \centering
  \begin{tikzpicture}[scale=.55]
    \draw[step=1,black,thin,dotted] (0,0) grid (8,10);

    \draw[line width=1.5pt, black, ->] (0,10) -- (1,10);

    \draw[line width=1.5pt, black] (0,10) -- (2,10) -- (2,6) -- (5,6) -- (5,3) -- (6,3) -- (6,1) -- (8,1) -- (8,0);
    \draw[line width=3pt, red] (2,9) -- (2,6) -- (5,6) -- (5,3) -- (6,3) -- (6,1);

    \node[circle, inner sep=.2pt, fill=white, draw=black] at (2,8.5) {\tiny$-$};
    \node[circle, inner sep=.2pt, fill=white, draw=black] at (2,6  ) {\tiny$+$};
    \node[circle, inner sep=.2pt, fill=white, draw=black] at (5,6  ) {\tiny$-$};
    \node[circle, inner sep=.2pt, fill=white, draw=black] at (5,3  ) {\tiny$+$};
    \node[circle, inner sep=.2pt, fill=white, draw=black] at (6,3  ) {\tiny$-$};
    \node[circle, inner sep=.2pt, fill=white, draw=black] at (6,1.5) {\tiny$+$};
  \end{tikzpicture}
  \qquad
  \qquad
  \begin{tikzpicture}[scale=.55]
    \draw[step=1,black,thin,dotted] (0,0) grid (8,10);

    \draw[line width=1.5pt, black, ->] (0,10) -- (1,10);

    \draw[line width=1.5pt, black] (0,10) -- (2,10) -- (2,6) -- (5,6) -- (5,3) -- (6,3) -- (6,1) -- (8,1) -- (8,0);
    \draw[line width=3pt, red] (2,9) -- (2,6) -- (5,6) -- (5,3) -- (6,3) -- (6,1);

    \node at (2.5,8.5) {\small$-$};
    \node at (1.5,8.5) {\small$+$};
    \node at (2.5,6.5) {\small$+$};
    \node at (1.5,5.5) {\small$-$};
    \node at (5.5,6.5) {\small$-$};
    \node at (4.5,5.5) {\small$+$};
    \node at (5.5,3.5) {\small$+$};
    \node at (4.5,2.5) {\small$-$};
    \node at (6.5,3.5) {\small$-$};
    \node at (5.5,2.5) {\small$+$};
    \node at (6.5,1.5) {\small$+$};
    \node at (5.5,1.5) {\small$-$};
  \end{tikzpicture}
  \caption{A segment of a path in the dual grid of~$P_{10,18}$ and the corresponding bending vector in $\RR^{P_{k,n}}$.}
  \label{fig:bending_functional}
\end{figure}

The \defn{bending vector}~$\bend_{(I,X)}$ is the vector in $\{-1,0,+1\}^{P_{k,n}}$ obtained as indicated on the right side of Figure~\ref{fig:bending_functional}.
We start with the zero vector $\{ 0\}^{P_{k,n}}$.
Every time~$X$ bends to the right or to the left, we add a $(+1,-1)$-pair to the squares north-east and south-west to the corner, with the $-1$ 
added to the square inside and the $+1$ added to square outside the corner. Equivalently, the northeast square gets a $+1$ added in left turns ($\oplus$-corners) 
and a $-1$ added in right turns or ($\ominus$ corners) and vice versa for the southwest square.
Additionally, we add $+1$ to the square west of the first step and $-1$ to the square east of the first step and the other way round for the last step. 
Observe that if the first (or last) vertical step in~$X$ directly ends in a corner, then there is a cancellation of a $+1$ and a $-1$, see Example~\ref{ex:bendingvector} below.

\medskip

Let now $J = (j_1,\ldots,j_k) \in V_{k,n}$ be another vector, thought of as another monotone path in the dual grid of~$P_{k,n}$.
Then the scalar product of the bending vector~$\bend_{(I,X)}$ and the characteristic vector~$\chi_J$ is, by construction, given by the number of $\oplus$-dots minus the 
number of $\ominus$-dots of the path~$X$ that~$J$ goes through.
Indeed, a $(+1,-1)$-pair in the bending vector $\bend_{(I,X)}$ contributes to the scalar product if and only if $J$ goes through the corresponding dot, and with the stated sign.
In particular, the scalar product depends only on the intersection of the paths corresponding to~$X$ and to~$J$.
The next lemma explains how to compute the scalar product by summing over the contributions from the connected components of this intersection. Observe that a connected component may be a single point.

\begin{lemma}
\label{lem:contribution}
  Let ~$I,J \in V_{k,n}$ be noncrossing and let $X = (i_{a_1},\ldots, i_{a_2})$ be a segment in~$I$. 
  Considering the connected components of the intersection of the paths corresponding to~$X$ and to~$J$, we have that
  \begin{enumerate}
    \item a component contributes $-1$ to~$\langle \bend_{(I,X)}, \chi_J\rangle$ if and only if this component starts with 
       $j_{a_1} = i_{a_1}$ and this component of~$J$ leaves~$X$ to the left, \label{eq:component-1}

    \item a component contributes $1$ to~$\langle \bend_{(I,X)}, \chi_J\rangle$ if and only if this component ends with 
      $j_{a_2} = i_{a_2}$ and this component of~$J$ enters~$X$ from the right, \label{eq:component1}

    \item all other components contribute zero to~$\langle \bend_{(I,X)}, \chi_J\rangle$. \label{eq:component0}
  \end{enumerate}
\end{lemma}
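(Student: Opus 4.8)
Here is how I would approach the proof.

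The plan is to reduce the whole statement to a signed count of dots and then to a purely local analysis at the ends of a single component. First I would invoke the identity established just before the lemma: for \emph{any} $J\in V_{k,n}$,
\[
\langle \bend_{(I,X)},\chi_J\rangle \;=\; \#\{\oplus\text{-dots of }X\text{ met by }J\}\;-\;\#\{\ominus\text{-dots of }X\text{ met by }J\},
\]
where a corner dot is ``met'' iff the lattice path of $J$ passes through that corner point, and the first/last step dot is ``met'' iff $J$ contains that edge of $X$. Since the right-hand side depends only on the set $X\cap J$, it splits as a sum of contributions, one per connected component of $X\cap J$ (a component possibly being a single point). So it is enough to prove the three itemized statements for one component $C$.

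Next I would exploit the alternation of the dots. Traversing $X$ from top to bottom, the dots strictly alternate in sign, beginning with the $\ominus$-dot on the first vertical step and ending with the $\oplus$-dot on the last vertical step. Hence the dots of $X$ lying on a fixed component $C$ are a block of consecutive terms of this alternating $\pm1$ sequence, and such a block has signed sum $0$ unless its first and last terms carry the same sign, in which case the sum is $+1$ (both $\oplus$) or $-1$ (both $\ominus$). Thus the contribution of $C$ is entirely determined by the signs of the topmost and bottommost dots of $X$ that lie on $C$ (and is $0$ if $C$ carries no dot of $X$, which settles the trivial single-point components). It then remains to read off these two extreme signs from how $J$ meets $X$ at the two ends of $C$. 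At the upper end there are exactly three possibilities: $C$ already contains the first step of $X$, i.e.\ $j_{a_1}=i_{a_1}$, in which case the topmost dot on $C$ is the $\ominus$-dot of that first step; or $J$ reaches $X$ from the left; or $J$ reaches $X$ from the right, and in these last two cases the topmost dot on $C$ is a corner dot whose sign is determined by the turn direction of $X$ at that corner together with the side from which $J$ arrived. Symmetrically for the lower end, with the roles of the first/last step and of left/right exchanged. Running through the (finitely many) combinations of upper-end and lower-end type, using that $I$ and $J$ are noncrossing to discard the impossible ones and to fix the extreme signs in the surviving ones, I would obtain exactly the stated trichotomy: contribution $-1$ precisely when the upper end is the start of $X$ and $J$ leaves $X$ to the left at the lower end; $+1$ precisely when $J$ enters from the right at the upper end and the lower end is the end of $X$; and $0$ otherwise.

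The hard part will be this last local case analysis, and in particular the use of the noncrossing hypothesis. One has to translate ``enters/leaves to the left/right'' into a precise statement about the two grid squares flanking the relevant corner of $X$ and about which side of the path $J$ those squares fall on, and then check — this is where noncrossingness of $I$ and $J$ is essential — that a component cannot have both its extreme active dots equal to $\ominus$ unless it runs all the way up to the first step of $X$, nor both equal to $\oplus$ unless it runs down to the last step of $X$; otherwise the ``interior'' components could contribute $\pm1$, contradicting the claim. Once these exclusions are in place, identifying the two extreme signs in each remaining configuration is routine bookkeeping, and the decomposition over components from the first paragraph finishes the proof.
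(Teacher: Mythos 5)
Your proposal is correct and follows essentially the same route as the paper: reduce to the signed count of dots of $X$ met by $J$, split over the connected components of $X\cap J$, use the alternation of the dots so that each component's contribution is governed by its two extreme dots, and invoke noncrossingness to force interior components to be traversed transversally (opposite sides, hence opposite extreme signs and contribution $0$), while the end components' signs are read off from the side on which $J$ enters or leaves. The paper's proof is exactly this argument, organized by cases (component through the initial $\ominus$-dot, through the final $\oplus$-dot, or neither), and it treats the local verification you defer to ``bookkeeping'' with the same brevity.
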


\begin{proof}
  First, consider a component that passes neither through the initial $\ominus$-dot nor through the final $\oplus$-dot.
  Since~$I$ and~$J$ are noncrossing,~$J$ enters and exits~$X$ on opposite sides.
  It therefore passes through equally many $\oplus$- and $\ominus$-dots, so its contribution is zero.
  Next, consider a component that passes through the initial $\ominus$-dot.
  Equivalently, this component starts with~$j_{a_1} = i_{a_1}$.
  Then there are two possibilities.
  Either this component of~$J$ leaves~$X$ to the left, in which case it contains one more $\ominus$-dot than $\oplus$-dots, 
  i.e., it contributes $-1$ to~$\langle \bend_{(I,X)}, \chi_J\rangle$.
  Or this component of~$J$ leaves~$X$ to the right and then this component contains as many $\ominus$-dots as it contains $\oplus$-dots, 
  i.e., it does not contribute to~$\langle \bend_{(I,X)}, \chi_J\rangle$.
  The argument for a component that passes through the final $\oplus$-dot, is analogous.
\end{proof}

Let~$F$ be an interior face of codimension one.
Represent~$F$ by a table with two marked positions in a certain column~$I = (i_1,\ldots,i_k)$ and exactly one marked position 
in the others, as in the proofs of Proposition~\ref{prop:furtherproperties} and of Corollary~\ref{cor:pseudomanifold}.
Let $a_1<a_2$ be the rows containing the two marked positions in~$I$, and let~$F_1$ and~$F_2$ be the two maximal faces 
obtained from~$F$ by pushing the marked position in the $a_1$\textsuperscript{th} row and in the $a_2$\textsuperscript{th} row, respectively.
We then have the following lemma.

\begin{lemma}
  \label{lemma:ridge_hyperplane}
  The vector orthogonal to the hyperplane containing~$F$ is the bending vector~$\bend_{(I,X)}$ where~$X = (i_{a_1},\ldots,i_{a_2})$ is 
  the segment of $I$ starting with the first marked position and ending with the second (and last) marked position.

  Moreover,~$F_2$ is on the same side of that hyperplane as~$\bend_{(I,X)}$, while~$F_1$ is on the opposite side.
\end{lemma}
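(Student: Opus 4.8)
The plan is to exhibit $\bend:=\bend_{(I,X)}$ as a nonzero vector orthogonal to $\operatorname{aff}(F)$ and then to locate $F_1$ and $F_2$ relative to the hyperplane by evaluating $\langle\bend,\,\cdot\,\rangle$ on the two pushed vertices. Since $F$ is a codimension one cell of the triangulation $\NC_{k,n}$ of the full-dimensional polytope $\oO_{k,n}$, its affine span is a hyperplane, so it is enough to check that $\bend\neq 0$ and that $\langle\bend,\chi_J\rangle$ takes one common value as $J$ ranges over the columns of the table of $F$ (the value on the origin, should $F$ contain it, is trivially $0$). That common value must be $0$: the column $I$ is itself a vertex of $F$, and its monotone path contains all of $X$, hence all the $\oplus$- and $\ominus$-dots of $X$, which occur in equal numbers, so $\langle\bend,\chi_I\rangle=0$. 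Non-vanishing of $\bend$ is immediate: if $X$ is a straight vertical segment the four contributions coming from its first and last step lie in four distinct squares; otherwise the $+1$ placed west of the first step of $X$ survives, as every corner of $X$ lies strictly below the top endpoint of $X$.

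The heart of the matter is the equality $\langle\bend,\chi_J\rangle=0$ for every column $J$ of the table of $F$. Here I would invoke Lemma~\ref{lem:contribution}: as $I$ and $J$ are noncrossing, $\langle\bend,\chi_J\rangle$ equals the number of connected components of the intersection of the paths $X$ and $J$ that end at the $\oplus$-dot and enter $X$ from the right, minus the number that start at the $\ominus$-dot and leave $X$ to the left; a component can meet the $\ominus$-dot only when $j_{a_1}=i_{a_1}$ and the $\oplus$-dot only when $j_{a_2}=i_{a_2}$. The decisive ingredient is the description of marked positions in Remark~\ref{rem:bars}. Since $I$ carries the mark in row $a_1$ with value $i_{a_1}$, among the columns of $F$ whose $a_1$-st entry equals $i_{a_1}$ its suffix $(i_{a_1+1},\dots,i_k)$ is lexicographically largest; hence if $j_{a_1}=i_{a_1}$ and $J\neq I$, the first index past $a_1$ at which $J$ disagrees with $I$ carries a smaller value in $J$, so the component of $X\cap J$ through the $\ominus$-dot runs down $X$ and then peels off to the right (or it reaches the $\oplus$-dot, in which case it is all of $X$ and contributes $0$). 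Dually, since $I$ carries the mark in row $a_2$ with value $i_{a_2}$, its prefix $(i_1,\dots,i_{a_2})$ is revlex-smallest among columns of $F$ with $a_2$-nd entry $i_{a_2}$; so if $j_{a_2}=i_{a_2}$ and $J\neq I$, following $X$ upward from the $\oplus$-dot the first disagreement carries a larger value in $J$, and that component enters $X$ from the left. Every other component touches neither dot, so every component contributes $0$, giving $\langle\bend,\chi_J\rangle=0$; together with the first paragraph this shows $\bend$ is normal to $\operatorname{aff}(F)$. The subtlety I expect to be the main obstacle is precisely this bookkeeping: one must verify that even when $J$ agrees with $I$ at \emph{both} endpoints of $X$ but disagrees in between --- a configuration compatible with noncrossingness and with $I$ carrying both marks --- the $\ominus$-component still leaves to the right and the $\oplus$-component still enters from the left, so that no uncancelled $\pm1$ remains; this uses the two mark conditions simultaneously, and one must keep in mind that the paths of $I$ and $J$ may cross as curves even though $I$ and $J$ are combinatorially noncrossing.

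Finally, the side statement. Since $F$ is an interior codimension one face, $F_1$ and $F_2$ lie on opposite sides of $\operatorname{aff}(F)$ and neither $\chi_{I_1}$ nor $\chi_{I_2}$ lies on that hyperplane, so $\langle\bend,\chi_{I_1}\rangle$ and $\langle\bend,\chi_{I_2}\rangle$ are nonzero of opposite sign and it suffices to pin down one of them. I would pin down $I_1$. By the pushing construction (and Remarks~\ref{rem:topbottom} and~\ref{rem:pushing}, which let us push the mark in row $a_1$ ``from the bottom''), $I_1$ agrees with $I$ on rows $1,\dots,a_1$ --- so its path contains the $\ominus$-dot --- while its suffix $\big((I_1)_{a_1+1},\dots,(I_1)_k\big)$ is lexicographically larger than $(i_{a_1+1},\dots,i_k)$. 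Running down $X$ from the $\ominus$-dot, $I_1$ therefore first disagrees with $X$ with a larger value, so at that point $I_1$ keeps going east while $X$ turns south; that is, the component of $X\cap I_1$ through the $\ominus$-dot leaves $X$ to the left and contributes $-1$. At most one component of $X\cap I_1$ can meet the $\ominus$-dot (giving at most one $-1$) and at most one can meet the $\oplus$-dot (giving at most one $+1$), so $\langle\bend,\chi_{I_1}\rangle\in\{-1,0\}$; being nonzero it equals $-1$. Hence $\langle\bend,\chi_{I_2}\rangle>0$, i.e., $F_2$ lies on the side of $\operatorname{aff}(F)$ into which $\bend$ points and $F_1$ on the opposite side, as claimed.
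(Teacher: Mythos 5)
Your proof is correct and follows essentially the same route as the paper's: orthogonality to every column of $F$ via Lemma~\ref{lem:contribution} combined with the characterization of marked positions in Remark~\ref{rem:bars}, followed by a sign computation for the vertices created by pushing. The only (harmless) deviations are your added explicit checks ($\bend_{(I,X)}\neq 0$, the value on $\chi_I$ and on the origin) and your shortcut of deducing $\langle\bend_{(I,X)},\chi_{J_1}\rangle=-1$ from the affine independence of the vertices of $F_1$ rather than re-using the row-$a_2$ mark to rule out a $+1$ contribution, which is what the paper's ``same argument'' remark implicitly does.
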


\begin{proof}
  Let~$J$ be another column in the table for~$F$.
  Since~$I$ and~$J$ are noncrossing we can use Lemma~\ref{lem:contribution} to compute $\langle \bend_{(I,X)}, \chi_J\rangle$.
  The result is zero since the intersection of the paths corresponding to~$X$ and to~$J$ cannot contain a component as described in Lemma~\ref{lem:contribution}\eqref{eq:component-1} or~\eqref{eq:component1}, as a direct consequence of the description of the marked positions  given at the end of Remark~\ref{rem:bars}.
  Observe here that if there was a component as described in Lemma~\ref{lem:contribution}\eqref{eq:component-1}, then $i_{a_1} = j_{a_1}$, and $(i_{a_1+1},\ldots,i_k)$ would be lexicographically smaller than $(j_{a_1+1},\ldots,j_k)$, contradicting that description.
  Similarly, if there was a component as described in Lemma~\ref{lem:contribution}\eqref{eq:component1}, then $i_{a_2} = j_{a_2}$, and $(i_1,\ldots,i_{a_2-1})$ is larger than $(j_1,\ldots,j_{a_2-1})$ in reverse lexicographic order, again contradicting that description.

  Moreover, the same argument shows that if~$J_1$ and~$J_2$ are the new elements in~$F_1$ and in~$F_2$ that were obtained by the pushing procedure, then the scalar product of~$\bend_{(I,X)}$ and~$F_1$ is negative, while the scalar product of~$\bend_{(I,X)}$ and~$F_2$ is positive.
\end{proof}

\begin{example}
\label{ex:bendingvector}
  Consider the following summing tableau.
  \begin{center}  
    \begin{tikzpicture}[node distance=0 cm,outer sep = 0pt]
      \tikzstyle{bsq}=[rectangle, draw,opacity=1,fill opacity=1, minimum width=.6cm, minimum height=.6cm]
      {\node[bsq] (11) at (1, 1) {5};}
      {\node[bsq] (12) [right = of 11] {8};}
      {\node[bsq] (13) [right = of 12] {9};}
      {\node[bsq] (14) [right = of 13] {11};}
      {\node[bsq] (21) [below = of 11] {3};}
      {\node[bsq] (22) [right = of 21] {4};}
      {\node[bsq] (23) [right = of 22] {8};}
      {\node[bsq] (24) [right = of 23] {9};}
      {\node[bsq] (31) [below = of 21] {1};}
      {\node[bsq] (32) [right = of 31] {3};}
      {\node[bsq] (33) [right = of 32] {5};}
      {\node[bsq] (34) [right = of 33] {7};}
      \node (00) [left = of 21] {$T=$};
    \end{tikzpicture}
  \end{center}
  Its associated noncrossing table, corresponding to a codimension one face in $\NC_{3,7}$, is given as follows.
  \smallskip
  \begin{center}
    \begin{tikzpicture}[node distance=0 cm,outer sep = 0pt]
      \tikzstyle{bsq}=[minimum width=.6cm, minimum height=.6cm]
      \tikzstyle{bsq2}=[rectangle,draw=black,opacity=.25,fill opacity=1]
      \tikzstyle{cor}=[anchor=north west,inner sep=1pt]

      \node[bsq,bsq2] (11) at (.3, 1.5)     {1};
      \node[bsq,bsq2] (12) [right = of 11]  {1};
      \node[bsq,bsq2] (13) [right = of 12]  {1};
      \node[bsq,bsq2] (14) [right = of 13]  {1};
      \node[bsq,bsq2] (15) [right = of 14]  {1};
      \node[bsq,bsq2] (16) [right = of 15]  {2};
      \node[bsq,bsq2] (17) [right = of 16]  {2};
      \node[bsq,bsq2] (18) [right = of 17]  {2};
      \node[bsq,bsq2] (19) [right = of 18]  {3};
      \node[bsq,bsq2] (110) [right = of 19]  {4};
      \node[bsq,bsq2] (111) [right = of 110]  {4};

      \node[bsq,bsq2] (21) at (.3, .9)      {2};
      \node[bsq,bsq2] (22) [right = of 21]  {2};
      \node[bsq,bsq2] (23) [right = of 22]  {2};
      \node[bsq,bsq2] (24) [right = of 23]  {4};
      \node[bsq,bsq2] (25) [right = of 24]  {6};
      \node[bsq,bsq2] (26) [right = of 25]  {3};
      \node[bsq,bsq2] (27) [right = of 26]  {4};
      \node[bsq,bsq2] (28) [right = of 27]  {4};
      \node[bsq,bsq2] (29) [right = of 28]  {4};
      \node[bsq,bsq2] (210) [right = of 29]  {5};
      \node[bsq,bsq2] (211) [right = of 210]  {6};

      \node[bsq,bsq2] (31) at (.3,.3)       {3};
      \node[bsq,bsq2] (32) [right = of 31]  {4};
      \node[bsq,bsq2] (33) [right = of 32]  {7};
      \node[bsq,bsq2] (34) [right = of 33]  {7};
      \node[bsq,bsq2] (35) [right = of 34]  {7};
      \node[bsq,bsq2] (36) [right = of 35]  {4};
      \node[bsq,bsq2] (37) [right = of 36]  {5};
      \node[bsq,bsq2] (38) [right = of 37]  {6};
      \node[bsq,bsq2] (39) [right = of 38]  {5};
      \node[bsq,bsq2] (310) [right = of 39]  {6};
      \node[bsq,bsq2] (311) [right = of 310]  {7};

      \draw[ thin] (2.7, 1.5) circle(0.20);
      \draw[ thin] (4.5, 1.5) circle(0.20);
      \draw[ thin] (5.1, 1.5) circle(0.20); 
      \draw[ thin] (6.3, 1.5) circle(0.20); 

      \draw[ thin] (1.5, 0.9) circle(0.20); 
      \draw[ thin] (2.1, 0.9) circle(0.20);
      \draw[ thin] (3.3, 0.9) circle(0.20);
      \draw[ thin] (5.7, 0.9) circle(0.20);
    
      \draw[ thin] (0.3, 0.3) circle(0.20);
      \draw[ thin] (0.9, 0.3) circle(0.20); 
      \draw[ thin] (3.9, 0.3) circle(0.20);
      \draw[ thin] (4.5, 0.3) circle(0.20);

      \node  (00) [left = of r2]  {$F = $};
    \end{tikzpicture}
  \end{center}
  \smallskip
  
  The doubly marked column is $I=(2,4,6)$, with $X=(2,4,6)$ as well. But observe that, as monotone paths,~$X$ is a strict subpath of~$I$ since the initial and final horizontal steps in~$I$ are not part of~$X$.
  The bending vector of~$X$ equals
  \begin{center}  
    \begin{tikzpicture}[node distance=0 cm,outer sep = 0pt]
      \tikzstyle{bsq}=[rectangle, draw,opacity=1,fill opacity=1, minimum width=.6cm, minimum height=.6cm]
      {\node[bsq] (11) at (1, 1) {\bf1};}
      {\node[bsq] (12) [right = of 11] { 0};}
      {\node[bsq] (13) [right = of 12] {\bf-1};}
      {\node[bsq] (14) [right = of 13] {0};}
      {\node[bsq] (21) [below = of 11] {\bf-1};}
      {\node[bsq] (22) [right = of 21] {\bf1};}
      {\node[bsq] (23) [right = of 22] {\bf1};}
      {\node[bsq] (24) [right = of 23] {\bf-1};}
      {\node[bsq] (31) [below = of 21] {0};}
      {\node[bsq] (32) [right = of 31] {\bf-1};}
      {\node[bsq] (33) [right = of 32] { 0};}
      {\node[bsq] (34) [right = of 33] {\bf1};}
      \node (00) [left = of 21] {$\bend_{(I,X)}=$};
    \end{tikzpicture}
  \end{center}
  Observe that two of the zeroes are obtained by canceling a $+1$ and a $-1$ in the definition of 
  the bending vector).

  This is indeed orthogonal to the $11$ tableaux corresponding to the columns of $F$, which are given as follows.

  \newcommand{\tableau}[3]
  {
    \begin{tikzpicture}[node distance=0 cm,outer sep = 0pt]
      \tikzstyle{bsq}=[rectangle, draw,opacity=1,fill opacity=1, minimum width=.6cm, minimum height=.6cm]
      \tikzstyle{wsq}=[rectangle, draw,opacity=0,fill opacity=1, minimum width=.6cm, minimum height=.6cm]
      \ifthenelse{#1<2}{\node[bsq] (11) at (1, 1) {\bf 1};}{\node[bsq] (11) at (1, 1) {\bf 0};}
      \ifthenelse{#1<3}{\node[bsq] (12) [right = of 11] {1};}{\node[bsq] (12) [right = of 11] {0};}
      \ifthenelse{#1<4}{\node[bsq] (13) [right = of 12] {\bf 1};}{\node[bsq] (13) [right = of 12] {\bf 0};}
      \ifthenelse{#1<5}{\node[bsq] (14) [right = of 13] { 1};}{\node[bsq] (14) [right = of 13] {0};}
      \ifthenelse{#2<3}{\node[bsq] (21) [below = of 11] {\bf1};}{\node[bsq] (21) [below = of 11] {\bf0};}
      \ifthenelse{#2<4}{\node[bsq] (22) [right = of 21] {\bf1};}{\node[bsq] (22) [right = of 21] {\bf0};}
      \ifthenelse{#2<5}{\node[bsq] (23) [right = of 22] {\bf1};}{\node[bsq] (23) [right = of 22] {\bf0};}
      \ifthenelse{#2<6}{\node[bsq] (24) [right = of 23] {\bf1};}{\node[bsq] (24) [right = of 23] {\bf0};}
      \ifthenelse{#3<4}{\node[bsq] (31) [below = of 21] { 1};}{\node[bsq] (31) [below = of 21] {0};}
      \ifthenelse{#3<5}{\node[bsq] (32) [right = of 31] {\bf1};}{\node[bsq] (32) [right = of 31] {\bf0};}
      \ifthenelse{#3<6}{\node[bsq] (33) [right = of 32] {1};}{\node[bsq] (33) [right = of 32] {0};}
      \ifthenelse{#3<7}{\node[bsq] (34) [right = of 33] {\bf1};}{\node[bsq] (34) [right = of 33] {\bf0};}
      \node (10) [left = of 11] {#1};
      \node (20) [left = of 21] {#2};
      \node (30) [left = of 31] {#3};
    \end{tikzpicture}
  }

  \begin{center}
    \begin{tabular}{ccc}
      \tableau123 &
      \tableau124 &
      \tableau127 \\
      \\
      \tableau147 &
      \tableau167 &
      \tableau234 \\
      \\
      \tableau245 &
      \tableau246 &
      \tableau345 \\
      \\
      \tableau456 &
      \tableau467 & 
      \\
    \end{tabular}
  \end{center}

  Let us also compute the maximal faces $F_1$ and $F_2$ obtained pushing the $2$ and the $6$ in the column corresponding to~$I$ in~$F$.
 
  \smallskip
  \begin{center}
    \begin{tikzpicture}[node distance=0 cm,outer sep = 0pt]
     \tikzstyle{bsq}=[minimum width=.6cm, minimum height=.6cm]
     \tikzstyle{bsq2}=[rectangle,draw=black,opacity=.25,fill opacity=1]
     \tikzstyle{cor}=[anchor=north west,inner sep=1pt]

      \node[bsq,bsq2] (11) at (.3, 1.5)     {1};
      \node[bsq,bsq2] (12) [right = of 11]  {1};
      \node[bsq,bsq2] (13) [right = of 12]  {1};
      \node[bsq,bsq2] (14) [right = of 13]  {1};
      \node[bsq,bsq2] (15) [right = of 14]  {1};
      \node[bsq,bsq2] (16) [right = of 15]  {2};
      \node[bsq,bsq2] (17) [right = of 16]  {2};
      \node[bsq,bsq2] (18) [right = of 17]  {2};
      \node[bsq,bsq2] (1n) [right = of 18]  {\bf2};
      \node[bsq,bsq2] (19) [right = of 1n]  {3};
      \node[bsq,bsq2] (110) [right = of 19]  {4};
      \node[bsq,bsq2] (111) [right = of 110]  {4};

      \node[bsq,bsq2] (21) at (.3, .9)      {2};
      \node[bsq,bsq2] (22) [right = of 21]  {2};
      \node[bsq,bsq2] (23) [right = of 22]  {2};
      \node[bsq,bsq2] (24) [right = of 23]  {4};
      \node[bsq,bsq2] (25) [right = of 24]  {6};
      \node[bsq,bsq2] (26) [right = of 25]  {3};
      \node[bsq,bsq2] (27) [right = of 26]  {4};
      \node[bsq,bsq2] (28) [right = of 27]  {4};
      \node[bsq,bsq2] (2n) [right = of 28]  {\bf 4};
      \node[bsq,bsq2] (29) [right = of 2n]  {4};
      \node[bsq,bsq2] (210) [right = of 29]  {5};
      \node[bsq,bsq2] (211) [right = of 210]  {6};

      \node[bsq,bsq2] (31) at (.3,.3)       {3};
      \node[bsq,bsq2] (32) [right = of 31]  {4};
      \node[bsq,bsq2] (33) [right = of 32]  {7};
      \node[bsq,bsq2] (34) [right = of 33]  {7};
      \node[bsq,bsq2] (35) [right = of 34]  {7};
      \node[bsq,bsq2] (36) [right = of 35]  {4};
      \node[bsq,bsq2] (37) [right = of 36]  {5};
      \node[bsq,bsq2] (38) [right = of 37]  {6};
      \node[bsq,bsq2] (3n) [right = of 38]  {\bf 7};
      \node[bsq,bsq2] (39) [right = of 3n]  {5};
      \node[bsq,bsq2] (310) [right = of 39]  {6};
      \node[bsq,bsq2] (311) [right = of 310]  {7};

      \draw[ thin] (2.7, 1.5) circle(0.20);
      \draw[ thin] (5.1, 1.5) circle(0.20);
      \draw[ thin] (5.7, 1.5) circle(0.20); 
      \draw[ thin] (6.9, 1.5) circle(0.20); 

      \draw[ thin] (1.5, 0.9) circle(0.20); 
      \draw[ thin] (2.1, 0.9) circle(0.20);
      \draw[ thin] (3.3, 0.9) circle(0.20);
      \draw[ thin] (6.3, 0.9) circle(0.20);
    
      \draw[ thin] (0.3, 0.3) circle(0.20);
      \draw[ thin] (0.9, 0.3) circle(0.20); 
      \draw[ thin] (3.9, 0.3) circle(0.20);
      \draw[ thin] (4.5, 0.3) circle(0.20);

      \node  (00) [left = of r2]  {$F_1 = $};

    \end{tikzpicture}
  \end{center}
  \smallskip
  and

  \smallskip
  \begin{center}
    \begin{tikzpicture}[node distance=0 cm,outer sep = 0pt]
      \tikzstyle{bsq}=[minimum width=.6cm, minimum height=.6cm]
      \tikzstyle{bsq2}=[rectangle,draw=black,opacity=.25,fill opacity=1]
      \tikzstyle{cor}=[anchor=north west,inner sep=1pt]

      \node[bsq,bsq2] (11) at (.3, 1.5)     {1};
      \node[bsq,bsq2] (12) [right = of 11]  {1};
      \node[bsq,bsq2] (13) [right = of 12]  {1};
      \node[bsq,bsq2] (1n) [right = of 13]  {\bf1};
      \node[bsq,bsq2] (14) [right = of 1n]  {1};
      \node[bsq,bsq2] (15) [right = of 14]  {1};
      \node[bsq,bsq2] (16) [right = of 15]  {2};
      \node[bsq,bsq2] (17) [right = of 16]  {2};
      \node[bsq,bsq2] (18) [right = of 17]  {2};
      \node[bsq,bsq2] (19) [right = of 18]  {3};
      \node[bsq,bsq2] (110) [right = of 19]  {4};
      \node[bsq,bsq2] (111) [right = of 110]  {4};

      \node[bsq,bsq2] (21) at (.3, .9)      {2};
      \node[bsq,bsq2] (22) [right = of 21]  {2};
      \node[bsq,bsq2] (23) [right = of 22]  {2};
      \node[bsq,bsq2] (2n) [right = of 23]  {\bf4};
      \node[bsq,bsq2] (24) [right = of 2n]  {4};
      \node[bsq,bsq2] (25) [right = of 24]  {6};
      \node[bsq,bsq2] (26) [right = of 25]  {3};
      \node[bsq,bsq2] (27) [right = of 26]  {4};
      \node[bsq,bsq2] (28) [right = of 27]  {4};
      \node[bsq,bsq2] (29) [right = of 28]  {4};
      \node[bsq,bsq2] (210) [right = of 29]  {5};
      \node[bsq,bsq2] (211) [right = of 210]  {6};

      \node[bsq,bsq2] (31) at (.3,.3)       {3};
      \node[bsq,bsq2] (32) [right = of 31]  {4};
      \node[bsq,bsq2] (33) [right = of 32]  {7};
      \node[bsq,bsq2] (3n) [right = of 33]  {\bf6};
      \node[bsq,bsq2] (34) [right = of 3n]  {7};
      \node[bsq,bsq2] (35) [right = of 34]  {7};
      \node[bsq,bsq2] (36) [right = of 35]  {4};
      \node[bsq,bsq2] (37) [right = of 36]  {5};
      \node[bsq,bsq2] (38) [right = of 37]  {6};
      \node[bsq,bsq2] (39) [right = of 38]  {5};
      \node[bsq,bsq2] (310) [right = of 39]  {6};
      \node[bsq,bsq2] (311) [right = of 310]  {7};

      \draw[ thin] (3.3, 1.5) circle(0.20);
      \draw[ thin] (5.1, 1.5) circle(0.20);
      \draw[ thin] (5.7, 1.5) circle(0.20); 
      \draw[ thin] (6.9, 1.5) circle(0.20); 

      \draw[ thin] (1.5, 0.9) circle(0.20); 
      \draw[ thin] (2.7, 0.9) circle(0.20);
      \draw[ thin] (3.9, 0.9) circle(0.20);
      \draw[ thin] (6.3, 0.9) circle(0.20);
    
      \draw[ thin] (0.3, 0.3) circle(0.20);
      \draw[ thin] (0.9, 0.3) circle(0.20); 
      \draw[ thin] (2.1, 0.3) circle(0.20);
      \draw[ thin] (4.5, 0.3) circle(0.20);

      \node  (00) [left = of r2]  {$F_2 = $};
     \end{tikzpicture}
  \end{center}
  \smallskip
  
  The new elements~$J_1$ and~$J_2$ and their characteristic vectors~$\chi_{J_1}$ and~$\chi_{J_2}$ in these maximal faces are
 
  \[
    J_1= \parbox{3cm}{\tableau247} 
    \qquad 
    J_2= \parbox{3cm}{\tableau 146}.
  \]
 \smallskip
 
  As predicted by Lemma~\ref{lemma:ridge_hyperplane},~$\chi_{J_1}$ has a negative scalar product with $\bend(I,X)$ and~$\chi_{J_2}$ has a positive scalar product.
\end{example}

In order to prove Theorem~\ref{thm:welldefinedTamari} we need to use one more property of bending vectors:

\begin{lemma}
  \label{lemma:positive}
   Let $o\in \RR^{P_{k,n-k}}$ be any vector such that
   \[
     o_{a_1,b_1} + o_{a_2,b_2} > o_{a_1,b_2} + o_{a_2,b_1}, \quad \mbox{~for all~} 1\le a_1 < a_2 \le k, \ 1 \le b_1 < b_2 \le n-k.
   \]
   Then, for every $I \in V_{k,n}$ and every segment $X$ in~$I$ we have
   \[
     \langle \bend(I,X), o \rangle >0.
   \]
   In particular, this holds for the vector $o$ defined as $o_{a,b}=ab$ for every $a,b\in P_{k,n-k}$.
\end{lemma}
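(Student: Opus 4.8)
The plan is to decompose the bending vector into elementary ``unit boxes'' so that the hypothesis on $o$ can be applied one summand at a time. For a lattice point $(a,b)$ of the dual grid with $1\le a\le k-1$ and $1\le b\le n-k-1$, let $\square_{a,b}\in\{-1,0,1\}^{P_{k,n}}$ be the vector that equals $+1$ on the poset elements $(a,b)$ and $(a+1,b+1)$ and $-1$ on $(a,b+1)$ and $(a+1,b)$ (the four elements whose cells meet at that lattice point). Then $\langle\square_{a,b},o\rangle=o_{a,b}+o_{a+1,b+1}-o_{a,b+1}-o_{a+1,b}$, which is precisely the hypothesis applied to $(a_1,a_2,b_1,b_2)=(a,a+1,b,b+1)$; conversely, summing such differences over a rectangle shows that the full hypothesis is \emph{equivalent} to $\langle\square_{a,b},o\rangle>0$ for all interior lattice points $(a,b)$.

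The heart of the argument is the identity
\[
  \bend_{(I,X)}=\sum_{(a,b)\in S(X)}\square_{a,b},
\]
where, writing $e^I_a:=i_a-a$ for the column of the $a$-th vertical step of $I$, the set $S(X):=\{(a,b)\ :\ a_1\le a\le a_2-1,\ e^I_a\le b\le e^I_{a+1}\}$ consists of the lattice points lying on the interior of the monotone subpath $X$. These are all interior to the grid: $a\in[a_1,a_2-1]\subseteq[1,k-1]$ and $0\le e^I_{a_1}\le b\le e^I_{a_2}\le n-k-1$, the last inequality being exactly the standing assumption $i_{a_2}<a_2+(n-k)$. Also $|S(X)|=(e^I_{a_2}-e^I_{a_1})+(a_2-a_1)\ge a_2-a_1\ge1$, so $S(X)\ne\emptyset$. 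Granting the identity, the lemma follows at once:
\[
  \langle\bend_{(I,X)},o\rangle=\sum_{(a,b)\in S(X)}\langle\square_{a,b},o\rangle>0,
\]
and for $o_{a,b}=ab$ each summand equals $(a+1)(b+1)-(a+1)b-a(b+1)+ab=1$, so $\langle\bend_{(I,X)},o\rangle=|S(X)|$.

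To prove the identity I would compare the two sides as linear functionals on the characteristic vectors $\chi_J$ of the filters $J$ of $P_{k,n}$, which span $\RR^{P_{k,n}}$. By the construction of the bending vector recalled just before Lemma~\ref{lem:contribution}, the left side evaluated at $\chi_J$ is the number of $\oplus$-dots of $X$ through which the path of $J$ passes, minus the number of $\ominus$-dots. For the right side, since $\chi_J$ is $0\cdots0\,1\cdots1$ along each row with the step in row $a$ occurring after column $e^J_a:=j_a-a$, a one-line check gives $\langle\square_{a,b},\chi_J\rangle=[\,b=e^J_{a+1}\,]-[\,b=e^J_a\,]$. Summing this over $S(X)$, the inner sums over $b$ collapse and one is left with $\sum_{a=a_1}^{a_2-1}\bigl([\,e^I_a\le e^J_{a+1}\le e^I_{a+1}\,]-[\,e^I_a\le e^J_a\le e^I_{a+1}\,]\bigr)$, which one then identifies with the signed dot count by reading off exactly when the path of $J$ meets each dot of $X$ (the $\oplus$-corner at $(a,e^I_a)$, the $\ominus$-corner at $(a,e^I_{a+1})$, and the first and last vertical steps). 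This is a finite, elementary verification, most cleanly organised by running through the vertical runs and corners of $X$ from top to bottom and observing that each successive $\ominus/\oplus$ pair of dots supplies the two off-diagonal entries of one box $\square_{a,b}$, the ``first step'' and ``last step'' contributions together with the cancellations noted after the definition of $\bend_{(I,X)}$ furnishing the boxes at the two ends.

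The step I expect to be the main obstacle is precisely this last bookkeeping: keeping the sign conventions straight (which turns count as $\oplus$ and which as $\ominus$, the northeast/southwest rule, and the exceptional rule for the first and last steps) so that the boxes fit together as claimed; Example~\ref{ex:bendingvector} is a useful guide for checking the accounting. A minor separate point is the behaviour at the west edge of the grid: if $i_{a_1}=a_1$ then $X$ meets that edge and one must replace $\square_{a,0}$ by the degenerate box $\mathbf{e}_{(a+1,1)}-\mathbf{e}_{(a,1)}$; since $\langle\mathbf{e}_{(a+1,1)}-\mathbf{e}_{(a,1)},o\rangle=o_{a+1,1}-o_{a,1}=1$ for $o_{a,b}=ab$, this causes no change for the weight vector actually used in the sequel.
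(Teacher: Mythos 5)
Your argument is correct and is, at heart, a refinement of the paper's own proof, so let me compare the two. The paper splits $X$ at each intermediate vertical step, writing $\bend(I,X)$ as the sum of the bending vectors of the two-entry segments $(i_a,i_{a+1})$, and computes directly from the definition that each of these has exactly four nonzero entries, $+1$ at $(a,i_a-a)$ and $(a+1,i_{a+1}-a)$ and $-1$ at $(a+1,i_a-a)$ and $(a,i_{a+1}-a)$, so each summand pairs positively with $o$ by the hypothesis with $a_2=a_1+1$. Your unit-box identity is exactly the telescoped version of this: each such rectangle vector equals $\sum_{b=i_a-a}^{i_{a+1}-(a+1)}\square_{a,b}$, which produces precisely your set $S(X)$; so the identity you need is true, and it can be established more painlessly by this direct computation (additivity at vertical steps plus the explicit four-entry formula) than by the pairing-with-$\chi_J$ bookkeeping you defer — though that route also works, and your formula $\langle\square_{a,b},\chi_J\rangle=[\,b=j_{a+1}-(a+1)\,]-[\,b=j_a-a\,]$ is correct. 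What your finer decomposition buys is that only the unit-box instances of the hypothesis are needed, and that for $o_{a,b}=ab$ one gets the exact value $\langle\bend(I,X),o\rangle=|S(X)|$. Your west-edge caveat is well spotted and in fact applies verbatim to the paper's proof too: when $i_{a_1}=a_1$ some of the four positions above fall outside $P_{k,n}$, the bending vector is truncated, and an arbitrary $o$ satisfying the hypothesis need not pair positively with the leftover piece (only differences $o_{a+1,1}-o_{a,1}$ survive), so the statement in full generality implicitly assumes that $X$ also avoids the west boundary, or else the specific weight $o_{a,b}=ab$; in the actual application (Lemma~\ref{lemma:ridge_hyperplane}) this case never occurs, since a column with $i_{a_1}=a_1$ carrying the mark for the value $a_1$ in row $a_1$ must be the cyclic-interval column $(1,\ldots,a_1,n-k+a_1+1,\ldots,n)$, which carries no second mark and hence is never the doubly marked column.
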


\begin{proof}
  The following additivity property follows trivially form the definition of bending vectors: if $X=(i_{a_1}, \dots, i_{a_2})$ is a 
  segment, and we decompose it into two parts $X_1=(i_{a_1}, \dots, i_{a})$ and $X_2=(i_{a}, \dots, i_{a_2})$ via a certain 
  $a_1 < a < a_2$, then
  \[
    \bend(I,X) = \bend(I,X_1) + \bend(I,X_2).
  \]
  Via this property, we only need to prove the lemma for segments with only two entries.

  So, let $X=(i_a, i_{a+1})$ be such a segment. Its bending vector has exactly four nonzero entries. It has $+1$ in positions
  $(a, i_a-a)$ and $(a+1, i_{a+1} -a)$, and it has $-1$ in $(a+1, i_a-a)$ and $(a, i_{a+1} -a)$. 
  By choice of $o$,  $\langle \bend(I,X), o \rangle >0$.
\end{proof}

Putting together Lemmas~\ref{lemma:central-shelling},~\ref{lemma:ridge_hyperplane} and~\ref{lemma:positive} we can now easily 
prove Theorem~\ref{thm:welldefinedTamari}.

\begin{proof}[Proof of Theorem~\ref{thm:welldefinedTamari}.]
  By Lemmas~\ref{lemma:ridge_hyperplane} and~\ref{lemma:positive}, the Gra{\ss}mann-Tamari orientation in the dual graph of $\NC_{k,n}$ coincides 
  with the central orientation induced by any $o \in \RR^{P_{k,n-k}}$ satisfying the assumptions of Lemma \ref{lemma:positive}. 
  By Lemma~\ref{lemma:central-shelling} this orientation is acyclic and compatible with a shelling order of 
  the maximal faces in $\NC_{k,n}$.
\end{proof}


\subsection{The nonnesting and noncrossing triangulations of the cube}
\label{sec:cube}
We saw in Section~\ref{sub:cubicalfaces} that every order polytope has some special faces that are cubes, namely the minimal face containing two given vertices.
Here we describe these faces for~$\oO_{k,n}$ and study the triangulations of them induced by~$\NN_{k,n}$ and~$\NC_{k,n}$.

Let~$\chi_I$ and~$\chi_J$ be two vertices of $\oO_{k,n}$. 
Remember from Section~\ref{sub:cubicalfaces} that the minimal face of $\oO_{k,n}$ containing~$\chi_I$ and~$\chi_J$ has the following description.
Let~$P_1,\ldots,P_d$ be the connected components of the poset given by the symmetric difference of the filters corresponding to $I$ and $J$.
Then~$\fF(I,J)$ is (affinely equivalent to) a cube of dimension~$d$ whose vertices are given as
\[
  \chi_{I\cap J} + \alpha_1 \chi_{P_1} +\cdots + \alpha_d \chi_{P_d}
\]
for the~$2^d$ choices of a vector $\alpha = (\alpha_1,\ldots,\alpha_d) \in\{0,1\}^d$.
In particular,~$\chi_I$ and~$\chi_J$ themselves correspond to certain antipodal vectors $\alpha_I,\alpha_J\in \{0,1\}^d$.

Our next observation is that the components $P_1,\ldots, P_d$ come with a natural linear order (and we consider them labelled according to that order).
Indeed, if we think of~$I$ and~$J$ as monotone paths in the $k\times(n-k)$ grid, 
the components are the regions that arise between the two paths. Since the paths are monotone, 
we consider those regions ordered from left to right (or, equivalently, from top to bottom).

\begin{example}
  Consider the two vectors
  \begin{align*}
    I &= (1,2,3,5,11,12,13,14,15,21,24),\\
    J &= (3,4,5,6, 7, 9,12,16,18,19,20)
  \end{align*}
  for $k=11, n = 24$ with paths shown in Figure~\ref{fig:cube}, compare also Figure~\ref{fig:poset}.
  The poset given by symmetric difference has the shown four connected components labelled~$1$ through~$4$ from left to right.
  Thus, $\fF(I,J)$ is combinatorially a $4$-dimensional cube.
  
  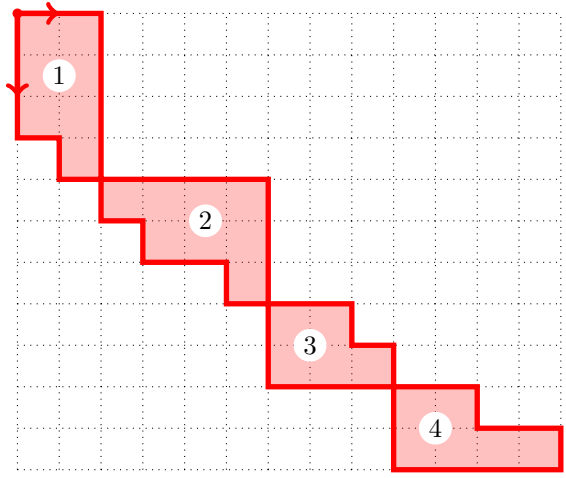
\begin{figure}
  \centering
    \begin{tikzpicture}[scale=.55]
      \draw[step=1,black,thin,dotted] (0,4) grid (13,15);

      \draw[line width=2pt, red, ->] (0,15) -- (0,13);
      \draw[line width=2pt, red, ->] (0,15) -- (1,15);

      \draw[fill, fill opacity=0.25, line width=2pt, red] (0,15) -- (0,12) -- (1,12) --(1,11) -- (5,11) -- (6,11) -- (6,6) -- (11,6) -- (11,5) -- (13,5) -- (13,4) -- (9,4) -- (9,7) -- (8,7) -- (8,8) -- (5,8) --(5,9) -- (3,9) -- (3,10) -- (2,10) -- (2,15) -- (0,15);

      \draw[color=red,fill=red] (0,15) circle (3pt);
      \node[circle, inner sep=1.5pt, fill=white] at (1.0,13.5) {$1$};
      \node[circle, inner sep=1.5pt, fill=white] at (4.5,10.0) {$2$};
      \node[circle, inner sep=1.5pt, fill=white] at (7.0, 7.0) {$3$};
      \node[circle, inner sep=1.5pt, fill=white] at (10 , 5.0) {$4$};
    \end{tikzpicture}
    \caption{Two paths in the dual grid of~$P_{11,24}$ and the corresponding four components.}
    \label{fig:cube}
  \end{figure}
\end{example}

With this point of view, the following lemma is straightforward.

\begin{lemma}
\label{lem:crossingnestingdiagonals}
  Let~$\chi_I$ and~$\chi_J$ be two vertices of $\oO_{k,n}$, let $d$ be the number of connected components of the symmetric
  difference of the filters corresponding to $I$ and $J$, and 
  let $\alpha_I, \alpha_J\in \{0,1\}^d$ be the 0/1-vectors identifying~$\chi_I$ and~$\chi_J$ as vertices of $\fF(I,J)$. Then
  \begin{itemize}
    \item $I,J$ are nonnesting if and only if $\{\alpha_I,\alpha_J\}=\big\{(0,\ldots,0),(1,\ldots,1)\big\}$.
    \item $I,J$ are noncrossing if and only if $\{\alpha_I,\alpha_J\}=\big\{(0,1,0,\ldots),(1,0,1,\ldots)\big\}$.
  \end{itemize}
\end{lemma}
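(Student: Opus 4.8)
The plan is to reduce the whole statement to bookkeeping about the two monotone paths $I$ and $J$ and the blocks of rows spanned by the components $P_1,\dots,P_d$. First I would invoke the cube structure. By Lemma~\ref{lem:cubes} and Corollary~\ref{coro:equivalent_diagonals}, $\fF(I,J)=\fF(I\cap J,\,I\cup J)$ is the $d$-cube $\chi_{I\cap J}+\vec\chi_{P_1}+\cdots+\vec\chi_{P_d}$. Each $P_m$ lies in the symmetric difference of the two filters, hence is disjoint from $I\cap J$, so $\chi_{I\cap J}$ vanishes on $P_m$ and therefore $\chi_I$ is identically $0$ or identically $1$ on each $P_m$; let $\alpha_I^{(m)}\in\{0,1\}$ record which value. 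From $\chi_I+\chi_J=\chi_{I\cap J}+\chi_{I\cup J}$ one gets $\alpha_J=\mathbf 1-\alpha_I$, so it suffices to identify $\alpha_I$ up to exchanging $I$ and $J$.

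Next I would pass to explicit coordinates. Since $\chi_I(a,b)=1$ iff $a+b>i_a$, the symmetric difference meets row $a$ in the column-interval $\{b:\min(i_a,j_a)<a+b\le\max(i_a,j_a)\}$; this interval is nonempty exactly when $i_a\ne j_a$, and \emph{all} of its cells lie in filter $I$ iff $i_a<j_a$ (and in filter $J$ iff $i_a>j_a$). Because $i_a-a$ and $j_a-a$ are weakly increasing, the region between the two paths is a staircase strip: each component $P_m$ occupies a contiguous block of rows, distinct components occupy disjoint blocks, and the left-to-right (equivalently top-to-bottom) order of the $P_m$ is the order of these row-blocks. A one-line calculation shows that the sign of $i_a-j_a$ is constant along a component (if $i_a<j_a$ and $i_{a+1}>j_{a+1}$ then the row intervals of $a$ and $a+1$ are disjoint, since $j_{a+1}>j_a$ forces them apart), so $\alpha_I^{(m)}=1$ iff $i_a<j_a$ for the rows $a$ of $P_m$; moreover, between the top row of $P_m$ and the bottom row of $P_{m+1}$ every row $a$ satisfies $i_a=j_a$.

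With this in hand the two equivalences follow. For the nonnesting case: $I,J$ are nonnesting iff $I\ge J$ or $J\ge I$ componentwise (as recorded after Definition~\ref{def:nonnesting}), which happens iff every component sits on the same side, i.e.\ iff $\alpha_I\in\{\mathbf 0,\mathbf 1\}$, i.e.\ iff $\{\alpha_I,\alpha_J\}=\{(0,\dots,0),(1,\dots,1)\}$. For the noncrossing case I would show that $\alpha_I$ is alternating iff $I,J$ are noncrossing. If $I,J$ cross, choose $a<b$ with $i_\ell=j_\ell$ for $a<\ell<b$ and, say, $i_a<j_a<i_b<j_b$; then $i_a\ne j_a$ and $i_b\ne j_b$ place row $a$ at the top of some $P_m$ and row $b$ at the bottom of the next component $P_{m+1}$, both with positive sign, so $\alpha_I^{(m)}=\alpha_I^{(m+1)}$ and $\alpha_I$ is not alternating (the arrangement $j_a<i_a<j_b<i_b$ is symmetric). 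Conversely, if $\alpha_I^{(m)}=\alpha_I^{(m+1)}$, take $a$ the top row of $P_m$ and $b$ the bottom row of $P_{m+1}$, so $i_\ell=j_\ell$ for $a<\ell<b$; in the sign-$1$ case the chain $j_a<j_{a+1}=i_{a+1}\le i_b$ (using strict monotonicity of $i$ and $j$, and, when $b=a+1$, the disjointness of the row intervals of $a$ and $a+1$, which directly gives $j_a<i_{a+1}$) yields $i_a<j_a<i_b<j_b$, a crossing; the sign-$0$ case is symmetric. This gives $\{\alpha_I,\alpha_J\}=\{(0,1,0,\dots),(1,0,1,\dots)\}$ exactly in the noncrossing case. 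The degenerate cases $d\le 1$, where the two prescribed sets of diagonals coincide, are consistent since such $I,J$ are simultaneously nonnesting and noncrossing.

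I expect the main obstacle to be the middle paragraph: making precise that components occupy contiguous, pairwise disjoint row-blocks, that $\operatorname{sign}(i_a-j_a)$ is constant along a component, and that consecutive components are separated by rows with $i_a=j_a$. The other delicate point is the adjacent case $b=a+1$ in the last paragraph, where the chain of inequalities degenerates and one must instead use the disjointness of the two one-row intervals to produce the crossing; everything else is routine verification from the coordinate description.
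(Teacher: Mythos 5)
Your proof is correct and takes essentially the same approach the paper intends: the paper gives no written argument for this lemma beyond the remark that it is straightforward once $I$ and $J$ are viewed as monotone lattice paths with $P_1,\dots,P_d$ the regions between them, ordered from left to right (equivalently top to bottom), and your row-by-row coordinate analysis (sign constancy on each region, rows between consecutive regions being empty or their column intervals jumped apart, crossing $\Leftrightarrow$ equal signs on two consecutive regions) is precisely the elaboration of that picture. The only polish needed is cosmetic: align your ``top/bottom of a component'' wording with the paper's row-labelling convention (row $a$ is the \emph{last} row of $P_m$ and row $b$ the \emph{first} row of $P_{m+1}$ when rows are labelled from the top), and state explicitly that the same interval-disjointness computation is what places rows $a$ and $a+1$ in different components in the forward direction when $b=a+1$.
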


Lemma~\ref{lem:crossingnestingdiagonals} has the following consequence.
\begin{corollary}
  With the same notation, let $\chi_X$ and $\chi_Y$ be two vertices of~$\fF(I,J)$, and let $\alpha_X, \alpha_Y\in \{0,1\}^d$ be the 0/1-vectors identifying them as vertices of~$\fF(I,J)$.
  Then
  \begin{itemize}
    \item $X,Y$ are nonnesting if and only if one of~$\alpha_X$ and~$\alpha_Y$ is coordinatewise smaller than the other.
    \item $X,Y$ are noncrossing if and only if,~$\alpha_X$ and~$\alpha_Y$ alternate between~$0$ and~$1$ in the coordinates in which they differ.
  \end{itemize}
\end{corollary}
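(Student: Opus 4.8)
The plan is to reduce the statement to Lemma~\ref{lem:crossingnestingdiagonals} applied to the pair $\{X,Y\}$ itself, by showing that the cube $\fF(X,Y)$ is a ``coordinate subcube'' of $\fF(I,J)$ whose components form, in the left-to-right order, a subsequence of $P_1,\dots,P_d$.

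First I would fix $X,Y$ as in the statement and set $D=\{\ell\in[d]:(\alpha_X)_\ell\ne(\alpha_Y)_\ell\}$, writing $D=\{\ell_1<\dots<\ell_m\}$. Since $\chi_X=\chi_{I\cap J}+\sum_\ell(\alpha_X)_\ell\chi_{P_\ell}$ and likewise for $Y$, the two filters $X$ and $Y$ agree on $I\cap J$, on the complement of $I\cup J$, and on each $P_\ell$ with $\ell\notin D$; hence, as subsets of $P_{k,n}$, their symmetric difference is exactly $\bigcup_{\ell\in D}P_\ell$. I then claim that these $P_\ell$ ($\ell\in D$) are precisely the connected components of $P_{k,n}|_{X\triangle Y}$: each one is connected (being a component of the larger restriction $P_{k,n}|_{I\triangle J}$), and there is no covering relation of $P_{k,n}$ between $P_\ell$ and $P_{\ell'}$ for distinct $\ell,\ell'\in D$, since such a relation would place $P_\ell$ and $P_{\ell'}$ in a single component of $P_{k,n}|_{I\triangle J}$. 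The same separation argument shows that each $P_\ell$ ($\ell\in D$) is a genuine region between the monotone paths of $X$ and $Y$ -- their paths coincide in a neighbourhood of $P_\ell$ because the boxes adjacent to $P_\ell$ lie in $I\cap J$ or outside $I\cup J$ -- so the natural left-to-right order on the components of $X\triangle Y$ is the induced order $P_{\ell_1},\dots,P_{\ell_m}$. By Lemma~\ref{lem:cubes} and Corollary~\ref{coro:equivalent_diagonals} (equivalently, the discussion preceding Lemma~\ref{lem:crossingnestingdiagonals} applied to $X,Y$), $\fF(X,Y)$ is an $m$-cube, and $\chi_X,\chi_Y$ are identified with vectors $\beta_X,\beta_Y\in\{0,1\}^m$ given by $(\beta_X)_i=(\alpha_X)_{\ell_i}$ and $(\beta_Y)_i=(\alpha_Y)_{\ell_i}$; by the definition of $D$ these two vectors differ in every coordinate, i.e.\ they are complementary.

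Now I would simply invoke Lemma~\ref{lem:crossingnestingdiagonals} for the pair $\{X,Y\}$ and its cube $\fF(X,Y)$. It gives: $X,Y$ are nonnesting if and only if $\{\beta_X,\beta_Y\}=\{(0,\dots,0),(1,\dots,1)\}$, which -- since $X$ and $Y$ agree off $\bigcup_{\ell\in D}P_\ell$ -- says exactly that one of $\alpha_X,\alpha_Y$ is coordinatewise below the other; and $X,Y$ are noncrossing if and only if $\{\beta_X,\beta_Y\}=\{(0,1,0,\dots),(1,0,1,\dots)\}$, which says exactly that $\alpha_X$ and $\alpha_Y$ alternate between $0$ and $1$ along $\ell_1<\ell_2<\dots<\ell_m$, i.e.\ along the coordinates in which they differ. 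The degenerate case $D=\emptyset$ (that is, $X=Y$) is trivially consistent, since a vertex neither nests nor crosses itself.

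The only genuinely delicate point is the bookkeeping in the middle paragraph: identifying the components of $X\triangle Y$ with $\{P_\ell\}_{\ell\in D}$ and, more importantly, checking that the intrinsic left-to-right order on these regions is the restriction of the order on $P_1,\dots,P_d$. Everything else is a direct application of Lemma~\ref{lem:crossingnestingdiagonals}. I expect this ordering compatibility to be the main (though still routine) obstacle, since it amounts to observing that ``left-to-right'' is a property of a region of $P_{k,n}$ that does not change when one passes from the boundary pair $\{I,J\}$ to the sub-pair $\{X,Y\}$.
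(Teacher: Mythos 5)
Your proof is correct and follows exactly the route the paper intends: the corollary is stated there as an unproved immediate consequence of Lemma~\ref{lem:crossingnestingdiagonals}, and your reduction --- observing that the components of $X\triangle Y$ are precisely the $P_\ell$ with $\ell$ in the disagreement set of $\alpha_X,\alpha_Y$, carrying their inherited left-to-right order, so that the lemma applied to the pair $\{X,Y\}$ gives both equivalences --- is precisely the routine verification the authors leave implicit. Nothing is missing; your careful check of the ordering compatibility is the only nontrivial bookkeeping, and it is handled correctly.
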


This shows that if we restrict the triangulations induced by $\NN_{k,n}$ and $\NC_{k,n}$ to the cube $\fF(I,J)$ they coincide with the following 
well known triangulations of this cube.

\begin{itemize}
  \item $\NN_{k,n}$ induces the standard triangulation of the cube, understood as the order polytope of an antichain. 
    That is, it is obtained by slicing the cube along all hyperplanes of the form $x_i=x_j$.

  \item $\NC_{k,n}$ induces a flag triangulation whose edges are the 0/1-vectors that alternate relative to one another. It can also be described as 
    the triangulation obtained slicing the cube by all the hyperplanes of the form
    \[
       x_i+\cdots + x_j =z
    \]
    for every pair of coordinates $1\le i < j \le d$ and for every $z\in [j-i]$.
    We call it the \defn{noncrossing triangulation} of the cube.
\end{itemize}

\begin{remark}
  The triangulation of the cube induced by $\NC_{k,n}$ was first constructed by R.~Stanley~\cite{Stanley} and then 
  (as a triangulation of each of the \defn{hypersimplices}) by B.~Sturmfels~\cite{Sturmfels1996}. 
  T.~Lam and A.~Postnikov~\cite{LamPostnikov} showed the two constructions to coincide.

  Both the triangulation induced by $\NN_{k,n}$ and $\NC_{k,n}$ are images of the \emph{dicing triangulation} of an \emph{alcoved polytope} for the root system~$A_d$.
  Alternatively, one can say they are (after a linear transformation) Delaunay triangulations of a fundamental parallelepiped in the lattice~$A_d^*$.
  Dicing triangulations of alcoved polytopes in type~$A$ are always regular, unimodular and flag~\cite{LamPostnikov}.

  Note that the nonnesting triangulation of $\oO_{k,n}$ is also induced by hyperplane cuts, but the noncrossing triangulation is not.
  This can be seen, for example, in $\NC_{2,5}$, see Figure~\ref{fig:52} on page~\pageref{fig:52}.
\end{remark}

\begin{remark}
  The (dual graph) diameters of the standard and the noncrossing triangulations of the cube $\fF(I,J)$ are easy to compute 
  from the fact that they are obtained by hyperplane cuts: the distance between two given 
  maximal faces equals the number of cutting hyperplanes that separate them. 

  In the standard triangulation every maximal face is separated from its opposite one by all the $\binom{d}{2}$ 
  hyperplanes, so the diameter equals $\binom{d}{2}$.
  For the noncrossing triangulation a slightly more complicated argument gives that 
  the diameter is $\binom{d+1}{3}$.
  In both bounds, $d\le \min\{k,n-k\}$ is the dimension of the cubical face $\fF(I,J)$.
\end{remark}

\subsection{$\oO_{k,n}$ as a Cayley polytope}

Let $\Delta_{\ell}=\conv\{v_1,\ldots, v_\ell\}$ be an $(\ell-1)$-dimensional unimodular simplex and let $Q_1,\ldots,Q_\ell$ be lattice polytopes in $\RR^k$.
We do not require the individual~$Q_i$'s to be full dimensional, but we require it for their 
Minkowski sum. The \defn{Cayley sum} or \defn{Cayley embedding} of the~$Q_i$'s is the $(k+\ell)$-dimensional polytope
\[
  \cC(Q_1,\ldots,Q_k):=\conv\big\{\bigcup_{i=1}^k Q_i\times \{v_i\}\big\} \subset \RR^{k+\ell}.
\]
We show that for each of the $k$ rows (or for each of the $n-k$ columns)
of the poset $P_{k,n}$ we can derive a representation of $\oO_{k,n}$ as
a Cayley sum. Let $a\in [k]$ be fixed, and for each 
$b\in [0,\ldots,n-k]$ let $V_{k,n}^{a,b}$ be the set of vectors 
from $V_{k,n}$ that have an $a+b$ in their $a$\textsuperscript{th} entry. 
In terms of tableaux, the vectors in~$V_{k,n}^{a,b}$ correspond to those tableaux that have a~$0$ in position$(a,b)$ and a~$1$ in position $(a,b+1)$.
Denote by~$\oO_{k,n}^{a,b}$ the convex hull of $V_{k,n}^{a,b}$.
We then have the following lemma.
\begin{lemma}
  \label{lem:cayley}
  For every $a\in [k]$,
  \[
    \oO_{k,n} = \cC(\oO_{k,n}^{a,0}, \ldots, \oO_{k,n}^{a,n-k}).
  \]
\end{lemma}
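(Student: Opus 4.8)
The plan is to verify the Cayley decomposition directly at the level of vertices, exploiting the $0/1$ description of the vertices of $\oO_{k,n}$. Fix $a \in [k]$. For a vertex $\chi_I \in \{0,1\}^{P_{k,n}}$ the $a$\textsuperscript{th} row of the tableau $\chi_I$ is a string of $b$ zeroes followed by $n-k-b$ ones, for a unique $b \in [0,n-k]$; this $b$ is exactly the one for which $I \in V_{k,n}^{a,b}$, i.e.\ $i_a = a+b$. So the vertex set of $\oO_{k,n}$ is partitioned as $V_{k,n} = \bigsqcup_{b=0}^{n-k} V_{k,n}^{a,b}$ according to the value in position $(a,b+1)$ being the first $1$ in row $a$. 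The idea is that ``which block a vertex lies in'' is governed purely by the entries of row $a$, and these entries can be read off by a single affine functional; this functional plays the role of the projection onto the simplex $\Delta_{n-k+1}$ in the definition of a Cayley sum.

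First I would make the Cayley structure explicit. Write the coordinates of $\RR^{P_{k,n}}$ as $(x_{c,d})$ with $c \in [k]$, $d \in [n-k]$, and split $\RR^{P_{k,n}} = \RR^{P_{k,n}\setminus(\text{row }a)} \times \RR^{n-k}$, where the second factor records row $a$. Under the bijection between monotone $0/1$ strings of length $n-k$ and the standard basis picture, the row-$a$ part of $\chi_I$, namely $(0,\dots,0,1,\dots,1)$ with $b$ zeroes, is an affine image of the vertex $v_{b}$ of the standard simplex $\Delta_{n-k+1} = \conv\{v_0,\dots,v_{n-k}\}$: concretely the map $v_b \mapsto (\underbrace{0,\dots,0}_{b},\underbrace{1,\dots,1}_{n-k-b})$ is affine and unimodular on $\RR^{n-k}$ (its inverse reads off $b$ from the string via $b = (n-k) - \sum_d x_{a,d}$, and the successive differences recover the $v_b$). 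Composing $\oO_{k,n}$ with this coordinate change turns row $a$ into genuine simplex coordinates, and then by definition $\chi_I$ with $I \in V_{k,n}^{a,b}$ sits in the slice $\{v_b\}\times \RR^{P_{k,n}\setminus(\text{row }a)}$, with its remaining coordinates being exactly a point of $\oO_{k,n}^{a,b}$ (viewed inside that complementary space). Hence $\oO_{k,n}$, being the convex hull of its vertices, equals $\conv \bigcup_{b=0}^{n-k} \oO_{k,n}^{a,b} \times \{v_b\}$, which is precisely $\cC(\oO_{k,n}^{a,0},\dots,\oO_{k,n}^{a,n-k})$.

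The two routine checks required to make this airtight are: (i) that $\oO_{k,n}^{a,b}$, after projecting away row $a$, is genuinely a lattice polytope in a $\ZZ$-affine subspace of dimension $k(n-k) - (n-k)$ — this is immediate since its vertices are $0/1$ points and the row-$a$ coordinates of all of them are fixed by the constraint $i_a = a+b$; and (ii) that the Minkowski sum of the $\oO_{k,n}^{a,b}$ is full dimensional in the complementary $\RR^{P_{k,n}\setminus(\text{row }a)}$, equivalently that the total dimension comes out to $\bigl(k(n-k)-(n-k)\bigr) + (n-k) = k(n-k) = \dim \oO_{k,n}$, which one sees by noting that e.g.\ $\oO_{k,n}^{a,0}$ already surjects onto the order polytope of $P_{k,n}$ with row $a$ deleted (one can freely fill the other rows). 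I expect the main obstacle to be purely bookkeeping: being careful that the affine identification of monotone $0/1$-strings with simplex vertices is unimodular (so that the statement is not just combinatorial but respects the lattice structure implicit in ``Cayley sum'' of \emph{lattice} polytopes), and that the indexing $b \in [0,n-k]$ matches the $\ell$ summands $Q_1,\dots,Q_\ell$ with $\ell = n-k+1$. The entirely analogous argument with the roles of rows and columns exchanged — using that $\oO_{k,n}$ is the order polytope of $P_{k,n}= P_{n-k,n}^{\mathrm{op}}$ transposed, cf.\ Corollary~\ref{cor:mapsontableaux} — gives the parenthetical statement about the $n-k$ columns.
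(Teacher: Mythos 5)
The paper itself states Lemma~\ref{lem:cayley} without proof, so there is nothing to compare against beyond the definitions; your route --- partition the vertex set $V_{k,n}=\bigsqcup_{b=0}^{n-k}V_{k,n}^{a,b}$ according to the row-$a$ pattern, identify the $n-k+1$ monotone $0/1$ patterns of row $a$ unimodularly with the vertices of a standard $(n-k)$-simplex, and conclude $\oO_{k,n}=\conv\bigl(\bigcup_b \oO_{k,n}^{a,b}\times\{v_b\}\bigr)$ --- is exactly the natural argument and its core is correct.

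One sub-claim in your check (ii) is false as stated, though the conclusion survives. You assert that $\oO_{k,n}^{a,0}$ surjects onto the order polytope of $P_{k,n}$ with row $a$ deleted because ``one can freely fill the other rows.'' But $b=0$ means row $a$ of the tableau is all ones, and column-monotonicity (values weakly increase from bottom to top) then forces every row \emph{above} row $a$ to be all ones as well; only the rows below row $a$ are free. So for $a\ge 2$ the projection of $\oO_{k,n}^{a,0}$ lands in a proper face of $\oO(P_{k,n}\setminus\text{row }a)$ and is not full dimensional. The fix is immediate: use two extreme pieces together. The projection of $\oO_{k,n}^{a,0}$ is full dimensional in the coordinates of the rows below row $a$, and the projection of $\oO_{k,n}^{a,n-k}$ (row $a$ all zeros, which pins the rows below and frees the rows above) is full dimensional in the coordinates of the rows above; hence already $\oO_{k,n}^{a,0}+\oO_{k,n}^{a,n-k}$, and a fortiori the full Minkowski sum, is full dimensional in $\RR^{P_{k,n}\setminus(\text{row }a)}$. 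Alternatively, full dimensionality follows from your main identity: since $\oO_{k,n}$ has dimension $k(n-k)$ and projects onto the $(n-k)$-dimensional simplex, the fiber over an interior point, which by the Cayley trick is a weighted Minkowski sum of the $\oO_{k,n}^{a,b}$, has dimension $(k-1)(n-k)$. With that repair the proposal is a complete proof, and the column version follows by the transposition of Corollary~\ref{cor:mapsontableaux} as you say.
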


This has enumerative consequences for the numbers of nonnesting (i.e., standard Young) and noncrossing tableaux.
\begin{definition}
  Let $Q_1,\ldots,Q_\ell$ be an $\ell$-tuple of polytopes in $\RR^k$. 
  For each $m=(m_1,\ldots,m_\ell)\in \mathbb{N}^\ell$ with 
  $\sum m_i=k$  (equivalently, for each monomial of degree $k$ in 
  $\RR[x_1,\ldots,x_\ell]$) call the coefficient of 
  $\xx^m$ in the homogeneous  polynomial
  \[
     \operatorname{vol}(x_1Q_1 + \cdots + x_\ell Q_\ell).
  \]
  the \defn{$m$-mixed volume} of $Q_1,\ldots,Q_\ell$.
  Here the volume is meant normalized to the lattice. 
  That is, unimodular simplices are considered to have volume $1$.
\end{definition}

We will need the following consequence of the 
Cayley trick as given in~\cite[Theorem~9.2.18]{DeloeraRambauSantos}.

\begin{lemma}
  Let $Q_1,\ldots,Q_\ell \subseteq \RR^k$ be an $\ell$-tuple of polytopes 
  and let $\Delta$ be a unimodular triangulation of $\cC(Q_1,\ldots,Q_\ell)$.
  Then, for each  tuple $m=(m_1,\ldots,m_\ell) \in \mathbb{N}^\ell$ of 
  sum $k$, the $m$-mixed volume of the tuple equals the number of maximal 
  faces of $\Delta$ that have exactly $m_b+1$ vertices in each fiber 
  $\{v_b\}\times Q_b$.
\end{lemma}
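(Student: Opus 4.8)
The statement is essentially a reformulation of the Cayley trick specialized to unimodular triangulations, so the plan is to reduce it to the version of the Cayley trick quoted from~\cite[Theorem~9.2.18]{DeloeraRambauSantos}. Recall that the Cayley trick gives a bijection between the (mixed, polyhedral) subdivisions of the Cayley polytope $\cC(Q_1,\ldots,Q_\ell)$ and the mixed subdivisions of the Minkowski sum $Q_1+\cdots+Q_\ell$, under which a cell of the Cayley polytope with vertex sets $S_b\subseteq \{v_b\}\times Q_b$ corresponds to a Minkowski cell $S_1+\cdots+S_\ell$ (identifying $S_b$ with its projection to $\RR^k$). The first step is to record that under this bijection, a full-dimensional \emph{simplex} $\sigma$ of a triangulation $\Delta$ of $\cC(Q_1,\ldots,Q_\ell)$, say with $s_b:=\#(\sigma\cap (\{v_b\}\times Q_b))$ vertices in fiber $b$, corresponds to a Minkowski cell that is the Minkowski sum of simplices of dimensions $s_b-1$; and that such a cell is full-dimensional in $Q_1+\cdots+Q_\ell$ (which has dimension $k$) precisely when $\sum_b (s_b-1) = k$, i.e. $\sum_b s_b = k+\ell$, which is automatic since $\dim \cC(Q_1,\ldots,Q_\ell)=k+\ell-1$ forces $\sigma$ to have $k+\ell$ vertices.

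The second step is to bring in unimodularity. Since $\Delta$ is unimodular, every full-dimensional simplex $\sigma$ has normalized volume $1$. The key point is the classical fact that, under the Cayley correspondence, the normalized volume of $\sigma$ equals the normalized mixed volume of the tuple of simplices $(\operatorname{conv} S_1,\ldots,\operatorname{conv} S_\ell)$ it maps to; in particular each Minkowski cell coming from a unimodular $\sigma$ contributes exactly $1$ to the appropriate mixed-volume count. More precisely, fix $m=(m_1,\ldots,m_\ell)$ with $\sum m_b=k$. Polarization of the volume polynomial $\operatorname{vol}(x_1Q_1+\cdots+x_\ell Q_\ell)$ shows that the $m$-mixed volume is obtained, via the mixed subdivision induced by $\Delta$, as the sum over Minkowski cells $S_1+\cdots+S_\ell$ of the $m$-mixed volume of $(\operatorname{conv}S_1,\ldots,\operatorname{conv}S_\ell)$. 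A cell in which $\operatorname{conv}S_b$ is a simplex of dimension $d_b=\#S_b-1$ has vanishing $m$-mixed volume unless $d_b=m_b$ for every $b$ (because the $m$-mixed volume of a tuple of polytopes vanishes as soon as some $Q_b$ has dimension less than $m_b$, and here $\sum d_b=k=\sum m_b$ forces equality coordinatewise once we have $d_b\le m_b$ for all $b$... and the dimension count $\sum d_b = k$ together with $\dim(\sum_b\operatorname{conv}S_b)\le\sum_b d_b$ forces each summand to contribute its full dimension, i.e. $d_b=m_b$). When $d_b=m_b$ for all $b$, the $m$-mixed volume of $(\operatorname{conv}S_1,\ldots,\operatorname{conv}S_\ell)$ is $m_1!\cdots m_\ell!$ times the normalized volume of the Minkowski sum $\operatorname{conv}S_1+\cdots+\operatorname{conv}S_\ell$, which equals the normalized volume of the corresponding simplex $\sigma$ in $\cC(Q_1,\ldots,Q_\ell)$ — this is the quoted Cayley trick identity — hence equals $1$ by unimodularity (after dividing out the multinomial normalization consistent with the chosen normalization convention, unimodular simplices having volume $1$).

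The third step is purely bookkeeping: assembling the above, the $m$-mixed volume equals the number of full-dimensional simplices $\sigma$ of $\Delta$ with exactly $m_b+1=d_b+1$ vertices in the fiber $\{v_b\}\times Q_b$ for each $b$, which is exactly the assertion. I would present this in two short paragraphs: one establishing the fiber-dimension dictionary and the mixed-volume-additivity over a mixed subdivision, and one doing the vanishing-unless-$d_b=m_b$ analysis plus the unimodular normalization. The main obstacle — and the only place one must be genuinely careful — is the normalization: making sure that the "normalized to the lattice, unimodular simplices have volume $1$" convention of the Definition is consistent across the mixed volume of the $Q_i$'s, the mixed volume of the constituent simplices, and the volume of $\sigma$ inside $\cC(Q_1,\ldots,Q_\ell)$, so that the multinomial factors $m_1!\cdots m_\ell!$ cancel exactly and each qualifying simplex contributes $1$ and not $m_1!\cdots m_\ell!$. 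This is precisely what \cite[Theorem~9.2.18]{DeloeraRambauSantos} is set up to handle, so the cleanest route is to invoke it verbatim on the unimodular triangulation $\Delta$ and observe that unimodularity makes every summand in its conclusion equal to $1$.
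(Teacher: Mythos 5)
The paper gives no argument for this lemma beyond citing the Cayley trick of~\cite[Theorem~9.2.18]{DeloeraRambauSantos}, and your plan follows exactly that intended derivation: pass from the unimodular triangulation $\Delta$ of $\cC(Q_1,\ldots,Q_\ell)$ to the induced fine mixed subdivision of $Q_1+\cdots+Q_\ell$, expand $\operatorname{vol}(x_1Q_1+\cdots+x_\ell Q_\ell)$ cell by cell, and observe that a fine cell $\conv S_1+\cdots+\conv S_\ell$ with $\dim \conv S_b=s_b-1$ contributes only to the monomial $x_1^{s_1-1}\cdots x_\ell^{s_\ell-1}$. That skeleton, including the dimension count $\sum_b(s_b-1)=k$, is right.

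The step you yourself flag as delicate is, however, exactly where the written identities fail, and it cannot be discharged by invoking Theorem~9.2.18 verbatim: that theorem is the combinatorial bijection between subdivisions of the Cayley polytope and mixed subdivisions of the Minkowski sum, and contains no volume identity. For a fine cell coming from a unimodular simplex $\sigma$ with $m_b+1$ vertices in fiber $b$, row-reducing the vertex matrix of $\sigma$ gives $\operatorname{vol}_E(\conv S_1+\cdots+\conv S_\ell)=1/(m_1!\cdots m_\ell!)$; equivalently, the lattice-normalized volume of the cell equals $\binom{k}{m_1,\ldots,m_\ell}$ times that of $\sigma$ --- it is \emph{not} equal to it, as your second paragraph asserts, and the multinomial factors do not cancel in the way your sketch hopes. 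What each qualifying simplex contributes $1$ to is $k!\,V(Q_1[m_1],\ldots,Q_\ell[m_\ell])$, the classical symmetric mixed volume in the lattice normalization, i.e.\ the coefficient of $\xx^m$ in the lattice-normalized volume polynomial \emph{divided by} $\binom{k}{m_1,\ldots,m_\ell}$. The discrepancy is already visible for $k=\ell=2$ with $Q_1=Q_2$ a unimodular triangle: $\cC(Q_1,Q_2)$ is a prism with a unimodular triangulation into three tetrahedra having fiber splits $(3,1),(2,2),(1,3)$, so the count for $m=(1,1)$ is $1$, while the coefficient of $x_1x_2$ in $\operatorname{vol}\bigl((x_1+x_2)Q_1\bigr)=(x_1+x_2)^2$ is $2$. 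So to complete the argument you must carry out this determinant/volume bookkeeping explicitly and state precisely which normalization of the $m$-mixed volume the count computes (the raw coefficient of the paper's Definition needs the multinomial correction); as written, the decisive identity is false and the fallback citation does not supply it.
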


Applied to the representation of $\oO_{k,n}$ as a Cayley sum from 
Lemma \ref{lem:cayley} and taking into account that both $\NN_{k,n}$ and 
$\NC_{k,n}$ are unimodular triangulations of $\oO_{k,n}$, 
the previous lemma has the following corollary.

\begin{corollary}
  Fix an $a\in [k]$ and let $t=(t_1,\ldots, t_{n-k})$ be a 
  vector with $a\le t_1<t_2<\dots<t_{n-k} \le k(n-k) +a -k$. 
  From $t$ we derive a partition $m=(m_0,\ldots, m_{n-k})$ of 
  $k(n-k)-(n-k)-(n-k)$ by setting $m_b=t_{b+1}-t_b-1$, with the 
  conventions $t_0=0$ and $t_{n-k+1}=k(n-k)+1$.

  Then the following numbers coincide:
  \begin{itemize}
    \item The number of maximal faces of $\NN_{k,n}$ whose summing tableaux 
      have their $a$\textsuperscript{th} row equal to~$t$.
    \item The number of maximal faces of $\NC_{k,n}$ whose summing tableaux 
      have their $a$\textsuperscript{th} row equal to~$t$.
    \item The $m$-mixed volume of the tuple 
      $\oO_{k,n}^{a,0}, \ldots, \oO_{k,n}^{a,n-k}$.
  \end{itemize}
\end{corollary}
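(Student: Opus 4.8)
The plan is to deduce the corollary directly from the preceding lemma (the Cayley--trick computation of mixed volumes through the fibers of a unimodular triangulation), applied to the Cayley representation $\oO_{k,n}=\cC(\oO_{k,n}^{a,0},\ldots,\oO_{k,n}^{a,n-k})$ supplied by Lemma~\ref{lem:cayley}. By Theorem~\ref{thm:triangulation} both $\NN_{k,n}$ and $\NC_{k,n}$ are unimodular triangulations of $\oO_{k,n}$, so that lemma applies verbatim to each of them; since the $m$-mixed volume of the tuple $\oO_{k,n}^{a,0},\ldots,\oO_{k,n}^{a,n-k}$ is an invariant of the tuple alone and not of the chosen triangulation, running the lemma once with $\Delta=\NN_{k,n}$ and once with $\Delta=\NC_{k,n}$ will equate both face counts with this one mixed volume, yielding all three equalities simultaneously.

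The one thing that then needs to be checked is the translation of the combinatorial condition. First I would recall that, in the Cayley embedding of Lemma~\ref{lem:cayley}, the vertex $\chi_I$ of $\oO_{k,n}$ lies in the fiber over $v_b$ precisely when $i_a=a+b$, i.e.\ exactly when $I\in V_{k,n}^{a,b}$. Hence, for a maximal simplex $F$ of $\NN_{k,n}$ or of $\NC_{k,n}$, the number of its vertices in the $b$\textsuperscript{th} fiber is $c_b:=\#\{I\in F: i_a=a+b\}$, and $\sum_{b=0}^{n-k}c_b=k(n-k)+1$ is the total number of vertices of the simplex. By the definition of the summing tableau $T$ of $F$, the entries of its $a$\textsuperscript{th} row are the partial sums $t_{a,b}=\#\{I\in F: i_a\le a+b-1\}=c_0+\cdots+c_{b-1}$; equivalently $c_b=t_{b+1}-t_b$ once one sets $t_0:=0$ and $t_{n-k+1}:=k(n-k)+1$. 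The value $k(n-k)+1$ rather than $k(n-k)$ enters here because the vertex $I_{\hat1}=(n-k+1,\ldots,n)$ belongs to every maximal simplex and has $i_a=a+(n-k)$ (so it sits in the last fiber), yet it contributes $0$ to the summing tableau. Thus $F$ has exactly $m_b+1$ vertices in the $b$\textsuperscript{th} fiber for every $b\in\{0,\ldots,n-k\}$ if and only if the $a$\textsuperscript{th} row of its summing tableau is the vector $t$ determined by $m_b=t_{b+1}-t_b-1$. The constraints placed on $t$ guarantee in particular that $m=(m_0,\ldots,m_{n-k})$ consists of nonnegative integers, with $\sum_{b=0}^{n-k}m_b=\dim\bigl(\oO_{k,n}^{a,0}+\cdots+\oO_{k,n}^{a,n-k}\bigr)=(k-1)(n-k)$, so that the $m$-mixed volume is defined.

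With this dictionary in place, the corollary follows by applying the preceding lemma twice, once to each of the two triangulations. I expect the main obstacle to be precisely the bookkeeping of the previous paragraph: getting the fiber indexing of the Cayley structure of Lemma~\ref{lem:cayley} to line up with the row and column conventions for tableaux, and correctly accounting for the vertex $I_{\hat1}$ (equivalently, for the passage between $V_{k,n}$ and $V_{k,n}^*$) so that the per-simplex vertex total is $k(n-k)+1$ while the summing tableau encodes only $k(n-k)$ of the vertices. Everything beyond that is a direct citation of the Cayley--trick lemma, of Lemma~\ref{lem:cayley}, and of Theorem~\ref{thm:triangulation}.
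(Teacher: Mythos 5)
Your proof is correct and follows essentially the same route as the paper, which derives this corollary immediately from the preceding Cayley-trick lemma applied to the representation of Lemma~\ref{lem:cayley}, using that both $\NN_{k,n}$ and $\NC_{k,n}$ are unimodular triangulations of $\oO_{k,n}$ (Theorem~\ref{thm:triangulation}). Your explicit bookkeeping identifying the fiber counts $c_b = t_{b+1}-t_b$ with the $a$\textsuperscript{th} row of the summing tableau, including the role of the vertex $I_{\hat 1}$ in the last fiber, is precisely the translation the paper leaves implicit; your total $\sum_b m_b=(k-1)(n-k)$ is indeed the correct one for the mixed volume to be defined.
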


Note that the inequalities $a\le t_1<t_2<\dots<t_{n-k} \le k(n-k) +a -k$
in the corollary are necessary and sufficient for $t$ to appear as the 
$a$\textsuperscript{th} row in some standard tableau. 

\begin{example}
  Consider the case $k=2$ and $n=5$.
  The nonnesting and noncrossing complexes each have five maximal faces, corresponding to the following tableaux:
  \newcommand{\tabtwothree}[6]
  {
  \begin{tikzpicture}[node distance=0 cm,outer sep = 0pt]
    \tikzstyle{bsq}=[rectangle, draw, minimum width=.6cm, minimum height=.6cm]
    \node[bsq] (11) at (1, 1)       {#1};
    \node[bsq] (12) [right = of 11] {#2};
    \node[bsq] (13) [right = of 12] {#3};
    \node[bsq] (21) [below = of 11] {#4};
    \node[bsq] (22) [right = of 21] {#5};
    \node[bsq] (23) [right = of 22] {#6};
  \end{tikzpicture}
  }

  \[
    \NN_{2,5}=
    \left\{
       \begin{array}{ccccc}
         \tabtwothree123456 &
         \tabtwothree124356 &
         \tabtwothree125346 &
         \tabtwothree134256 &
         \tabtwothree135246
       \end{array}
    \right\}
  \]
  \[
    \NC_{2,5}=\left\{
    \begin{array}{ccccc}
      \tabtwothree123246 &
      \tabtwothree124256 &
      \tabtwothree125356 &
      \tabtwothree134346 &
      \tabtwothree135456
    \end{array}
    \right\}
  \]
  As can be seen, the multisets of $a$-rows are the same in both complexes.
  \begin{align*}
    \text{First rows} &= \{123,124,125,134,135\}, \\
    \text{Second rows} &= \{246,256,356,346,456\}, \\
  \end{align*}
  Of course, symmetry under exchange of $k$ and $n-k$ implies that the same happens for columns:
  \begin{align*}
    \text{First columns} &= \{12,12,13,13,14\}, \\
    \text{Second columns} &= \{24,25,25,34,35\}, \\
    \text{Third columns} &= \{36,46,46,56,56\}.
  \end{align*}

\end{example}

\section{Relation to the weak separation complex}
\label{sec:separated}

\subsection{The weak separation complex}

B.~Leclerc and A.~Zelevinsky~\cite{LZ1998} introduced the complex of \emph{weakly separated subsets} of $[n]$ and showed that 
its faces are the sets of pairwise quasi-commuting quantum {P}l\"ucker coordinates in a $q$-deformation of the coordinate ring of 
the flag variety resp. the Gra\ss mannian.
A geometric version of their definition is that two subsets $X$ and $Y$ of $[n]$ are weakly separated if, when considered as subsets 
of vertices in an $n$-gon,  the convex hulls of $X\setminus Y$ and $Y\setminus X$ are disjoint. 
If we restrict to $X$ and $Y$ of fixed size~$k$ then we are in the Gra\ss mann situation and the the following complex was studied 
by J.~Scott~\cite{Sco2005,Sco2006} as a subcomplex of the Leclerc-Zelevinsky complex.

\begin{definition}
  Let $I$ and $J$ be two vectors in $V_{k,n}$. We say that $I$ and $J$ are \defn{weakly separated} if, considered as subsets of 
  vertices of an $n$-gon, the convex hulls of $I\setminus J$ and $J\setminus I$ do not meet.
  We denote by~$\WS_{k,n}$ the simplicial complex of subsets of $V_{k,n}$ whose elements are pairwise weakly separated.
\end{definition}

J.~Scott conjectured that~$\WS_{k,n}$ is pure of dimension $k(n-k)$ and that it is strongly connected (that is, its dual graph is connected). 
Both conjectures were shown to hold by S.~Oh, A.~Postnikov and D.~Speyer~\cite{OPS2011}, for the first one see also 
V.~I.~Danilov, A.~V.~Karzanov, and G.~A.~Koshevoy~\cite[Prop.~5.9]{DKK2010}.

It is not hard to see that~$\WS_{k,n}$ is a subcomplex of $\NC_{k,n}$ and it is trivial to observe that $\WS_{k,n}$ is invariant under cyclic 
(or, more strongly, dihedral) symmetry. Our next results combines these two properties and states that $\WS_{k,n}$ is the ``cyclic part'' of $\NC_{k,n}$. 
In the following statement, we denote by $I^{+i}$ the cyclic shift of $I\in V_{k,n}$ by the amount $i$, i.e., the image of $I$ under the 
map $x\to x+i$ considered as remainders $1,\ldots,n$ modulo~$n$.

\begin{proposition}
\label{prop:cyclicsymmetry}
  The weak separation complex~$\WS_{k,n}$ is equal to the intersection of all cyclic shifts of the noncrossing complex~$\NC_{k,n}$:
  \begin{enumerate}
    \item If $I$ and $J$ are weakly separated, then they are noncrossing. \label{eq:separabletocrossing}
    \item  If $I^{+i}$ and $J^{+i}$ are noncrossing for every $i\in [n]$, then $I$ and $J$ are weakly separated.\label{eq:crossingtoseparable}
  \end{enumerate}
\end{proposition}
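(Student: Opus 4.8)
The plan is to translate both the noncrossing and the weak separability conditions into statements about arcs/chords of the $n$-gon, and then observe that a crossing of chords is exactly what is obstructed. First recall that, by Proposition~\ref{prop:intro}\eqref{eq:symmetricdifference}, two vectors $I,J\in V_{k,n}$ are noncrossing if and only if their restrictions to the symmetric difference $I\triangle J$ are noncrossing; and weak separability of $I$ and $J$ also depends only on $I\triangle J$ together with how the elements of $I\setminus J$ and $J\setminus I$ interleave cyclically. So throughout I would assume $I\cap J=\emptyset$ and $I\cup J=[m]$ for $m=|I\triangle J|$, reducing to the case $k'=|I|=|J|=m/2$, $n'=m$. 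In this reduced picture, weak separability says precisely that the points of $I$ and the points of $J$ can be separated by a chord of the $m$-gon, i.e.\ that going around the cycle one meets first a block of $I$-elements, then a block of $J$-elements (in some cyclic order the two sets do \emph{not} interleave more than once).

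For part~\eqref{eq:separabletocrossing}, I would argue by contrapositive: suppose $I$ and $J$ cross, so (after the reduction) there are indices $a<b$ with $i_\ell=j_\ell$ for $a<\ell<b$ and with the arcs $(i_a<i_b)$ and $(j_a<j_b)$ crossing, say $i_a<j_a<i_b<j_b$. After removing the common middle coordinates (which lie outside both open arcs, since they sit in a common position and the sets are disjoint — here one must check that the middle coordinates $i_\ell=j_\ell$ cannot lie between $i_a$ and $i_b$, which follows because $I,J$ are increasing and $i_a<j_a$, $i_b<j_b$ forces $i_\ell\le i_b$ and $j_\ell\ge j_a$), we get four elements $i_a\in I\setminus J$, $j_a\in J\setminus I$, $i_b\in I\setminus J$, $j_b\in J\setminus I$ appearing in the cyclic order $i_a, j_a, i_b, j_b$. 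These four vertices of the $n$-gon alternate between $I$ and $J$, so the chord $\overline{i_a i_b}$ meets the chord $\overline{j_a j_b}$ inside the polygon, hence the convex hulls of $I\setminus J$ and $J\setminus I$ intersect; thus $I,J$ are not weakly separated.

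For part~\eqref{eq:crossingtoseparable}, again by contrapositive: assume $I$ and $J$ are \emph{not} weakly separated; I must produce a cyclic shift $I^{+i},J^{+i}$ that crosses. Not weakly separated means the convex hulls of $I\setminus J$ and $J\setminus I$ meet, which (for two disjoint finite point sets on a convex curve) is equivalent to the existence of four points $p_1<q_1<p_2<q_2$ in cyclic order with $p_1,p_2\in I\setminus J$ and $q_1,q_2\in J\setminus I$ — i.e.\ the two sets interleave at least twice. The idea is to choose the shift amount $i$ so that, in the shifted labelling $1,\dots,n$, the cyclic position $n$ falls in a ``gap'' that lets $p_1,q_1,p_2,q_2$ become a genuine (linear) crossing with no intervening constraint from the coordinates strictly between them. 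Concretely, pick $i$ so that the point just \emph{counterclockwise} of $p_1$ becomes $n$ (or, if elements sit there, choose the gap immediately before $p_1$); then in linear order $p_1$ is the smallest of the four, the four occur as $p_1<q_1<p_2<q_2$, and I choose the pair of coordinate-indices $a<b$ of $I$ (resp.\ of $J$) corresponding to $p_1,p_2$ (resp.\ $q_1,q_2$). The condition ``$i_\ell=j_\ell$ for $a<\ell<b$'' is not automatic, so the real content is: among \emph{all} interleaving quadruples, one can choose $p_1,q_1,p_2,q_2$ so that between the chosen coordinates of $I$ there is no element of $J$ that is ``out of sync'' — equivalently one picks a \emph{minimal} interleaving, where $q_1$ is the first $J$-element after $p_1$ and $p_2$ the first $I$-element after $q_1$, etc. With that minimality, any shared coordinate position $\ell$ between $a$ and $b$ would have $i_\ell$ strictly between $p_1$ and $p_2$ and equal to $j_\ell$, and minimality of the interleaving is exactly what rules this out. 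Then $(i_a<i_b)=(p_1<p_2)$ and $(j_a<j_b)=(q_1<q_2)$ cross, so $I^{+i},J^{+i}$ cross.

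The main obstacle I anticipate is part~\eqref{eq:crossingtoseparable}, specifically the bookkeeping needed to guarantee the ``no intervening coordinate'' condition $i_\ell=j_\ell$ for $a<\ell<b$ in the definition of crossing: one cannot just take any interleaving quadruple but must choose it minimally and then verify that the cyclic shift can be selected to turn this cyclic interleaving into a linear one without breaking the coordinate-index adjacency. I would handle this by working first in the reduced setting $I\cap J=\emptyset$, $I\cup J=[m]$ (legitimized by Proposition~\ref{prop:intro}\eqref{eq:symmetricdifference}), where $I$ and $J$ are complementary and every coordinate position of $I$ that lies between $a$ and $b$ is occupied by an element of $I\setminus J$, so the only way the crossing definition can fail is a genuine obstruction that minimality removes; then lift back. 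The rest — part~\eqref{eq:separabletocrossing} and the equivalence of ``convex hulls meet'' with ``interleave twice'' — is elementary planar convexity.
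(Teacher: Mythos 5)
Your part (1) and the preliminary reduction are fine and essentially coincide with the paper's argument: both weak separation and (non)crossing depend only on $I\triangle J$ (Proposition~1.5(iii) for the latter), and a crossing at matching coordinate positions produces four cyclically alternating points of $I\setminus J$ and $J\setminus I$, so the convex hulls meet. Note that in the reduced setting $I\cap J=\emptyset$ the condition ``$i_\ell=j_\ell$ for $a<\ell<b$'' can only hold vacuously, so crossing means: some pair of \emph{consecutive} coordinates $a,a+1$ satisfies $i_a<j_a<i_{a+1}<j_{a+1}$ (or the symmetric pattern). This is exactly where your part (2) breaks down.

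For part (2) your recipe --- pick a minimal interleaving quadruple $p_1<q_1<p_2<q_2$ and shift so that the cut sits just before $p_1$ --- does not in general produce a crossing, and ``minimality'' cannot repair it, because the problem is not just the intervening coordinates but that $p_1,p_2$ and $q_1,q_2$ must occupy the \emph{same} pair of index positions in $I$ and in $J$. Concretely take $n=8$, $k=4$, $I=(1,2,4,5)$, $J=(3,6,7,8)$ (complementary, membership word $IIJIIJJJ$). They are not weakly separated (the chords $2$--$4$ and $3$--$6$ cross), and the minimal quadruple with $p_1=1$ is $1<3<4<6$; the point just counterclockwise of $p_1$ is already $8=n$, so your prescribed shift is the identity. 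But $I$ and $J$ are noncrossing: the only relevant arc pairs are the consecutive ones $(1,2),(3,6)$; $(2,4),(6,7)$; $(4,5),(7,8)$, none of which cross (indeed $p_1=i_1,p_2=i_3$ while $q_1=j_1,q_2=j_2$, so no common index pair is available). Thus with your choice of quadruple and shift there is \emph{no} crossing at all, whereas the shift $x\mapsto x+1$ gives $I^{+1}=(2,3,5,6)$, $J^{+1}=(1,4,7,8)$, which cross at positions $2,3$ since $3<4<5<7$. The missing idea is precisely how to select the shift: the paper encodes the complementary pair as a cyclic sequence of two symbols and shifts so that two adjacent \emph{maximal blocks get equal length} (shift by $a-b$, resp.\ $2a$, where $a,b$ are the lengths of the first two blocks); this balancing forces a crossing at a consecutive index pair at the seam of the two equal blocks. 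Your argument would need this (or an equivalent counting argument controlling the positions of $p_1,p_2$ within $I$ versus $q_1,q_2$ within $J$); as written, the success of your recipe depends on an uncontrolled choice of the quadruple, so there is a genuine gap.
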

\begin{proof}
  Assume that $1 \leq a < b \leq k$ are indices such that for $(i_1,\ldots, i_k)$ and $(j_1,\ldots, j_k)$ in $V_{k,n}$ we have $i_\ell = j_\ell$ for $a < \ell < b$ and $(i_a,i_b)$ and $(j_a,j_b)$ cross.
  Then we must have $i_a,i_b \in I\setminus J$ and $j_a,j_b \in J \setminus I$.
  Thus the convex hulls of $I\setminus J$ and $J \setminus I$ intersect nontrivially and hence~$I$ and~$J$ cannot be weakly separated.
  This completes the proof of~\eqref{eq:separabletocrossing}.

  Next, recall that we have seen in Proposition~\ref{prop:intro}\eqref{eq:symmetricdifference} that the crossing or noncrossing of $I$ and~$J$ only depends on $I\triangle J$, as does weak separability by definition.
  Hence, there is no loss of generality in assuming that~$I$ and~$J$ are complementary to one another.
  Think of~$I$ as a cyclic sequence of pluses and minuses, indicating horizontal and vertical steps when you look at~$I$ as a monotone path in the $k\times (n-k)$ grid (the complementarity assumption, of course, implies $k=n-k$), compare~\figref{fig:poset} in the introduction.
  Then~$I$ and~$J$ are opposite sequences.
  Observe that~$I$ and its complement~$J$ are weakly separated if and only if~$I$ changes signs (considered cyclically) exactly twice.
  In other words,~$I$ and~$J$ are weakly separated if and only if, after some cyclic shift, it consists of $n/2$ pluses followed by $n/2$ minuses.
  Suppose that~$I$ is not of that form, and it changes sign at least four times.
  Let~$a$ and~$b$ be the lengths of the first two maximal constant subsequences.
  We then are in one of the following two situations.
  \begin{itemize}
    \item If $a > b$, shifting the sequence by $a-b$ we get that~$I$ starts with two constant subsequences of the same length $b$.
    This implies that~$I$ crosses its complement.

    \item If $a < b$, shifting the sequence by $2a$ we get that $I$ finishes with two constant subsequences of the same length $a$.
    This implies as well that~$I$ crosses its complement.
  \end{itemize}
  Statement~\eqref{eq:crossingtoseparable} follows.
\end{proof}

The fact that $\WS_{k,n}$ is a subcomplex of $\NC_{k,n}$ was already observed in~\cite[Lem. 2.10]{PPS2010}.
Proposition~\ref{prop:cyclicsymmetry} and the results from~\cite{OPS2011} immediately imply the following corollary.

\begin{corollary}
  $\WS_{k,n}$ is a full dimensional pure flag subcomplex of $\NC_{k,n}$.
\end{corollary}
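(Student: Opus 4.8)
The claim is that $\WS_{k,n}$ is a full-dimensional, pure, flag subcomplex of $\NC_{k,n}$. The plan is to assemble this from pieces that are already in place, so the ``proof'' is really a matter of citing the right facts in the right order.

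First, flagness and the subcomplex property. Flagness of $\WS_{k,n}$ is built into its definition: it is declared to be the flag complex of the weak separation graph on $V_{k,n}$. That it is a \emph{subcomplex} of $\NC_{k,n}$ is exactly Proposition~\ref{prop:cyclicsymmetry}\eqref{eq:separabletocrossing}: any pair of weakly separated vectors is noncrossing, so every edge of the weak separation graph is an edge of the noncrossingness graph, and since both complexes are flag, containment of graphs gives containment of complexes. (Alternatively, one quotes the reference to~\cite[Lem. 2.10]{PPS2010} as noted just before the corollary.)

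Second, purity and full-dimensionality. Here I would invoke the cited theorem of Oh--Postnikov--Speyer~\cite{OPS2011}: $\WS_{k,n}$ is pure of dimension $k(n-k)$. Combined with Corollary~\ref{coro:pure}, which says $\NC_{k,n}$ is pure of the same dimension $k(n-k)$, this shows $\WS_{k,n}$ is a \emph{full-dimensional} pure subcomplex --- its maximal faces are also maximal faces of $\NC_{k,n}$, since a face of $\NC_{k,n}$ of the top dimension $k(n-k)$ cannot be properly contained in another. So the one-line proof is: flagness is by definition; the subcomplex statement is Proposition~\ref{prop:cyclicsymmetry}\eqref{eq:separabletocrossing}; purity of dimension $k(n-k)$ is~\cite{OPS2011}; full-dimensionality then follows by comparing with Corollary~\ref{coro:pure}.

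There is no real obstacle: everything has been imported. The only point requiring a moment's care is the word ``full dimensional,'' which should be read as ``$\WS_{k,n}$ and $\NC_{k,n}$ have the same dimension,'' so that a maximal simplex of $\WS_{k,n}$ is automatically a maximal simplex of $\NC_{k,n}$ --- this is what makes the statement more than just ``$\WS_{k,n}$ is a pure flag complex that happens to sit inside $\NC_{k,n}$.'' In writing the proof I would simply string together these citations and remark that the equality $\dim \WS_{k,n} = \dim \NC_{k,n} = k(n-k)$ is what upgrades ``subcomplex'' to ``full-dimensional subcomplex.''

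\begin{proof}
  The complex $\WS_{k,n}$ is flag by definition. It is a subcomplex of $\NC_{k,n}$ by Proposition~\ref{prop:cyclicsymmetry}\eqref{eq:separabletocrossing}: every pair of weakly separated vectors in $V_{k,n}$ is noncrossing, so the weak separation graph is a subgraph of the noncrossingness graph, and since both complexes are the flag complexes of these graphs, $\WS_{k,n}\subseteq\NC_{k,n}$ (this was already observed in~\cite[Lem. 2.10]{PPS2010}). By~\cite{OPS2011}, $\WS_{k,n}$ is pure of dimension $k(n-k)$. Since $\NC_{k,n}$ is also pure of dimension $k(n-k)$ by Corollary~\ref{coro:pure}, every maximal face of $\WS_{k,n}$ is a maximal face of $\NC_{k,n}$; in particular $\WS_{k,n}$ is a full-dimensional subcomplex.
\end{proof}
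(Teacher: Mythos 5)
Your proof is correct and follows the same route as the paper, which states the corollary as an immediate consequence of Proposition~\ref{prop:cyclicsymmetry}\eqref{eq:separabletocrossing} (weak separation implies noncrossing, giving the subcomplex containment) together with the purity result of~\cite{OPS2011}; your added comparison with Corollary~\ref{coro:pure} to make full-dimensionality explicit is exactly the intended reading.
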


\subsection{A conjecture on the topology of $\WS_{k,n}$}

The (cyclic) intervals considered in Proposition~\ref{prop:furtherproperties}\eqref{eq:join} were shown to be vertices in all maximal 
faces of~$\NC_{k,n}$. They are as well vertices in all maximal faces of $\WS_{k,n}$.
Thus it makes sense to look at the \emph{reduced} weak separation complex $\WSred_{k,n}$, which is a subcomplex of $\NCred_{k,n}$.
Observe that both complexes coincide for $k=2$.
This follows for example from Proposition~\ref{prop:cyclicsymmetry} and the fact that $\NC_{2,n}$ is cyclic symmetric.
Hence, $\WSred_{2,n}$ is an $(n-4)$-sphere, the dual associahedron. 
The topology of $\WSred_{k,n}$ and its generalizations motivated by the work of B.~Leclerc and A.~Zelevinsky~\cite{LZ1998} has been scrutinized in~\cite{HessHirsch2011,HessHirsch2013}.
In the preliminary version~\cite{HessHirsch2011} they make detailed conjectures about the topology of the various complexes based on computer experiments.
In particular, they observe that for every~$k$ and~$n$ the complex $\WSred_{k,n}$ appears to have the same homology as an $(n-4)$-sphere.
We state their conjecture in this case and remark that in the first version of the present paper, we had independently come to the same conjecture.
We learned about the preliminary version~\cite{HessHirsch2011} from Vic Reiner after the first version of the present paper was released on the arxiv.

\begin{conjecture}
  \label{conj:ws-sphere}
  For every $2\le k\le n-2$ the reduced complex $\WSred_{k,n}$ of weakly separated $k$-subsets of $[n]$ is homotopy equivalent to the $(n-4)$-sphere.
\end{conjecture}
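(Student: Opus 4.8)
The plan is to prove the conjecture by induction on $n$, exploiting the symmetry between $k$ and $n-k$. First one notes that the complementation isomorphism of Proposition~\ref{prop:intro}\eqref{eq:complementarity} restricts to the weak separation complex, since weak separation is visibly invariant under $I\mapsto [n]\setminus I$; hence $\WSred_{k,n}\cong\WSred_{n-k,n}$, and it suffices to treat $2\le k\le n/2$ (so in particular $k\le n-2$). The base case $k=2$ is exactly the dual associahedron $\WSred_{2,n}=\NCred_{2,n}$, a polytopal $(n-4)$-sphere, so there is nothing to prove there.

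For the inductive step, fix $b\in[n]$. Adapting the embedding structure of Proposition~\ref{prop:intro}\eqref{eq:restriction} to weak separation — weak separation of two $k$-sets both containing $b$ (resp.\ both avoiding $b$) depends only on their traces in the $(n-1)$-gon obtained by deleting the vertex $b$ — one shows that the full subcomplex $P_b\subseteq\WSred_{k,n}$ on $k$-sets through $b$ is the join of an $(n-k-2)$-simplex with $\WSred_{k-1,n-1}$, and the full subcomplex $Q_b\subseteq\WSred_{k,n}$ on $k$-sets avoiding $b$ is the join of a $(k-2)$-simplex with $\WSred_{k,n-1}$. Since $2\le k\le n-2$, both $P_b$ and $Q_b$ are cones, hence contractible; more generally, for disjoint sets $S,T$ the full subcomplex on $k$-sets containing $S$ and avoiding $T$ is, after the analogous deletion/contraction, the join of a simplex with a smaller reduced weak separation complex, hence either contractible or itself a lower-dimensional $\WSred_{k',n'}$, which is a sphere by the inductive hypothesis. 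I would then feed the cover of $\WSred_{k,n}$ by the $2n$ contractible subcomplexes $\{P_b,Q_b:b\in[n]\}$ (or a suitable refinement, see below) into a Mayer--Vietoris / homotopy colimit argument: $\WSred_{k,n}$ is weakly equivalent to the homotopy colimit of the diagram of all nonempty intersections of the $P_b$'s and $Q_b$'s, a diagram whose values are either contractible or are smaller reduced weak separation complexes (spheres, by induction), indexed by the face poset of the nerve, which is an explicit complex built from the cyclic order on $[n]$.

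Granting this, the proof is completed by a homology computation and a connectivity estimate. On homology, the Mayer--Vietoris spectral sequence of the cover has $E_1$-terms given by the reduced homologies of the intersections, all known by induction; one must check that the differentials leave exactly one copy of $\mathbb{Z}$ in total degree $n-4$, a combinatorial bookkeeping that should dovetail with the recursion \eqref{eq:zeroesattheend} defining the multidimensional Narayana numbers and with the Euler-characteristic identity $\tilde\chi(\WSred_{k,n})=(-1)^{n}$. On the homotopy side, one shows $\WSred_{k,n}$ is simply connected for $n\ge7$ (for instance by van~Kampen applied to the same cover, since the pieces and their pairwise/triple intersections are connected and — again by induction, once $k'\ge2$ gives spheres of dimension $\ge2$ — simply connected), so Hurewicz and Whitehead upgrade ``has the homology of $S^{n-4}$'' to ``is homotopy equivalent to $S^{n-4}$''; the finitely many cases $n\le6$ are checked directly (and are already known, e.g.\ $\WSred_{3,5}\cong\WSred_{2,5}=S^{1}$, $\WSred_{3,6}$, $\WSred_{4,6}\cong\WSred_{2,6}=S^{2}$).

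The genuine difficulty, and where I expect the main obstacle to be, is two-fold. First, one must verify that the chosen family actually covers $\WSred_{k,n}$, or repair it so that it does; the clean form of this is a purity-type statement — no facet of $\WSred_{k,n}$ has total union equal to $[n]$ \emph{and} total intersection empty — which generalizes the elementary fact that every triangulation of a convex polygon has an ear, and for which the purity and strong-connectivity theorems of Oh--Postnikov--Speyer~\cite{OPS2011}, together with the dihedral symmetry of $\WS_{k,n}$, are the natural inputs. Second, and this is the real heart of the matter, one must control the spectral sequence: show that the numerous ``smaller sphere'' contributions of the deep intersections $Q_{b_1}\cap\cdots\cap Q_{b_j}$ (which, for $\{b_1,\dots,b_j\}$ a cyclic interval, are honest spheres $\WSred_{k,n-j}$, not contractible) cancel in pairs, leaving a single surviving class. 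An attractive alternative that avoids the covering issue altogether is to construct directly a discrete Morse function (or strong-collapsing order) on $\WSred_{k,n}$ with exactly two critical cells, in dimensions $0$ and $n-4$: one would enumerate the vertices of $\WSred_{k,n}$ in a dihedrally symmetric order and prove, via a domination argument in the weak separation graph, that adjoining each successive vertex is a homotopy equivalence except for one step that performs the suspension of an $(n-5)$-sphere; producing such an explicit Morse matching is, I believe, the crux of any complete proof.
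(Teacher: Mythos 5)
The statement you are trying to prove is not proved in the paper at all: it is stated as an open conjecture (Conjecture~\ref{conj:ws-sphere}), attributed to the preliminary version~\cite{HessHirsch2011} and supported only by computer experiments, so there is no ``paper proof'' to compare against. Your text, by its own admission, does not close the gap either. What you have is a plausible induction scheme, but every step that would actually constitute the proof is deferred: (a) the covering claim that the subcomplexes $P_b,Q_b$ (or a repaired family) cover $\WSred_{k,n}$ is only conjectured, via an unproved ``no facet has union $[n]$ and empty intersection'' purity statement; (b) the Mayer--Vietoris/homotopy-colimit bookkeeping is not carried out --- you yourself note that the deep intersections $Q_{b_1}\cap\cdots\cap Q_{b_j}$ over cyclic intervals contribute genuine lower spheres $\WSred_{k,n-j}$ rather than contractible pieces, and you do not exhibit the cancellation that is supposed to leave a single class in degree $n-4$; (c) simple connectivity for $n\ge 7$ is asserted via van~Kampen on the same unverified cover; and (d) the alternative discrete Morse matching with two critical cells is not constructed. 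Any one of (a)--(d) is where the actual mathematical content would have to live, and none is supplied, so this is a research program, not a proof.

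Two smaller cautions if you pursue this program. First, the identification of the full subcomplex of $\WSred_{k,n}$ on vertices through (or avoiding) $b$ with a simplex joined to $\WSred_{k-1,n-1}$ (resp.\ $\WSred_{k,n-1}$) needs a careful argument about which cyclic intervals become cone points after deleting $b$: cyclic intervals of $[n]\setminus\{b\}$ that are not traces of cyclic intervals of $[n]$ are \emph{not} automatically compatible with everything in the subcomplex, so the join decomposition is not purely formal, even though the dimension count works out. Second, the purity and strong connectivity results of~\cite{OPS2011} give you that $\WSred_{k,n}$ is pure of dimension $(k-1)(n-k-1)-1$ with connected dual graph, but the conjectured sphere has dimension $n-4$, which is strictly smaller for $k\ge 3$; so, unlike the case $k=2$, the complex cannot be a pseudomanifold realization of the sphere, and any Morse-theoretic or shelling-type strategy must account for this dimension drop --- this is an additional structural obstacle your sketch does not address.
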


\bibliographystyle{amsalpha} 
\bibliography{../SantosStumpWelker}

\end{document}